\titleformat{\chapter}[display]
{\normalfont\huge\bfseries}{\chaptertitlename\\thechapter}{20pt}{\Huge}
\titleformat{\subsubsection}[runin]
{\normalfont\normalsize\bfseries}{\thesubsubsection}{1em}{}
\titleformat{\paragraph}[runin]
{\normalfont\normalsize\bfseries}{\theparagraph}{1em}{}
\titleformat{\subparagraph}[runin]
{\normalfont\normalsize\bfseries}{\thesubparagraph}{1em}{}
\titlespacing*{\chapter} {0pt}{50pt}{40pt}
\titlespacing*{\section} {0pt}{3.5ex plus 1ex minus .2ex}{2.3ex plus .2ex}
\titlespacing*{\subsection} {0pt}{3.25ex plus 1ex minus .2ex}{1.5ex plus .2ex}
\titlespacing*{\subsubsection}{0pt}{3.25ex plus 1ex minus .2ex}{1.5ex plus .2ex}
\titlespacing*{\paragraph} {0pt}{3.25ex plus 1ex minus .2ex}{1em}
\titlespacing*{\subparagraph} {\parindent}{3.25ex plus 1ex minus .2ex}{1em}
\subjclass[2000]{Primary 16S35; Secondary 16W30}
\newtheorem{theorem}{Theorem}[section]
\newtheorem{lemma}[theorem]{Lemma}
\newtheorem{proposition}[theorem]{Proposition}
\newtheorem{corollary}[theorem]{Corollary}
\theoremstyle{definition}
\newtheorem{definition}[theorem]{Definition}
\newtheorem{notations}[theorem]{Notations}
\newtheorem{notation}[theorem]{Notation}
\newtheorem{example}[theorem]{Example}
\theoremstyle{remark}
\newtheorem{remark}[theorem]{Remark}
\DeclareMathOperator{\Aut}{Aut}
\DeclareMathOperator{\ide}{id}
\DeclareMathOperator{\Supp}{Supp}
\DeclareMathOperator{\en}{en}
\DeclareMathOperator{\factors}{factors}
\DeclareMathOperator{\st}{st}
\DeclareMathOperator{\PE}{PE}
\DeclareMathOperator{\val}{val}
\DeclareMathOperator{\Valsup}{Valsup}
\DeclareMathOperator{\Valinf}{Valinf}
\DeclareMathOperator{\Succ}{Succ}
\DeclareMathOperator{\Pred}{Pred}
\DeclareMathOperator{\Val}{Val}
\DeclareMathOperator{\lcm}{lcm}
\newcommand{\ov}{\overline}
\begin{document}

\title{The Dixmier conjecture  and the shape of possible counterexamples II}

\author{Jorge A. Guccione}
\address{Departamento de Matem\'atica\\ Facultad de Ciencias Exactas y Naturales-UBA, Pabell\'on~1-Ciudad Universitaria\\ Intendente Guiraldes 2160 (C1428EGA) Buenos Aires, Argentina.}
\address{Instituto de Investigaciones Matem\'aticas ``Luis A. Santal\'o"\\ Facultad de Ciencias Exactas y Naturales-UBA, Pabell\'on~1-Ciudad Universitaria\\ Intendente Guiraldes 2160 (C1428EGA) Buenos Aires, Argentina.}
\email{vander@dm.uba.ar}

\author{Juan J. Guccione}
\address{Departamento de Matem\'atica\\ Facultad de Ciencias Exactas y Naturales-UBA\\ Pabell\'on~1-Ciudad Universitaria\\ Intendente Guiraldes 2160 (C1428EGA) Buenos Aires, Argentina.}
\address{Instituto Argentino de Matem\'atica-CONICET\\ Savedra 15 3er piso\\ (C1083ACA) Buenos Aires, Argentina.}
\email{jjgucci@dm.uba.ar}

\thanks{Jorge A. Guccione and Juan J. Guccione research were supported by PIP 112-200801-00900 (CONICET)}

\author{Christian Valqui}
\address{Pontificia Universidad Cat\'olica del Per\'u - Instituto de Matem\'atica y Ciencias Afi\-nes, Secci\'on Matem\'aticas, PUCP, Av. Universitaria 1801, San Miguel, Lima 32, Per\'u.}

\address{Instituto de Matem\'atica y Ciencias Afines (IMCA) Calle Los Bi\'ologos 245. Urb San C\'esar. La Molina, Lima 12, Per\'u.}
\email{cvalqui@pucp.edu.pe}

\thanks{Christian Valqui research was supported by PUCP-DGI-2011-0206 and Lucet 90-DAI-L005.}

\subjclass[2010]{Primary 16S32; Secondary 16W20}
\keywords{Weyl algebra, Dixmier Conjecture}

\begin{abstract} We continue with the investigation began in ``The Dixmier conjecture  and the shape of possible counterexamples''. In that paper we introduced the notion of an irreducible pair $(P,Q)$ as the image of the pair $(X,Y)$ of the canonical generators of $W$ via an endomorphism which is not an automorphism, such that it cannot be made ``smaller'', we let $B$ denote the minimum of the greatest common divisor of the total degrees of $P$ and $Q$, where $(P,Q)$ runs on the irreducible pairs and we prove that $B\ge 9$. In the present work we improve this lower bound by proving that $B\ge 15$. In order to do this we need to show the the main results of our previous paper remain valid for a family of algebras $(W^{(l)})_{l\in \mathds{N}}$ that extend~$W$.
\end{abstract}

\maketitle

\section*{Introduction}

In this paper $K$ is a characteristic zero field, $W$ is the Weyl algebra on $K$, that is the unital associative $K$-algebra generate by elements $X,Y$ and the relation $[Y,X]=1$. In~\cite{D} Dixmier posed a question, nowadays known as the Dixmier conjecture: is an algebra endomorphism of the Weyl algebra $W$ on a characteristic zero field, necessarily an automorphism? Currently, the Dixmier conjecture remains open. In 2005 the stable equivalence between the Dixmier and Jacobian conjectures was established in~\cite{T} by Yoshifumi Tsuchimoto. In~\cite{G-G-V2} we introduced the notion of an irreducible pair $(P,Q)$ as the image of the pair $(X,Y)$ of the canonical generators of $W$ via an endomorphism which is not an automorphism, such that it cannot be made ``smaller''. Following the strategy of describing the generators of possible counterexamples, we proved the following result: If the Dixmier conjecture is false, then there exist and irreducible pair $(P,Q)$ such that the support of both $P$ and $Q$ is subrectangular. We also made a first ``cut'' at the lower right edge of the support of such pairs, which gave us a lower bound for
$$
B:=\min\{\gcd(v_{1,1}(P),v_{1,1}(Q)),\quad \text{where $(P,Q)$ is an irreducible pair}\}.
$$
We managed to prove that $B\ge 9$.

In the present work we will start with an irreducible subrectangular pair $(P,Q)$, and cut further the lower right edge of the support. For this we need to embed $P$ and $Q$ in a bigger algebra $W^{(l)}$, basically adjoining fractional powers of $X$. As a $K$-linear space $W^{(l)}$ is $K[X,X^{-1/l},Y]$ and the relation $[Y,X]=1$ is preserved.

In the first four sections we carry over the results of~\cite{G-G-V2} from $W$ to $W^{(l)}$. This comprehends basically the leading terms associated to a valuation, the corresponding polynomials $f_{P,\rho,\sigma}$ and the $(\rho,\sigma)$-bracket. Using the same differential equation found in~\cite{G-G-V2} we arrive at the Theorem~\ref{th tipo irreducibles}, which asserts the existence of a $(\rho,\sigma)$-homogeneous element $F$ with $[P,F] = \ell_{\rho,\sigma}(P)$. As in~\cite{G-G-V2}, it is a powerful tool to restrict the possible geometric shapes of irreducible pairs and of some pairs constructed out of them.

The central technical result in this article is Proposition~\ref{preparatoria} which generalizes~\cite[Prop.6.2]{G-G-V2}. It allows to ``cut'' the right lower edge of the support of a given pair in $W^{(l)}$. Starting from an irreducible pair $(P_0,Q_0)$, in Theorem~\ref{familia} we generate a finite chain of pairs $(P_i,Q_i)$, with $[P_i,Q_i]_{\rho_i,\sigma_i}=0$ for all but the last pair.

Proposition~\ref{preparatoria} also allows to increase the lower bound for $B$ from~$9$ to~$15$. For this we only have analyze extensively two cases
$$
\frac{1}{m}\en_{\rho,\sigma}(P)\notin \{(3,6),(4,6)\},
$$
which is done in Proposition~\ref{pr no (3,6) ni (4,6)}. To eliminate the possibility of $B=15$ we would have to analyze the cases $(5,10)$ and $(6,9)$, and for $B=16$ we need to analyze $(6,10)$ and $(4,12)$. Instead of making such an extensive analysis, we will develop an algorithm to carry out an exhaustive search for the smallest possible complete chain $(S_j)$ as in Proposition~\ref{chains}, probably with the use of computers.

The present work yields the necessary tools for that purpose, which will increase significatively the lower bound for $B$. Additionally, it also indicates precisely the exact location of the corners of the possible counterexamples, which simplifies the search.

\section{Preliminaries}\label{preliminares}

\setcounter{equation}{0}

In order to continue with the study of the irreducible pairs introduced in~\cite{G-G-V2} it is convenient to consider some extensions of the Weyl algebra $W$. On these extensions we will make similar constructions as in~\cite{G-G-V2}, thus extending several results of that paper.

\smallskip

For each $l\in \mathds{N}$, we define an algebra $W^{(l)}$ as the Ore extension $A[Y,\ide,\delta]$, where $A$ is the algebra of Laurent polynomials $K[Z_l,Z_l^{-1}]$ and $\delta\colon A\to A$ is the derivation, defined by $\delta(Z_l) = \frac{1}{l} Z_l^{1-l}$. Suppose that $l,h\in \mathds{N}$ such that $l|h$ and let $d := h/l$. We have
$$
[Y,Z_h^d] = \sum_{i=0}^{d-1} Z_h^i[Y,Z_h]Z_h^{d-i-1} = \frac{d}{h} Z_h^{d-h} = \frac{1}{l} (Z_h^d)^{1-l}.
$$
Hence there is an inclusion $\iota_l^h\colon W^{(l)}\to W^{(h)}$, given by $\iota_l^h(Z_l) := Z_h^d$ and $\iota_l^h(Y) := Y$.

\smallskip

We will write $X^{\frac{1}{l}}$ and $X^{\frac{-1}{l}}$ instead of $Z_l$ and $Z_l^{-1}$, respectively. With this notation the map $\iota_l^h$ satisfies $\iota_l^h(X^{\frac{1}{l}}) = (X^{\frac{1}{h}})^d$. We will consider $W^{(l)}\subseteq W^{(h)}$ via this inclusion. Note that $W\subseteq W^{(1)}$.

\smallskip

Similarly, for each $l\in \mathds{N}$, we consider the commutative $K$-algebra $L^{(l)}$, generated by variables $x^{\frac{1}{l}}$, $x^{\frac{-1}{l}}$ and $y$, subject to the relation $x^{\frac{1}{l}} x^{\frac{-1}{l}} = 1$. In other words $L^{(l)} = K[x^{\frac{1}{l}}, x^{\frac{-1}{l}},y]$. Obviously, there is a canonical inclusion $L^{(l)}\subseteq L^{(h)}$, for each $l,h\in \mathds{N}$ such that $l|h$. We let $\Psi^{(l)}\colon W^{(l)}\to L^{(l)}$ denote the $K$-linear map defined by $\Psi^{(l)}\bigl(X^{\frac{i}{l}}Y^j\bigr) := x^{\frac{i}{l}} y^j$. As in~\cite{G-G-V2}, let
\begin{align*}
& \ov{\mathfrak{V}} := \{(\rho,\sigma)\in \mathds{Z}^2: \text{$\gcd(\rho,\sigma) = 1$ and $\rho+\sigma\ge 0$}\}
\shortintertext{and}
&\mathfrak{V} := \{(\rho,\sigma)\in \ov{\mathfrak{V}}: \rho+\sigma> 0 \}.
\end{align*}

Now we extend to the algebras $W^{(l)}$ and $L^{(l)}$ some well know definitions and notations given in~\cite{G-G-V2} for $W$ and the polynomial algebra $L:=K[x,y]$.

\begin{definition} For all $(\rho,\sigma)\in \ov{\mathfrak{V}}$ and $(i/l,j)\in \frac{1}{l}\mathds{Z}\times \mathds{Z}$ we write
$$
v_{\rho,\sigma}(i/l,j):= \rho i/l+\sigma j.
$$
\end{definition}

\begin{notations}\label{not valuaciones para polinomios} Let $(\rho,\sigma)\in \ov{\mathfrak{V}}$. For $P = \sum a_{\frac{i}{l},j} x^{\frac{i}{l}} y^j\in L^{(l)}\setminus\{0\}$, we define:

\begin{itemize}

\smallskip

\item[-] The {\em support} of $P$ as
$$
\Supp(P) := \left\{\left(\frac{i}{l},j\right): a_{\frac{i}{l},j}\ne 0\right\}.
$$

\smallskip

\item[-] The {\em $(\rho,\sigma)$-degree} of $P$ as $v_{\rho,\sigma}(P):= \max\left\{ v_{\rho,\sigma} \bigl(\frac{i}{l},j\bigr): a_{\frac{i}{l},j} \ne 0\right\}$.

\smallskip

\item[-] The {\em $(\rho,\sigma)$-leading term} of $P$ as
$$
\ell_{\rho,\sigma}(P):= \displaystyle \sum_{\{\rho \frac{i}{l} + \sigma j = v_{\rho,\sigma}(P)\}} a_{\frac{i}{l},j} x^{\frac{i}{l}} y^j.
$$

\smallskip

\item[-] $w(P):= \left(\frac{i_0}{l},\frac{i_0}{l}-v_{1,-1}(P)\right)$ such that
$$
\frac{i_0}{l} = \max \left\{\frac{i}{l}: \left(\frac{i}{l},\frac{i}{l}-v_{1,-1}(P) \right) \in \Supp(\ell_{1,-1}(P))\right\},
$$

\smallskip

\item[-] $\ell_c(P):= a_{\frac{i_0}{l}j_0}$, where $\left(\frac{i_0}{l},j_0\right)= w(P)$.

\smallskip

\item[-] $\ell_t(P):= a_{\frac{i_0}{l}j_0} x^{\frac{i_0}{l}}y^{j_0}$, where $\left(\frac{i_0}{l},j_0\right)= w(P)$.

\smallskip

\item[-] $\ov{w}(P):= \left(\frac{i_0}{l}-v_{-1,1}(P),\frac{i_0}{l}\right)$ such that
$$
\frac{i_0}{l} = \max \left\{\frac{i}{l}: \left(\frac{i}{l}-v_{-1,1}(P),\frac{i}{l} \right) \in \Supp(\ell_{-1,1}(P))\right\},
$$

\smallskip

\item[-] $\ov{\ell}_c(P):= a_{\frac{i_0}{l}j_0}$, where $\left(\frac{i_0}{l},j_0\right)= \ov{w}(P)$.

\smallskip

\item[-] $\ov{\ell}_t(P):= a_{\frac{i_0}{l}j_0} x^{\frac{i_0}{l}}y^{j_0}$, where $\left(\frac{i_0}{l},j_0\right)= \ov{w}(P)$.

\end{itemize}
\end{notations}

\begin{notations}\label{not valuaciones para alg de Weyl} Let $(\rho,\sigma)\in \ov{\mathfrak{V}}$. For $P \in W^{(l)}\setminus\{0\}$, we define:

\begin{itemize}

\smallskip

\item[-] The {\em support} of $P$ as $\Supp(P) := \Supp\bigl(\Psi^{(l)}(P)\bigr)$.

\smallskip

\item[-] The {\em $(\rho,\sigma)$-degree} of $P$ as $v_{\rho,\sigma}(P):= v_{\rho,\sigma} \bigl(\Psi^{(l)}(P)\bigr)$.

\smallskip

\item[-] The {\em $(\rho,\sigma)$-leading term} of $P$ as $\ell_{\rho,\sigma}(P):= \ell_{\rho,\sigma} \bigl(\Psi^{(l)}(P)\bigr)$.

\smallskip

\item[-] $w(P):= w\bigl(\Psi^{(l)}(P)\bigr)$.

\smallskip

\item[-] $\ell_c(P):= \ell_c\bigl(\Psi^{(l)}(P)\bigr)$.

\smallskip

\item[-] $\ell_t(P):= \ell_c(P) X^{\frac{i_0}{l}}Y^{j_0}$, where $\left(\frac{i_0}{l},j_0 \right) = w(P)$.

\smallskip

\item[-] $\ov{w}(P):= \ov{w}\bigl(\Psi^{(l)}(P)\bigr)$.

\smallskip

\item[-] $\ov{\ell}_c(P):= \ov{\ell}_c\bigl(\Psi^{(l)}(P)\bigr)$.

\smallskip

\item[-] $\ov{\ell}_t(P):= \ov{\ell}_c(P) X^{\frac{i_0}{l}}Y^{j_0}$, where $\left(\frac{i_0}{l},j_0\right)= \ov{w}(P)$.

\end{itemize}
\end{notations}

\begin{notation}\label{not elementos rho-sigma homogeneos} We say that $P\in L^{(l)}$ is {\em $(\rho,\sigma)$-homogeneous} if $P = 0$ or $P = \ell_{\rho,\sigma}(P)$. Moreover we say that $P\in W^{(l)}$ is {\em $(\rho,\sigma)$-homogeneous} if $\Psi^{(l)}(P)$ is so.
\end{notation}

\begin{definition}\label{Comienzo y Fin de un elemento de W^{(l)}} Let $P\in W^{(l)}\setminus\{0\}$. We define
$$
\st_{\rho,\sigma}(P) = w(\ell_{\rho,\sigma}(P))\quad\text{for $(\rho,\sigma)\in \ov{\mathfrak{V}}\setminus (1,-1)$}
$$
and
$$
\en_{\rho,\sigma}(P) = \ov{w}(\ell_{\rho,\sigma}(P)) \quad\text{for $(\rho,\sigma)\in \ov{\mathfrak{V}}\setminus (-1,1)$}.
$$
\end{definition}

\begin{lemma}\label{le conmutacion en Wl} For each $l\in \mathds{N}$, we have
$$
Y^jX^{\frac{i}{l}} = \sum_{k=0}^j k!\binom{j}{k}\binom{i/l}{k} X^{\frac{i}{l}-k} Y^{j-k}.
$$
\end{lemma}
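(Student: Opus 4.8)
The plan is to reduce the identity to the standard commutation formula valid in any Ore extension $A[Y,\ide,\delta]$ whose twisting endomorphism is the identity, and then to evaluate that formula on the element $X^{1/l}=Z_l$.

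Recall that in $W^{(l)}=A[Y,\ide,\delta]$, with $A=K[Z_l,Z_l^{-1}]$, one has $Ya=aY+\delta(a)$ for every $a\in A$. Since $\delta$ is a derivation and $\delta(Z_l)=\frac1l Z_l^{1-l}$, an immediate induction gives $\delta(Z_l^m)=\frac{m}{l}Z_l^{m-l}$ for every $m\in\mathds{N}$ and, by differentiating $Z_l^mZ_l^{-m}=1$, for every $m\in\mathds{Z}$; in particular $[Y,X^{i/l}]=\frac{i}{l}X^{i/l-1}$.

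First I would prove, by induction on $j$, the auxiliary identity
$$
Y^j a=\sum_{k=0}^{j}\binom{j}{k}\delta^k(a)\,Y^{j-k}\qquad\text{for all }a\in A.
$$
The case $j=0$ is trivial. For the inductive step one multiplies the identity for $j$ on the left by $Y$, pushes each $Y$ past $\delta^k(a)\in A$ by means of $Y\delta^k(a)=\delta^k(a)Y+\delta^{k+1}(a)$, re-indexes the two resulting sums, and reads off the coefficient of $\delta^k(a)Y^{j+1-k}$, which equals $\binom{j}{k}+\binom{j}{k-1}=\binom{j+1}{k}$ by Pascal's rule. Next I would compute $\delta^k(X^{i/l})$ explicitly: iterating $\delta(Z_l^m)=\frac{m}{l}Z_l^{m-l}$ yields, by induction on $k$,
$$
\delta^k\bigl(X^{i/l}\bigr)=\Bigl(\textstyle\prod_{m=0}^{k-1}\bigl(\tfrac{i}{l}-m\bigr)\Bigr)\,X^{\frac{i}{l}-k}=k!\binom{i/l}{k}X^{\frac{i}{l}-k}.
$$
Substituting $a=X^{i/l}$ into the auxiliary identity gives exactly the claimed formula.

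The whole argument is routine bookkeeping; the only mild subtlety is the re-indexing together with the use of Pascal's rule in the inductive step of the auxiliary identity. If one prefers to bypass that identity and induct on $j$ directly in the statement, the same effect is achieved via the binomial manipulation $\binom{i/l}{k}\bigl(\tfrac{i}{l}-k\bigr)=(k+1)\binom{i/l}{k+1}$, which merges the two sums arising from $Y\cdot X^{i/l-k}=X^{i/l-k}Y+(i/l-k)X^{i/l-k-1}$. There is no genuine obstacle here.
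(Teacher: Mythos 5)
Your argument is correct and follows essentially the same route as the paper: both proofs amount to an induction on $j$ starting from $[Y,X^{\frac{i}{l}}]=\frac{i}{l}X^{\frac{i}{l}-1}$, yours merely packaging the induction through the standard Ore-extension identity $Y^ja=\sum_{k}\binom{j}{k}\delta^k(a)Y^{j-k}$ before evaluating $\delta^k(X^{\frac{i}{l}})=k!\binom{i/l}{k}X^{\frac{i}{l}-k}$. The paper leaves exactly these details to the reader, so your write-up is simply a fuller version of its proof.
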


\begin{proof} It follows easily using that
$$
[Y,X^{\frac{i}{l}}] = \frac{i}{l} X^{\frac{i}{l}-1},\qquad [Y^j,X^{\frac{i}{l}}] = [Y,X^{\frac{i}{l}}] Y^{j-1}+Y[Y^{j-1},X^{\frac{i}{l}}]
$$
and an induction argument.
\end{proof}

For $l\in \mathds{N}$ and $j\in \mathds{Z}$, we set
$$
W^{(l)}_{j/l}:=\left\{P\in W^{(l)}\setminus\{0\} : P \text{ is $(1,-1)$-homogeneous and } v_{1,-1}(P) = \frac{j}{l}\right\}\cup \{0\}.
$$

\begin{remark}\label{graduacion} It is easy to see that $W^{(l)}_{j/l}$ is a subvector space of $W^{(l)}$. Moreover, by Lemma~\ref{le conmutacion en Wl}, we know that $W^{(l)}$ is a $\frac{1}{l}\mathds{Z}$-graded algebra with $W^{(l)}_{j/l}$ the $(1,-1)$-homogeneous component of degree $\frac{j}{l}$, and by~\cite[Lemma~2.1]{G-G-V1}, we know that $W^{(l)}_0 = K[XY]$, and hence commutative.
\end{remark}

\begin{proposition}\label{pr v de un producto} Let $P,Q\in W^{(l)}\setminus\{0\}$. The following assertions hold:

\begin{enumerate}

\smallskip

\item $w(PQ) = w(P) + w(Q)$ and $\ov{w}(PQ) = \ov{w}(P) + \ov{w}(Q)$. In particular $PQ\ne 0$.

\smallskip

\item $\ell_{\rho,\sigma}(PQ) = \ell_{\rho,\sigma}(P)\ell_{\rho,\sigma}(Q)$ for all $(\rho,\sigma)\in \mathfrak{V}$.

\smallskip

\item $v_{\rho,\sigma}(PQ) = v_{\rho,\sigma}(P) + v_{\rho,\sigma}(Q)$ for all $(\rho,\sigma)\in \ov{\mathfrak{V}}$.

\smallskip

\item $\st_{\rho,\sigma}(PQ)= \st_{\rho,\sigma}(P)+\st_{\rho,\sigma}(Q)$  for all $(\rho,\sigma)\in \mathfrak{V}$.

\smallskip

\item $\en_{\rho,\sigma}(PQ)= \en_{\rho,\sigma}(P)+\en_{\rho,\sigma}(Q)$ for all $(\rho,\sigma)\in \mathfrak{V}$.

\end{enumerate}
The same properties hold for $P,Q\in L^{(l)}\setminus\{0\}$.
\end{proposition}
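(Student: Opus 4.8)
The plan is to reduce everything to two ingredients: the elementary behaviour of the weights on the commutative domain $L^{(l)}$, and the $\frac{1}{l}\mathds{Z}$-grading of $W^{(l)}$ (Remark~\ref{graduacion}) together with the commutation formula of Lemma~\ref{le conmutacion en Wl}. First I would dispose of the $L^{(l)}$-case. Since $L^{(l)}=K[x^{1/l},x^{-1/l},y]$ is an integral domain and, for every $(\rho,\sigma)\in\ov{\mathfrak V}$, the decomposition of $L^{(l)}$ into the spans of the monomials $x^{i/l}y^j$ with a fixed value of $v_{\rho,\sigma}$ is an honest algebra grading, the product of the two nonzero top $(\rho,\sigma)$-components of $P$ and $Q$ is nonzero and equals the top $(\rho,\sigma)$-component of $PQ$; this yields at once $PQ\neq 0$, item (3), and item (2). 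Specialising to $(\rho,\sigma)=(1,-1)$ and then singling out, among the monomials of $\ell_{1,-1}(PQ)=\ell_{1,-1}(P)\ell_{1,-1}(Q)$, the one of largest $x$-exponent --- which for a product of two $(1,-1)$-homogeneous polynomials is visibly the product of the corresponding extremal monomials of the factors --- gives $w(PQ)=w(P)+w(Q)$; the statement for $\ov w$ is symmetric, using $(-1,1)$ and largest $y$-exponent. Items (4) and (5) for $L^{(l)}$ then follow by combining (2) with this additivity of $w$ and $\ov w$.

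For $W^{(l)}$, item (1) is the heart of the matter. I would write $P=\sum_a P_a$ and $Q=\sum_b Q_b$ as sums of their $(1,-1)$-homogeneous components, so that $\ell_{1,-1}(P)=\Psi^{(l)}(P_{a_0})$ with $a_0=v_{1,-1}(P)$, and likewise $\ell_{1,-1}(Q)=\Psi^{(l)}(Q_{b_0})$. The key lemma, proved from Lemma~\ref{le conmutacion en Wl}, is that for nonzero $(1,-1)$-homogeneous $R,S\in W^{(l)}$ one has $RS\neq 0$, $RS$ is again $(1,-1)$-homogeneous (by Remark~\ref{graduacion}), $w(RS)=w(R)+w(S)$ and $\ell_c(RS)=\ell_c(R)\ell_c(S)$. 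To see this, write $R=\ell_t(R)+R^-$ and $S=\ell_t(S)+S^-$, where $R^-,S^-$ collect the monomials of strictly smaller $X$-exponent --- these are all the remaining monomials, because distinct monomials of a $(1,-1)$-homogeneous element have distinct $X$-exponents --- expand $RS$ by Lemma~\ref{le conmutacion en Wl}, and observe that the monomial of largest $X$-exponent occurs only in $\ell_t(R)\ell_t(S)$, with coefficient $\ell_c(R)\ell_c(S)\neq 0$. Applying this with $R=P_{a_0}$ and $S=Q_{b_0}$, and noting that $a\le a_0$, $b\le b_0$ with $a+b=a_0+b_0$ forces $a=a_0$, $b=b_0$, the top $(1,-1)$-homogeneous component of $PQ$ equals $P_{a_0}Q_{b_0}\neq 0$; hence $PQ\neq 0$, $v_{1,-1}(PQ)=a_0+b_0$, $\ell_{1,-1}(PQ)=\Psi^{(l)}(P_{a_0}Q_{b_0})$, and therefore $w(PQ)=w(P_{a_0}Q_{b_0})=w(P_{a_0})+w(Q_{b_0})=w(P)+w(Q)$. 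Taking the bottom $(1,-1)$-components instead gives $\ov w(PQ)=\ov w(P)+\ov w(Q)$ and also the cases $(1,-1),(-1,1)$ of item (3).

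For items (2) and (3) with $(\rho,\sigma)\in\mathfrak V$, I would note that Lemma~\ref{le conmutacion en Wl} gives, monomial by monomial, $\Psi^{(l)}\bigl(X^{i/l}Y^jX^{u/l}Y^v\bigr)=x^{i/l}y^j\,x^{u/l}y^v+(\text{terms }x^{(i+u)/l-t}y^{j+v-t},\ t\ge 1)$; summing over the monomials of $P$ and $Q$, $\Psi^{(l)}(PQ)=\Psi^{(l)}(P)\,\Psi^{(l)}(Q)+R$ where every monomial of $R$ has the shape $x^{(i+u)/l-t}y^{j+v-t}$ with $t\ge 1$, hence $v_{\rho,\sigma}$-value at most $v_{\rho,\sigma}(P)+v_{\rho,\sigma}(Q)-(\rho+\sigma)$, which is strictly smaller than $v_{\rho,\sigma}(P)+v_{\rho,\sigma}(Q)$ precisely because $\rho+\sigma>0$ on $\mathfrak V$. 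By the already established $L^{(l)}$-case, $\ell_{\rho,\sigma}\bigl(\Psi^{(l)}(P)\Psi^{(l)}(Q)\bigr)=\ell_{\rho,\sigma}(P)\ell_{\rho,\sigma}(Q)\neq 0$ has $(\rho,\sigma)$-degree exactly $v_{\rho,\sigma}(P)+v_{\rho,\sigma}(Q)$; combining the two facts yields $v_{\rho,\sigma}(PQ)=v_{\rho,\sigma}(P)+v_{\rho,\sigma}(Q)$ and $\ell_{\rho,\sigma}(PQ)=\ell_{\rho,\sigma}(P)\ell_{\rho,\sigma}(Q)$. Finally (4) and (5) drop out of (2): $\st_{\rho,\sigma}(PQ)=w\bigl(\ell_{\rho,\sigma}(PQ)\bigr)=w\bigl(\ell_{\rho,\sigma}(P)\ell_{\rho,\sigma}(Q)\bigr)=w\bigl(\ell_{\rho,\sigma}(P)\bigr)+w\bigl(\ell_{\rho,\sigma}(Q)\bigr)=\st_{\rho,\sigma}(P)+\st_{\rho,\sigma}(Q)$, using the additivity of $w$ on $L^{(l)}$ from the commutative case, and the same computation with $\ov w$ gives (5). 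The only genuine obstacle is the homogeneous-factor lemma inside item (1), i.e. the non-vanishing and the location of the extremal monomial after commuting the $Y$'s past the $X^{1/l}$'s; everything else is bookkeeping with $\Psi^{(l)}$ and the weights, carried over essentially verbatim from $W$ to $W^{(l)}$.
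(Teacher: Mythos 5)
Your argument is correct and is exactly the elaboration the paper intends: its own proof is a one-line appeal to Lemma~\ref{le conmutacion en Wl} and the inequality $\rho+\sigma>0$, which are precisely the two ingredients you use (the $(1,-1)$-grading plus the extremal-monomial computation for item~(1), and the error terms of $(\rho,\sigma)$-value dropped by $\rho+\sigma$ for items~(2)--(3), with (4)--(5) reduced to the commutative case). No gaps.
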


\begin{proof} For $P,Q\in W^{(l)}\setminus\{0\}$ this follows easily from Lemma~\ref{le conmutacion en Wl} using that $\rho+\sigma >0$ if $(\rho,\sigma)\in \mathfrak{V}$. The proof for $P,Q\in L^{(l)}\setminus\{0\}$ is easier.
\end{proof}

Let $A=(a_1,a_2)$ and $B=(b_1,b_2)$ in $\mathds{R}^2$. As in~\cite{G-G-V2} we say that $A$ and $B$ are {\em aligned} if $A\times B := \det\left(\begin{smallmatrix} a_1 & a_2\\ b_1 & b_2\end{smallmatrix}\right)$ is zero.

\begin{definition}\label{def alineados} Let $P,Q\in L^{(l)}\setminus\{0\}$. We say that $P$ and $Q$ are {\em aligned} and write $P\sim Q$, if $w(P)$ and $w(Q)$ are so. Moreover we say that $P,Q\in W^{(l)}\setminus\{0\}$ are aligned if $\Psi^{(l)}(P)\sim \Psi^{(l)}(Q)$. Note that
\begin{itemize}

\smallskip

\item[-] By definition $P\sim Q$ if and only if $\ell_{1,-1}(P)\sim \ell_{1,-1}(Q)$.

\smallskip

\item[-] $\sim$ is not an equivalence relation (it is so restricted to $\{P:w(P)\ne (0,0)\}$).

\smallskip

\item[-] If $P\sim Q$ and $w(P)\ne (0,0)\ne w(Q)$, then $w(P)=\lambda w(Q)$ with $\lambda\ne 0$.

\end{itemize}
\end{definition}

\begin{proposition}\label{pr conmutadores con exponentes fraccinarios} Let $P,Q\in W^{(l)}\setminus \{0\}$. The following assertions hold:

\begin{enumerate}

\smallskip

\item If $P \nsim Q$, then
$$
[P,Q]\ne 0\quad\text{and}\quad w\bigl([P,Q]\bigr) = w(P)+w(Q)-(1,1).
$$

\smallskip

\item If $\ov{w}(P) \nsim \ov{w}(Q)$, then
$$
[P,Q]\ne 0\quad\text{and}\quad \ov w\bigl([P,Q]\bigr) = \ov w(P)+\ov w(Q)-(1,1).
$$
\end{enumerate}
\end{proposition}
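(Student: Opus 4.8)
The plan is to reduce everything to the $(1,-1)$-grading, exactly as is done for $W$ in \cite{G-G-V2}, and then run the standard leading-term computation for the commutator. For part (1), set $A:=\ell_{1,-1}(P)$ and $B:=\ell_{1,-1}(Q)$, which are $(1,-1)$-homogeneous with $w(A)=w(P)$ and $w(B)=w(Q)$. Since $\Psi^{(l)}$ carries the product and the bracket to the corresponding operations in the graded structure, and since $W^{(l)}$ is $\tfrac1l\mathds{Z}$-graded by Remark~\ref{graduacion}, the $(1,-1)$-leading part of $[P,Q]$ is controlled by $[A,B]$ provided we show $[A,B]\neq 0$ and compute its $w$. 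So the real content is the homogeneous case: given nonzero $(1,-1)$-homogeneous $A,B$ with $w(A)\nsim w(B)$, show $[A,B]\neq0$ and $w([A,B])=w(A)+w(B)-(1,1)$.

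For the homogeneous case I would argue as follows. Write $A=X^{a_1}Y^{a_2}g(XY)$-type factorizations: more precisely, since $A$ is $(1,-1)$-homogeneous of degree $v_{1,-1}(A)$, Remark~\ref{graduacion} and $W^{(l)}_0=K[XY]$ let us write $A$ as $X^{\alpha}$ (or $Y^{-\alpha}$) times a polynomial in $XY$, and $w(A)$ picks out the top such monomial. Then one computes $[A,B]$ directly using Lemma~\ref{le conmutacion en Wl} (the fractional-power commutation rule), which shows that the bracket of two monomials $X^{i/l}Y^j$ and $X^{u/l}Y^v$ has leading $w$-term $X^{i/l+u/l-1}Y^{j+v-1}$ with coefficient proportional to $\det\!\left(\begin{smallmatrix} i/l & j\\ u/l & v\end{smallmatrix}\right)=\frac1l\bigl(iv-uj\bigr)$ — this determinant is, up to the scalar, exactly $w(A)\times w(B)$ evaluated on the relevant corner monomials. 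The hypothesis $w(P)\nsim w(Q)$ says precisely that this determinant is nonzero on the extreme monomials, so no cancellation occurs at the $w$-corner; hence $[A,B]\neq0$ and its $w$ is $w(A)+w(B)-(1,1)$. This is the same computation as \cite[Prop.\,?]{G-G-V2} but with $\tfrac1l\mathds{Z}$-exponents in the first slot, and Lemma~\ref{le conmutacion en Wl} is exactly the tool that makes the fractional version go through unchanged.

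Part (2) is entirely symmetric: apply part (1)'s argument with the roles of the two coordinates swapped, i.e.\ work with $\ell_{-1,1}$ and $\ov{w}$ in place of $\ell_{1,-1}$ and $w$, using that $W^{(l)}$ is also $\tfrac1l\mathds{Z}$-graded for the $(-1,1)$-direction and that the analogue of Remark~\ref{graduacion} holds there (the degree-$0$ component is again $K[XY]$). The determinant $\ov w(P)\times\ov w(Q)\neq0$ again prevents cancellation at the $\ov w$-corner, giving $[P,Q]\neq0$ and $\ov w([P,Q])=\ov w(P)+\ov w(Q)-(1,1)$.

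The main obstacle I anticipate is purely bookkeeping: one must verify that passing from $W^{(l)}$ through $\Psi^{(l)}$ to the graded pieces is compatible with both the product (Proposition~\ref{pr v de un producto}) and the bracket, and that the ``$w$-corner'' is genuinely determined by a single determinant even when the leading term $\ell_{1,-1}(A)$ involves several monomials along the line $v_{1,-1}=\text{const}$. The choice in the definition of $w(P)$ — taking the monomial with the largest first coordinate $i_0/l$ — is exactly what resolves this: among the finitely many monomials on the extremal $(1,-1)$-line, the bracket's contribution to the $w$-corner of $[A,B]$ comes only from the pair of extremal monomials of $A$ and $B$, and the alignment hypothesis guarantees their determinant is nonzero, so no cancellation can kill that corner. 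Once this is set up, the computation with Lemma~\ref{le conmutacion en Wl} is routine.
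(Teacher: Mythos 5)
Your proposal is correct and follows essentially the same route as the paper: the paper's proof likewise uses Lemma~\ref{le conmutacion en Wl} to see that the coefficient of $X^{\frac{r+u}{l}-1}Y^{s+v-1}$ in $\ell_t([P,Q])$ is $\bigl(\binom{s}{1}\binom{u/l}{1}-\binom{v}{1}\binom{r/l}{1}\bigr)\ell_c(P)\ell_c(Q)$, i.e.\ the determinant of the two $w$-corners times the leading coefficients, which is nonzero exactly by the non-alignment hypothesis. Your extra bookkeeping (reducing first to the $(1,-1)$-homogeneous components and isolating the extremal monomials) is just an expanded version of the same computation, and part (2) is handled symmetrically in both.
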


\begin{proof} We only prove item~(1) since item~(2) is similar. Let $w(P) = \left(\frac{r}{l},s\right)$ and $w(Q) = \left(\frac{u}{l},v\right)$. Since $\binom{s}{1}\binom{u/l}{1} - \binom{v}{1} \binom{r/l}{1} = (r/l,s) \times (u/l,v)\ne 0$, using Lemma~\ref{le conmutacion en Wl} one can check that
$$
\ell_t\bigl([P,Q]\bigr) = \Biggl(\binom{s}{1}\binom{u/l}{1} - \binom{v}{1} \binom{r/l}{1} \Biggr) \ell_c(P)\ell_c(Q)X^{\frac{r+u}{l}-1}Y^{s+v-1}.
$$
So, $w\bigl([P,Q]\bigr) = w(P)+w(Q)-(1,1)$.
\end{proof}

\begin{remark}\label{re v de un conmutador} For all $P,Q\in W^{(l)}\setminus\{0\}$ and each $(\rho,\sigma)\in \ov{\mathfrak{V}}$, we have
$$
[P,Q] = 0\quad\text{or}\quad v_{\rho,\sigma}([P,Q])\le v_{\rho,\sigma}(P) + v_{\rho,\sigma}(Q) - (\rho+\sigma).
$$
\end{remark}

\section{The bracket associated with a valuation}

\setcounter{equation}{0}

\begin{definition}\label{def rho-sigma proporcionales} Let $(\rho,\sigma)\in\ov{\mathfrak{V}}$ and $P,Q\in W^{(l)}\setminus\{0\}$. We say that $P$ and $Q$ are {\em $(\rho,\sigma)$-proportional} if $[P,Q]=0$ or $v_{\rho,\sigma}([P,Q]) < v_{\rho,\sigma}(P) + v_{\rho,\sigma}(Q) - (\rho+\sigma)$.
\end{definition}

\begin{definition}\label{def rho-sigma corchete} Let $l\in \mathds{N}$ and $(\rho,\sigma)\in \ov{\mathfrak{V}}$. We define
$$
[-,-]_{\rho,\sigma}\colon \bigl(W^{(l)}\setminus\{0\}\bigr)\times \bigl(W^{(l)}\setminus\{0\}\bigr)\to L^{(l)}_{\rho,\sigma},
$$
by
$$
[P,Q]_{\rho,\sigma} = \begin{cases} 0 &\text{if $P$ and $Q$ are $(\rho,\sigma)$-proportional,}\\ \ell_{\rho,\sigma}([P,Q]) &\text{if $P$ and $Q$ are not $(\rho,\sigma)$-proportional.}
\end{cases}
$$
\end{definition}

\begin{lemma}\label{pr conmutadores de elementos homogeneos con exponentes fraccinarios} Let $(\rho,\sigma)\in \mathfrak{V}$ and let $P$ and $Q$ be $(\rho,\sigma)$-homogeneous elements of $W^{(l)}\setminus \{0\}$. Assume that $\sigma\le 0$.

\begin{enumerate}

\smallskip

\item If $w(P)\nsim w(Q)$, then $[P,Q] \ne 0$ and $w([P,Q]) = w\bigl(\ell_{\rho,\sigma}([P,Q])\bigr)$.

\smallskip

\item If $\ov{w}(P)\nsim \ov{w}(Q)$, then $[P,Q]\ne 0$ and $\ov{w}([P,Q])= \ov{w}\bigl(\ell_{\rho,\sigma} ([P,Q])\bigr)$.
\end{enumerate}

\end{lemma}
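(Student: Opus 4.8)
The plan is to reduce the statement to Proposition~\ref{pr conmutadores con exponentes fraccinarios} by showing that the hypothesis $w(P)\nsim w(Q)$ (resp. $\ov w(P)\nsim\ov w(Q)$) already forces $P$ and $Q$ to be \emph{not} $(\rho,\sigma)$-proportional, so that $\ell_{\rho,\sigma}([P,Q])$ is the genuine leading term and no cancellation occurs. Since $\sigma\le 0$, the valuation $v_{1,-1}$ and $v_{\rho,\sigma}$ interact in a controlled way on $(\rho,\sigma)$-homogeneous elements: I would first record that for a $(\rho,\sigma)$-homogeneous $R\in W^{(l)}\setminus\{0\}$ one has $\ell_{1,-1}(R) = \ell_{1,-1}(\ell_{\rho,\sigma}(R)) = \ell_{1,-1}(R)$ trivially, but more usefully that $w(R)$ is computed from the monomial of $\Supp(R)$ with the largest first coordinate, and this monomial is determined purely by $\ell_{\rho,\sigma}(R)=R$. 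The point is that $w$ and $\ov w$ of a $(\rho,\sigma)$-homogeneous element are intrinsic.

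First I would prove item~(1). By Proposition~\ref{pr conmutadores con exponentes fraccinarios}(1), since $P\nsim Q$ means precisely $w(P)\nsim w(Q)$, we get $[P,Q]\ne 0$ and
$$
w\bigl([P,Q]\bigr) = w(P) + w(Q) - (1,1).
$$
Next, using Proposition~\ref{pr v de un producto}(2) applied in $L^{(l)}$ (or directly the bilinearity of $\ell_{\rho,\sigma}$ on the leading forms), I would show $\ell_{\rho,\sigma}([P,Q]) = [\,\ell_{\rho,\sigma}(P),\ell_{\rho,\sigma}(Q)\,]$ up to the usual caveat; but since $P=\ell_{\rho,\sigma}(P)$ and $Q=\ell_{\rho,\sigma}(Q)$ are already homogeneous, the commutator $[P,Q]$ lies in a single $(\rho,\sigma)$-degree, namely $v_{\rho,\sigma}(P)+v_{\rho,\sigma}(Q)-(\rho+\sigma)$, unless it vanishes. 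Combined with $[P,Q]\ne 0$ from the previous line, this shows $[P,Q]$ is itself $(\rho,\sigma)$-homogeneous, i.e. $\ell_{\rho,\sigma}([P,Q]) = [P,Q]$. Therefore $w\bigl(\ell_{\rho,\sigma}([P,Q])\bigr) = w([P,Q])$, which is the assertion. The key computational input is the explicit formula for $\ell_t([P,Q])$ from the proof of Proposition~\ref{pr conmutadores con exponentes fraccinarios}, together with Lemma~\ref{le conmutacion en Wl} to see that every monomial appearing in $[P,Q]$ has $(\rho,\sigma)$-degree exactly $v_{\rho,\sigma}(P)+v_{\rho,\sigma}(Q)-(\rho+\sigma)$ when $P,Q$ are homogeneous.

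For item~(2) I would run the symmetric argument, replacing $w$ by $\ov w$, $(1,-1)$ by $(-1,1)$, and invoking Proposition~\ref{pr conmutadores con exponentes fraccinarios}(2) instead of~(1); the hypothesis $\sigma\le 0$ is used in the same place. The main obstacle, and the only spot requiring genuine care, is verifying that $[P,Q]$ is $(\rho,\sigma)$-homogeneous when $P$ and $Q$ are: a priori the commutator in $W^{(l)}$ mixes lower-order terms via the relation in Lemma~\ref{le conmutacion en Wl}, but those lower-order corrections drop the $(1,-1)$-degree, not the $(\rho,\sigma)$-degree, precisely because $\rho Y^{j}X^{i/l}$-type reorderings change $(i/l,j)$ along the anti-diagonal direction $(1,-1)$ up to $(-k,-k)$, which shifts $(\rho,\sigma)$-degree by $-k(\rho+\sigma)$; one must check that only the top $(\rho,\sigma)$-stratum survives, which follows since $P$ and $Q$ are concentrated in a single $(\rho,\sigma)$-degree and the bracket of two homogeneous elements of degrees $a,b$ lands in degree $a+b-(\rho+\sigma)$ with no lower contributions after collecting. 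Once that is in place, everything else is bookkeeping with Propositions~\ref{pr v de un producto} and~\ref{pr conmutadores con exponentes fraccinarios}.
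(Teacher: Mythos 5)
Your reduction of the first half of item~(1) to Proposition~\ref{pr conmutadores con exponentes fraccinarios} is fine, but the second half of your argument rests on a false claim: the commutator of two $(\rho,\sigma)$-homogeneous elements of $W^{(l)}$ is \emph{not} in general $(\rho,\sigma)$-homogeneous. Reordering via Lemma~\ref{le conmutacion en Wl} produces, for each pair of monomials, correction terms indexed by $k=1,2,\dots$ whose exponents are shifted by $(-k,-k)$; such a shift leaves the $(1,-1)$-degree unchanged but lowers the $(\rho,\sigma)$-degree by $k(\rho+\sigma)>0$. (You state this shift correctly and then draw the opposite conclusion from it.) The $k\ge 2$ terms do not cancel in general: for instance $[Y^2,X^2]=4XY+2$ with $(\rho,\sigma)=(1,0)$, where both $Y^2$ and $X^2$ are $(1,0)$-homogeneous but the bracket is not. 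So the step ``$[P,Q]\ne 0$ and lives in a single $(\rho,\sigma)$-degree, hence $\ell_{\rho,\sigma}([P,Q])=[P,Q]$'' fails, and with it your deduction of $w([P,Q])=w\bigl(\ell_{\rho,\sigma}([P,Q])\bigr)$.

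The conclusion is nevertheless true, and the repair is exactly the computation your plan avoids. Writing $P=\sum_i\lambda_i X^{r/l-i\sigma/\rho}Y^{s+i}$ and $Q=\sum_j\mu_j X^{u/l-j\sigma/\rho}Y^{v+j}$ and expanding $[P,Q]$ with Lemma~\ref{le conmutacion en Wl}, one gets coefficients $c_{ijk}$ with $c_{ij0}=0$, so the $(\rho,\sigma)$-leading part is the $k=1$ stratum, provided some $c_{ij1}\ne 0$; the hypothesis $w(P)\nsim w(Q)$ is precisely the statement $c_{001}\ne 0$. One then checks that among all surviving monomials the $(1,-1)$-degree is maximized at $i=j=0$ (this uses $\rho+\sigma>0$), and among those the first coordinate is maximized at $k=1$; hence $w([P,Q])$ is attained at the $(i,j,k)=(0,0,1)$ monomial, which also computes $w\bigl(\ell_{\rho,\sigma}([P,Q])\bigr)$. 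In short, the point is not that the lower $(\rho,\sigma)$-strata vanish, but that they sit on the same $(1,-1)$-level with strictly smaller first coordinate and therefore cannot displace $w$.
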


\begin{proof} We only prove item~(1) since item~(2) is similar. Write
$$
\qquad P = \sum_{i=0}^{\alpha} \lambda_i X^{\frac{r}{l}-\frac{i\sigma} {\rho}} Y^{s+i} \quad\text{and}\quad Q = \sum_{j=0}^{\beta} \mu_j X^{\frac{u}{l}-\frac{j\sigma}{\rho}} Y^{v+j},
$$
with $\lambda_0,\lambda_{\alpha},\mu_0,\mu_{\beta}\ne 0$. Since, by Lemma~\ref{le conmutacion en Wl},
$$
X^{\frac{i}{l}}Y^j X^{\frac{i'}{l}}Y^{j'} = \sum_{k=0}^j  k!\binom{j}{k} \binom{i'/l}{k} X^{\frac{i+i'}{l}-k}Y^{j+j'-k},
$$
we obtain that
$$
[P,Q] = \sum_{i=0}^{\alpha}\sum_{j=0}^{\beta}\sum_{k=0}^{\max\{s+i,v+j\}} \lambda_i\mu_j c_{ijk} X^{\frac{r+u}{l}-\frac{(i+j)\sigma} {\rho}-k} Y^{s+v+i+j-k},
$$
where
$$
c_{ijk} = k!\binom{s+i}{k} \binom{u/l-j\sigma/\rho}{k} - k!\binom{v+j}{k} \binom{r/l-i\sigma/\rho}{k}.
$$
Note that $c_{ij0} = 0$. Furthermore $c_{001}\ne 0$, because $w(P)\nsim w(Q)$. Consequently, since $\rho+\sigma>0$,
$$
\ell_{\rho,\sigma}([P,Q]) = \sum_{i=0}^{\alpha}\sum_{j=0}^{\beta} \lambda_i\mu_j c_{ij1} x^{\frac{r+u}{l}-\frac{(i+j)\sigma} {\rho}-1} y^{s+v+i+j-1}.
$$
Using again that $c_{001}\ne 0$, we obtain that
$$
w([P,Q]) = \left(\frac{r+u}{l}-1,s+v-1\right) = w\bigl(\ell_{\rho,\sigma}([P,Q])\bigr),
$$
as desired.
\end{proof}

\begin{proposition}\label{extremosnoalineados} Let $P,Q,R\in W^{(l)}\setminus \{0\}$ such that $[P,Q]_{\rho,\sigma} = \ell_{\rho,\sigma}(R)$, where $(\rho,\sigma)\in \mathfrak{V}$. Assume that $\sigma\le 0$. We have

\begin{enumerate}

\smallskip

\item If $\st_{\rho,\sigma}(P)\nsim \st_{\rho,\sigma}(Q)$, then
$$
\st_{\rho,\sigma}(P)+\st_{\rho,\sigma}(Q)-(1,1)=\st_{\rho,\sigma}(R).
$$

\smallskip

\item If $\en_{\rho,\sigma}(P)\nsim \en_{\rho,\sigma}(Q)$, then
$$
\en_{\rho,\sigma}(P)+\en_{\rho,\sigma}(Q)-(1,1)=\en_{\rho,\sigma}(R).
$$

\end{enumerate}
\end{proposition}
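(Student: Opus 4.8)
The plan is to prove both items by reducing to the homogeneous setting already handled in Lemma~\ref{pr conmutadores de elementos homogeneos con exponentes fraccinarios}. I will only treat item~(1), as item~(2) is entirely symmetric (replacing $w$, $\st$ and $\ell_{1,-1}$ by $\ov w$, $\en$ and $\ell_{-1,1}$, and invoking the second parts of the relevant statements).

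\medskip

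First I would set $p:=\ell_{\rho,\sigma}(P)$ and $q:=\ell_{\rho,\sigma}(Q)$, viewed as $(\rho,\sigma)$-homogeneous elements of $W^{(l)}\setminus\{0\}$, so that by definition $\st_{\rho,\sigma}(P)=w(p)$ and $\st_{\rho,\sigma}(Q)=w(q)$. The hypothesis $\st_{\rho,\sigma}(P)\nsim\st_{\rho,\sigma}(Q)$ says precisely $w(p)\nsim w(q)$. Since $\sigma\le 0$ and $(\rho,\sigma)\in\mathfrak V$, Lemma~\ref{pr conmutadores de elementos homogeneos con exponentes fraccinarios}(1) applies to $p$ and $q$ and gives $[p,q]\ne 0$ together with
$$
w([p,q]) = w\bigl(\ell_{\rho,\sigma}([p,q])\bigr) = w(p)+w(q)-(1,1),
$$
where the last equality is Proposition~\ref{pr conmutadores con exponentes fraccinarios}(1) applied to the pair $p\nsim q$. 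The second step is to identify $\ell_{\rho,\sigma}([p,q])$ with $\ell_{\rho,\sigma}([P,Q])=[P,Q]_{\rho,\sigma}=\ell_{\rho,\sigma}(R)$. For this I use Proposition~\ref{pr v de un producto}(2): writing $P=p+P'$ and $Q=q+Q'$ with $v_{\rho,\sigma}(P')<v_{\rho,\sigma}(P)$ and $v_{\rho,\sigma}(Q')<v_{\rho,\sigma}(Q)$, expand $[P,Q]=[p,q]+[p,Q']+[P',q]+[P',Q']$ and bound the $(\rho,\sigma)$-degree of each of the three correction terms using Remark~\ref{re v de un conmutador}; each is strictly smaller than $v_{\rho,\sigma}(p)+v_{\rho,\sigma}(q)-(\rho+\sigma)$, which by Proposition~\ref{pr v de un producto}(3) equals $v_{\rho,\sigma}([p,q])$ once we know $[p,q]\ne0$ is genuinely of that degree. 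Since $[p,q]$ is $(\rho,\sigma)$-homogeneous of degree exactly $v_{\rho,\sigma}(P)+v_{\rho,\sigma}(Q)-(\rho+\sigma)$, it cannot be cancelled, so $\ell_{\rho,\sigma}([P,Q])=[p,q]$, and in particular $P$ and $Q$ are not $(\rho,\sigma)$-proportional, consistently with the hypothesis $[P,Q]_{\rho,\sigma}=\ell_{\rho,\sigma}(R)\ne0$.

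\medskip

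Combining the two steps: $\ell_{\rho,\sigma}(R)=[P,Q]_{\rho,\sigma}=\ell_{\rho,\sigma}([P,Q])=[p,q]$, and this element is $(\rho,\sigma)$-homogeneous, so $\st_{\rho,\sigma}(R)=w(\ell_{\rho,\sigma}(R))=w([p,q])=w(p)+w(q)-(1,1)=\st_{\rho,\sigma}(P)+\st_{\rho,\sigma}(Q)-(1,1)$, as claimed.

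\medskip

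The main obstacle is the bookkeeping in the second step: one must make sure that the degree inequality in Remark~\ref{re v de un conmutador}, applied to the mixed brackets $[p,Q']$, $[P',q]$, $[P',Q']$, really produces a \emph{strict} drop below $v_{\rho,\sigma}([p,q])$, so that the leading term of $[P,Q]$ is forced to come from $[p,q]$ alone and suffers no cancellation. The inequality $v_{\rho,\sigma}(P')<v_{\rho,\sigma}(P)=v_{\rho,\sigma}(p)$ (and similarly for $Q'$), together with $\rho+\sigma>0$, makes this work, but it is the one place where the hypothesis $(\rho,\sigma)\in\mathfrak V$ (rather than merely $\ov{\mathfrak V}$) is genuinely used, and it should be spelled out carefully. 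Everything else is a direct appeal to the already-established homogeneous case and to the product/commutator formulas for $w$.
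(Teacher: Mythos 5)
Your argument is correct and is essentially the paper's own proof: both decompose $P$ and $Q$ into their $(\rho,\sigma)$-leading homogeneous parts plus strictly lower-degree remainders, use Remark~\ref{re v de un conmutador} (together with the non-proportionality forced by $[P,Q]_{\rho,\sigma}\ne 0$) to see that the cross-brackets drop strictly below $v_{\rho,\sigma}(P)+v_{\rho,\sigma}(Q)-(\rho+\sigma)$, and then invoke Proposition~\ref{pr conmutadores con exponentes fraccinarios}(1) and Lemma~\ref{pr conmutadores de elementos homogeneos con exponentes fraccinarios}(1). The only cosmetic slip is the assertion that $\ell_{\rho,\sigma}([P,Q])=[p,q]$ with $[p,q]$ homogeneous — in general one only gets $\ell_{\rho,\sigma}([P,Q])=\ell_{\rho,\sigma}([p,q])$, since the bracket of two $(\rho,\sigma)$-homogeneous elements of $W^{(l)}$ need not be homogeneous — but as the cited lemma gives $w([p,q])=w\bigl(\ell_{\rho,\sigma}([p,q])\bigr)$, this does not affect the conclusion.
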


\begin{proof} We only prove item~(1) and leave the proof of item~(2), which is similar, to the reader. Let $P_1$ and $Q_1$ be $(\rho,\sigma)$-homogeneous elements of $W^{(l)}\setminus\{0\}$, such that
\begin{equation}
v_{\rho,\sigma}(P-P_1) < v_{\rho,\sigma}(P_1)\quad\text{and}\quad v_{\rho,\sigma}(Q-Q_1) < v_{\rho,\sigma}(Q_1).\label{eq13}
\end{equation}
Since
$$
[P,Q] = [P_1,Q_1] + [P_1,Q-Q_1] + [P-P_1,Q],
$$
and, by Remark~\ref{re v de un conmutador}, we have
\begin{align*}
v_{\rho,\sigma}([P_1,Q-Q_1])&\le v_{\rho,\sigma}(P_1) + v_{\rho,\sigma}(Q-Q_1) - (\rho+\sigma)\\
& < v_{\rho,\sigma}(P_1) + v_{\rho,\sigma}(Q_1) - (\rho+\sigma)\\
& = v_{\rho,\sigma}(P) + v_{\rho,\sigma}(Q) - (\rho+\sigma)
\shortintertext{and}
v_{\rho,\sigma}([P-P_1,Q]) & < v_{\rho,\sigma}(P) + v_{\rho,\sigma}(Q) - (\rho+\sigma),
\end{align*}
it follows from the fact that $P$ and $Q$ are not $(\rho,\sigma)$-proportional, that
\begin{equation}
v_{\rho,\sigma}([P,Q]-[P_1,Q_1]) < v_{\rho,\sigma}([P,Q]).\label{eq14}
\end{equation}
Note that~\eqref{eq13} and~\eqref{eq14} imply
$$
\ell_{\rho,\sigma}(P) = \ell_{\rho,\sigma}(P_1),\quad \ell_{\rho,\sigma}(Q) = \ell_{\rho,\sigma}(Q_1)\quad \text{and} \quad \ell_{\rho,\sigma}([P,Q]) = \ell_{\rho,\sigma}([P_1,Q_1]).
$$
Consequently, by item~(1) of Proposition~\ref{pr conmutadores con exponentes fraccinarios} and item~(1) of Lemma~\ref{pr conmutadores de elementos homogeneos con exponentes fraccinarios},
\begin{align*}
\st_{\rho,\sigma}(P)+\st_{\rho,\sigma}(Q)-(1,1) &= \st_{\rho,\sigma}(P_1)+\st_{\rho,\sigma}(Q_1) -(1,1)\\
&= w(P_1)+w(Q_1)-(1,1)\\
&= w([P_1,Q_1])\\
&= w\bigl(\ell_{\rho,\sigma}([P_1,Q_1])\bigr)\\
&= w\bigl(\ell_{\rho,\sigma}([P,Q])\bigr)\\
&= w\bigl(\ell_{\rho,\sigma}(R)\bigr)\\
&= \st_{\rho,\sigma}(R),
\end{align*}
as desired.
\end{proof}

\begin{proposition}\label{calculo del corchete} Let $(\rho,\sigma)\in \mathfrak{V}$ and $P,Q\in W^{(l)}\setminus \{0\}$. Assume that $\sigma\le 0$. If
$$
\qquad \ell_{\rho,\sigma}(P) = \sum_{i=0}^{\alpha} \lambda_i x^{\frac{r}{l}-\frac{i\sigma} {\rho}} y^{s+i}\quad\text{and}\quad \ell_{\rho,\sigma}(Q) = \sum_{j=0}^{\beta} \mu_j x^{\frac{u}{l}-\frac{j\sigma}{\rho}} y^{v+j},
$$
with $\lambda_0,\lambda_{\alpha},\mu_0,\mu_{\beta}\ne 0$, then
$$
[P,Q]_{\rho,\sigma} = \sum \lambda_i\mu_j c_{ij} x^{\frac{r+u}{l}-\frac{(i+j)\sigma} {\rho}-1} y^{s+v+i+j-1}.
$$
where $c_{ij} = \bigl(\frac{u}{l}-\frac{j\sigma}{\rho},v+j\bigr) \times \bigl(\frac{r}{l} -\frac{i\sigma}{\rho},s+i\bigl)$.
\end{proposition}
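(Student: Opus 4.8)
The plan is to reduce the statement to a direct computation with the commutator formula, using Lemma~\ref{le conmutacion en Wl} as the engine, and then to recognize that the answer is governed by the $(\rho,\sigma)$-leading terms alone. First I would observe that since the claimed output $[P,Q]_{\rho,\sigma}$ depends only on $\ell_{\rho,\sigma}(P)$ and $\ell_{\rho,\sigma}(Q)$, I may as well compute $\ell_{\rho,\sigma}([P,Q])$ and then check whether the leading term survives or cancels. As in the proof of Proposition~\ref{extremosnoalineados}, I would pick $(\rho,\sigma)$-homogeneous elements $P_1,Q_1$ with $v_{\rho,\sigma}(P-P_1)<v_{\rho,\sigma}(P_1)$ and $v_{\rho,\sigma}(Q-Q_1)<v_{\rho,\sigma}(Q_1)$; splitting $[P,Q]=[P_1,Q_1]+[P_1,Q-Q_1]+[P-P_1,Q]$ and invoking Remark~\ref{re v de un conmutador} shows the last two terms have strictly smaller $(\rho,\sigma)$-degree than $v_{\rho,\sigma}(P_1)+v_{\rho,\sigma}(Q_1)-(\rho+\sigma)$. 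Hence it suffices to work with $(\rho,\sigma)$-homogeneous $P=\ell_{\rho,\sigma}(P)$ and $Q=\ell_{\rho,\sigma}(Q)$ outright.

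Next I would carry out the explicit computation already sketched inside the proof of Lemma~\ref{pr conmutadores de elementos homogeneos con exponentes fraccinarios}: writing $P=\sum_{i=0}^{\alpha}\lambda_i X^{\frac{r}{l}-\frac{i\sigma}{\rho}}Y^{s+i}$ and $Q=\sum_{j=0}^{\beta}\mu_j X^{\frac{u}{l}-\frac{j\sigma}{\rho}}Y^{v+j}$, and using the multiplication rule $X^{\frac{a}{l}}Y^{b}X^{\frac{a'}{l}}Y^{b'}=\sum_{k}k!\binom{b}{k}\binom{a'/l}{k}X^{\frac{a+a'}{l}-k}Y^{b+b'-k}$, one gets
\begin{equation*}
[P,Q] = \sum_{i,j}\sum_{k\ge 0}\lambda_i\mu_j c_{ijk}\, X^{\frac{r+u}{l}-\frac{(i+j)\sigma}{\rho}-k}Y^{s+v+i+j-k},
\end{equation*}
with $c_{ijk}=k!\binom{s+i}{k}\binom{u/l-j\sigma/\rho}{k}-k!\binom{v+j}{k}\binom{r/l-i\sigma/\rho}{k}$. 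The $k=0$ contributions vanish identically, so the surviving terms have $v_{\rho,\sigma}$-value at most $v_{\rho,\sigma}(P)+v_{\rho,\sigma}(Q)-(\rho+\sigma)$, with equality exactly on the $k=1$ terms because $\rho+\sigma>0$ forces lowering the $X$-exponent by $k$ to cost $k(\rho+\sigma)$ (noting that the $Y$-exponent drop is compensated by the substitution preserving the $(\rho,\sigma)$-weight of each monomial of the homogeneous $P,Q$, since lowering $Y$ by $k$ and $X$ by $k\sigma/\rho$ along the homogeneity direction is exactly the shift between consecutive monomials). Thus $\ell_{\rho,\sigma}([P,Q])=\sum_{i,j}\lambda_i\mu_j c_{ij1}x^{\frac{r+u}{l}-\frac{(i+j)\sigma}{\rho}-1}y^{s+v+i+j-1}$, and a one-line expansion of the $k=1$ binomials gives $c_{ij1}=\binom{s+i}{1}\binom{u/l-j\sigma/\rho}{1}-\binom{v+j}{1}\binom{r/l-i\sigma/\rho}{1}$, which is precisely the determinant $\bigl(\frac{u}{l}-\frac{j\sigma}{\rho},v+j\bigr)\times\bigl(\frac{r}{l}-\frac{i\sigma}{\rho},s+i\bigr)$ in the statement.

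Finally I would reconcile this with the definition of $[-,-]_{\rho,\sigma}$: if $P$ and $Q$ are $(\rho,\sigma)$-proportional, then by definition $v_{\rho,\sigma}([P,Q])<v_{\rho,\sigma}(P)+v_{\rho,\sigma}(Q)-(\rho+\sigma)$, which means all the $c_{ij1}$ coefficients of the top line cancel, so the displayed sum $\sum\lambda_i\mu_j c_{ij}x^{\cdots}y^{\cdots}$ is zero and the formula holds trivially; if they are not $(\rho,\sigma)$-proportional, then $[P,Q]_{\rho,\sigma}=\ell_{\rho,\sigma}([P,Q])$ and the computation above gives the claimed expression directly. I expect the main obstacle to be the bookkeeping in the step identifying which monomials attain the top $(\rho,\sigma)$-degree — one must be careful that, although the two homogeneous polynomials $\ell_{\rho,\sigma}(P),\ell_{\rho,\sigma}(Q)$ may have several monomials, the bracket is genuinely $(\rho,\sigma)$-homogeneous of the stated degree, so that retaining only $k=1$ is legitimate uniformly in $i,j$; this is exactly where $\sigma\le 0$ and $\rho+\sigma>0$ are used, and it parallels the argument already given in Lemma~\ref{pr conmutadores de elementos homogeneos con exponentes fraccinarios}, so it should go through without new ideas.
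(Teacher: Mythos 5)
Your proposal is correct and follows essentially the same route as the paper's proof: split off the $(\rho,\sigma)$-homogeneous leading parts, use Remark~\ref{re v de un conmutador} to discard the lower-order remainders, expand the monomial brackets via Lemma~\ref{le conmutacion en Wl}, and observe that the $k=0$ terms cancel while the $k\ge 2$ terms have strictly smaller $(\rho,\sigma)$-degree because $\rho+\sigma>0$, leaving exactly the $k=1$ determinant coefficients $c_{ij}$. Your closing reconciliation with Definition~\ref{def rho-sigma corchete} (proportional versus not) is the same implicit final step the paper takes, so nothing is missing.
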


\begin{proof} Write
$$
\qquad P = \sum_{i=0}^{\alpha} \lambda_i X^{\frac{r}{l}-\frac{i\sigma} {\rho}} Y^{s+i} + R_P\quad\text{and}\quad Q = \sum_{j=0}^{\beta} \mu_j X^{\frac{u}{l}-\frac{j\sigma}{\rho}} Y^{v+j} + R_Q.
$$
Since $R_P = 0$ or $v_{\rho,\sigma}(R_P) < v_{\rho,\sigma}(P)$, and $R_Q = 0$ or $v_{\rho,\sigma}(R_Q) < v_{\rho,\sigma}(Q)$, from Re\-mark~\ref{re v de un conmutador} it follows that
\begin{equation}
[P,Q] = \sum_{i=0}^{\alpha} \sum_{j=0}^{\beta}\lambda_i \mu_j \left[X^{\frac{r}{l} -\frac{i\sigma}{\rho}} Y^{s+i}, X^{\frac{u}{l}-\frac{j\sigma}{\rho}} Y^{v+j} \right] + R,
\label{eq1}
\end{equation}
where $R = 0$ or $v_{\rho,\sigma}(R) < v_{\rho,\sigma}(P) + v_{\rho,\sigma}(Q) - (\rho + \sigma)$. Now, since $\rho+\sigma>0$ and by Lemma~\ref{le conmutacion en Wl},
$$
X^{\frac{i}{l}}Y^j X^{\frac{i'}{l}}Y^{j'} = \sum_{k=0}^j  k!\binom{j}{k} \binom{i'/l}{k} X^{\frac{i+i'}{l}-k}Y^{j+j'-k},
$$
we obtain that
\begin{equation}
\left[X^{\frac{r}{l} -\frac{i\sigma}{\rho}} Y^{s+i}, X^{\frac{u}{l}-\frac{j\sigma}{\rho}} Y^{v+j} \right] = c_{ij} X^{\frac{r+u}{l}-\frac{(i+j)\sigma} {\rho}-1} Y^{s+v+i+j-1} + R_{ij}, \label{eq2}
\end{equation}
with $R_{ij} = 0$ or $v_{\rho,\sigma}(R_{ij}) < v_{\rho,\sigma}(P) + v_{\rho,\sigma}(Q) - (\rho + \sigma)$. Combining~\eqref{eq1} with~\eqref{eq2}, we obtain that
$$
[P,Q]= \sum_{i=0}^{\alpha}\sum_{j=0}^{\beta} \lambda_i\mu_j c_{ij} X^{\frac{r+u}{l}-\frac{(i+j)\sigma} {\rho}-1} Y^{s+v+i+j-1} + R_{PQ},
$$
where $R_{PQ} = 0$ or $v_{\rho,\sigma}(R_{PQ}) < v_{\rho,\sigma}(P) + v_{\rho,\sigma}(Q) - (\rho + \sigma)$. Now, since
$$
v_{\rho,\sigma}\left(X^{\frac{r+u}{l}-\frac{(i+j)\sigma} {\rho}-1} Y^{s+v+i+j-1}\right) = v_{\rho,\sigma}(P) + v_{\rho,\sigma}(Q) - (\rho + \sigma),
$$
the result follows immediately.
\end{proof}

\begin{corollary}\label{ell depende del ell} Let $(\rho,\sigma)\in \mathfrak{V}$ and $P,Q,P_1,Q_1\in W^{(l)}\setminus\{0\}$. Assume that $\sigma\le 0$. If $\ell_{\rho,\sigma}(P) = \ell_{\rho,\sigma}(P_1)$ and $\ell_{\rho,\sigma}(Q) = \ell_{\rho,\sigma}(Q_1)$, then $[P,Q]_{\rho,\sigma}=[P_1,Q_1]_{\rho,\sigma}$.
\end{corollary}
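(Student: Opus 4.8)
The plan is to deduce Corollary~\ref{ell depende del ell} directly from Proposition~\ref{calculo del corchete}, exploiting the fact that the formula for the bracket $[\,\cdot\,,\cdot\,]_{\rho,\sigma}$ given there depends only on the leading terms $\ell_{\rho,\sigma}(P)$ and $\ell_{\rho,\sigma}(Q)$, not on the full elements $P$ and $Q$. Concretely, write
$$
\ell_{\rho,\sigma}(P) = \ell_{\rho,\sigma}(P_1) = \sum_{i=0}^{\alpha} \lambda_i x^{\frac{r}{l}-\frac{i\sigma}{\rho}} y^{s+i}
\quad\text{and}\quad
\ell_{\rho,\sigma}(Q) = \ell_{\rho,\sigma}(Q_1) = \sum_{j=0}^{\beta} \mu_j x^{\frac{u}{l}-\frac{j\sigma}{\rho}} y^{v+j},
$$
with $\lambda_0,\lambda_{\alpha},\mu_0,\mu_{\beta}\ne 0$. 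Then Proposition~\ref{calculo del corchete} applies verbatim to both pairs $(P,Q)$ and $(P_1,Q_1)$ and yields, in each case,
$$
[P,Q]_{\rho,\sigma} = \sum \lambda_i\mu_j c_{ij} x^{\frac{r+u}{l}-\frac{(i+j)\sigma}{\rho}-1} y^{s+v+i+j-1} = [P_1,Q_1]_{\rho,\sigma},
$$
where $c_{ij} = \bigl(\frac{u}{l}-\frac{j\sigma}{\rho},v+j\bigr) \times \bigl(\frac{r}{l}-\frac{i\sigma}{\rho},s+i\bigr)$ in both computations. Since the right-hand side is literally the same expression, the two brackets are equal.

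One subtlety I would address is the possibility that $P$ (or $Q$) and $Q$ (resp.\ $P_1$ and $Q_1$) are $(\rho,\sigma)$-proportional, in which case the bracket is $0$ by definition. But the formula in Proposition~\ref{calculo del corchete} already accounts for this: being $(\rho,\sigma)$-proportional is exactly the condition that makes the displayed sum vanish, and whether this happens depends only on the data $\lambda_i,\mu_j$ and the exponents — i.e.\ only on $\ell_{\rho,\sigma}(P)$ and $\ell_{\rho,\sigma}(Q)$. So no separate case analysis is really needed; the single formula covers both the proportional and non-proportional cases uniformly, and the equality $[P,Q]_{\rho,\sigma}=[P_1,Q_1]_{\rho,\sigma}$ follows.

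There is essentially no obstacle here: the corollary is an immediate consequence of the shape of the formula in Proposition~\ref{calculo del corchete}. The only thing to be careful about is to record that the hypothesis $\sigma\le 0$, needed to invoke Proposition~\ref{calculo del corchete}, is assumed, and to note that the exponents $\frac{r}{l},s,\frac{u}{l},v$ and the ranges $\alpha,\beta$ are determined by $\ell_{\rho,\sigma}(P)$ and $\ell_{\rho,\sigma}(Q)$, hence agree for the two pairs. Thus the proof is a one-line appeal to Proposition~\ref{calculo del corchete} applied twice.
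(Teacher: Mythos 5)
Your proof is correct and is exactly the paper's argument: the paper's entire proof of this corollary is the single line ``By Proposition~\ref{calculo del corchete}.'' Your elaboration — that the formula there depends only on the data of $\ell_{\rho,\sigma}(P)$ and $\ell_{\rho,\sigma}(Q)$, and that the proportional case is absorbed because it corresponds precisely to the displayed sum vanishing — is the intended justification.
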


\begin{proof} By Proposition~\ref{calculo del corchete}.
\end{proof}

\begin{corollary}\label{extremos alineados} Let $(\rho,\sigma)\in \mathfrak{V}$ and $P,Q\in W^{(l)}\setminus \{0\}$. If $[P,Q]_{\rho,\sigma}= 0$ and $\sigma\le 0$, then
$$
\st_{\rho,\sigma}(P)\sim \st_{\rho,\sigma}(Q)\quad\text{and}\quad \en_{\rho,\sigma}(P)\sim \en_{\rho,\sigma}(Q).
$$
\end{corollary}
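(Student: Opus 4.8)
The plan is to establish the contrapositive of each assertion. Suppose $\st_{\rho,\sigma}(P)\nsim\st_{\rho,\sigma}(Q)$; I will show $[P,Q]_{\rho,\sigma}\ne 0$, and likewise, assuming $\en_{\rho,\sigma}(P)\nsim\en_{\rho,\sigma}(Q)$, the same argument (with $\ov w$ in place of $w$) gives $[P,Q]_{\rho,\sigma}\ne 0$; both conclusions contradict the hypothesis. The only input is Proposition~\ref{calculo del corchete}: writing $\ell_{\rho,\sigma}(P)=\sum_{i=0}^{\alpha}\lambda_i x^{r/l-i\sigma/\rho}y^{s+i}$ and $\ell_{\rho,\sigma}(Q)=\sum_{j=0}^{\beta}\mu_j x^{u/l-j\sigma/\rho}y^{v+j}$ with $\lambda_0,\lambda_\alpha,\mu_0,\mu_\beta\ne 0$, it gives $[P,Q]_{\rho,\sigma}=\sum\lambda_i\mu_j c_{ij}\,x^{(r+u)/l-(i+j)\sigma/\rho-1}y^{s+v+i+j-1}$, where $c_{ij}=(u/l-j\sigma/\rho,\,v+j)\times(r/l-i\sigma/\rho,\,s+i)$.

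First I would pin down the corners in terms of these expansions. Since $(\rho,\sigma)\in\mathfrak V$ and $\sigma\le 0$ we have $\rho\ge 1$ and $\rho+\sigma>0$, so the $(1,-1)$-degree $r/l-s-i(\rho+\sigma)/\rho$ of the $i$-th monomial of $\ell_{\rho,\sigma}(P)$ is strictly decreasing in $i$; hence $\ell_{1,-1}(\ell_{\rho,\sigma}(P))$ is the single monomial with $i=0$ and $\ell_{-1,1}(\ell_{\rho,\sigma}(P))$ the single monomial with $i=\alpha$ (both nonzero, as $\lambda_0,\lambda_\alpha\ne 0$). Consequently $\st_{\rho,\sigma}(P)=w(\ell_{\rho,\sigma}(P))=(r/l,s)$ and $\en_{\rho,\sigma}(P)=\ov w(\ell_{\rho,\sigma}(P))=(r/l-\alpha\sigma/\rho,\,s+\alpha)$, and similarly $\st_{\rho,\sigma}(Q)=(u/l,v)$ and $\en_{\rho,\sigma}(Q)=(u/l-\beta\sigma/\rho,\,v+\beta)$. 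Comparing with the formula for $c_{ij}$ then identifies $c_{00}=\st_{\rho,\sigma}(Q)\times\st_{\rho,\sigma}(P)$ and $c_{\alpha\beta}=\en_{\rho,\sigma}(Q)\times\en_{\rho,\sigma}(P)$, so that $c_{00}\ne 0$ when $\st_{\rho,\sigma}(P)\nsim\st_{\rho,\sigma}(Q)$, and $c_{\alpha\beta}\ne 0$ when $\en_{\rho,\sigma}(P)\nsim\en_{\rho,\sigma}(Q)$.

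The conclusion is then immediate: in the sum giving $[P,Q]_{\rho,\sigma}$ the monomial attached to the pair $(i,j)$ depends only on $i+j$, so the unique contribution with $i+j=0$ is the $(0,0)$-term, of coefficient $\lambda_0\mu_0 c_{00}$, and the unique one with $i+j=\alpha+\beta$ is the $(\alpha,\beta)$-term, of coefficient $\lambda_\alpha\mu_\beta c_{\alpha\beta}$; neither can cancel when the relevant $c$ is nonzero, whence $[P,Q]_{\rho,\sigma}\ne 0$. (Alternatively, one can first replace $P,Q$ by $(\rho,\sigma)$-homogeneous lifts of their leading terms, legitimate by Corollary~\ref{ell depende del ell}, and combine Lemma~\ref{pr conmutadores de elementos homogeneos con exponentes fraccinarios} with Proposition~\ref{pr conmutadores con exponentes fraccinarios} and Remark~\ref{re v de un conmutador} to see that non-alignment of the corners forces $P$ and $Q$ not to be $(\rho,\sigma)$-proportional.) I do not expect a serious obstacle; the only point requiring attention is the identification of $w$ and $\ov w$ of a $(\rho,\sigma)$-homogeneous polynomial with its two extreme monomials, i.e.\ the monotonicity of the $(1,-1)$-degree along the $(\rho,\sigma)$-line, and the rest is bookkeeping.
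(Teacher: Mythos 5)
Your proposal is correct and follows essentially the same route as the paper: both rest on the explicit formula of Proposition~\ref{calculo del corchete}, observing that the $(0,0)$- and $(\alpha,\beta)$-terms are the unique contributions to their respective monomials (determined by $i+j$), so that $[P,Q]_{\rho,\sigma}=0$ forces $c_{00}=c_{\alpha\beta}=0$, which is exactly the alignment of the $\st$'s and of the $\en$'s. Your preliminary identification of $\st_{\rho,\sigma}$ and $\en_{\rho,\sigma}$ with the extreme monomials via monotonicity of the $(1,-1)$-degree is the content of~\eqref{eq57}, which the paper invokes implicitly.
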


\begin{proof} This follows immediately from Proposition~\ref{calculo del corchete}, since
$$
\st_{\rho,\sigma}(P)\times \st_{\rho,\sigma}(Q)=c_{00}\quad\text{and}\quad \en_{\rho,\sigma}(P) \times \en_{\rho,\sigma}(Q)=c_{\alpha\beta},
$$
where we are using the same notations as in the statement of that result.
\end{proof}

\begin{remark} Until now all definitions and notations we have introduced, when applied to $P\in W\subseteq W^{(1)}$, coincide with those given in~\cite{G-G-V2}. This is not the case with the following definition.
\end{remark}

\begin{definition}\label{polinomio asociado f^{(l)}} Given $P\in L^{(l)}\setminus\{0\}$ and $(\rho,\sigma)\in \mathfrak{V}$ with $\sigma\le 0$, we write
$$
f^{(l)}_{P,\rho,\sigma} := \sum_{i=0}^{\gamma} a_i x^i\in K[x],
$$
if
$$
\ell_{\rho,\sigma}(P) = \sum_{i=0}^{\gamma} a_i x^{\frac{r}{l} -\frac{i\sigma}{\rho}} y^{s+i}\quad\text{with $a_0\ne 0$ and $a_{\gamma}\ne 0$.}
$$
Now, for $P\in W^{(l)}$ we set $f^{(l)}_{P,\rho,\sigma}:=f^{(l)}_{\Psi^{(l)}(P),\rho,\sigma}$. Note that
\begin{align}
&\st_{\rho,\sigma}(P) = \Bigl(\frac{r}{l},s\Bigr),\qquad \en_{\rho,\sigma}(P) = \Bigl(\frac{r}{l}- \frac{\gamma\sigma}{\rho},s+\gamma\Bigr)\label{eq57}
\shortintertext{and}
&\ell_{\rho,\sigma}(P)= x^{\frac{r}{l}}y^s f^{(l)}_{P,\rho,\sigma}(x^{-\frac{\sigma} {\rho}}y).\label{eq58}
\end{align}
\end{definition}

\begin{remark}\label{f y f^{(l)}} Let $(\rho,\sigma) \in \mathfrak{V}$ with $\sigma\!\le\! 0$ and let $P\!\in W\setminus\{0\}$. Comparing Definition~\ref{polinomio asociado f^{(l)}} and~\cite[Def.~1.20]{G-G-V2}, we obtain that
$$
f^{(l)}_{P,\rho,\sigma}(x) = f_{P,\rho,\sigma}(x^{\rho}).
$$
The same formula is valid for $P\in L\setminus\{0\}$.
\end{remark}

\begin{remark}\label{f de un producto} Let $(\rho,\sigma)\in \mathfrak{V}$ with $\sigma\le 0$. From Proposition~\ref{pr v de un producto} it follows immediately that
\begin{equation*}
f^{(l)}_{PQ,\rho,\sigma}=f^{(l)}_{P,\rho,\sigma}f^{(l)}_{Q,\rho,\sigma}\quad\text{for $P,Q \in W^{(l)}\setminus\{0\}$.}
\end{equation*}
The same result holds for $P,Q\in L^{(l)}\setminus\{0\}$.
\end{remark}

\smallskip

Item~(2) of the following theorem justifies the terminology ``$(\rho,\sigma)$-proportional'' introduced in Definition~\ref{def rho-sigma proporcionales}.

\begin{theorem}\label{f[] en W^{(l)}} Let $P,Q\in W^{(l)}\setminus \{0\}$ and $(\rho,\sigma)\in \mathfrak{V}$ with $\sigma\le 0$. Set $a:=\frac{1} {\rho} v_{\rho,\sigma}(Q)$ and $b:=\frac{1}{\rho} v_{\rho,\sigma}(P)$.

\begin{enumerate}

\smallskip

\item If $[P,Q]_{\rho,\sigma}\ne 0$, then there exist $h\in \mathds{N}_0$ and $c\in \mathds{Z}$, such that
$$
x^h f_{[P,Q]} = cf_P f_Q+ ax f'_P f_Q-bxf'_Qf_P,
$$
where $f_P:=f^{(l)}_{P,\rho,\sigma}$, $f_Q:=f^{(l)}_{Q,\rho,\sigma}$ and $f_{[P,Q]}:=f^{(l)}_{[P,Q],\rho, \sigma}$.

\smallskip

\item If $[P,Q]_{\rho,\sigma}= 0$ and $a,b>0$, then there exist $\lambda_P,\lambda_Q\in K^{\times}$, $m,n\in \mathds{N}$ and a $(\rho,\sigma)$-homogeneous polynomial $R\!\in\! L^{(l)}$, with $\gcd(m,n)\! =\! 1$ and $m/n\! =\! b/a$, such that
$$
\ell_{\rho,\sigma}(P) = \lambda_P R^m\quad \text{and}\quad \ell_{\rho,\sigma}(Q) = \lambda_Q R^n.
$$

\end{enumerate}

\end{theorem}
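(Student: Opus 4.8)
Both parts come out of the explicit formula for $[P,Q]_{\rho,\sigma}$ given by Proposition~\ref{calculo del corchete}. Write $\ell_{\rho,\sigma}(P)=\sum_{i=0}^{\alpha}\lambda_i x^{\frac rl-\frac{i\sigma}{\rho}}y^{s+i}$ and $\ell_{\rho,\sigma}(Q)=\sum_{j=0}^{\beta}\mu_j x^{\frac ul-\frac{j\sigma}{\rho}}y^{v+j}$ with $\lambda_0,\lambda_\alpha,\mu_0,\mu_\beta\ne 0$, so that $f_P=\sum_i\lambda_i x^i$ and $f_Q=\sum_j\mu_j x^j$ by Definition~\ref{polinomio asociado f^{(l)}}. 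The first step is a one-line computation: every monomial of $\ell_{\rho,\sigma}(P)$ has $(\rho,\sigma)$-value $\rho b$, so $\frac rl=b-\frac{\sigma s}{\rho}$, and likewise $\frac ul=a-\frac{\sigma v}{\rho}$; substituting this into $c_{ij}=\bigl(\frac ul-\frac{j\sigma}{\rho},v+j\bigr)\times\bigl(\frac rl-\frac{i\sigma}{\rho},s+i\bigr)$ makes the $\sigma/\rho$-terms cancel and leaves $c_{ij}=a(s+i)-b(v+j)$. Setting $c:=as-bv$ and $z:=x^{-\sigma/\rho}y$, and using $\sum_i i\lambda_i z^i=zf'_P(z)$, $\sum_j j\mu_j z^j=zf'_Q(z)$, Proposition~\ref{calculo del corchete} takes the form
\[
[P,Q]_{\rho,\sigma}=x^{\frac{r+u}{l}-1}\,y^{\,s+v-1}\,G(z),\qquad G(x):=cf_Pf_Q+axf'_Pf_Q-bxf_Pf'_Q\in K[x].
\]

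For part~(1): since $[P,Q]_{\rho,\sigma}\ne 0$, Definition~\ref{def rho-sigma corchete} gives $[P,Q]\ne 0$ and $[P,Q]_{\rho,\sigma}=\ell_{\rho,\sigma}([P,Q])$, and the display shows $G\ne 0$. Let $h\in\mathds{N}_0$ be the order of vanishing of $G$ at $0$; then $x^{-h}G$ has nonzero constant term, and comparing the display with~\eqref{eq58} applied to $[P,Q]$ identifies $f_{[P,Q]}=x^{-h}G$, that is,
\[
x^hf_{[P,Q]}=cf_Pf_Q+axf'_Pf_Q-bxf'_Qf_P,
\]
which is the asserted identity; here $c=as-bv$.

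For part~(2): now $G\equiv 0$. Evaluating at $x=0$ and using that $f_P(0),f_Q(0)\ne 0$ (their constant terms are nonzero, Definition~\ref{polinomio asociado f^{(l)}}) forces $c=0$, hence $af'_Pf_Q=bf_Pf'_Q$, i.e.\ $a\,f'_P/f_P=b\,f'_Q/f_Q$ in $K(x)$. As the values of $v_{\rho,\sigma}$ lie in $\tfrac1l\mathds{Z}$, the numbers $a,b$ are positive rationals; write $b/a=m/n$ with $m,n\in\mathds{N}$ and $\gcd(m,n)=1$. Over $\overline K$ one has $f'_P/f_P=\sum_\xi m_\xi/(x-\xi)$ and $f'_Q/f_Q=\sum_\xi n_\xi/(x-\xi)$, where the $\xi$ are the (necessarily nonzero, since $f_P(0)f_Q(0)\ne 0$) roots and $m_\xi,n_\xi$ the corresponding multiplicities; comparing residues gives $am_\xi=bn_\xi$ for every $\xi$, and since $a,b>0$ this shows $f_P$ and $f_Q$ have exactly the same roots, with $m_\xi=mk_\xi$ and $n_\xi=nk_\xi$ for suitable $k_\xi\in\mathds{N}$. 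Hence $f_P=\lambda_PS^m$ and $f_Q=\lambda_QS^n$, with $S:=\prod_\xi(x-\xi)^{k_\xi}$ monic and $\lambda_P,\lambda_Q\in K^{\times}$ the leading coefficients; since $g(S)^m=g(f_P/\lambda_P)=f_P/\lambda_P=S^m$ for every $g\in\operatorname{Gal}(\overline K/K)$ and $S$ is monic, $g(S)=S$, so $S\in K[x]$. Next, $c=0$ reads $as=bv$, which together with $b/a=m/n$ gives $ns=mv$; combining this with $nv_{\rho,\sigma}(P)=mv_{\rho,\sigma}(Q)$ (equivalent to $nb=ma$) and $v_{\rho,\sigma}(P)=\rho\frac rl+\sigma s$, $v_{\rho,\sigma}(Q)=\rho\frac ul+\sigma v$, one gets $nr=mu$. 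Now $A:=\lambda_P^{-1}\ell_{\rho,\sigma}(P)=x^{\frac rl}y^sS(z)^m$ and $B:=\lambda_Q^{-1}\ell_{\rho,\sigma}(Q)=x^{\frac ul}y^vS(z)^n$ lie in $L^{(l)}$ by~\eqref{eq58}, and $nr=mu$, $ns=mv$ give $A^n=B^m$. Choose $p,q\in\mathds{Z}$ with $pm+qn=1$ and put $R:=A^pB^q\in\operatorname{Frac}(L^{(l)})$; using $A^n=B^m$ one checks $R^m=A$ and $R^n=B$. Since $R$ is integral over $L^{(l)}$ (a root of $T^m-A\in L^{(l)}[T]$) and $L^{(l)}$ is integrally closed — being a localization of the polynomial ring $K[x^{1/l},y]$ — it follows that $R\in L^{(l)}$; moreover $R$ is $(\rho,\sigma)$-homogeneous because $R^m=A$ is. Then $\ell_{\rho,\sigma}(P)=\lambda_PR^m$, $\ell_{\rho,\sigma}(Q)=\lambda_QR^n$ and $m/n=b/a$, as required.

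The algebraic identity of the first paragraph is routine — essentially the generating-function manipulation already used in~\cite[\S1]{G-G-V2}. The substantive points are in part~(2): the forced vanishing of $c$, the Galois descent of $S$, and — the step I expect to require most care — the passage from the factorizations of the \emph{polynomials} $f_P,f_Q$ in $K[x]$ to a factorization of the \emph{leading terms} $\ell_{\rho,\sigma}(P),\ell_{\rho,\sigma}(Q)$ through a common $(\rho,\sigma)$-homogeneous $R$ \emph{inside $L^{(l)}$ itself}; this uses both the degree bookkeeping that yields $A^n=B^m$ and the integral closedness of $L^{(l)}$, which keeps $R$ from escaping into a ramified extension. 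The hypothesis $a,b>0$ enters exactly at the residue comparison, to exclude a root of $f_P$ not shared by $f_Q$ (or conversely).
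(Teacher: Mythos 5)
Your proof is correct and follows the paper's argument essentially step for step: the identity $F=cf_Pf_Q+axf_P'f_Q-bxf_Q'f_P$ extracted from Proposition~\ref{calculo del corchete}, the vanishing of $c$ forced by $f_P(0)f_Q(0)\ne 0$, the resulting separable differential equation, the proportionality $n\,\st_{\rho,\sigma}(P)=m\,\st_{\rho,\sigma}(Q)$, and the final descent of $R$ to $L^{(l)}$. The only (harmless) local variations are that you solve $af_P'f_Q=bf_Q'f_P$ by comparing residues of logarithmic derivatives over $\ov K$ together with Galois descent, where the paper integrates $\bigl(f_P^{\bar a}/f_Q^{\bar b}\bigr)'=0$, and that you produce $R$ abstractly as $A^pB^q$ with $pm+qn=1$ and invoke integral closedness of $L^{(l)}$, where the paper writes $R$ down in coordinates --- both reduce to the same unique-factorization input.
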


\begin{proof} Write
$$
\qquad \ell_{\rho,\sigma}(P) = \sum_{i=0}^{\alpha} \lambda_i x^{\frac{r}{l}-\frac{i\sigma} {\rho}} y^{s+i}\quad\text{and}\quad \ell_{\rho,\sigma}(Q) = \sum_{j=0}^{\beta} \mu_j x^{\frac{u}{l}-\frac{j\sigma}{\rho}} y^{v+j},
$$
with $\lambda_0,\lambda_\alpha,\mu_0,\mu_\beta\ne 0$. By Proposition~\ref{calculo del corchete},
$$
[P,Q]_{\rho,\sigma} = \sum \lambda_i\mu_j c_{ij} x^{\frac{r+u}{l}-\frac{(i+j)\sigma} {\rho}-1} y^{s+v+i+j-1},
$$
where $c_{ij} := \bigl(\frac{u}{l}-\frac{j\sigma}{\rho},v+j\bigr)\times \bigl(\frac{r}{l}- \frac{i\sigma}{\rho},s+i\bigr)$. Set
$$
F(x):=\sum_{i,j} \lambda_i\mu_j c_{ij} x^{i+j}.
$$
Note that if $[P,Q]_{\rho,\sigma} = 0$, then $F=0$, and if $[P,Q]_{\rho,\sigma}\ne 0$, then $F = x^h f_{[P,Q]}$, where $h$ is the multiplicity of $x$ in $F$. Note that
$$
a=\left(\frac{u}{l},v\right)\times \left(-\frac{\sigma}{\rho},1\right)\quad\text{and}\quad b= -\left(-\frac{\sigma}{\rho},1\right)\times \left(\frac{r}{l},s\right).
$$
Let
$$
c:=\left(\frac{u}{l},v\right)\times \left(\frac{r}{l},s\right).
$$
Clearly $c_{ij} = c+ia-jb$. Since
$$
\sum_{i,j} \lambda_i\mu_j x^{i+j} = f_Pf_Q ,\quad \sum_{i,j} i\lambda_i\mu_j x^{i+j}=xf_P'f_Q \quad \text{and} \quad \sum_{i,j} j\lambda_i\mu_j x^{i+j} = xf_Q'f_P.
$$
we obtain
\begin{align}
F = cf_Pf_Q+axf_P'f_Q-bxf_Q'f_P.\label{eq8}
\end{align}
Item~(1) follows immediately from this fact. Assume now that $[P,Q]_{\rho,\sigma}= 0$ and that $a,b>0$. In this case $F = 0$ and, in particular, $c = c_{00} = \frac{F(0)}{\lambda_0\mu_0} = 0$. Hence,~\eqref{eq8} becomes
\begin{equation}
af_P'f_Q-bf_Q'f_P = 0.\label{eq9}
\end{equation}
Let $\bar{l}\in \mathds{N}$ be such that $\bar a:=\bar{l}a$ and $\bar b:=\bar{l}b$ are natural numbers. Since~\eqref{eq9} implies $(f_P^{\bar a}/f_Q^{\bar b})'=0$, there exists $\lambda\in K^\times$, such that $f_P^{\bar a} = \lambda f_Q^{\bar b}$. Hence, there are $g\in K[x]$ and $\lambda_P,\lambda_Q\in K^\times$, such that
\begin{equation}
f_P = \lambda_P g^m\quad\text{and}\quad f_Q=\lambda_Q g^n,\label{eq10}
\end{equation}
where $m: = \bar b/\gcd(\bar a,\bar b)$ and $n :=\bar a/\gcd(\bar a,\bar b)$. Now, note that $\left\{\left(s,-\frac{r}{l}\right),(\rho,\sigma)\right\}$ is a basis of $\mathds{Q}\times \mathds{Q}$ as a $\mathds{Q}$-vector space, since
$$
\left(s,-\frac{r}{l}\right)\times (\rho,\sigma) = \left(\frac{r}{l},s\right).(\rho,\sigma) = v_{\rho,\sigma}(P)= \rho b > 0,
$$
where the dot denotes the usual inner product. Hence, from
\begin{align*}
& \rho b \left(\frac{u}{l},v\right). \left(s,-\frac{r}{l}\right) =  \rho b \left(\frac{u}{l},v \right)\times \left(\frac{r}{l},s\right) = \rho b c = 0 = \rho a \left(\frac{r}{l},s \right). \left(s,-\frac{r}{l}\right)
\shortintertext{and}
& \rho b\left(\frac{u}{l},v\right).(\rho,\sigma) = v_{\rho,\sigma}(P)v_{\rho,\sigma}(Q) = v_{\rho,\sigma}(Q) v_{\rho,\sigma}(P) = \rho a \left(\frac{r}{l},s\right).(\rho,\sigma),
\end{align*}
it follows that $\bar b(u,l v) = \bar a (r,l s)$. Consequently $m(u,l v) = n(r,l s)$, and so there exists $(p,\bar q)\in \mathds{Z}\times \mathds{N}_0$, such that
\begin{equation*}
(u,l v) = n(p,\bar q)\quad\text{and}\quad (r,l s) = m(p,\bar q).
\end{equation*}
In particular $l|n \bar q$ and $l|m \bar q$, and so $l|\bar q$, since $m$ and $n$ are coprime. Hence,
\begin{equation}
\left(\frac{u}{l}, v\right) = n\left(\frac{p}{l},q\right)\quad\text{and}\quad \left(\frac{r}{l},s\right) = m \left(\frac{p}{l}, q\right),\label{eq11}
\end{equation}
where $q:=\bar q/l$. If $g = \sum_{i=0}^{\gamma} \nu_i x^i$ with $\nu_{\gamma}\ne 0$, then, by~\eqref{eq10} and~\eqref{eq11},
$$
R:=\sum_{i=0}^{\gamma} \nu_i x^{\frac{p}{l}-i\frac{\sigma}{\rho}} y^{q+i}
$$
fulfills
$$
\ell_{\rho,\sigma}(P)= x^{\frac{r}{l}}y^s f_P(x^{-\frac{\sigma}{\rho}}y) = \lambda_P \bigl(x^{\frac{p}{l}}y^q g(x^{-\frac{\sigma}{\rho}}y)\bigr)^m = \lambda_P R^m
$$
and
$$
\ell_{\rho,\sigma}(Q)= x^{\frac{u}{l}}y^v f_Q(x^{-\frac{\sigma}{\rho}}y) = \lambda_Q \bigl(x^{\frac{p}{l}}y^q g(x^{-\frac{\sigma}{\rho}}y)\bigr)^n = \lambda_Q R^n.
$$
In order to finish the proof it suffices to check that $R\in L^{(l)}$. First note that $R$ belongs to the field of fractions of $L^{(l)}$, because $R^m,R^n\in L^{(l)}$ and $\gcd(m,n) = 1$. But then $R\in L^{(l)}$, since $R^m\in L^{(l)}$.
\end{proof}

\begin{lemma}\label{ell del conmutador de una potencia} Let $C,E\in W^{(l)}\setminus\{0\}$, $m\in \mathds{N}$ and $(\rho,\sigma)\in \mathfrak{V}$. If $[C,E]\ne 0$, then
$$
[C^m,E]\ne 0\quad\text{and}\quad \ell_{\rho,\sigma}([C^m,E]) = m\ell_{\rho,\sigma}(C)^{m-1} \ell_{\rho,\sigma}([C,E]).
$$
Moreover $[C^m,E]_{\rho,\sigma}\ne 0$ if and only if $[C,E]_{\rho,\sigma}\ne 0$.
\end{lemma}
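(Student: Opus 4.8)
The plan is to prove the three assertions of Lemma~\ref{ell del conmutador de una potencia} in order, reducing everything to the Leibniz rule for the inner derivation $\operatorname{ad}_E = [-,E]$ together with the multiplicativity of $\ell_{\rho,\sigma}$ proved in Proposition~\ref{pr v de un producto}(2).

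\textbf{Step 1: the identity $[C^m,E] = \sum_{k=0}^{m-1} C^k [C,E] C^{m-1-k}$.} This is the standard expansion of the commutator of a power, proved by induction on $m$ using $[C^{m},E] = C[C^{m-1},E] + [C,E]C^{m-1}$; it requires no hypothesis on $(\rho,\sigma)$ and holds in any associative algebra. In particular it shows $[C^m,E]$ is a sum of $m$ products, each of which is nonzero by Proposition~\ref{pr v de un producto}(1) (since $W^{(l)}$ is a domain in the relevant sense), but that alone does not give $[C^m,E]\ne 0$; that will follow from Step 2.

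\textbf{Step 2: computing $\ell_{\rho,\sigma}([C^m,E])$.} Each summand $C^k[C,E]C^{m-1-k}$ has, by Proposition~\ref{pr v de un producto}(3), $(\rho,\sigma)$-degree equal to $(m-1)v_{\rho,\sigma}(C) + v_{\rho,\sigma}([C,E])$, which is the same for all $k$; moreover by Proposition~\ref{pr v de un producto}(2) its $(\rho,\sigma)$-leading term is $\ell_{\rho,\sigma}(C)^k\,\ell_{\rho,\sigma}([C,E])\,\ell_{\rho,\sigma}(C)^{m-1-k}$. Since $L^{(l)}$ is commutative, each of these equals $\ell_{\rho,\sigma}(C)^{m-1}\ell_{\rho,\sigma}([C,E])$, so the leading terms all coincide and there is no cancellation: summing over $k$ gives a nonzero element of $(\rho,\sigma)$-degree $(m-1)v_{\rho,\sigma}(C) + v_{\rho,\sigma}([C,E])$. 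Hence $[C^m,E]\ne 0$ and $\ell_{\rho,\sigma}([C^m,E]) = m\,\ell_{\rho,\sigma}(C)^{m-1}\,\ell_{\rho,\sigma}([C,E])$, where we use $\operatorname{char}K = 0$ so that $m\ne 0$ in $K$.

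\textbf{Step 3: the ``moreover'' clause.} Here I must compare $v_{\rho,\sigma}([C^m,E])$ with the critical value $v_{\rho,\sigma}(C^m) + v_{\rho,\sigma}(E) - (\rho+\sigma) = m v_{\rho,\sigma}(C) + v_{\rho,\sigma}(E) - (\rho+\sigma)$. By Step 2, $v_{\rho,\sigma}([C^m,E]) = (m-1)v_{\rho,\sigma}(C) + v_{\rho,\sigma}([C,E])$, and by Remark~\ref{re v de un conmutador} (applied with $m=1$) we always have $v_{\rho,\sigma}([C,E]) \le v_{\rho,\sigma}(C) + v_{\rho,\sigma}(E) - (\rho+\sigma)$. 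Therefore $[C^m,E]_{\rho,\sigma}\ne 0$, i.e.\ $C^m$ and $E$ are not $(\rho,\sigma)$-proportional, exactly when $v_{\rho,\sigma}([C,E]) = v_{\rho,\sigma}(C) + v_{\rho,\sigma}(E) - (\rho+\sigma)$, i.e.\ exactly when $[C,E]_{\rho,\sigma}\ne 0$; the two conditions are literally the same after subtracting $(m-1)v_{\rho,\sigma}(C)$ from both sides.

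The only mild subtlety — the ``main obstacle'', such as it is — is making sure no cancellation occurs among the $m$ summands in Step 2, and this is precisely where commutativity of $L^{(l)}$ and $\operatorname{char} K = 0$ are used: the leading terms are all equal (not merely of equal degree), so they add up to $m$ times a single nonzero monomial-support polynomial rather than possibly cancelling. Everything else is bookkeeping with Proposition~\ref{pr v de un producto} and Remark~\ref{re v de un conmutador}.
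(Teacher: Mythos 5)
Your proof is correct and follows essentially the same route as the paper's: expand $[C^m,E]=\sum_{k=0}^{m-1}C^k[C,E]C^{m-1-k}$, apply Proposition~\ref{pr v de un producto} to see that every summand has the same leading term $\ell_{\rho,\sigma}(C)^{m-1}\ell_{\rho,\sigma}([C,E])$ (so the sum is nonzero with leading term $m$ times that, using $\operatorname{char}K=0$), and then read off the degree identity that makes the two proportionality conditions equivalent. Your Step 2 cancellation argument is exactly the content of Lemma~\ref{le ell de la suma}, which you could cite to make it fully rigorous.
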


\begin{proof} Since
$$
[C^m,E] = \sum_{i=0}^{m-1} C^i[C,E]C^{m-i-1}
$$
and, by Proposition~\ref{pr v de un producto},
$$
\ell_{\rho,\sigma}(C^i[C,E]C^{m-i-1}) = \ell_{\rho,\sigma}(C)^{m-1}\ell_{\rho,\sigma}([C,E]),
$$
we have
$$
\ell_{\rho,\sigma}([C^m,E]) = m\ell_{\rho,\sigma}(C)^{m-1}\ell_{\rho,\sigma} ([C,E]),
$$
In order to prove the last assertion note that, again by Proposition~\ref{pr v de un producto},
$$
v_{\rho,\sigma}([C^m,E]) = v_{\rho,\sigma}(C^m) + v_{\rho,\sigma}([C,E]) - v_{\rho,\sigma}(C),
$$
and so
$$
v_{\rho,\sigma}([C^m,E]) = v_{\rho,\sigma}(C^m) + v_{\rho,\sigma}(E) - (\rho+\sigma)
$$
if and only if
$$
v_{\rho,\sigma}([C,E]) = v_{\rho,\sigma}(C) + v_{\rho,\sigma}(E) - (\rho+\sigma),
$$
which by Definition~\ref{def rho-sigma corchete}, means that $[C^m,E]_{\rho,\sigma}\ne 0 \Leftrightarrow [C,E]_{\rho,\sigma}\ne 0$.
\end{proof}

\begin{lemma}\label{Esta en Ll} Let $A,B\in L^{(l)}$ and $C\in L^{(l')}$, where $l|l'$. If $AC=B$, then $C\in L^{(l)}$.
\end{lemma}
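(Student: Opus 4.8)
The plan is to realize $L^{(l')}$ as a free module over $L^{(l)}$ with an explicit basis and then simply compare coefficients. Put $d:=l'/l$, so that $x^{\frac1l}=(x^{\frac1{l'}})^d$ inside $L^{(l')}$.

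First I would establish the direct sum decomposition
$$
L^{(l')}=\bigoplus_{k=0}^{d-1}L^{(l)}\,x^{\frac{k}{l'}}
$$
of $L^{(l)}$-modules. The submodules on the right span $L^{(l')}$: any monomial $x^{\frac{m}{l'}}$ with $m\in\mathds{Z}$ can be written as $x^{\frac{q}{l}}x^{\frac{k}{l'}}$, where $m=qd+k$ with $q\in\mathds{Z}$ and $0\le k<d$, and $x^{\frac{q}{l}}\in L^{(l)}$. The sum is direct because every element of $L^{(l)}x^{\frac{k}{l'}}$ is a $K[y]$-linear combination of monomials $x^{\frac{n}{l'}}$ with $n\equiv k\pmod d$; since $\{x^{\frac{p}{l'}}\}_{p\in\mathds{Z}}$ is a $K[y]$-basis of $L^{(l')}$ and the residues $0,\dots,d-1$ are pairwise distinct, a relation $\sum_k c_k x^{\frac{k}{l'}}=0$ with $c_k\in L^{(l)}$ forces every $c_k=0$. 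Hence each element of $L^{(l')}$ has a unique expansion in the basis $x^{0},x^{\frac1{l'}},\dots,x^{\frac{d-1}{l'}}$ over $L^{(l)}$.

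Next I would write $C=\sum_{k=0}^{d-1}c_k x^{\frac{k}{l'}}$ with $c_k\in L^{(l)}$, and multiply by $A\in L^{(l)}$, getting
$$
B=AC=\sum_{k=0}^{d-1}(Ac_k)\,x^{\frac{k}{l'}},
$$
which is the expansion of $B$ in the same basis. Since $B\in L^{(l)}=L^{(l)}x^{0}$, uniqueness of this expansion yields $Ac_k=0$ for $1\le k\le d-1$ (and $Ac_0=B$). As $L^{(l)}$ is an integral domain and $A\neq 0$, we get $c_k=0$ for $1\le k\le d-1$, so $C=c_0\in L^{(l)}$, as claimed.

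There is no genuine obstacle here; the only point needing a little care is the directness of the decomposition, i.e. that distinct coset representatives $k$ produce monomials in $x^{\frac1{l'}}$ lying in disjoint residue classes mod $d$, so that coefficients in the two sides of $B=AC$ may be matched unambiguously. (Note the statement tacitly assumes $A\neq 0$: if $A=0$ then $B=0$ and $C$ may be any element of $L^{(l')}$; this is the form in which the lemma is used.)
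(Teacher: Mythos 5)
Your proof is correct, but it takes a different route from the paper. The paper passes to the polynomial algebras $K(y)[x^{\frac{1}{l}}]\subseteq K(y)[x^{\frac{1}{l'}}]$ and invokes the uniqueness of quotient and remainder in the Euclidean division of $B$ by $A$, comparing the division performed over $K(y)[x^{\frac{1}{l}}]$ with the one over $K(y)[x^{\frac{1}{l'}}]$ (using that the $x^{\frac{1}{l'}}$-degree is $\frac{l'}{l}$ times the $x^{\frac{1}{l}}$-degree). You instead exhibit $L^{(l')}$ as the free $L^{(l)}$-module $\bigoplus_{k=0}^{d-1}L^{(l)}x^{\frac{k}{l'}}$ and match coefficients in $B=AC=\sum_k (Ac_k)x^{\frac{k}{l'}}$, concluding $c_k=0$ for $k\ge 1$ because $L^{(l)}$ is a domain. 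Your argument is arguably the cleaner of the two: it stays inside the Laurent polynomial rings themselves, whereas the paper's appeal to $K(y)[x^{\frac{1}{l}}]$ tacitly requires clearing the negative powers of $x$ before the division algorithm applies (elements of $L^{(l)}$ are Laurent, not ordinary, polynomials in $x^{\frac{1}{l}}$). You are also right to flag that the statement implicitly assumes $A\neq 0$; the paper's proof needs this too, and it holds in the one place the lemma is used.
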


\begin{proof} Consider the $K(y)$-polynomial algebras $K(y)[x^{\frac{1}{l}}]\subseteq K(y)[x^{\frac{1}{l'}}]$. By the division algorithm there exists $D,R\in K(y)[x^{\frac{1}{l}}]$ such that
$$
CA = B = DA+R \quad\text{and}\quad R=0\text{ or } \deg_l(R) < \deg_l(A),
$$
where $\deg_l$ denotes usual the degree in $x^{\frac{1}{l}}$. Let $\deg_{l'}$ be the degree in $x^{\frac{1}{l'}}$. Since $\deg_{l'} = \frac{l'}{l}\deg_{l'}$ the result follows from the uniqueness of the division algorithm.
\end{proof}

\begin{theorem}\label{te conmutadores1} Let $(\rho,\sigma)\!\in\! \mathfrak{V}$ with $\sigma\!\le\! 0$ and let $C,D\!\in\! W^{(l)}\setminus \{0\}$ with $v_{\rho,\sigma}(C)\!>\!0$. If
\begin{equation}
[C^k,D]_{\rho,\sigma}=\ell_{\rho,\sigma}(C^{k+j})\quad\text{for some $k\in \mathds{N}$ and $j\in \mathds{N}_0$,}\label{eq conmutador igual a potencia}
\end{equation}
then there exists a $(\rho,\sigma)$-homogeneous element $E\in W^{(l)}$, such that
$$
[C^t,E]_{\rho,\sigma}= t\ell_{\rho,\sigma}(C^t)\quad\text{for all $t\in \mathds{N}$.}
$$
\end{theorem}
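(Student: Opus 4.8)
The goal is to produce a single $(\rho,\sigma)$-homogeneous $E$ that works simultaneously for all powers $C^t$. The natural strategy is to use Theorem~\ref{f[] en W^{(l)}} to transfer the hypothesis into a statement about the one-variable polynomials $f_C$ and $f_D$, solve the resulting differential relation there, and then build $E$ as a suitable $(\rho,\sigma)$-homogeneous lift. First I would write $f_C := f^{(l)}_{C,\rho,\sigma}$, $f_D := f^{(l)}_{D,\rho,\sigma}$, set $a := \tfrac1\rho v_{\rho,\sigma}(D)$, $b := \tfrac1\rho v_{\rho,\sigma}(C)$ (note $b>0$ by hypothesis), and apply item~(1) of Theorem~\ref{f[] en W^{(l)}} together with Lemma~\ref{ell del conmutador de una potencia} (which gives $\ell_{\rho,\sigma}([C^k,D]) = k\,\ell_{\rho,\sigma}(C)^{k-1}\ell_{\rho,\sigma}([C,D])$, hence $f_{[C^k,D]} = k f_C^{k-1} f_{[C,D]}$, and also tells us $[C,D]_{\rho,\sigma}\ne0$). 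The hypothesis $[C^k,D]_{\rho,\sigma} = \ell_{\rho,\sigma}(C^{k+j})$ translates, via~\eqref{eq58}, into $f_{[C^k,D]} = \lambda f_C^{k+j}$ for a nonzero scalar $\lambda$ (after normalizing $C$ appropriately, or absorbing the scalar). Combining, we get a polynomial identity of the form $x^h \lambda f_C^{k+j} = k f_C^{k-1}\bigl(c\, f_C f_D + a x f_C' f_D - b x f_D' f_C\bigr)$, i.e. after cancelling $f_C^{k-1}$,
\[
x^h \lambda f_C^{j+1} = c\, f_C f_D + a x f_C' f_D - b x f_D' f_C .
\]

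**Extracting the differential relation and solving it.** The key step is to read this as a first-order linear ODE for $f_D$ with $f_C$ as a known datum: $\bigl(c f_C + a x f_C'\bigr) f_D - b x f_C f_D' = x^h \lambda f_C^{j+1}$. Dividing by $x f_C^{?}$ and recognizing the left side as $b x f_C \cdot \bigl(\text{something}\bigr)'$ type expression — more precisely, one checks that $\bigl(f_D / f_C^{c/b}\bigr)'$ or an appropriate power combination has a computable form — one solves to obtain $f_D$ explicitly in terms of $f_C$, a scalar, and an integral term, forcing $f_D$ to be (up to scalars and a monomial in $x$) a specific polynomial determined by $f_C$. I expect the upshot to be that $f_C = \lambda_C R_0^{\,n}$ and $f_D$ is essentially $\mu\, x^e R_0^{\,?} + (\text{lower})$ for a polynomial $R_0$ and integers read off from $a,b,c$; in particular the pair $(C,D)$ is, up to leading terms, in the "proportional-like" configuration handled by item~(2) of Theorem~\ref{f[] en W^{(l)}} applied not to $(C,D)$ but to the pair $(C, E)$ we are about to define. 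Concretely, I would set $E$ to be the $(\rho,\sigma)$-homogeneous element of $W^{(l)}$ (or rather its image, lifted through $\Psi^{(l)}$ as in the proof of Theorem~\ref{f[] en W^{(l)}}) whose associated polynomial is $f_E := \tfrac{1}{\text{(explicit constant)}} \bigl(\text{the integral/antiderivative expression built from } f_C \text{ and } f_D\bigr)$, chosen so that $c_E f_C + a_E x f_C' f_E - b_E x f_E' f_C$ reproduces $f_C$ itself; here $a_E := \tfrac1\rho v_{\rho,\sigma}(E)$, $b_E = b$, and one uses that the $(1,-1)$- and $(\rho,\sigma)$-degree bookkeeping forces $E\in W^{(l)}$ exactly as in the last paragraph of the proof of Theorem~\ref{f[] en W^{(l)}} (via Lemma~\ref{Esta en Ll}).

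**From the polynomial identity back to $W^{(l)}$, and the induction on $t$.** Once $E$ is constructed with $f_E$ satisfying the right algebraic identity, item~(1) of Theorem~\ref{f[] en W^{(l)}} (read backwards, using Corollary~\ref{ell depende del ell} so that only leading terms matter) gives $[C,E]_{\rho,\sigma} = \ell_{\rho,\sigma}(C)$, i.e. the $t=1$ case. For general $t$ I would argue by the multiplicativity in Lemma~\ref{ell del conmutador de una potencia}: since $[C,E]_{\rho,\sigma} = \ell_{\rho,\sigma}(C)\ne 0$, that lemma yields $[C^t,E]_{\rho,\sigma}\ne 0$ and
\[
\ell_{\rho,\sigma}([C^t,E]) = t\,\ell_{\rho,\sigma}(C)^{t-1}\ell_{\rho,\sigma}([C,E]) = t\,\ell_{\rho,\sigma}(C)^{t-1}\ell_{\rho,\sigma}(C) = t\,\ell_{\rho,\sigma}(C^t),
\]
where the last equality is Proposition~\ref{pr v de un producto}(2). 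Hence $[C^t,E]_{\rho,\sigma} = t\,\ell_{\rho,\sigma}(C^t)$ for all $t\in\mathds N$, as required.

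**Main obstacle.** The genuinely delicate point is the middle step: solving the differential equation for $f_D$ cleanly and then identifying the correct antiderivative to take as $f_E$, while keeping track of the several degree parameters $a,b,c$ (and their analogues $a_E,c_E$) so that the homogeneity/denominator bookkeeping closes up and $E$ genuinely lands in $W^{(l)}$ rather than some larger $W^{(l')}$. The scalar normalization (absorbing $\lambda$, and the passage between $f$-identities and $\ell_{\rho,\sigma}$-identities which are only defined up to the leading-term data) also needs care but is routine given Corollary~\ref{ell depende del ell} and Remark~\ref{f de un producto}. I would expect the argument to mirror closely the corresponding result in~\cite{G-G-V2}, with the fractional exponents handled uniformly by the $L^{(l)}$ formalism already set up above.
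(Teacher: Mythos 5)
Your outer skeleton is the right one (translate the hypothesis into an identity among the polynomials $f^{(l)}_{C,\rho,\sigma}$, $f^{(l)}_{D,\rho,\sigma}$ via item~(1) of Theorem~\ref{f[] en W^{(l)}}, construct a homogeneous $E$ from that data, and finish with Lemma~\ref{ell del conmutador de una potencia} and Proposition~\ref{pr v de un producto}(2) exactly as you do in your last step), but the middle of your argument --- which you yourself flag as the delicate point --- is not actually carried out, and the guesses you make about how it goes are off target. What is really needed from the identity $x^h f^{k+j} = cf^k g + ax(f^k)'g - bxf^k g'$ (with $f:=f^{(l)}_{C,\rho,\sigma}$, $g:=f^{(l)}_{D,\rho,\sigma}$) is the divisibility $f^j\mid g$. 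The paper gets this not by ``solving a first-order ODE'' but by observing that $a=\tfrac jk b+\varepsilon$ with $\varepsilon=1+\sigma/\rho$, so that $(f,g)$ satisfies the condition $\PE(k,j,\varepsilon,b,c)$ of \cite[Def.~1.23]{G-G-V2}, and then invoking \cite[Prop.~1.24]{G-G-V2} to write $g=f^j\bar g$. The element $E$ is then built directly from the \emph{quotient} $\bar g=g/f^j$ (so that $\ell_{\rho,\sigma}\bigl(\tfrac1k EC^j\bigr)=\ell_{\rho,\sigma}(D)$); it is not an antiderivative or integral expression, and the factorization $f_C=\lambda_C R_0^n$ you anticipate is the conclusion of item~(2) of Theorem~\ref{f[] en W^{(l)}}, which applies when the bracket vanishes --- here $[C^k,D]_{\rho,\sigma}\ne0$, so that route is unavailable.

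A second, smaller gap: for $E:=k\sum_i\eta_i X^{\frac ul-j\frac rl-i\frac\sigma\rho}Y^{v-js+i}$ to be an element of a $W^{(l')}$ at all, the $Y$-exponents must be nonnegative, i.e.\ one must prove $v-js\ge0$ where $(\tfrac ul,v)=\st_{\rho,\sigma}(D)$ and $(\tfrac rl,s)=\st_{\rho,\sigma}(C)$. This is not mere ``degree bookkeeping'': the paper needs a case analysis according to whether $\st_{\rho,\sigma}(D)\sim\st_{\rho,\sigma}(C)$ or not, using Proposition~\ref{extremosnoalineados} and the relation $v_{\rho,\sigma}(D)=jv_{\rho,\sigma}(C)+\rho+\sigma$ in the aligned case. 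Your appeal to Lemma~\ref{Esta en Ll} correctly handles the \emph{descent} from $W^{(l')}$ to $W^{(l)}$ (via $\ell_{\rho,\sigma}(\tfrac1k EC^j)=\ell_{\rho,\sigma}(D)$), but not this prior positivity. The derivation of $[C,E]_{\rho,\sigma}=\ell_{\rho,\sigma}(C)$ from $[C^k,\tfrac1k EC^j]_{\rho,\sigma}=\ell_{\rho,\sigma}(C)^{k+j}$ by comparing $(\rho,\sigma)$-degrees, and the passage to all $t$, are fine as you state them.
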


\begin{proof} By equality~\eqref{eq conmutador igual a potencia} and items~(2) and~(3) of Proposition~\ref{pr v de un producto}, we have
$$
(k+j)v_{\rho,\sigma}(C)=kv_{\rho,\sigma}(C)+v_{\rho,\sigma}(D)-(\rho+\sigma)\quad\text{and}\quad \ell_{\rho,\sigma}([C^k,D]) = \ell_{\rho,\sigma}(C^{k+j}),
$$
and so,
\begin{equation}
v_{\rho,\sigma}(D)=jv_{\rho,\sigma}(C)+\rho+\sigma\quad\text{and}\quad f^{(l)}_{[C^k,D],\rho,\sigma} = f^{(l)}_{C^{k+j},\rho,\sigma}.\label{f1}
\end{equation}
Hence, by item~(1) of Theorem~\ref{f[] en W^{(l)}} and Remark~\ref{f de un producto}, there exist $h\!\in\! \mathds{N}_0$ and $c\!\in\! \mathds{Z}$, such that
$$
x^hf^{k+j} = cf^kg+ax(f^k)'g-bxf^kg',
$$
where
$$
f:=f^{(l)}_{C,\rho,\sigma},\quad g:=f^{(l)}_{D,\rho,\sigma},\quad a:=\frac{1}{\rho}v_{\rho,\sigma}(D)\quad\text{and}\quad b:=\frac{1}{\rho}v_{\rho,\sigma}(C^k)=\frac{k}{\rho}v_{\rho,\sigma}(C).
$$
Note that, since
$$
a = \frac{j}{k}b + \varepsilon,
$$
with $\varepsilon:=1 + \frac{\sigma}{\rho}$, the pair $(f,g)$ fulfills the condition $\PE(k,j,\varepsilon,b,c)$, introduced in~\cite[Def.~1.23]{G-G-V2}.  By~\cite[Prop.~1.24]{G-G-V2} there exists $\bar{g}\in K[x]$ such that $g=f^j\bar{g}$. Set $\alpha:=\deg f$, $\beta:=\deg g$ and write
$$
\ell_{\rho,\sigma}(C) = \sum_{i=0}^{\alpha}\lambda_i x^{\frac{r}{l}-i\frac{\sigma}{\rho}}y^{s+i}\quad\text{and} \quad \ell_{\rho,\sigma}(D) = \sum_{i=0}^{\beta} \mu_i x^{\frac{u}{l}-i\frac{\sigma}{\rho}} y^{v+i},
$$
with $\lambda_0,\lambda_{\alpha},\mu_0,\mu_{\beta}\ne 0$. By item~(2) of Proposition~\ref{pr v de un producto},
$$
\ell_{\rho,\sigma}(C^k) = \ell_{\rho,\sigma}(C)^k  = \left(\sum_{i=0}^{\alpha}\lambda_i x^{\frac{r}{l}-i\frac{\sigma}{\rho}}y^{s+i}\right)^k = \sum_{i=0}^{k\alpha} \bar{\lambda}_i x^{k\frac{r}{l}-i\frac{\sigma}{\rho}} y^{ks+i}
$$
with each $\bar{\lambda}_i\in K$. Let $\gamma:=\beta-j\alpha$ be the degree of $\ov g$ and write $\bar{g} = \sum_{i=0}^\gamma \eta_i x^i$. Note that $\eta_0\ne 0$, since $f^j(0)\eta_0 =f^j(0)\bar{g}(0) = g(0)\ne 0$. We define
$$
E:=k \sum_{i=0}^\gamma \eta_i X^{\frac{u}{l}-j\frac{r}{l}-i\frac{\sigma}{\rho}}Y^{v-js+i}.
$$
We claim that $E\in W^{(l')}$, where $l':=\lcm(l,\rho)$. For this it suffices to check that
$$
 v-js\ge 0.
$$
We consider the two cases
$$
\st_{\rho,\sigma}(D)\sim\st_{\rho,\sigma}(C)\quad\text{and}\quad \st_{\rho,\sigma}(D)\nsim \st_{\rho,\sigma}(C).
$$
Note that $(\frac{r}{l},s)=\st_{\rho,\sigma}(C)\ne (0,0)$, since $v_{\rho,\sigma}(\st_{\rho,\sigma}(C)) = v_{\rho,\sigma}(C) >0$. Hence, if $\st_{\rho,\sigma}(D)\sim\st_{\rho,\sigma}(C)$, then there exists $\lambda\in K$ such that
$$
\left(\frac{u}{l},v\right)=\st_{\rho,\sigma}(D)=\lambda \st_{\rho,\sigma}(C)=\lambda \left(\frac{r}{l},s\right).
$$
Consequently, by the first equality in~\eqref{f1},
$$
jv_{\rho,\sigma}(C)+\rho+\sigma=v_{\rho,\sigma}(D)=v_{\rho,\sigma}(\st_{\rho,\sigma}(D)) =\lambda v_{\rho,\sigma}(\st_{\rho,\sigma}(C)) =\lambda v_{\rho,\sigma}(C),
$$
which implies $\lambda> j$, since $v_{\rho,\sigma}(C)>0$ and $\rho+\sigma>0$. But then $v-js=(\lambda-j)s\ge 0$ as we want. Assume now that $\st_{\rho,\sigma}(D)\nsim \st_{\rho,\sigma}(C)$. Then
$$
\st_{\rho,\sigma}(D)\nsim \st_{\rho,\sigma}(C^k),
$$
since $\st_{\rho,\sigma}(C^k) = k\st_{\rho,\sigma}(C)$ by item~(4) of Proposition~\ref{pr v de un producto}. Hence, equality~\eqref{eq conmutador igual a potencia}, Proposition~\ref{extremosnoalineados} and item~(4) of Proposition~\ref{pr v de un producto} yields
$$
\st_{\rho,\sigma}(D)+k\st_{\rho,\sigma}(C)-(1,1)=(k+j)\st_{\rho,\sigma}(C),
$$
which implies $\left(\frac{u}{l},v\right)=j\left(\frac{r}{l},s\right)+(1,1)$, and so $v-js=1>0$, which ends the proof of the claim.

\smallskip

Now, since $\eta_0\ne 0$ and $\eta_{\gamma}\ne 0$, we have $\bar{g}=f^{(l)}_{\frac{1}{k}E,\rho,\sigma}$. Thus, by Proposition~\ref{pr v de un producto} and equality~\eqref{eq58},
\begin{align*}
\ell_{\rho,\sigma}\left(\frac{1}{k} EC^j\right)& =\ell_{\rho,\sigma}\left(\frac{1}{k} E\right) \ell_{\rho,\sigma}(C)^j\\
& = x^{\frac{u-jr}{l}}y^{v-js}\ov g(x^{-\frac{\sigma}{\rho}} y) \left(x^\frac{r}{l} y^s f(x^{-\frac{\sigma}{\rho}}y)\right)^j\\
& = x^{\frac{u}{l}} y^v g(x^{-\frac{\sigma}{\rho}} y)\\
& = \ell_{\rho,\sigma}(D).
\end{align*}
Consequently, by Lemma~\ref{Esta en Ll}, $\ell_{\rho,\sigma}(E) \in L^{(l)}$ and, since $\Psi^{(l)}(E) = \ell_{\rho,\sigma}(E)$, we have $E\!\in\! W^{(l)}$. Moreover, by Corollary~\ref{ell depende del ell}, equality~\eqref{eq conmutador igual a potencia} and item~(2) of Proposition~\ref{pr v de un producto},
\begin{equation}
\left[C^k,\frac{1}{k} EC^j\right]_{\rho,\sigma} = [C^k,D]_{\rho,\sigma} = \ell_{\rho,\sigma}(C^{k+j}) = \ell_{\rho,\sigma}(C)^{k+j}.\label{eq15}
\end{equation}
Hence $\left[C^k,\frac{1}{k} EC^j\right]_{\rho,\sigma}\ne 0$, and so, by items (2) and~(3) of Proposition~\ref{pr v de un producto},
\begin{equation}
\begin{aligned}
v_{\rho,\sigma}\left(\left[C^k,\frac{1}{k} EC^j\right]\right) & = v_{\rho,\sigma}(C^k) + v_{\rho,\sigma} \left(\frac{1}{k} EC^j\right) - (\rho+\sigma)\\
& = (k+j)v_{\rho,\sigma}(C) + v_{\rho,\sigma}(E) - (\rho+\sigma)
\end{aligned}\label{eq16}
\end{equation}
and
\begin{equation}
\begin{aligned}
\left[C^k,\frac{1}{k} EC^j\right]_{\rho,\sigma} & = \ell_{\rho,\sigma}\left(\left[C^k, \frac{1}{k} EC^j\right]\right)\\
& = \ell_{\rho,\sigma}\left(\left[C^k, \frac{1}{k} E\right]C^j\right)\\
& =\frac{1}{k} \ell_{\rho,\sigma}([C^k,E])\ell_{\rho,\sigma}(C)^j\\
& = \ell_{\rho,\sigma}([C,E])\ell_{\rho,\sigma}(C)^{k+j-1},
\end{aligned}\label{eq17}
\end{equation}
where the last equality follows from Lemma~\ref{ell del conmutador de una potencia}. Hence, again by Pro\-position 2.7
\begin{equation}
v_{\rho,\sigma}\left(\left[C^k,\frac{1}{k} EC^j\right]\right) = (k+j-1)v_{\rho,\sigma}(C) + v_{\rho,\sigma}([C,E]).\label{eq18}
\end{equation}
Combining now~\eqref{eq16} with~\eqref{eq18}, and~\eqref{eq15} with~\eqref{eq17}, we obtain
\begin{equation*}
v_{\rho,\sigma}([C,E]) = v_{\rho,\sigma}(C) + v_{\rho,\sigma}(E) - (\rho+\sigma) \quad \text{and}\quad \ell_{\rho,\sigma}([C,E]) = \ell_{\rho,\sigma}(C).
\end{equation*}
Hence $[C,E]_{\rho,\sigma} = \ell_{\rho,\sigma}(C)\ne 0$, and thus, by Lemma~\ref{ell del conmutador de una potencia} and item (2) of Proposition~\ref{pr v de un producto}, we have
$$
[C^t,E]_{\rho,\sigma} = \ell_{\rho,\sigma}([C^t,E]) = t\ell_{\rho,\sigma}(C)^{t-1}\ell_{\rho,\sigma}([C,E]) = t\ell_{\rho,\sigma}(C)^t = t\ell_{\rho,\sigma}(C^t),
$$
for all $t\in \mathds{N}$.
\end{proof}

Recall from~\cite{G-G-V2} that $\ov{\mathfrak{V}}$ is endowed with an order relation such that
$$
(1,-1)<(\rho,\sigma)<(-1,1)
$$
for all $(\rho,\sigma)\in \mathfrak{V}$ and that
$$
(\rho_1,\sigma_1) \le (\rho,\sigma)\Leftrightarrow (\rho_1,\sigma_1)\times (\rho,\sigma)\ge 0\quad\text{for all $(\rho_1,\sigma_1),(\rho,\sigma)\in \mathfrak{V}$.}
$$

\begin{definition}\label{forma debil} Let $P\in W^{(l)}\setminus \{0\}$. We define the set of {\em valuations associated with} $P$ as
$$
\Val(P):=\{(\rho,\sigma)\in\mathfrak{V}:\# \Supp(\ell_{\rho,\sigma}(P))>1\},
$$
and we set $\ov{\Val}(P):= \Val(P)\cup\{(1,-1),(-1,1)\}$. We make a similar definition for $P\in L^{(l)} \setminus \{0\}$.
\end{definition}

For each $(r/l,s)\in \frac{1}{l}\mathds{Z}\times \mathds{Z} \setminus \mathds{Z}(1,1)$ there exists a unique $(\rho,\sigma)\in \mathfrak{V}$ such that $v_{\rho,\sigma}(r/l,s)=0$. In fact clearly
$$
(\rho,\sigma) := \begin{cases}\left(-\frac{ls}{d},\frac{r}{d}\right)& \text{ if $r-ls>0$,}\\ \left(\frac{ls}{d},-\frac{r}{d}\right)& \text{ if $r-ls<0$,}\end{cases}
$$
where $d:=\gcd(r,ls)$, fulfill the required condition, and the uniqueness is evident.

\begin{definition} For $(r/l,s)\in\frac{1}{l}\mathds{Z}\times\mathds{Z}\setminus \mathds{Z}(1,1)$, we define $\val(r/l,s)$ to be the unique $(\rho,\sigma)\in \mathfrak{V}$ such that $v_{\rho,\sigma}(r/l,s)=0$.
\end{definition}

\begin{remark} Note that if $P\in W^{(l)}\setminus\{0\}$ and $(\rho,\sigma)\in \Val(P)$, then
$$
(\rho,\sigma) = \val\bigl(\en_{\rho,\sigma}(P) - \st_{\rho,\sigma}(P)\bigr).
$$
\end{remark}

Our aim is to prove Proposition~\ref{le basico}, and therefore we fix $P\in W^{(l)}\setminus \{0\}$ and $(\rho,\sigma)\in \mathfrak{V}$. We set $\en:=\en_{\rho,\sigma}(P)$ and $\st:= \st_{\rho,\sigma}(P)$ and we consider the following two sets of valuations
$$
\Valsup(\rho,\sigma)\!:=\!\left\{\val\!\left(\!\left(\frac{i}{l},j\right)\!-\!\en\!\right): \left(\!\frac{i}{l},j\!\right)\!\!\in\! \Supp(P)\text{ and } v_{-1,1}\!\left(\frac{i}{l},j\right) \!> \!v_{-1,1}(\en)\right\}
$$
and
$$
\Valinf(\rho,\sigma)\!:=\!\left\{\val\left(\!\left(\frac{i}{l},j\right)\!-\!\st\!\right) : \left(\!\frac{i}{l},j \!\right)\!\!\in\! \Supp(P) \text{ and } v_{1,-1}\!\left(\frac{i}{l},j\right)\!>\! v_{1,-1}(\st)\right\}.
$$

\begin{lemma}\label{separacion} The following assertions hold:

\begin{enumerate}

\smallskip

\item If $(\rho_1,\sigma_1)\in \Valsup(\rho,\sigma)$, then $(\rho_1,\sigma_1)>(\rho,\sigma)$.

\smallskip

\item If $(\rho_1,\sigma_1)\in \Valinf(\rho,\sigma)$, then $(\rho_1,\sigma_1)<(\rho,\sigma)$.

\end{enumerate}

\end{lemma}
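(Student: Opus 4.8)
The plan is to reduce both assertions to a single elementary fact relating one point of $\Supp(P)$ to the $(\rho,\sigma)$-leading edge, and to note that (2) is obtained from (1) by the symmetry interchanging $(1,-1)\leftrightarrow(-1,1)$, $\st\leftrightarrow\en$ and reversing the order; so I would only write out (1) in detail. Fix $(\rho_1,\sigma_1)\in\Valsup(\rho,\sigma)$ and choose, according to the definition, a point $D=(i/l,j)\in\Supp(P)$ with $v_{-1,1}(D)>v_{-1,1}(\en)$ and $(\rho_1,\sigma_1)=\val(w)$, where $w:=D-\en=(w_1,w_2)$. Note $w\in\frac{1}{l}\mathds{Z}\times\mathds{Z}$, and that the inequality $v_{-1,1}(D)>v_{-1,1}(\en)$ is literally $w_2-w_1>0$; in particular $w\neq(0,0)$ and $w\notin\mathds{Z}(1,1)$, so $\val(w)$ is well defined.

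The one step with genuine content is to show $v_{\rho,\sigma}(w)<0$. Since $D\in\Supp(P)$, we have $v_{\rho,\sigma}(D)\le v_{\rho,\sigma}(P)=v_{\rho,\sigma}(\en)$; I would rule out equality by observing that if $v_{\rho,\sigma}(D)=v_{\rho,\sigma}(P)$ then $D\in\Supp(\ell_{\rho,\sigma}(P))$, whereas $\en=\ov{w}(\ell_{\rho,\sigma}(P))$ satisfies, straight from the definition of $\ov{w}$, $v_{-1,1}(\en)=v_{-1,1}(\ell_{\rho,\sigma}(P))=\max\{v_{-1,1}(t):t\in\Supp(\ell_{\rho,\sigma}(P))\}\ge v_{-1,1}(D)$, contradicting the choice of $D$. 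Hence $v_{\rho,\sigma}(D)<v_{\rho,\sigma}(\en)$, i.e. $\rho w_1+\sigma w_2<0$.

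With this in hand the conclusion is a one-line determinant computation. Since $v_{\rho_1,\sigma_1}(w)=0$, the vectors $(\rho_1,\sigma_1)$ and $(-w_2,w_1)$ are both nonzero and orthogonal to $w$, hence parallel: $(\rho_1,\sigma_1)=\mu(-w_2,w_1)$ for some nonzero $\mu\in\mathds{Q}$. From $0<\rho_1+\sigma_1=\mu(w_1-w_2)$ and $w_1-w_2<0$ we get $\mu<0$, and therefore
$$
(\rho_1,\sigma_1)\times(\rho,\sigma)=\mu\bigl(-w_2\sigma-w_1\rho\bigr)=-\mu(\rho w_1+\sigma w_2)=-\mu\, v_{\rho,\sigma}(w)<0 .
$$
A nonzero cross product forces $(\rho_1,\sigma_1)\neq(\rho,\sigma)$, and by the recalled description $(\rho_1,\sigma_1)\le(\rho,\sigma)\Leftrightarrow(\rho_1,\sigma_1)\times(\rho,\sigma)\ge0$ of the order on $\mathfrak{V}$, a strictly negative value gives $(\rho_1,\sigma_1)>(\rho,\sigma)$. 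For (2), repeating the argument with $v_{1,-1}$, $\st$ and $w(\ell_{\rho,\sigma}(P))$ in place of $v_{-1,1}$, $\en$ and $\ov{w}(\ell_{\rho,\sigma}(P))$ produces $w_1-w_2>0$, hence $\mu>0$, hence $(\rho_1,\sigma_1)\times(\rho,\sigma)=-\mu\, v_{\rho,\sigma}(w)>0$ and $(\rho_1,\sigma_1)<(\rho,\sigma)$.

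The main obstacle I anticipate is essentially the sign bookkeeping: making sure the strictness $v_{\rho,\sigma}(w)<0$ is honestly justified (this is where the fact that $\en$ attains the maximal $v_{-1,1}$ along the $(\rho,\sigma)$-leading edge is used), and that the sign of $\mu$ — equivalently, which of the two normals of $w$ lies in $\mathfrak{V}$ — is read off correctly from the defining inequality of $\Valsup$ (resp. $\Valinf$). Everything after the computation of $v_{\rho,\sigma}(w)$ is routine.
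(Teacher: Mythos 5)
Your proposal is correct and follows essentially the same route as the paper's proof: first show $v_{\rho,\sigma}\bigl((i/l,j)-\en\bigr)<0$ by observing that a support point with $v_{-1,1}$ exceeding $v_{-1,1}(\en)$ cannot lie on the $(\rho,\sigma)$-leading edge, then conclude via the sign of the cross product $(\rho_1,\sigma_1)\times(\rho,\sigma)$. The only cosmetic difference is that you determine the sign of the proportionality constant $\mu$ from $\rho_1+\sigma_1>0$, whereas the paper reads it off directly from the explicit formula $\val(a,b)=\lambda(b,-a)$ with $\lambda>0$; the computation is otherwise identical.
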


\begin{proof} We only prove item~(1) and leave the other one to the reader. Clearly, if
$$
(i/l,j)\in \Supp(P)\quad \text{and}\quad v_{\rho,\sigma}(i/l,j) = v_{\rho,\sigma}(P),
$$
then $(i/l,j)\in \Supp\bigl(\ell_{\rho,\sigma}(P)\bigr)$, and so $v_{-1,1}(i/l,j)\le v_{-1,1}(\en)$. Consequently, if
$$
(i/l,j)\in \Supp(P)\quad \text{and}\quad v_{-1,1}(i/l,j) > v_{-1,1}(\en),
$$
then $v_{\rho,\sigma}(i/l,j) < v_{\rho,\sigma}(P) = v_{\rho,\sigma}(\en)$. This means
\begin{equation}
v_{\rho,\sigma}(a,b)<0, \label{eq ij menor que P}
\end{equation}
where $(a,b) := (i/l,j)-\en$. Note that $v_{-1,1}(i/l,j)> v_{-1,1}(\en)$ now reads
$$
b-a = v_{-1,1}(a,b)>0.
$$
But then
$$
(\rho_1,\sigma_1):=\val\bigl((i/l,j)-\en\bigr) = \val(a,b)=\lambda (b,-a),
$$
for some $\lambda>0$. Hence
\begin{align*}
0& > v_{\rho,\sigma}(a,b)\\
&= a\rho+b\sigma\\
&= -\frac {1}{\lambda}(\sigma_1 \rho-\rho_1 \sigma)\\
&= - \frac {1}{\lambda}(\rho,\sigma)\times (\rho_1,\sigma_1).
\end{align*}
This yields $(\rho,\sigma)\times(\rho_1,\sigma_1)>0$, and so $(\rho_1,\sigma_1)>(\rho,\sigma)$, as desired.
\end{proof}

\begin{lemma}\label{le recorte de dominio} Let $P$, $(\rho,\sigma)$, $\st$ and $\en$ be as before. We have:
\begin{enumerate}

\smallskip

\item If $(i/l,j)\in \Supp(P)$, $(\rho_1,\sigma_1)>(\rho,\sigma)$ and $v_{-1,1}(i/l,j)\le v_{-1,1}(\en)$, then
\begin{equation}
v_{\rho_1,\sigma_1}(i/l,j)\le v_{\rho_1,\sigma_1}(\en).\label{eqnu1}
\end{equation}
Moreover, if $(\rho_1,\sigma_1)\ne (-1,1)$, then equality holds if and only if $\left(\frac{i}{l},j\right)=\en$.

\smallskip

\item If $(i/l,j)\in \Supp(P)$, $(\rho_1,\sigma_1)<(\rho,\sigma)$ and $v_{1,-1}(i/l,j)\le v_{1,-1}(\st)$, then
$$
v_{\rho_1,\sigma_1}(i/l,j)\le v_{\rho_1,\sigma_1}(\st).
$$
Moreover, if $(\rho_1,\sigma_1)\ne (1,-1)$, then equality holds if and only if $\left(\frac{i}{l},j\right)=\st$.
\end{enumerate}
\end{lemma}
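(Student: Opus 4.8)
The plan is to reduce everything to a two-dimensional determinant computation with the vectors $(\rho,\sigma)$, $(\rho_1,\sigma_1)$, $(1,-1)$ and $(-1,1)$, exploiting the order relation on $\mathfrak{V}$ as encoded by signs of cross products. By symmetry I only treat item~(1); item~(2) follows by the analogous argument with $\st$ in place of $\en$, $(1,-1)$ in place of $(-1,1)$, and the reversed inequalities. Write $(a,b):=(i/l,j)-\en$, so that the hypotheses become $v_{-1,1}(a,b)\le 0$, i.e. $b-a\le 0$, and $(\rho,\sigma)<(\rho_1,\sigma_1)$, while the desired conclusion~\eqref{eqnu1} is $v_{\rho_1,\sigma_1}(a,b)\le 0$, i.e. $a\rho_1+b\sigma_1\le 0$, which is $(\rho_1,\sigma_1)\times(a,b)\le 0$ up to a harmless relabelling — more precisely $a\rho_1+b\sigma_1 = -\bigl((\rho_1,\sigma_1)\times(b,-a)\bigr)$.

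First I would note that $(a,b)$ is not forced to be zero only when $(i/l,j)\ne\en$, and observe that because $(i/l,j)\in\Supp(P)$ while $\en=\en_{\rho,\sigma}(P)=\ov w(\ell_{\rho,\sigma}(P))$ we have $v_{\rho,\sigma}(i/l,j)\le v_{\rho,\sigma}(P)=v_{\rho,\sigma}(\en)$, hence $v_{\rho,\sigma}(a,b)\le 0$, i.e. $a\rho+b\sigma\le 0$, equivalently $(\rho,\sigma)\times(b,-a)\ge 0$. Next I would distinguish the two cases $b-a<0$ (strict) and $b-a=0$. If $b=a$, then $(a,b)=a(1,1)$; but $(i/l,j)\in\frac1l\mathds{Z}\times\mathds{Z}$ and $\en$ has the same property, and one checks from the definition of $\ov w$ that two points of $\Supp(\ell_{\rho,\sigma}(P))$ differing by a multiple of $(1,1)$ would contradict minimality unless $a=0$; actually it is cleaner to argue: if $a=b\ne0$ then $v_{\rho,\sigma}(a,b)=a(\rho+\sigma)$, which is $\ne0$ since $\rho+\sigma>0$, so $v_{\rho,\sigma}(a,b)<0$ forces $a<0$, but then $v_{-1,1}(a,b)=0$ places $(i/l,j)$ on the same $(\rho,\sigma)$-anti-diagonal direction only if it lies in $\Supp(\ell_{\rho,\sigma}(P))$, contradicting $v_{\rho,\sigma}(a,b)<0$ — so in fact $a=b$ cannot occur with $(a,b)\ne0$, giving the ``only if'' part for free in the boundary subcase.

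The heart of the matter is the generic case $b-a<0$ together with $v_{\rho,\sigma}(a,b)\le 0$. Here $(b,-a)$ points into the cone strictly between the rays of $(1,-1)$ and $(-1,1)$ (since $-a>-b$ combined with... ), and the two cross-product inequalities $(\rho,\sigma)\times(b,-a)\ge0$ and $(\rho_1,\sigma_1)>(\rho,\sigma)$, i.e. $(\rho,\sigma)\times(\rho_1,\sigma_1)\ge0$, should be combined via the standard fact that for three vectors $u,v,w$ lying in an open half-plane, $u\times v\ge0$ and $v\times w\ge0$ imply $u\times w\ge0$ whenever $v$ lies ``between'' $u$ and $w$ angularly. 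Concretely I expect to write $(b,-a)$, $(\rho,\sigma)$ in coordinates and deduce $(\rho_1,\sigma_1)\times(b,-a)\ge0$, hence $a\rho_1+b\sigma_1\le0$, which is~\eqref{eqnu1}. For the ``moreover'' clause, when $(\rho_1,\sigma_1)\ne(-1,1)$ I would show the inequality $(\rho,\sigma)\times(b,-a)\ge 0$ is actually strict unless $(a,b)=(0,0)$: if $(\rho,\sigma)\times(b,-a)=0$ then $(b,-a)$ is a positive multiple of $(\rho,\sigma)$ (signs forced by $b-a<0$, i.e. the vector being in the correct cone), so $(i/l,j)$ lies on the supporting line of $\ell_{\rho,\sigma}(P)$ and has $v_{-1,1}$-value strictly above that of $\en=\ov w(\ell_{\rho,\sigma}(P))$ — but $\ov w$ by definition picks the point of largest second coordinate on that leading line, a contradiction, unless $(a,b)=(0,0)$; and if $(a,b)\ne(0,0)$ then a strict inequality somewhere in the chain together with $(\rho_1,\sigma_1)\ne(-1,1)$ (which guarantees $(\rho_1,\sigma_1)$ is not an endpoint making a cross product degenerate) propagates to a strict inequality in~\eqref{eqnu1}.

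The main obstacle I anticipate is bookkeeping the sign conventions so that the cross-product ``transitivity'' lemma applies cleanly — in particular being careful that $(b,-a)$ genuinely lies strictly inside the cone spanned by $(1,-1)$ and $(-1,1)$ (this uses $b-a<0$ and $v_{\rho,\sigma}(a,b)\le0$ simultaneously, otherwise the vector could be on the wrong side), and that the degenerate directions $(\rho_1,\sigma_1)=(-1,1)$ are exactly the ones where equality in~\eqref{eqnu1} can hold for $(i/l,j)\ne\en$. This is precisely the place where the proof of \cite[Lemma~2.5]{G-G-V2} does the corresponding planar-geometry argument, and I would mirror it, substituting Notations~\ref{not valuaciones para polinomios}, the definition of $\ov w$, and the order relation recalled just before Definition~\ref{forma debil} for the $W$-versions used there.
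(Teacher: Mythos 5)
Your overall strategy---rewrite every hypothesis and the conclusion as the sign of a cross product against $(b,-a)$, where $(a,b)=(i/l,j)-\en$, and then run a planar cone argument---is the right one, and it is exactly what the paper's proof does in algebraic form: it multiplies $b-a\le 0$ by $(\rho,\sigma)\times(\rho_1,\sigma_1)>0$, multiplies $\rho a+\sigma b\le 0$ by $\rho_1+\sigma_1\ge 0$, adds the two products, and factors out $\rho+\sigma>0$ to obtain $v_{\rho_1,\sigma_1}(a,b)\le 0$. But as written your central step is not carried out. The ``standard fact'' you invoke is transitivity ($u\times v\ge 0$ and $v\times w\ge 0$ imply $u\times w\ge 0$), and it points the wrong way: you know $(\rho,\sigma)\times(\rho_1,\sigma_1)\ge 0$ and $(\rho,\sigma)\times(b,-a)\ge 0$ and want $(\rho_1,\sigma_1)\times(b,-a)\ge 0$, which is not a transitivity statement. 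What is actually needed is the dual-cone observation that $(b,-a)$ lies in $\{w:(\rho,\sigma)\times w\ge 0\}\cap\{w:(-1,1)\times w\ge 0\}$, a cone generated by $(-1,1)$ and $-(\rho,\sigma)$, while $(\rho_1,\sigma_1)$ pairs nonnegatively with both generators because $(\rho,\sigma)\le(\rho_1,\sigma_1)\le(-1,1)$. Your identification of the relevant cone as the one ``between the rays of $(1,-1)$ and $(-1,1)$'' is incorrect (that is a half-plane and would use only the hypothesis on $v_{-1,1}$, not the essential bound $v_{\rho,\sigma}(a,b)\le 0$), the justification trails off (``combined with \dots''), and the sign claim ``$-a>-b$'' contradicts the hypothesis $b\le a$. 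Note also that $(b,-a)$ need not lie in the half-plane containing $\ov{\mathfrak{V}}$, so the ``three vectors in an open half-plane'' framing does not apply as stated.

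The equality analysis is also broken in both branches. The claim that $a=b\ne 0$ cannot occur is false: a point $\en+a(1,1)$ with $a<0$ may perfectly well belong to $\Supp(P)$ (the hypotheses only force $v_{\rho,\sigma}(a,a)=a(\rho+\sigma)\le 0$), and your argument that $v_{-1,1}(a,b)=0$ would place $(i/l,j)$ in $\Supp(\ell_{\rho,\sigma}(P))$ is a non sequitur. In the branch $b-a<0$, your strictness argument via ``$(b,-a)$ is a positive multiple of $(\rho,\sigma)$'' has the sign backwards ($b-a<0$ and $\rho+\sigma>0$ force a \emph{negative} multiple) and is unnecessary anyway: when $b<a$ the product $\bigl((\rho,\sigma)\times(\rho_1,\sigma_1)\bigr)(b-a)$ is strictly negative, so $v_{\rho_1,\sigma_1}(a,b)<0$ outright. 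The correct route to the ``moreover'' clause is the paper's: equality in~\eqref{eqnu1} forces that product to vanish, hence $b=a$; then $0=v_{\rho_1,\sigma_1}(a,a)=(\rho_1+\sigma_1)a$, and $(\rho_1,\sigma_1)\ne(-1,1)$ gives $\rho_1+\sigma_1>0$, so $a=0$ and $(i/l,j)=\en$.
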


\begin{proof} We prove item~(1) and leave the proof of item~(2), which is similar, to the reader. Set $(a,b):=\left(i/l,j\right)-\en$. Then, by the hypothesis,
$$
\rho \sigma_1-\sigma \rho_1 >0\quad \text{and}\quad b-a\le 0.
$$
Hence
\begin{equation}
b\rho\sigma_1+\sigma \rho_1 a-a\rho \sigma_1-b\sigma \rho_1 \le 0,\label{eq condiciones del lemma}
\end{equation}
and the equality holds if and only if $b\!=\!a$. We also know that $v_{\rho,\sigma} (i/l,j)\le v_{\rho,\sigma}(\en)$, which means that $\rho a+\sigma b \le 0$. Since $\rho_1+\sigma_1\ge 0$, we obtain
\begin{equation}
\rho_1\rho a+\sigma_1\sigma b+\rho_1 \sigma b+\sigma_1\rho a =(\rho a+\sigma b)(\rho_1+\sigma_1) \le 0.\label{eq28}
\end{equation}
Summing up~\eqref{eq condiciones del lemma} and~\eqref{eq28}, we obtain
$$
0\ge \rho\rho_1 a+\sigma\sigma_1 b+\rho\sigma_1 b+\sigma\rho_1 a=(\rho+\sigma)(\rho_1 a+ \sigma_1 b),
$$
and so $v_{\rho_1,\sigma_1}(a,b)\le 0$, as desired. Moreover, if the equality is true, then~\eqref{eq condiciones del lemma} is also an equality, and so $b=a$. Hence $0 = v_{\rho_1,\sigma_1}(a,a) = (\rho_1+\sigma_1) a$, which implies that $a=0$ or $(\rho_1,\sigma_1) = (-1,1)$. Thus, is $(\rho_1,\sigma_1)\ne (-1,1)$ and equality holds in~\eqref{eqnu1}, then $(i/l,j) = \en$.
\end{proof}

\begin{definition} If $\Valsup(\rho,\sigma)\ne \emptyset$, then we define
$$
\Succ(\rho,\sigma):=\min \Valsup(\rho,\sigma)
$$
and if $\Valinf(\rho,\sigma)\ne \emptyset $, then we define
$$
\Pred(\rho,\sigma):=\max \Valinf(\rho,\sigma).
$$
\end{definition}

\begin{lemma}\label{predysucc} The following assertions hold:
\begin{enumerate}

\smallskip

\item $\Succ(\rho,\sigma)\in \Val(P)$ and $\en=\st_{\Succ(\rho,\sigma)}(P)$.

\smallskip

\item $\Pred(\rho,\sigma)\in \Val(P)$ and $\st=\en_{\Pred(\rho,\sigma)}(P)$.
\end{enumerate}
\end{lemma}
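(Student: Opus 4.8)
The plan is to prove item~(1), since item~(2) is entirely symmetric (exchanging the roles of $\st$ and $\en$, of $(1,-1)$ and $(-1,1)$, and of $\Valsup$ and $\Valinf$). Write $(\rho_1,\sigma_1) := \Succ(\rho,\sigma) = \min \Valsup(\rho,\sigma)$, which is well defined whenever $\Valsup(\rho,\sigma)\ne\emptyset$. By definition of $\Valsup(\rho,\sigma)$ there is a point $\bigl(\frac{i_0}{l},j_0\bigr)\in\Supp(P)$ with $v_{-1,1}\bigl(\frac{i_0}{l},j_0\bigr) > v_{-1,1}(\en)$ and $(\rho_1,\sigma_1) = \val\bigl(\bigl(\frac{i_0}{l},j_0\bigr)-\en\bigr)$. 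The first task is to show $\en = \st_{\rho_1,\sigma_1}(P)$; the second is to deduce $(\rho_1,\sigma_1)\in\Val(P)$ from this, which is immediate since $\st_{\rho_1,\sigma_1}(P)=\en$ and the point $\bigl(\frac{i_0}{l},j_0\bigr)$ lies in $\Supp(\ell_{\rho_1,\sigma_1}(P))$ and is distinct from $\en$ (they are not equal because $v_{-1,1}$ separates them), so $\#\Supp(\ell_{\rho_1,\sigma_1}(P))>1$.

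For the main claim $\en = \st_{\rho_1,\sigma_1}(P)$, I would argue in two steps. First, $\bigl(\frac{i_0}{l},j_0\bigr)$ and $\en$ both lie on the line $v_{\rho_1,\sigma_1}(\cdot) = v_{\rho_1,\sigma_1}(\en)$, so it suffices to show that this value is the $(\rho_1,\sigma_1)$-degree of $P$, i.e. every $\bigl(\frac{i}{l},j\bigr)\in\Supp(P)$ satisfies $v_{\rho_1,\sigma_1}\bigl(\frac{i}{l},j\bigr)\le v_{\rho_1,\sigma_1}(\en)$, with the additional information that among the points attaining equality, $\en$ is the one with $w$-coordinate (the $\st$-selecting extremal choice) as prescribed. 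To get the inequality, split $\Supp(P)$ into two parts: points with $v_{-1,1}\bigl(\frac{i}{l},j\bigr)\le v_{-1,1}(\en)$ and points with $v_{-1,1}\bigl(\frac{i}{l},j\bigr) > v_{-1,1}(\en)$. For the first part, since $(\rho_1,\sigma_1)>(\rho,\sigma)$ by Lemma~\ref{separacion}(1), Lemma~\ref{le recorte de dominio}(1) gives exactly $v_{\rho_1,\sigma_1}\bigl(\frac{i}{l},j\bigr)\le v_{\rho_1,\sigma_1}(\en)$, with equality (when $(\rho_1,\sigma_1)\ne(-1,1)$, which holds since $(\rho_1,\sigma_1)\in\mathfrak{V}$) only at $\en$ itself. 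For the second part, each such point contributes a valuation to $\Valsup(\rho,\sigma)$, hence $\val\bigl(\bigl(\frac{i}{l},j\bigr)-\en\bigr)\ge (\rho_1,\sigma_1)$ by minimality of $(\rho_1,\sigma_1)$; translating this ``$\ge$'' via the cross-product characterization of the order on $\mathfrak{V}$ (as in the proof of Lemma~\ref{separacion}, but with the inequality reversed) yields $v_{\rho_1,\sigma_1}\bigl(\frac{i}{l},j\bigr)\le v_{\rho_1,\sigma_1}(\en)$, with equality precisely when $\val\bigl(\bigl(\frac{i}{l},j\bigr)-\en\bigr) = (\rho_1,\sigma_1)$, i.e. when the point lies on the same $(\rho_1,\sigma_1)$-line through $\en$.

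Combining the two parts shows $v_{\rho_1,\sigma_1}(P) = v_{\rho_1,\sigma_1}(\en)$, so $\en\in\Supp(\ell_{\rho_1,\sigma_1}(P))$, and the points of $\Supp(\ell_{\rho_1,\sigma_1}(P))$ are $\en$ together with certain points $\bigl(\frac{i}{l},j\bigr)$ having $v_{-1,1}\bigl(\frac{i}{l},j\bigr) > v_{-1,1}(\en)$, i.e. all lying strictly above $\en$ in the $v_{-1,1}$-direction. Since $\st_{\rho_1,\sigma_1}(P) = w\bigl(\ell_{\rho_1,\sigma_1}(P)\bigr)$ selects, among the points of the $(1,-1)$-leading segment of $\ell_{\rho_1,\sigma_1}(P)$, the one of largest first coordinate, and all other support points of $\ell_{\rho_1,\sigma_1}(P)$ have strictly larger $j$ hence strictly smaller $i/l$ along the fixed $(\rho_1,\sigma_1)$-line (here one uses $\sigma_1 > 0$, which holds because $(\rho_1,\sigma_1) > (\rho,\sigma) > (1,-1)$), the point $\en$ is exactly the $\st$-selected one. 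Hence $\st_{\rho_1,\sigma_1}(P) = \en$.

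The step I expect to be the main obstacle is the careful bookkeeping in the second part: translating the minimality statement ``$(\rho_1,\sigma_1)\le\val\bigl(\bigl(\frac{i}{l},j\bigr)-\en\bigr)$'' into the valuation inequality, and in particular tracking the equality case so as to pin down $\st_{\rho_1,\sigma_1}(P)$ rather than merely $v_{\rho_1,\sigma_1}(P)$. This requires combining the cross-product description of the order on $\mathfrak{V}$ with the sign of $v_{-1,1}\bigl(\frac{i}{l},j\bigr)-v_{-1,1}(\en)$, exactly as in the computation at the end of the proof of Lemma~\ref{separacion}, and then matching it against the extremal ($w$, i.e. $\st$) selection rule; none of the individual computations are hard, but getting all the edge cases (e.g. $(\rho_1,\sigma_1)=(-1,1)$ being excluded, and the strictness coming from $\sigma_1>0$) lined up correctly is the delicate part.
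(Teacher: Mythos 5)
Your plan follows the paper's proof almost step for step: the same splitting of $\Supp(P)$ according to the sign of $v_{-1,1}(i/l,j)-v_{-1,1}(\en)$, with Lemma~\ref{le recorte de dominio}(1) (via Lemma~\ref{separacion}(1)) handling the lower part and the minimality of $\Succ(\rho,\sigma)$, translated through the cross-product description of the order, handling the upper part; the only organizational difference is that the paper runs the upper-part estimate as a proof by contradiction rather than as a direct contrapositive, which changes nothing.

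One auxiliary claim in your last step is false, though the step itself survives. You assert that $\sigma_1>0$ because $(\rho_1,\sigma_1)>(\rho,\sigma)>(1,-1)$; this implication does not hold (for instance $(4,-1)>(3,-1)$ in the order on $\mathfrak{V}$, since $(4,-1)\times(3,-1)=-1<0$, yet $\sigma_1=-1$), and when $\sigma_1<0$ the other points of $\Supp(\ell_{\rho_1,\sigma_1}(P))$ actually have \emph{larger} first coordinate than $\en$. Fortunately neither fact is needed: you have already shown that every point of $\Supp(\ell_{\rho_1,\sigma_1}(P))$ other than $\en$ satisfies $v_{1,-1}(i/l,j)<v_{1,-1}(\en)$, so $\en$ is the unique maximizer of $v_{1,-1}$ on that support and hence equals $w(\ell_{\rho_1,\sigma_1}(P))=\st_{\rho_1,\sigma_1}(P)$ directly from the definition of $w$, with no appeal to the first-coordinate tiebreaker. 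Excising the $\sigma_1>0$ remark leaves a correct proof.
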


\begin{proof} We only prove~(1) since~(2) is similar. As above we set $\en:=\en_{\rho,\sigma}(P)$. Write $(\rho_1,\sigma_1):= \Succ(\rho,\sigma)$. By defi\-nition, there exists an $(i_0/l,j_0)\in \Supp(P)$, such that
$$
v_{-1,1}(i_0/l,j_0)>v_{-1,1}(\en)\quad\text{and}\quad (\rho_1,\sigma_1)= \val\bigl((i_0/l,j_0)-\en\bigr).
$$
Consequently,
\begin{equation}
(i_0/l,j_0)\ne \en\quad\text{and}\quad v_{\rho_1,\sigma_1}(\en) = v_{\rho_1,\sigma_1}(i_0/l,j_0).\label{eq29}
\end{equation}
Hence $(\rho_1,\sigma_1)\ne(-1,1)$, since, on the contrary, $v_{\rho_1,\sigma_1}(\en)< v_{\rho_1,\sigma_1}(i_0/l,j_0)$.  We claim that $v_{\rho_1,\sigma_1}(P) = v_{\rho_1,\sigma_1}(\en)$, which, by~\eqref{eq29}, proves that $(\rho_1,\sigma_1) \in \Val(P)$. In fact, assume on the contrary that there exists $(i/l,j)\in \Supp(P)$ with
\begin{equation}\label{ineq}
v_{\rho_1,\sigma_1}(i/l,j)>v_{\rho_1,\sigma_1}(\en)
\end{equation}
By item~(1) of Lemmas~\ref{separacion} and~\ref{le recorte de dominio}, necessarily $v_{-1,1}(i/l,j)>v_{-1,1}(\en)$, and so $(a,b):=(i/l,j)-\en$ fulfills $b-a>0$. Hence
$$
(\rho_2,\sigma_2):=\val\bigl((i/l,j)-\en\bigr) = \val(a,b)= \lambda (b,-a)
$$
with $\lambda>0$. Now~\eqref{ineq} leads to
\begin{align*}
0& < (\rho_1,\sigma_1).(a,b)\\
&= \frac{1}{\lambda}(\rho_2\sigma_1-\sigma_2 \rho_1)\\
&= \frac{1}{\lambda}(\rho_2,\sigma_2)\times(\rho_1,\sigma_1),
\end{align*}
which implies that $(\rho_2,\sigma_2)<(\rho_1,\sigma_1)$. But this fact is impossible, since $(\rho_1,\sigma_1)$ is minimal in $\Valsup(\rho,\sigma)$ and $(\rho_2,\sigma_2)\in \Valsup(\rho,\sigma)$. This proves the claim and so $\Succ(\rho,\sigma)\in \Val(P)$.

Finally we will check that $\en=\st_{\rho_1,\sigma_1}(P)$. For this, it suffices to prove that any $(i/l,j)\in \Supp(\ell_{\rho_1,\sigma_1}(P))$ fulfills $v_{1,-1}(i/l,j)\le v_{1,-1}(\en)$ or, equivalently, that $v_{-1,1}(i/l,j) \ge v_{-1,1}(\en)$. To do this we first note that by item~(1) of Lemma~\ref{separacion} we have $(\rho_1,\sigma_1) > (\rho,\sigma)$. Since, moreover $(i/l,j)\in \Supp(P)$ and $(\rho_1,\sigma_1)\ne (-1,1)$, using item~(1) of Lemma~\ref{le recorte de dominio}, it follows that if $v_{-1,1}(i/l,j)< v_{-1,1}(\en)$, then $v_{\rho_1,\sigma_1}(i/l,j)< v_{\rho_1,\sigma_1}(\en)$, which is a contradiction.
\end{proof}

\begin{proposition}\label{le basico} Let $P\in W^{(l)}\setminus\{0\}$ and let $(\rho_1,\sigma_1)> (\rho_2,\sigma_2)$ consecutive elements in $\ov{\Val}(P)$. The following assertions hold:

\begin{enumerate}

\smallskip

\item If $(\rho_1,\sigma_1)\in \Val(P)$ and $(\rho_1,\sigma_1)> (\rho,\sigma) \ge (\rho_2,\sigma_2)$, then
$$
(\rho_1,\sigma_1) = \Succ_{\rho,\sigma}(P).
$$

\smallskip

\item If $(\rho_2,\sigma_2)\in \Val(P)$ and $(\rho_1,\sigma_1)\ge (\rho,\sigma) > (\rho_2,\sigma_2)$, then
$$
(\rho_2,\sigma_2) = \Pred_{\rho,\sigma}(P).
$$

\smallskip

\item If $(\rho_1,\sigma_1)> (\rho,\sigma) >(\rho_2,\sigma_2)$, then
$$
\{\st_{\rho_1,\sigma_1}(P)\} = \Supp(\ell_{\rho,\sigma}(P)) = \{\en_{\rho_2,\sigma_2}(P)\}.
$$

\end{enumerate}

\end{proposition}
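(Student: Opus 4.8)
The plan is to treat the three items as direct consequences of Lemma~\ref{predysucc} together with the definitions of $\Succ$, $\Pred$, $\Valsup$, $\Valinf$ and the ordering on $\ov{\mathfrak{V}}$. The key observation is that $\ov{\Val}(P)$ is a finite totally ordered set, so ``consecutive elements'' $(\rho_1,\sigma_1)>(\rho_2,\sigma_2)$ means there is no valuation in $\Val(P)$ strictly between them.

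For item~(1), assume $(\rho_1,\sigma_1)\in\Val(P)$ and $(\rho_1,\sigma_1)>(\rho,\sigma)\ge(\rho_2,\sigma_2)$. First I would note that $\Valsup(\rho,\sigma)\ne\emptyset$: since $(\rho_1,\sigma_1)\in\Val(P)$, the leading term $\ell_{\rho_1,\sigma_1}(P)$ has at least two support points, and the one with largest $v_{-1,1}$-value — namely $\en_{\rho_1,\sigma_1}(P)$ — lies in $\Supp(P)$ and, because $(\rho_1,\sigma_1)>(\rho,\sigma)$, has $v_{-1,1}$-value exceeding $v_{-1,1}(\en_{\rho,\sigma}(P))$ by item~(1) of Lemma~\ref{le recorte de dominio} applied in the reverse direction (a point of $\Supp(\ell_{\rho_1,\sigma_1}(P))$ distinct from $\en_{\rho_1,\sigma_1}(P)$ cannot have maximal $v_{-1,1}$, hence is not in $\Supp(\ell_{\rho,\sigma}(P))$). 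So $\Succ(\rho,\sigma)$ is defined. By Lemma~\ref{predysucc}(1), $\Succ(\rho,\sigma)\in\Val(P)$ and $\Succ(\rho,\sigma)>(\rho,\sigma)$ by Lemma~\ref{separacion}(1). If $\Succ(\rho,\sigma)\ne(\rho_1,\sigma_1)$, then since $\Succ(\rho,\sigma)\in\Val(P)$ and $(\rho_1,\sigma_1)$, $(\rho_2,\sigma_2)$ are consecutive in $\ov{\Val}(P)$, we would need $\Succ(\rho,\sigma)\le(\rho_2,\sigma_2)<(\rho,\sigma)$ or $\Succ(\rho,\sigma)\ge(\rho_1,\sigma_1)$; the first contradicts $\Succ(\rho,\sigma)>(\rho,\sigma)$, and for the second, $\Succ(\rho,\sigma)>(\rho_1,\sigma_1)$ is impossible because $\en_{\rho_1,\sigma_1}(P)-\en_{\rho,\sigma}(P)$ witnesses a valuation in $\Valsup(\rho,\sigma)$ equal to $(\rho_1,\sigma_1)$ (using the final remark before the proposition that $(\rho_1,\sigma_1)=\val(\en_{\rho_1,\sigma_1}(P)-\st_{\rho_1,\sigma_1}(P))$ and that here $\en_{\rho,\sigma}(P)=\st_{\rho_1,\sigma_1}(P)$ will follow once we know $\Succ(\rho,\sigma)=(\rho_1,\sigma_1)$; to avoid circularity I would instead argue directly that some point of $\Supp(\ell_{\rho_1,\sigma_1}(P))$ other than $\st_{\rho_1,\sigma_1}(P)$ yields the valuation $(\rho_1,\sigma_1)$ and lies in $\Valsup(\rho,\sigma)$, forcing $\min\Valsup(\rho,\sigma)\le(\rho_1,\sigma_1)$, hence $=(\rho_1,\sigma_1)$ by minimality of $(\rho_1,\sigma_1)$ among valuations $>(\rho,\sigma)$). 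Item~(2) is proved symmetrically, exchanging $\Succ\leftrightarrow\Pred$, $\en\leftrightarrow\st$, $(-1,1)\leftrightarrow(1,-1)$ and reversing inequalities.

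For item~(3), with $(\rho_1,\sigma_1)>(\rho,\sigma)>(\rho_2,\sigma_2)$ consecutive, items~(1) and~(2) give $(\rho_1,\sigma_1)=\Succ_{\rho,\sigma}(P)$ and $(\rho_2,\sigma_2)=\Pred_{\rho,\sigma}(P)$ (the hypotheses of both apply since consecutive elements of $\ov{\Val}(P)$ with something strictly between them in $\mathfrak V$ must each lie in $\Val(P)$ — indeed $(1,-1)$ and $(-1,1)$ are the extremes and cannot both be consecutive to a point with a strict predecessor/successor unless they are in $\Val(P)$; more carefully, if $(\rho_1,\sigma_1)=(-1,1)$ then there is nothing $>(\rho,\sigma)$ issue does not arise, and one checks $\Valsup(\rho,\sigma)=\emptyset$ forces $\en_{\rho,\sigma}(P)$ to be the top corner, consistent with the claim; I would handle the boundary cases $(\rho_1,\sigma_1)=(-1,1)$ and $(\rho_2,\sigma_2)=(1,-1)$ separately, where the relevant $\Valsup$ or $\Valinf$ is empty and $\Supp(\ell_{\rho,\sigma}(P))$ reduces to a single corner by Lemma~\ref{le recorte de dominio}). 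Then Lemma~\ref{predysucc} gives $\en_{\rho,\sigma}(P)=\st_{\rho_1,\sigma_1}(P)$ and $\st_{\rho,\sigma}(P)=\en_{\rho_2,\sigma_2}(P)$. It remains to show $\Supp(\ell_{\rho,\sigma}(P))$ is a single point, i.e. $\st_{\rho,\sigma}(P)=\en_{\rho,\sigma}(P)$. Suppose not; then $(\rho,\sigma)\in\Val(P)$, contradicting that $(\rho_1,\sigma_1),(\rho_2,\sigma_2)$ are \emph{consecutive} in $\ov{\Val}(P)$ with $(\rho,\sigma)$ strictly between them. Hence $\Supp(\ell_{\rho,\sigma}(P))=\{\st_{\rho,\sigma}(P)\}=\{\en_{\rho,\sigma}(P)\}$, and combining with the two equalities above finishes the proof.

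The main obstacle I expect is the careful bookkeeping at the boundary cases $(\rho_1,\sigma_1)=(-1,1)$ or $(\rho_2,\sigma_2)=(1,-1)$, where $\Valsup$ or $\Valinf$ may be empty so $\Succ$ or $\Pred$ is undefined; there one must instead invoke Lemma~\ref{le recorte de dominio} directly to see that $\Supp(\ell_{\rho,\sigma}(P))$ collapses to the appropriate single corner. Everything else is a routine translation of the order relation $(\rho_1,\sigma_1)\le(\rho,\sigma)\Leftrightarrow(\rho_1,\sigma_1)\times(\rho,\sigma)\ge 0$ into inequalities among the $v_{\rho,\sigma}$-values, exactly as in Lemmas~\ref{separacion} and~\ref{predysucc}.
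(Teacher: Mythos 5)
Your overall architecture for item~(1) --- show $\Valsup(\rho,\sigma)\ne\emptyset$ so that $\Succ(\rho,\sigma)$ exists, note $\Succ(\rho,\sigma)\in\Val(P)$ and $\Succ(\rho,\sigma)>(\rho,\sigma)$ by Lemmas~\ref{predysucc} and~\ref{separacion}, and conclude $\Succ(\rho,\sigma)\ge(\rho_1,\sigma_1)$ from the minimality of $(\rho_1,\sigma_1)$ --- is exactly the paper's, and your treatment of the generic case of item~(3) is also fine. The gap is in the reverse inequality $\Succ(\rho,\sigma)\le(\rho_1,\sigma_1)$. You try to obtain it by showing $(\rho_1,\sigma_1)\in\Valsup(\rho,\sigma)$; you correctly flag that the natural witness ($\en_{\rho_1,\sigma_1}(P)$ together with the identity $\st_{\rho_1,\sigma_1}(P)=\en_{\rho,\sigma}(P)$) is circular, but your proposed replacement --- ``some point of $\Supp(\ell_{\rho_1,\sigma_1}(P))$ other than $\st_{\rho_1,\sigma_1}(P)$ yields the valuation $(\rho_1,\sigma_1)$ and lies in $\Valsup(\rho,\sigma)$'' --- is not an argument. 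For a point $(i/l,j)\in\Supp(\ell_{\rho_1,\sigma_1}(P))$ to witness $(\rho_1,\sigma_1)\in\Valsup(\rho,\sigma)$ one needs $\val\bigl((i/l,j)-\en_{\rho,\sigma}(P)\bigr)=(\rho_1,\sigma_1)$, i.e.\ $v_{\rho_1,\sigma_1}\bigl(\en_{\rho,\sigma}(P)\bigr)=v_{\rho_1,\sigma_1}(P)$, which is precisely the statement that $\en_{\rho,\sigma}(P)$ lies on the $(\rho_1,\sigma_1)$-face --- the very fact that Lemma~\ref{predysucc} delivers only after one already knows $\Succ(\rho,\sigma)=(\rho_1,\sigma_1)$. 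So the step remains circular as written.

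The paper closes this step differently: it shows that no $(\rho_3,\sigma_3)\in\Val(P)$ satisfies $(\rho,\sigma)<(\rho_3,\sigma_3)<\Succ(\rho,\sigma)$, by sandwiching every $(i/l,j)\in\Supp(\ell_{\rho_3,\sigma_3}(P))$: if $v_{-1,1}(i/l,j)\le v_{-1,1}(\en_{\rho,\sigma}(P))$ one applies item~(1) of Lemma~\ref{le recorte de dominio}; otherwise one uses $\en_{\rho,\sigma}(P)=\st_{\Succ(\rho,\sigma)}(P)$ from Lemma~\ref{predysucc}(1) and applies item~(2) of Lemma~\ref{le recorte de dominio} with $\Succ(\rho,\sigma)$ in place of $(\rho,\sigma)$. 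Both cases force $(i/l,j)=\en_{\rho,\sigma}(P)$, so $(\rho_3,\sigma_3)\notin\Val(P)$, and minimality of $(\rho_1,\sigma_1)$ in $\Val(P)\cap\{(\rho',\sigma'):(\rho',\sigma')>(\rho,\sigma)\}$ then yields $(\rho_1,\sigma_1)\ge\Succ(\rho,\sigma)$. You would need to add this two-sided use of Lemma~\ref{le recorte de dominio} (or prove the membership claim by a comparable argument). A similar remark applies to the boundary cases of item~(3): that $\Supp(\ell_{\rho,\sigma}(P))$ is a singleton is immediate from $(\rho,\sigma)\notin\Val(P)$, but identifying that singleton with $\st_{-1,1}(P)$ when $(\rho_1,\sigma_1)=(-1,1)$ (resp.\ with $\en_{1,-1}(P)$ when $(\rho_2,\sigma_2)=(1,-1)$) requires the short extra argument ruling out a shift by a positive multiple of $(1,1)$, which your sketch omits.
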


\begin{proof} (1)\enspace By item~(1) of Lemmas~\ref{separacion} and~\ref{predysucc} it is clear that the existence of $\Succ(\rho,\sigma)$ implies
$$
(\rho,\sigma)<\Succ(\rho,\sigma)\quad\text{and}\quad \Succ(\rho,\sigma)\in \Val(P).
$$
Hence $(\rho_1,\sigma_1)\le \Succ(\rho,\sigma)$. So we must prove that $\Succ(\rho,\sigma)$ exists and that $(\rho_1,\sigma_1)\ge \Succ(\rho,\sigma)$. For the existence it suffices to prove that $\Valsup(\rho,\sigma)\ne \emptyset$. Assume on the contrary that $\Valsup_{\rho,\sigma}(P) = \emptyset$. Then, by definition
$$
v_{-1,1}(i=l,j)\le v_{-1,1}(\en_{\rho,\sigma})(P)\quad\text{for all $(i/l,j)\in \Supp(P)$.}
$$
Consequently, since $(\rho,\sigma)<(\rho_1,\sigma_1)< (-1,1)$, by item~(1) of Lemma~\ref{le recorte de dominio},
$$
\Supp(\ell_{\rho_1,\sigma_1}(P)) = \{\en_{\rho,\sigma}(P)\},
$$
and so $(\rho_1,\sigma_1)\notin \Val(P)$, which is a contradiction.

Now we prove that $(\rho_1,\sigma_1)\ge \Succ(\rho,\sigma)$. Since $(\rho_1,\sigma_1)$ is the minimal element of $\Val(P)$ greater than $(\rho,\sigma)$, it suffices to prove that there exists no $(\rho_3,\sigma_3)\in \Val(P)$ such that $\Succ(\rho,\sigma)> (\rho_3,\sigma_3)>(\rho,\sigma)$. In other words that
\begin{equation*}
\Succ_{\rho,\sigma}(P)> (\rho_3,\sigma_3)>(\rho,\sigma) \Longrightarrow (\rho_3,\sigma_3)\notin \Val(P).
\end{equation*}
So let us assume $\Succ(\rho,\sigma)> (\rho_3,\sigma_3) > (\rho,\sigma)$ and let $(i/l,j)\in \Supp(\ell_{\rho_3,\sigma_3}(P))$. We assert that  $(i/l,j)=\en_{\rho,\sigma}(P)$, which shows that $\Supp(\ell_{\rho_3,\sigma_3}(P))=\{\en_{\rho,\sigma}(P)\}$, and consequently that $(\rho_3,\sigma_3)\notin \Val(P)$. In fact, if $v_{-1,1}(i/l,j)\le v_{-1,1}(\en_{\rho,\sigma}(P))$, this follows from item~(1) of Lemma~\ref{le recorte de dominio}, applied to $(\rho_3,\sigma_3)$ instead of $(\rho_1,\sigma_1)$. Assume now that $v_{-1,1}(i/l,j) \ge v_{-1,1}(\en_{\rho,\sigma}(P))$. Since, by item~(1) of Lemma~\ref{predysucc}, we know that $\st_{\Succ(\rho,\sigma)}(P) = \en_{\rho,\sigma}(P)$, we have
$$
v_{1,-1}(i/l,j) \le v_{1,-1}(\en) = v_{1,-1}\bigl(\st_{\Succ(\rho,\sigma)}(P)\bigr).
$$
Hence, applying item~(2) of Lemma~\ref{le recorte de dominio}, with $\Succ(\rho,\sigma)$ instead of $(\rho,\sigma)$ and $(\rho_3,\sigma_3)$ instead of $(\rho_1,\sigma_1)$, and taking into account that $(i/l,j)\in \Supp(\ell_{\rho_3,\sigma_3}(P))$, we ob\-tain
$$
v_{\rho_3,\sigma_3}(i/l,j) = v_{\rho_3,\sigma_3}\bigl(\st_{\Succ(\rho,\sigma)}(P)\bigr).
$$
Consequently, since $(\rho_3,\sigma_3) \ne (1,-1)$, it follows, again by item~(2) of Lemma~\ref{le recorte de dominio}, that $(i/l,j) = \st_{\Succ(\rho,\sigma)}(P) = \en_{\rho,\sigma}(P)$, which proves the assertion.

\smallskip

\noindent (2)\enspace It is similar to the proof of item~(1).

\smallskip

\noindent (3)\enspace We first prove that if $(\rho_1,\sigma_1)\in \Val(P)$, then
$$
\{\st_{\rho_1,\sigma_1}(P)\} = \Supp(\ell_{\rho,\sigma}(P)).
$$
Since $\{\en_{\rho,\sigma}(P)\} = \Supp(\ell_{\rho,\sigma}(P))$, this fact follows from item~(1) and item~(1) of Lemma~\ref{predysucc}. This conclude the proof of the first equality in the statement when $(\rho_1,\sigma_1)\in (-1,1)$. Now, a symmetric argument shows that if $(\rho_2,\sigma_2)>(1,-1)$, then
$$
\{\en_{\rho_2,\sigma_2}(P)\} = \Supp(\ell_{\rho,\sigma}(P)).
$$
Assume now that $(\rho_1,\sigma_1)=(-1,1)$ and $(\rho_2,\sigma_2)\ne (1,-1)$. Then, by item~(1) of Lemmas~\ref{separacion} and~\ref{predysucc}, we have $\Valsup(\rho_2,\sigma_2)= \emptyset$. Hence
$$
v_{-1,1}(i/l,j)\le v_{-1,1}(\en_{\rho_2,\sigma_2}(P)),
$$
for all $(i/l,j)\in \Supp(P)$. Consequently, $\en_{\rho_2,\sigma_2}(P)\in \Supp\bigl(\ell_{-1,1} (P)\bigr)$, and so
$$
\st_{-1,1}(P) = \en_{\rho_2,\sigma_2}(P)+(a,a),
$$
for some $a\ge 0$. But necessarily $a=0$, since $a>0$ leads to the contradiction
$$
v_{\rho_2,\sigma_2}(\st_{-1,1}(P))= v_{\rho_2,\sigma_2}(\en_{\rho_2,\sigma_2}(P)+(a,a))= v_{\rho_2,\sigma_2}(P)+ a(\rho_2+\sigma_2).
$$
Thus
$$
\{\st_{\rho_1,\sigma_1}(P)\} = \{\en_{\rho_2,\sigma_2}(P)\} = \Supp(\ell_{\rho,\sigma}(P)).
$$
Similarly, if $(\rho_1,\sigma_1)\ne (-1,1)$ and $(\rho_2,\sigma_2)= (1,-1)$, then
$$
\{\st_{\rho_1,\sigma_1}(P)\} = \{\en_{\rho_2,\sigma_2}(P)\} = \Supp(\ell_{\rho,\sigma}(P)).
$$
Finally we assume that $(\rho_1,\sigma_1) = (-1,1)$ and $(\rho_2,\sigma_2) = (1,-1)$. Since $\Val(P) = \emptyset$, it follows from  Lemma~\ref{predysucc} that
$$
\Valsup(\rho,\sigma) = \emptyset = \Valinf(\rho,\sigma).
$$
Hence
\begin{equation}
v_{-1,1}(P) = v_{-1,1}(\en_{\rho,\sigma}(P))\quad\text{and}\quad v_{1,-1}(P) = v_{1,-1}(\st_{\rho,\sigma}(P)). \label{eq30}
\end{equation}
But, since $\en_{\rho,\sigma}(P)=\st_{\rho,\sigma}(P)$, it follows easily from~\eqref{eq30} that $P = \ell_{-1,1}(P)$, and so,
$$
\{\en_{1,-1}(P)\}=\{w(P)\}=\{\ov{w}(P)\}=\{\st_{-1,1}(P)\} = \Supp\bigl(\ell_{\rho,\sigma}(P)\bigr),
$$
as desired.
\end{proof}

\begin{proposition}\label{le basico1} Let $P\in W^{(l)}\setminus\{0\}$ and let $(\rho',\sigma')\in Val(P)$. We have:

\begin{enumerate}

\smallskip

\item If $(\rho,\sigma)\in \ov{\mathfrak{B}}$ satisfy $\sigma <0$ and $(\rho',\sigma')<(\rho,\sigma)$, then
$$
v_{\rho,\sigma}\bigl(\st_{\rho',\sigma'}(P)\bigr) < v_{\rho,\sigma}\bigl(\en_{\rho',\sigma'}(P)\bigr),
$$

\smallskip

\item If $(\rho,\sigma)\in \ov{\mathfrak{B}}$ satisfy $(\rho,\sigma)<(\rho',\sigma')$, then
$$
v_{\rho,\sigma}\bigl(\st_{\rho',\sigma'}(P)\bigr) > v_{\rho,\sigma}\bigl(\en_{\rho',\sigma'}(P)\bigr).
$$

\smallskip

\end{enumerate}
The same properties hold for $P\in L^{(l)}\setminus\{0\}$.
\end{proposition}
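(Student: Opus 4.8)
The plan is to deduce both inequalities from the sign of the single determinant $(\rho',\sigma')\times(\rho,\sigma)$. Throughout set $\st:=\st_{\rho',\sigma'}(P)$ and $\en:=\en_{\rho',\sigma'}(P)$, which are well defined because $(\rho',\sigma')\in\mathfrak V$. The crux will be an identity expressing $v_{\rho,\sigma}(\en)-v_{\rho,\sigma}(\st)$ as a \emph{positive} multiple of $(\rho',\sigma')\times(\rho,\sigma)$; granting this, items~(1) and~(2) become a routine case analysis on the sign of that determinant.

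To establish the identity I would first collect the relevant geometric facts. Since $(\rho',\sigma')\in\Val(P)$, the set $\Supp(\ell_{\rho',\sigma'}(P))$ has more than one element and lies entirely on the line $\rho' i/l+\sigma' j=v_{\rho',\sigma'}(P)$; as $\rho'+\sigma'>0$ this line is not a level line of $v_{1,-1}$, so $v_{1,-1}$ is not constant on $\Supp(\ell_{\rho',\sigma'}(P))$. By the definitions of $w$ and $\ov w$, $\st$ is the point of that set of largest $v_{1,-1}$ and $\en$ the one of smallest, so $\st\ne\en$ and $v_{1,-1}(\en-\st)<0$. By the Remark preceding the statement, $(\rho',\sigma')=\val(\en-\st)$; in particular $v_{\rho',\sigma'}(\en-\st)=0$, so $\en-\st=t\,(-\sigma',\rho')$ for a unique $t\in\mathds Q$. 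Evaluating $v_{1,-1}$ on both sides gives $v_{1,-1}(\en-\st)=-t(\rho'+\sigma')$, whence $t>0$ since $\rho'+\sigma'>0$. Then, for every $(\rho,\sigma)\in\ov{\mathfrak V}$,
$$
v_{\rho,\sigma}(\en)-v_{\rho,\sigma}(\st)=v_{\rho,\sigma}(\en-\st)=t\,(\sigma\rho'-\rho\sigma')=t\,\bigl[(\rho',\sigma')\times(\rho,\sigma)\bigr].
$$

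Finally I would run the sign analysis. For item~(1): if $\rho+\sigma=0$ then $\sigma<0$ forces $(\rho,\sigma)=(1,-1)$, and $(\rho',\sigma')<(1,-1)$ cannot occur because $(1,-1)$ is the least element of $\ov{\mathfrak V}$, so the hypothesis is vacuous; otherwise $(\rho,\sigma)\in\mathfrak V$, and $(\rho',\sigma')<(\rho,\sigma)$ gives $(\rho',\sigma')\times(\rho,\sigma)>0$ by the determinant description of the order on $\mathfrak V$ (strictly, since distinct elements of $\mathfrak V$ are never proportional), so $v_{\rho,\sigma}(\st)<v_{\rho,\sigma}(\en)$. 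For item~(2): the case $(\rho,\sigma)=(-1,1)$ is vacuous as $(-1,1)$ is the greatest element; if $(\rho,\sigma)=(1,-1)$ then $(\rho',\sigma')\times(1,-1)=-(\rho'+\sigma')<0$ outright; and if $(\rho,\sigma)\in\mathfrak V$ then $(\rho,\sigma)<(\rho',\sigma')$ gives $(\rho,\sigma)\times(\rho',\sigma')>0$, i.e.\ $(\rho',\sigma')\times(\rho,\sigma)<0$; in every case $v_{\rho,\sigma}(\st)>v_{\rho,\sigma}(\en)$. The statement for $P\in L^{(l)}\setminus\{0\}$ is proved verbatim, since $\st$, $\en$, $\Supp$ and the $v_{\rho,\sigma}$ are all transported from $L^{(l)}$ and obey the same formal rules. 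The one step requiring genuine care is the first one: verifying that $v_{1,-1}$ separates $\st$ from $\en$ and does so with the sign that makes the orientation constant $t$ positive; once that is pinned down, the rest is the elementary sign analysis of a $2\times 2$ determinant.
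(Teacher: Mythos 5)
Your proof is correct and follows essentially the same route as the paper: both write $\en_{\rho',\sigma'}(P)-\st_{\rho',\sigma'}(P)$ as a positive multiple of $(-\sigma',\rho')$ and then read off the sign of $v_{\rho,\sigma}$ of that difference from the determinant $(\rho',\sigma')\times(\rho,\sigma)$ that defines the order. The only cosmetic difference is that the paper gets the positive multiple $\gamma/\rho'$ from the explicit parametrization of $\ell_{\rho',\sigma'}(P)$ in Definition~\ref{polinomio asociado f^{(l)}}, whereas you derive $t>0$ from the $v_{1,-1}$-extremality built into the definitions of $w$ and $\ov{w}$, and you treat the boundary cases $(\rho,\sigma)\in\{(1,-1),(-1,1)\}$ explicitly.
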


\begin{proof} We prove item~(1) and leave the proof of item~(2), which is similar, to the reader. By definition
$$
(\rho',\sigma')<(\rho,\sigma)\Longleftrightarrow \rho'\sigma - \sigma'\rho >0 \Longleftrightarrow  \sigma-\frac{\rho\sigma'}{\rho'}>0,
$$
where the last equivalence follows from the fact that $\rho'>0$. Now, set
$$
\st_{\rho',\sigma'}(P) = \Bigl(\frac{r}{l},s\Bigr)\quad\text{and}\quad \en_{\rho',\sigma'}(P) = \Bigl(\frac{r}{l}- \frac{\gamma\sigma'}{\rho'},s+\gamma\Bigr)
$$
Since $(\rho',\sigma')\in Val(P)$ we have $\gamma\in \mathds{N}$. Consequently,
$$
v_{\rho,\sigma}\bigl(\en_{\rho',\sigma'}(P)\bigr) - v_{\rho,\sigma}\bigl(\st_{\rho',\sigma'}(P)\bigr) = \sigma\gamma -\frac{\rho\sigma'\gamma }{\rho'}>0,
$$
as desired.
\end{proof}

\section{Fixed points of $(\rho,\sigma)$-brackets}

\setcounter{equation}{0}

The aim of this section is to construct $F\in W$ such that $[F,P]_{\rho,\sigma} = \ell_{\rho,\sigma}(P)$ and $[F,P]_{\rho,\sigma} = \ell_{\rho,\sigma}(P)$ for suitable pairs $(P,Q)$ and some given $(\rho,\sigma)\in \mathfrak{V}$.

\begin{theorem}\label{tecnico1} Let $C\!\in\! W^{(l)}$ and let $(\rho,\sigma)\!\in\! \mathfrak{V}$ with $\sigma\le 0$. Suppose that
$$
v_{\rho,\sigma}(C)> 0\quad\text{and}\quad \ell_{\rho,\sigma}(C) \ne \zeta \ell_{\rho,\sigma}(D^h),\quad\! \text{for all $D\in W^{(l)}$, $\zeta\in K^{\times}$ and $h>1$.} $$
If there exist $n,m\in \mathds{N}$ and $A,B\in W^{(l)}$ such that
\begin{enumerate}

\smallskip

\item $\ell_{\rho,\sigma}(A)=\ell_{\rho,\sigma}(C^n)$,

\smallskip

\item $\ell_{\rho,\sigma}(B)=\ell_{\rho,\sigma}(C^m)$,

\smallskip

\item $c := \gcd(n,m)\notin \{n,m\}$,

\smallskip

\item $\ell_{\rho,\sigma}([A,B]) = \lambda \ell_{\rho,\sigma}(C^h)$, for some $h\in \mathds{N}_0$ and $\lambda\in K^{\times}$,

\smallskip

\end{enumerate}
then there exist $D\in W^{(l)}$, $\mu\in K^{\times}$ and $k,j_0\in \mathds{N}$, such that
$$
[D,C^k]_{\rho,\sigma} = \mu\ell_{\rho,\sigma}(C^{k+j_0}).
$$
\end{theorem}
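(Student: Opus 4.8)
The plan is to read off the numerical data forced by (1)--(4), build a candidate $D$ as a difference of powers of $A$ and $B$, compute the leading term of its bracket with a power of $C$ via Lemma~\ref{ell del conmutador de una potencia}, and finally normalise $D$ by a finite descent.

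First, the degree bookkeeping. Put $f:=f^{(l)}_{C,\rho,\sigma}$. By Remark~\ref{f de un producto} and hypotheses (1), (2) we get $f^{(l)}_{A,\rho,\sigma}=f^{n}$ and $f^{(l)}_{B,\rho,\sigma}=f^{m}$, hence $\ell_{\rho,\sigma}(A)=\ell_{\rho,\sigma}(C)^{n}$, $\ell_{\rho,\sigma}(B)=\ell_{\rho,\sigma}(C)^{m}$, and by Proposition~\ref{pr v de un producto}, $v_{\rho,\sigma}(A)=nv_{\rho,\sigma}(C)$, $v_{\rho,\sigma}(B)=mv_{\rho,\sigma}(C)$. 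By (4), $[A,B]\ne0$ and $v_{\rho,\sigma}([A,B])=hv_{\rho,\sigma}(C)$, so Remark~\ref{re v de un conmutador} forces $h<n+m$. Write $\bar n:=n/c$ and $\bar m:=m/c$; by (3) both are $\ge2$, and being coprime they are distinct, so $(\bar n-1)(\bar m-1)\ge2$. Moreover $A$ and $B$ are $(\rho,\sigma)$-proportional: otherwise part~(1) of Theorem~\ref{f[] en W^{(l)}} yields $x^{h'}\lambda f^{h}=cf^{n}f^{m}+ax(f^{n})'f^{m}-bx(f^{m})'f^{n}$ with $a=\tfrac{m}{\rho}v_{\rho,\sigma}(C)$ and $b=\tfrac{n}{\rho}v_{\rho,\sigma}(C)$; since $na=mb$ the derivative terms cancel, leaving $\lambda x^{h'}f^{h}=cf^{n+m}$, which (as $f(0)\ne0$ and $h<n+m$) forces $\deg f=0$, so $\ell_{\rho,\sigma}(A)$ and $\ell_{\rho,\sigma}(B)$ are monomials and Proposition~\ref{calculo del corchete} gives $[A,B]_{\rho,\sigma}=0$, a contradiction.

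Second, the candidate and its bracket. Since $\ell_{\rho,\sigma}(A)^{\bar m}=\ell_{\rho,\sigma}(C)^{nm/c}=\ell_{\rho,\sigma}(B)^{\bar n}$, the leading terms of $A^{\bar m}$ and $B^{\bar n}$ agree; set $D_0:=A^{\bar m}-B^{\bar n}\in W^{(l)}$, so $v_{\rho,\sigma}(D_0)<\tfrac{nm}{c}v_{\rho,\sigma}(C)$. Because $[A^{\bar m},A^{t}]=0$, we have $[D_0,A^{t}]=-[B^{\bar n},A^{t}]$ for every $t\in\mathds{N}$, and applying Lemma~\ref{ell del conmutador de una potencia} twice (using $[A,B]\ne0$) one obtains
$$\ell_{\rho,\sigma}\bigl([D_0,A^{t}]\bigr)=\bar n\,t\,\lambda\,\ell_{\rho,\sigma}(C)^{\,nt+j_0},\qquad j_0:=\tfrac{nm}{c}-n-m+h=c\bigl((\bar n-1)(\bar m-1)-1\bigr)+h\ \ge\ c\ \ge\ 1,$$
with $v_{\rho,\sigma}([D_0,A^{t}])=(nt+j_0)v_{\rho,\sigma}(C)$; in particular $[D_0,A^{t}]\ne0$. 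As $\ell_{\rho,\sigma}(A^{t})=\ell_{\rho,\sigma}(C^{nt})$, Corollary~\ref{ell depende del ell} shows that a relation $[D,A^{t}]_{\rho,\sigma}=\mu\,\ell_{\rho,\sigma}(C^{nt+j_0})$ is literally the desired $[D,C^{k}]_{\rho,\sigma}=\mu\,\ell_{\rho,\sigma}(C^{k+j_0})$ with $k:=nt$. So it remains to produce $D$ and $t$ with $D$ and $A^{t}$ not $(\rho,\sigma)$-proportional and $\ell_{\rho,\sigma}([D,A^{t}])$ a nonzero scalar multiple of $\ell_{\rho,\sigma}(C)^{nt+j_0}$.

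Third, the normalisation, which is the main obstacle. By Remark~\ref{re v de un conmutador} applied to $[D_0,A^{t}]$ one always has $v_{\rho,\sigma}(D_0)\ge j_0v_{\rho,\sigma}(C)+(\rho+\sigma)$; if equality holds, $D_0$ and $A^{t}$ are not $(\rho,\sigma)$-proportional and the previous paragraph completes the proof with $D=D_0$, $t=1$, $k=n$, $\mu=\bar n\lambda$ and $j_0$ as above. Otherwise $D_0$ and $A^{t}$ are $(\rho,\sigma)$-proportional for every $t$, and, since $v_{\rho,\sigma}(D_0)>0$, part~(2) of Theorem~\ref{f[] en W^{(l)}} together with the hypothesis that $\ell_{\rho,\sigma}(C)$ is not a proper power and unique factorisation in the localised polynomial ring $L^{(l)}$ (comparing irreducible factors and $(\rho,\sigma)$-degrees) force $\ell_{\rho,\sigma}(D_0)=\lambda_0\,\ell_{\rho,\sigma}(C^{p})$ for some $\lambda_0\in K^{\times}$ and some integer $p$ with $j_0<p<nm/c$. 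One then replaces $D_0$ by $D_0-\lambda_0C^{p}$, which strictly lowers the $(\rho,\sigma)$-degree while, thanks to $[C^{p},A^{t}]_{\rho,\sigma}=[C^{p},C^{nt}]_{\rho,\sigma}=0$ and Lemma~\ref{ell del conmutador de una potencia}, keeping the bracket with $A^{t}$ under control, and iterates. The hard part is organising this descent so that it terminates at a $D$ with $v_{\rho,\sigma}(D)=j_0v_{\rho,\sigma}(C)+(\rho+\sigma)$ whose bracket with $A^{t}$ is still a multiple of $\ell_{\rho,\sigma}(C)^{nt+j_0}$: this forces one to track the $(\rho,\sigma)$-degrees of the correction terms $A-C^{n}$, $B-C^{m}$ and of the commutators $[C,A-C^{n}]$ through Lemma~\ref{ell del conmutador de una potencia} and Proposition~\ref{calculo del corchete}, and it is precisely here that the primitivity of $\ell_{\rho,\sigma}(C)$ and the condition $\gcd(n,m)\notin\{n,m\}$ (which makes $(\bar n-1)(\bar m-1)\ge2$, hence $j_0\ge1$) are genuinely used.
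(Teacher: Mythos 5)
Your opening coincides with the paper's: the same candidate $D_0=A^{m/c}-B^{n/c}$, the same use of Lemma~\ref{ell del conmutador de una potencia} to show that the bracket of $D_0$ with a power of $A$ or $B$ has leading term a nonzero multiple of $\ell_{\rho,\sigma}(C)^{k+j_0}$ with $j_0=\tfrac{nm}{c}-n-m+h\ge c\ge1$, and the same dichotomy according to whether $D_0$ is $(\rho,\sigma)$-proportional to the other factor. The gap is in the normalisation step, which you yourself flag as unfinished, and the mechanism you propose for it fails on two counts. First, the correction term is the wrong one. Subtracting $\lambda_0C^{p}$ only gives $[C^{p},B]_{\rho,\sigma}=0$, i.e.\ $v_{\rho,\sigma}([C^{p},B])<(p+m)v_{\rho,\sigma}(C)-(\rho+\sigma)$ with no quantitative control; and in the proportional case one has $p\,v_{\rho,\sigma}(C)>j_0v_{\rho,\sigma}(C)+(\rho+\sigma)$, so this bound does not keep $v_{\rho,\sigma}([C^{p},B])$ below the target degree $(m+j_0)v_{\rho,\sigma}(C)$ — the correction may destroy the very leading term you are trying to preserve. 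The paper instead corrects only by monomials $A^{i}B^{j}$ with $in+jm<\tfrac{nm}{c}$ (the set $X$ in its proof), precisely because for these Lemma~\ref{ell del conmutador de una potencia} gives the exact leading term $i\lambda\,\ell_{\rho,\sigma}(C^{ni+mj-n+h})$ of the bracket with $B$, of degree strictly below the target.

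Second, and more fundamentally, the descent cannot be organised to terminate at $v_{\rho,\sigma}(D)=j_0v_{\rho,\sigma}(C)+(\rho+\sigma)$. To subtract $\lambda_1A^{a}B^{b}$ one needs $p=an+bm$ with $a,b\ge0$, which by \cite[Lemma~4.1]{G-G-V2} requires $c\mid p$; the paper shows that the element $D_1\in X$ of minimal $(\rho,\sigma)$-degree satisfies $\ell_{\rho,\sigma}(D_1)=\lambda_1\ell_{\rho,\sigma}(C^{r})$ with $c<r<\tfrac{nm}{c}$ and $c\nmid r$, so the descent gets stuck strictly above the desired degree. The missing idea is an outer minimality argument: one chooses $(A,B)$ satisfying (1)--(4) with $c=\gcd(n,m)$ minimal at the outset, and when the inner minimisation stalls one sets $\bar A:=\lambda_1^{-1}D_1$ and $\bar B:=A^{a}B^{b}$ with $\gcd(r,an+bm)=\gcd(c,r)<c$ (using \cite[Lemma~4.2]{G-G-V2}), checks via Lemma~\ref{ell del conmutador de una potencia} and equalities analogous to yours that $(\bar A,\bar B)$ again satisfies (1)--(4), and contradicts the minimality of $c$. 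Without this second layer your argument does not close.
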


\begin{proof} Take $A$ and $B$ satisfying the hypothesis of the statement with $c$ minimum. Set
\begin{align*}
& m_1:=m/c,\quad n_1:=n/c,\quad D_0:=A^{m_1}-B^{n_1}
\shortintertext{and}
& X:=\biggl\{D=D_0+\sum_{i,j\in \mathds{N}_0}\lambda_{ij}A^i B^j\in W^{(l)} : in+jm<cn_1 m_1 \text{ and }  \lambda_{ij}\in K \biggr\}.
\end{align*}
We claim that each element $D\in X$ satisfies
\begin{align}
&\ell_{\rho,\sigma}([D,B])= m_1 \lambda \ell_{\rho,\sigma}(C^{nm_1-n+h}) \label{commutatorDB1l}
\shortintertext{and}
&\ell_{\rho,\sigma}([D,A])= n_1 \lambda \ell_{\rho,\sigma}(C^{mn_1-m+h}). \label{commutatorDB2l}
\end{align}
In fact, this is true for $D_0$ since, by Proposition~\ref{pr v de un producto}, Lemma~\ref{ell del conmutador de una potencia} and items~(1) and~(4), we have
$$
\ell_{\rho,\sigma}([D_0,B]) = \ell_{\rho,\sigma}([A^{m_1},B])= m_1 \lambda \ell_{\rho,\sigma}(C^{nm_1-n+h}).
$$
and similarly
$$
\ell_{\rho,\sigma}([D_0,A]) = n_1 \lambda \ell_{\rho,\sigma}(C^{mn_1-m+h}).
$$
In particular $D_0\ne 0$. So, in order to establish~\eqref{commutatorDB1l} and~\eqref{commutatorDB2l}, it suffices to show that
\begin{align*}
& v_{\rho,\sigma}(\ell_{\rho,\sigma}([A^i B^j,B])) < (nm_1-n+h) v_{\rho,\sigma}(C)
\shortintertext{and}
&v_{\rho,\sigma}([A^i B^j,A]) < (mn_1-m+h) v_{\rho,\sigma}(C),
\end{align*}
for all $i,j$ such that $in+jm < n_1 m_1c$. But this follows from the fact that, again by Proposition~\ref{pr v de un producto}, Lemma~\ref{ell del conmutador de una potencia} and items~(1), (2) and~(4),
\begin{align*}
&\ell_{\rho,\sigma}([A^i B^j,B])= \ell_{\rho,\sigma}([A^i ,B]B^j) = i\lambda\ell_{\rho,\sigma}(C^{ni+mj-n+h})
\shortintertext{and}
&\ell_{\rho,\sigma}([A^i B^j,A]) = \ell_{\rho,\sigma}(A^i[B^j,A]) = j\lambda\ell_{\rho,\sigma}(C^{ni+mj-m+h}).
\end{align*}
Now, by Remark~\ref{re v de un conmutador}, equality~\eqref{commutatorDB1l} implies that for $D\in X$
$$
v_{\rho,\sigma}(D)+v_{\rho,\sigma}(B)-(\rho+\sigma) \ge v_{\rho,\sigma}(C^{nm_1-n+h}),
$$
and so, by item~(2),
\begin{equation}\label{eqq5l}
v_{\rho,\sigma}(D)> v_{\rho,\sigma}(C^{nm_1-n+h})-v_{\rho,\sigma}(B) = (nm_1-m-n+h)v_{\rho,\sigma}(C)
\end{equation}
for all $D\in X$. Hence, there exists $D_1\in X$ such that $v_{\rho,\sigma}(D_1)$ is minimum. We have two alternatives:
\begin{equation}
[D_1,B]_{\rho,\sigma}\ne 0\quad\text{or}\quad [D_1,B]_{\rho,\sigma}= 0.\label{eq20}
\end{equation}
Note that
$$
j_0: = nm_1-n-m+h\ge c (n_1m_1-n_1-m_1) =c\bigl((n_1-1)(m_1-1)-1\bigr) >0,
$$
since $\gcd(n_1,m_1) = 1$ and $n_1,m_1>1$ by item~(3). Hence, in the first case, the thesis holds with $k=m$ and $\mu = m_1\lambda$, because, by item~(2), Corollary~\ref{ell depende del ell} and~\eqref{commutatorDB1l},
$$
[D_1,C^m]_{\rho,\sigma} =  [D_1,B]_{\rho,\sigma} = m_1\lambda\ell_{\rho,\sigma}(C^{nm_1-n+h}).
$$
Assume now that $[D_1,B]_{\rho,\sigma} = 0$. We are going to show that this alternative is impossible, because it implies that $c$ is not minimum. In other words, that

\begin{itemize}

\item[(*)] there exist $\bar A, \bar B\in W^{(l)}$, $\bar \lambda\in K^{\times}$ and $\bar n,\bar m,\bar c,\bar h\in \mathds{N}$ with $\bar c<c$, such that~(1), (2), (3) and~(4) hold, with $\bar A$, $\bar B$, $\bar \lambda$, $\bar n$, $\bar m$, $\bar c$ and $\bar h$ instead of $A$, $B$, $\lambda$, $n$, $m$, $c$ and $h$ respectively.

\end{itemize}
With this purpose in mind, we claim that there exist $\lambda_1\in K^{\times}$ and $r\in \mathds{N}$, such that
\begin{equation}
\ell_{\rho,\sigma}(D_1) = \lambda_1 \ell_{\rho,\sigma}(C^r),\quad r<n_1 m_1 c,\quad r>c\quad\text{and}\quad c\nmid r\label{Al}
\end{equation}
In fact, by Corollary~\ref{ell depende del ell} and item~(2), we know that $[D_1,C^m]_{\rho,\sigma}  = [D_1,B]_{\rho,\sigma} = 0$, which by Lemma~\ref{ell del conmutador de una potencia} implies that $[D_1,C]_{\rho,\sigma} = 0$. Hence, by Theorem~\ref{f[] en W^{(l)}}, there exists $R = \ell_{\rho,\sigma}(R)\in L^{(l)}$, $\zeta,\xi \in K^{\times}$ and $r,s\in \mathds{N}$, such that
$$
\ell_{\rho,\sigma}(D_1) = \zeta R^r\quad\text{and}\quad \ell_{\rho,\sigma}(C) = \xi R^s.
$$
Besides, by the conditions required to $C$, it must be $s=1$ and so, Proposition~\ref{pr v de un producto}, $\ell_{\rho,\sigma}(D_1) = \frac{\zeta}{\xi^r}\ell_{\rho,\sigma}(C^r)$, which proves the equality in~\eqref{Al} with $\lambda_1 = \frac{\zeta}{\xi^r}$. Moreover
$$
r v_{\rho,\sigma}(C) = v_{\rho,\sigma}(D_1)\le v_{\rho,\sigma}(D_0)= v_{\rho,\sigma}(A^{m_1} - B^{n_1})<n_1 m_1 c v_{\rho,\sigma}(C),
$$
where the last inequality follows from the fact that, by items~(1) and~(2),
$$
\ell_{\rho,\sigma}(A^{m_1}) = \ell_{\rho,\sigma}(C^{cn_1m_1}) = \ell_{\rho,\sigma}(B^{n_1}).
$$
Thus $r<n_1 m_1 c$. Note that by~\eqref{eqq5l} and the equality in~\eqref{Al},
$$
rv_{\rho,\sigma}(C) = v_{\rho,\sigma}(D_1) > (nm_1-m-n+h)v_{\rho,\sigma}(C) \ge c(m_1n_1-m_1-n_1)v_{\rho,\sigma}(C).
$$
Hence
\begin{equation}
r>c(m_1n_1-m_1-n_1) = c\bigl((m_1-1)(n_1-1)-1\bigr)\ge c,\label{eqq9l}
\end{equation}
where the last equality holds, as before, since $m_1,n_1\ge 2$ and $m_1\ne n_1$. Next we will prove that $c$ does not divide $r$. Assume on the contrary that $c|r$. By~\cite[Lemma~4.1]{G-G-V2} and the first inequality in~\eqref{eqq9l} there exist $a_1,b_1\ge 0$ such that
$$
a_1 n_1 +b_1 m_1=\frac{r}{c}.
$$
Consequently
$$
a_1 n +b_1 m = r < cn_1 m_1,
$$
and so $D_2:=D_1-\lambda_1 A^{a_1}B^{b_1}\in X$. Moreover, since by items~(1) and~(2), and the equality in~\eqref{Al},
$$
\lambda_1 \ell_{\rho,\sigma}(A^{a_1}B^{b_1}) = \lambda_1 \ell_{\rho,\sigma}(C^{a_1 n +b_1 m})= \lambda_1 \ell_{\rho,\sigma}(C^r)= \ell_{\rho,\sigma}(D_1),
$$
we get $v_{\rho,\sigma}(D_2)<v_{\rho,\sigma}(D_1)$, which contradicts the minimality of $v_{\rho,\sigma}(D_1)$. Thus $c$ does not divide $r$.

Set $\bar c:=\gcd(c,r)$ and $\bar A:=\frac{1}{\lambda_1}D_1$. By~\eqref{Al}, we know that
$$
\ell_{\rho,\sigma}(\bar A) = \ell_{\rho,\sigma}(C^{\bar n}),
$$
where $\bar n := r>c>\bar c$. Moreover, by~\cite[Lemma~4.2]{G-G-V2} there exist $a,b\ge 0$, such that $\gcd(r,an+bm) = \bar c$. Note that $a>0$ or $b>0$, because $\bar c\ne r$. In particular $\bar c < c\le \min(n,m)\le an+bm$. Let $\bar B:=A^a B^b$. By Proposition~\ref{pr v de un producto} and items~(1) and~(2),
$$
\ell_{\rho,\sigma}(\bar B)= \ell_{\rho,\sigma}(C^{\bar m}),
$$
where $\bar m :=an+bm$.  So, in order to verify that~(*) holds, it only remains to establish~(4). But, since,
$$
\lambda_1[\bar A,\bar B] = [D_1,A^a B^b] = [D_1,A^a]B^b + A^a[D_1,B^b].
$$
and, by Proposition~\ref{pr v de un producto}, Lemma~\ref{ell del conmutador de una potencia}, items~(1) and~(2), and equalities~\eqref{commutatorDB1l} and~\eqref{commutatorDB2l}
\begin{align*}
& \ell_{\rho,\sigma}([D_1,A^a]B^b) = a n_1 \lambda \ell_{\rho,\sigma}(C^{\bar m +c(m_1 n_1-m_1-n_1)+h})
\shortintertext{and}
& \ell_{\rho,\sigma}(A^a[D_1,B^b]) = b m_1 \lambda \ell_{\rho,\sigma}(C^{\bar m +c(m_1 n_1-m_1 -n_1)+h}),
\end{align*}
we have
$$
\ell_{\rho,\sigma}([\bar A,\bar B]) = \bar \lambda C^{\bar h},
$$
with
$$
\bar \lambda:= \frac{\lambda}{\lambda_1}(a n_1+b m_1)\ne 0\quad\text{and}\quad\bar h:=\bar m +c (m_1 n_1-m_1 -n_1)+h>0,
$$
as desired.
\end{proof}

\begin{corollary}\label{diagonalcornerfreel} Let $(\rho,\sigma)\in \mathfrak{V}$ with $\sigma\le 0$, and let $C\in W^{(l)}$ such that
$$
v_{\rho,\sigma}(C)> 0\quad\text{and}\quad \ell_{\rho,\sigma}(C) \ne \zeta \ell_{\rho,\sigma}(D^h)\quad\!\text{for all $D\in W^{(l)}$, $\zeta\in K^{\times}$ and $h\in \mathds{N}$.}
$$
If there exist $n,m\in \mathds{N}$ and $A,B\in W^{(l)}$ such that
\begin{enumerate}

\smallskip

\item $\ell_{\rho,\sigma}(A)=\ell_{\rho,\sigma}(C^n)$,

\smallskip

\item $\ell_{\rho,\sigma}(B)=\ell_{\rho,\sigma}(C^m)$,

\smallskip

\item $c := \gcd(n,m)\notin \{n,m\}$,

\smallskip

\item $\ell_{\rho,\sigma}([A,B]) = \lambda \ell_{\rho,\sigma}(C^h)$, for some $h\in \mathds{N}_0$ and $\lambda\in K^{\times}$,

\smallskip

\end{enumerate}
then
$$
\Supp(\ell_{\rho,\sigma}(C))\ne \{(j,j)\}\quad\text{for all $j$.}
$$
\end{corollary}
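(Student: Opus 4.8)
The plan is to argue by contradiction. Suppose $\Supp(\ell_{\rho,\sigma}(C))=\{(j,j)\}$ for some $j$, so that $\ell_{\rho,\sigma}(C)=c_0\,x^j y^j$ for some $c_0\in K^{\times}$; since $v_{\rho,\sigma}(C)>0$ and $\rho+\sigma>0$ we must have $j\ge 1$. The conditions (1)--(4) on $A,B$, together with the condition on $C$, are those required to apply Theorem~\ref{tecnico1}, so that result produces $D\in W^{(l)}$, $\mu\in K^{\times}$ and $k,j_0\in\mathds{N}$ with $[D,C^k]_{\rho,\sigma}=\mu\,\ell_{\rho,\sigma}(C^{k+j_0})$. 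Using that $[-,-]_{\rho,\sigma}$ is antisymmetric and homogeneous of degree one in each slot with respect to nonzero scalars (both immediate from Definition~\ref{def rho-sigma corchete}), we rewrite this as $[C^k,-\frac{1}{\mu}D]_{\rho,\sigma}=\ell_{\rho,\sigma}(C^{k+j_0})$. Since $v_{\rho,\sigma}(C)>0$, $k\in\mathds{N}$ and $j_0\in\mathds{N}_0$, Theorem~\ref{te conmutadores1} now applies and yields a $(\rho,\sigma)$-homogeneous $E\in W^{(l)}\setminus\{0\}$ with $[C^t,E]_{\rho,\sigma}=t\,\ell_{\rho,\sigma}(C^t)$ for all $t\in\mathds{N}$; we only use the case $t=1$, namely $[C,E]_{\rho,\sigma}=\ell_{\rho,\sigma}(C)\ne 0$.

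The next step is to show that $\ell_{\rho,\sigma}(E)$ is itself a monomial supported on the diagonal. From $\ell_{\rho,\sigma}(C)=c_0\,x^j y^j$ we get $\st_{\rho,\sigma}(C)=\en_{\rho,\sigma}(C)=(j,j)$, and $[C,E]_{\rho,\sigma}=\ell_{\rho,\sigma}(C)$, so Proposition~\ref{extremosnoalineados} (applicable with $R:=C$ since $\sigma\le 0$) gives: if $\st_{\rho,\sigma}(C)\nsim\st_{\rho,\sigma}(E)$ then $\st_{\rho,\sigma}(C)+\st_{\rho,\sigma}(E)-(1,1)=\st_{\rho,\sigma}(C)$, i.e. $\st_{\rho,\sigma}(E)=(1,1)$; but $(j,j)\times(1,1)=0$, so $\st_{\rho,\sigma}(E)\sim\st_{\rho,\sigma}(C)$, contradicting that assumption. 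Hence $\st_{\rho,\sigma}(C)\sim\st_{\rho,\sigma}(E)$, and symmetrically $\en_{\rho,\sigma}(C)\sim\en_{\rho,\sigma}(E)$. As $j\ne 0$, alignment with $(j,j)$ forces $\st_{\rho,\sigma}(E)=(q,q)$ and $\en_{\rho,\sigma}(E)=(q',q')$; both lie on the line $\rho x+\sigma y=v_{\rho,\sigma}(E)$, so $(q-q')(\rho+\sigma)=0$ and therefore $q=q'$. Since two distinct points of a $(\rho,\sigma)$-line have distinct $v_{1,-1}$-values (because $\rho+\sigma\ne 0$), the equality $\st_{\rho,\sigma}(E)=\en_{\rho,\sigma}(E)$ forces $\Supp(\ell_{\rho,\sigma}(E))=\{(q,q)\}$, i.e. $\ell_{\rho,\sigma}(E)=e_0\,x^q y^q$ with $e_0\in K^{\times}$.

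To finish, apply Proposition~\ref{calculo del corchete} to $C$ and $E$ with $\alpha=\beta=0$: since $\ell_{\rho,\sigma}(C)=c_0\,x^j y^j$ and $\ell_{\rho,\sigma}(E)=e_0\,x^q y^q$, it gives $[C,E]_{\rho,\sigma}=c_0 e_0\,c_{00}\,x^{j+q-1}y^{j+q-1}$ with $c_{00}=(q,q)\times(j,j)=0$. Hence $[C,E]_{\rho,\sigma}=0$, which contradicts $[C,E]_{\rho,\sigma}=\ell_{\rho,\sigma}(C)\ne 0$. This contradiction shows $\Supp(\ell_{\rho,\sigma}(C))\ne\{(j,j)\}$ for every $j$.

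I expect the crux to be the middle step: excluding the possibility that an endpoint of $\ell_{\rho,\sigma}(E)$ fails to be aligned with the diagonal. The key observation is that Proposition~\ref{extremosnoalineados} pins any non-aligned endpoint of $E$ to the point $(1,1)$, which is itself on the diagonal, so the non-aligned alternative is self-defeating; one is invariably reduced to $E$ having a diagonal monomial as $(\rho,\sigma)$-leading term, whose $(\rho,\sigma)$-bracket with the diagonal monomial $\ell_{\rho,\sigma}(C)$ vanishes. A minor but necessary bookkeeping point is the scalar rescaling that allows the output of Theorem~\ref{tecnico1} to be fed into Theorem~\ref{te conmutadores1}.
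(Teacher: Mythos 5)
Your proof is correct, but after the common first step it follows a genuinely different route from the paper's. Both arguments begin by invoking Theorem~\ref{tecnico1} to produce $D$ with $[D,C^k]_{\rho,\sigma}=\mu\,\ell_{\rho,\sigma}(C^{k+j_0})$, and both ultimately exploit the fact that diagonal monomials $(\rho,\sigma)$-commute with diagonal monomials. The paper stops there: it replaces $D$ by a $(\rho,\sigma)$-homogeneous representative (Corollary~\ref{ell depende del ell}), expands $D=\sum d_{rs}X^{r/l}Y^s$, and computes $\ell_{\rho,\sigma}\bigl([D,d^kX^{kj}Y^{kj}]\bigr)$ directly from Lemma~\ref{le conmutacion en Wl}: the diagonal terms of $D$ contribute nothing, every off-diagonal term contributes an off-diagonal monomial, and so the result cannot equal the diagonal monomial $\mu d^{k+j_0}x^{(k+j_0)j}y^{(k+j_0)j}$. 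You instead normalize the output of Theorem~\ref{tecnico1} (the sign and scalar adjustment you flag is indeed necessary) so that Theorem~\ref{te conmutadores1} applies, obtain a homogeneous $E$ with $[C,E]_{\rho,\sigma}=\ell_{\rho,\sigma}(C)$, and then use Proposition~\ref{extremosnoalineados} to pin both endpoints of $\ell_{\rho,\sigma}(E)$ to the diagonal --- the observation that the non-aligned alternative forces the endpoint to be $(1,1)$, which is itself aligned with $(j,j)$, is the neat point --- before concluding with Proposition~\ref{calculo del corchete}. All your steps check out, including the passage from $\st_{\rho,\sigma}(E)=\en_{\rho,\sigma}(E)$ to a singleton support, which is justified because $v_{1,-1}$ is injective on the support of a $(\rho,\sigma)$-homogeneous element when $\rho+\sigma\ne 0$. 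Your version routes through heavier machinery but is more structural; the paper's is a two-line leading-term computation that avoids Theorem~\ref{te conmutadores1} entirely.
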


\begin{proof} Assume on the contrary that $\ell_{\rho,\sigma}(C) = dX^jY^j$ for some $d\in K^{\times}$ and some~$j$. By Theorem~\ref{tecnico1} there exist $D\in W^{(l)}$, $\mu\in K^{\times}$, $k\in \mathds{N}$ and $j_0\in \mathds{N}_0$, such that $[D,C^k]_{\rho,\sigma} = \mu C^{k+j_0}$. Now, by Corollary~\ref{ell depende del ell}, we can assume that $D$ is $(\rho,\sigma)$-homogeneous. Write $D = \sum d_{rs} X^{r/l}Y^s$. Since, by Remark~\ref{re v de un conmutador},
$$
[d_{rl,r} X^rY^r,d^k X^{kj}Y^{kj}] = 0
$$
and, by Lemma~\ref{le conmutacion en Wl},
$$
\ell_{\rho,\sigma}([d_{rs} X^{r/l}Y^s,d^k X^{kj}Y^{kj}]) = (s-r/l)kj d_{rs} d^kx^{r/l+kj-1}y^{s+kj-1}\quad \text{for all $r\ne sl$,}
$$
we have
\begin{equation}
\ell_{\rho,\sigma}([D,d^k X^{kj}Y^{kj}]) = \sum_{r\ne sl} (s-r/l)kj d_{rs} d^k x^{r/l+kj-1} y^{s+kj-1}\label{eqq8l}
\end{equation}
On the other hand, by Proposition~\ref{pr v de un producto} and Corollary~\ref{ell depende del ell},
$$
\ell_{\rho,\sigma}([D,d^k X^{kj}Y^{kj}]) = \ell_{\rho,\sigma}([D,C^k]) = \mu \ell_{\rho,\sigma}(C^{k+j_0}) = \mu d^{k+j_0} x^{(k+j_0)j}y^{(k+j_0)j},
$$
which contradicts~\eqref{eqq8l}.
\end{proof}

\begin{lemma}\label{casi continuidad} Let $(\rho,\sigma)\in \mathfrak{V}$ and $(i/l,j)\in \frac{1}{l}\mathds{Z}\times \mathds{Z}$ such that $v_{\rho,\sigma}(i/l,j)>0$. Then there exists $(\rho'',\sigma'')<(\rho,\sigma)$ such that
$$
v_{\rho',\sigma'}(i/l,j)>0 \quad \text{for all $(\rho'',\sigma'')<(\rho',\sigma') <(\rho,\sigma)$}.
$$
\end{lemma}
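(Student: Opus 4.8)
The plan is to reduce the statement to the elementary fact that $(\rho',\sigma')\mapsto v_{\rho',\sigma'}(i/l,j)=\rho'(i/l)+\sigma' j$ is linear in $(\rho',\sigma')$, together with the order-theoretic description of $\ov{\mathfrak{V}}$ recalled just above. The device I would use is the following observation about that order: if $(\rho_1,\sigma_1)<(\rho',\sigma')<(\rho_2,\sigma_2)$ in $\ov{\mathfrak{V}}$ and $(\rho_1,\sigma_1),(\rho_2,\sigma_2)$ are not antipodal (equivalently, are not the pair $(1,-1),(-1,1)$), then $(\rho',\sigma')=\lambda(\rho_1,\sigma_1)+\mu(\rho_2,\sigma_2)$ for some $\lambda,\mu>0$. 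This follows once one notes that for distinct non-antipodal $A,B\in\ov{\mathfrak{V}}$ one has $A<B\Leftrightarrow A\times B>0$ — which is the recalled criterion on $\mathfrak{V}$ supplemented by $(1,-1)\times(\rho,\sigma)=(\rho,\sigma)\times(-1,1)=\rho+\sigma>0$ — since then, writing $(\rho',\sigma')$ in the basis $\{(\rho_1,\sigma_1),(\rho_2,\sigma_2)\}$ and taking cross products with each factor gives $\mu\,(\rho_1,\sigma_1)\times(\rho_2,\sigma_2)=(\rho_1,\sigma_1)\times(\rho',\sigma')>0$ and $\lambda\,(\rho_1,\sigma_1)\times(\rho_2,\sigma_2)=(\rho',\sigma')\times(\rho_2,\sigma_2)>0$, so $\lambda,\mu>0$.

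Granting this, it suffices to produce a single $(\rho'',\sigma'')\in\ov{\mathfrak{V}}$ with $(\rho'',\sigma'')<(\rho,\sigma)$ and $v_{\rho'',\sigma''}(i/l,j)\ge 0$: for any $(\rho',\sigma')$ with $(\rho'',\sigma'')<(\rho',\sigma')<(\rho,\sigma)$, the observation applied to the ends $(\rho'',\sigma'')$ and $(\rho,\sigma)$ — which are not antipodal, since $(\rho,\sigma)\in\mathfrak{V}$ forces $(\rho,\sigma)\ne(-1,1)$ — gives $(\rho',\sigma')=\lambda(\rho'',\sigma'')+\mu(\rho,\sigma)$ with $\lambda,\mu>0$, hence $v_{\rho',\sigma'}(i/l,j)=\lambda v_{\rho'',\sigma''}(i/l,j)+\mu v_{\rho,\sigma}(i/l,j)\ge\mu v_{\rho,\sigma}(i/l,j)>0$. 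To construct such a $(\rho'',\sigma'')$ I would distinguish cases on the sign of $i/l-j=v_{1,-1}(i/l,j)$. If $i/l\ge j$, then $(\rho'',\sigma''):=(1,-1)$ works, because $v_{1,-1}(i/l,j)=i/l-j\ge 0$ and $(1,-1)<(\rho,\sigma)$. If $i/l<j$, then $(i/l,j)\notin\mathds{Z}(1,1)$, so $(\rho'',\sigma''):=\val(i/l,j)$ is defined and $v_{\rho'',\sigma''}(i/l,j)=0$; one then checks $(\rho'',\sigma'')<(\rho,\sigma)$: otherwise $(\rho,\sigma)=(\rho'',\sigma'')$ — impossible, as $v_{\rho,\sigma}(i/l,j)\ne 0=v_{\rho'',\sigma''}(i/l,j)$ — or $(1,-1)<(\rho,\sigma)<(\rho'',\sigma'')$, and in the latter case the observation would give $(\rho,\sigma)=\lambda(1,-1)+\mu(\rho'',\sigma'')$ with $\lambda,\mu>0$, so $v_{\rho,\sigma}(i/l,j)=\lambda(i/l-j)+\mu\cdot 0<0$, contradicting the hypothesis $v_{\rho,\sigma}(i/l,j)>0$.

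I do not anticipate a genuine obstacle: the argument is nothing more than linearity of $v_{\rho,\sigma}$ in $(\rho,\sigma)$ combined with the convex-geometric meaning of the order on $\ov{\mathfrak{V}}$. The only points that need to be written with some care are the ``positive combination'' observation and the small extension of the cross-product criterion to the two boundary directions $(1,-1)$ and $(-1,1)$; both are routine consequences of what has already been recalled, and everything else is bookkeeping.
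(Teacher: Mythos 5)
Your proof is correct and follows essentially the same route as the paper's: the same case split on the sign of $i/l-j$, with the same witnesses, namely $(1,-1)$ when $i/l\ge j$ and the primitive vector proportional to $(jl,-i)$, i.e.\ $\val(i/l,j)$, when $i/l<j$. The only cosmetic difference is that the paper propagates positivity to the intermediate $(\rho',\sigma')$ via the direct identity $(\rho'',\sigma'')\times(\rho',\sigma')=l\lambda\, v_{\rho',\sigma'}(i/l,j)$ instead of your positive-combination argument, but both are the same use of the cross-product description of the order on $\ov{\mathfrak{V}}$.
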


\begin{proof} Suppose first that $i/l<j$ and take $(\rho'',\sigma''):= \lambda(jl,-i)$ with $\lambda = \frac{1}{\gcd(jl,i)}$. Since
$$
(\rho'',\sigma'')\times (\rho',\sigma') = \lambda(\rho' i + \sigma' j l) = l\lambda v_{\rho',\sigma'}(i/l,j),
$$
we have
$$
(\rho'',\sigma'')<(\rho,\sigma)\quad\text{and}\quad v_{\rho',\sigma'}(i/l,j)>0\quad\text{\!\! for all $(\rho'',\sigma'')<(\rho',\sigma')$}.
$$
Suppose now that $i/l\ge j$ and take $(\rho'',\sigma''):= (1,-1)$. Let $(\bar{\rho},\bar{\sigma}) = \lambda(-jl,i)$ with $\lambda = \frac{1}{\gcd(jl,i)}$. Since
$$
(\rho',\sigma')\times (\bar{\rho},\bar{\sigma}) = \lambda(\rho' i + \sigma' j l) = \lambda l v_{\rho',\sigma'}(i/l,j),
$$
we have
$$
(\rho,\sigma)<(\bar{\rho},\bar{\sigma})\quad\text{and}\quad  v_{\rho',\sigma'}(i/l,j)>0\quad\text{for all $(\rho',\sigma') < (\bar{\rho},\bar{\sigma})$}.
$$
The thesis follows immediately from these facts.
\end{proof}

\begin{corollary}\label{cont proporcionalidad} Let $(\rho,\sigma)\in \mathfrak{V}$ and $P,Q\in W^{(l)}$ such that $[Q,P]\in K^{\times}$. If  $[Q,P]_{\rho,\sigma}=0$, then there exists $(\rho'',\sigma'')<(\rho,\sigma)$ such that
$$
[Q,P]_{\rho',\sigma'}=0\quad\text{for all $(\rho'',\sigma'')<(\rho',\sigma') <(\rho,\sigma)$}.
$$
\end{corollary}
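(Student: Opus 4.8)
The plan is to reduce the two--variable condition $[Q,P]_{\rho,\sigma}=0$ to the positivity of the valuation of a single lattice point, and then quote Lemma~\ref{casi continuidad}.

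First I would record the translation of the hypothesis. Since $[Q,P]\in K^{\times}$ we have $[Q,P]\neq 0$ and $\Supp([Q,P])=\{(0,0)\}$, so $v_{\rho',\sigma'}([Q,P])=0$ for every $(\rho',\sigma')\in\ov{\mathfrak{V}}$. By Definitions~\ref{def rho-sigma proporcionales} and~\ref{def rho-sigma corchete}, this makes the equality $[Q,P]_{\rho',\sigma'}=0$ equivalent, for each $(\rho',\sigma')\in\ov{\mathfrak{V}}$, to the single scalar inequality
$$
v_{\rho',\sigma'}(P)+v_{\rho',\sigma'}(Q)-(\rho'+\sigma')>0 ,
$$
which holds at $(\rho,\sigma)$ by hypothesis.

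Next I would build a witness point. Pick $(i_P/l,j_P)\in\Supp(\ell_{\rho,\sigma}(P))$ and $(i_Q/l,j_Q)\in\Supp(\ell_{\rho,\sigma}(Q))$ realizing $v_{\rho,\sigma}(P)$ and $v_{\rho,\sigma}(Q)$, and set $(a,b):=\bigl(\tfrac{i_P+i_Q}{l}-1,\ j_P+j_Q-1\bigr)$, which lies in $\tfrac1l\mathds{Z}\times\mathds{Z}$. For every $(\rho',\sigma')\in\ov{\mathfrak{V}}$ one has
$$
v_{\rho',\sigma'}(a,b)=v_{\rho',\sigma'}(i_P/l,j_P)+v_{\rho',\sigma'}(i_Q/l,j_Q)-(\rho'+\sigma')\le v_{\rho',\sigma'}(P)+v_{\rho',\sigma'}(Q)-(\rho'+\sigma'),
$$
since $v_{\rho',\sigma'}(P)$ and $v_{\rho',\sigma'}(Q)$ are maxima over their supports; and at $(\rho,\sigma)$ the left--hand side equals $v_{\rho,\sigma}(P)+v_{\rho,\sigma}(Q)-(\rho+\sigma)>0$, so $v_{\rho,\sigma}(a,b)>0$.

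Finally I would apply Lemma~\ref{casi continuidad} to the point $(a,b)$: there is $(\rho'',\sigma'')<(\rho,\sigma)$ with $v_{\rho',\sigma'}(a,b)>0$ for all $(\rho'',\sigma'')<(\rho',\sigma')<(\rho,\sigma)$. For each such $(\rho',\sigma')$ the displayed inequality then yields $v_{\rho',\sigma'}(P)+v_{\rho',\sigma'}(Q)-(\rho'+\sigma')>0$, i.e.\ $[Q,P]_{\rho',\sigma'}=0$, which is the claim. There is no genuine obstacle: the only things to verify are that $(a,b)$ really sits in $\tfrac1l\mathds{Z}\times\mathds{Z}$ so that Lemma~\ref{casi continuidad} applies, and the inequality between $v_{\rho',\sigma'}(a,b)$ and $v_{\rho',\sigma'}(P)+v_{\rho',\sigma'}(Q)-(\rho'+\sigma')$ --- both immediate from the definition of $v_{\rho',\sigma'}$ as a maximum --- the whole ``one--sided continuity'' content being carried by Lemma~\ref{casi continuidad}.
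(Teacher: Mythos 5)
Your proposal is correct and is essentially the paper's own argument: the paper likewise rewrites the hypothesis as $v_{\rho,\sigma}(1,1)<v_{\rho,\sigma}(P)+v_{\rho,\sigma}(Q)$, picks support points realizing $v_{\rho,\sigma}(P)$ and $v_{\rho,\sigma}(Q)$, applies Lemma~\ref{casi continuidad} to their sum minus $(1,1)$, and concludes by the same comparison with the maxima. No differences worth noting.
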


\begin{proof} By hypothesis
$$
v_{\rho,\sigma}(1,1) < v_{\rho,\sigma}(P) + v_{\rho,\sigma}(Q).
$$
Let $(i/l,j)\in \Supp(P)$ and $(i'/l,j')\in \Supp(Q)$ be such that $v_{\rho,\sigma}(P) = v_{\rho,\sigma}(i/l,j)$ and $v_{\rho,\sigma}(Q) = v_{\rho,\sigma}(i'/l,j')$. By Lemma~\ref{casi continuidad}, there exists $(\rho'',\sigma'') < (\rho,\sigma)$ such that
$$
v_{\rho',\sigma'}\bigl((i/l,j) + (i'/l,j') - (1,1)\bigr) > 0\quad \text{for all $(\rho'',\sigma'') < (\rho',\sigma') <(\rho,\sigma)$}.
$$
Hence,
$$
v_{\rho',\sigma'}(1,1) < v_{\rho',\sigma'}(P) + v_{\rho',\sigma'}(Q),
$$
from which the thesis follows immediately.
\end{proof}

\begin{theorem}\label{th tipo irreducibles} Let $P,Q\in W^{(l)}$ and $(\rho,\sigma)\in \mathfrak{V}$ with $\sigma\le 0$. Suppose that
\begin{alignat*}{3}
&[Q,P]=1,\qquad && v_{\rho,\sigma}(P)>0, \qquad && v_{\rho,\sigma}(Q)>0,\\
&[P,Q]_{\rho,\sigma}=0,\qquad && \frac{v_{\rho,\sigma}(P)}{v_{\rho,\sigma}(Q)}\notin \mathds{N},\qquad &&\frac{v_{\rho,\sigma}(Q)}{v_{\rho,\sigma}(P)}\notin \mathds{N}.
\end{alignat*}
Then

\begin{enumerate}

\smallskip

\item If $(\rho,\sigma)\notin \Val(P)$, then $v_{1,-1}(\ell_{\rho,\sigma}(P))\ne 0$.

\smallskip

\item $v_{1,-1}(\st_{\rho,\sigma}(P))\ne 0$.

\smallskip

\item There exists a $(\rho,\sigma)$-homogeneous element $F\in W^{(l)}$, such that
$$
[P,F]_{\rho,\sigma}=\ell_{\rho,\sigma}(P)\quad\text{and}\quad [Q,F]_{\rho,\sigma}= \frac{v_{\rho,\sigma}(Q)}{v_{\rho,\sigma}(P)} \ell_{\rho,\sigma}(Q).
$$
Furthermore
\begin{equation*}
v_{\rho,\sigma}(F) =\rho+\sigma\quad\text{and}\quad f^{(l)}_{[P,F],\rho,\sigma} = f^{(l)}_{P,\rho,\sigma},
\end{equation*}
where $f^{(l)}_{[P,F],\rho,\sigma}$ and $f^{(l)}_{P,\rho,\sigma}$ are the polynomials introduced in Definition~\ref{polinomio asociado f^{(l)}}.

\end{enumerate}

\end{theorem}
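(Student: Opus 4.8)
The plan is to establish item~(3) first --- from which items~(1) and~(2) essentially follow --- by turning the relation $[Q,P]=1$ into input for Theorems~\ref{tecnico1} and~\ref{te conmutadores1}. Since $[P,Q]_{\rho,\sigma}=0$ and $a:=\frac1\rho v_{\rho,\sigma}(Q)>0$, $b:=\frac1\rho v_{\rho,\sigma}(P)>0$, item~(2) of Theorem~\ref{f[] en W^{(l)}} supplies $\lambda_P,\lambda_Q\in K^{\times}$, coprime $m_0,n_0\in\mathds{N}$ with $m_0/n_0=v_{\rho,\sigma}(P)/v_{\rho,\sigma}(Q)$, and a $(\rho,\sigma)$-homogeneous $R_0\in L^{(l)}$ with $\ell_{\rho,\sigma}(P)=\lambda_P R_0^{m_0}$ and $\ell_{\rho,\sigma}(Q)=\lambda_Q R_0^{n_0}$; the two non-integrality hypotheses together with $\gcd(m_0,n_0)=1$ force $m_0,n_0\ge2$. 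Writing $R_0=\zeta R^h$ with $h\in\mathds{N}$ maximal (a degree bound guarantees existence), $R$ is not, up to a scalar, a proper power of a $(\rho,\sigma)$-homogeneous element, so $\ell_{\rho,\sigma}(R)\ne\zeta'\ell_{\rho,\sigma}(D^{h'})$ for all $D\in W^{(l)}$, $\zeta'\in K^{\times}$, $h'>1$. Putting $m:=hm_0$, $n:=hn_0$ and absorbing the $\zeta$-powers into $\lambda_P,\lambda_Q$ we get $\ell_{\rho,\sigma}(P)=\lambda_P R^m$, $\ell_{\rho,\sigma}(Q)=\lambda_Q R^n$ with $\gcd(m,n)=h\notin\{m,n\}$; since $\ell_{\rho,\sigma}(P)$ has nonnegative $y$-exponents so does $R$, and we let $C\in W^{(l)}$ be the $(\rho,\sigma)$-homogeneous element with $\Psi^{(l)}(C)=R$, so $\ell_{\rho,\sigma}(C^j)=R^j$ and $v_{\rho,\sigma}(C)=v_{\rho,\sigma}(P)/m>0$.

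For item~(3), I apply Theorem~\ref{tecnico1} to $C$ with $A:=\lambda_P^{-1}P$ and $B:=\lambda_Q^{-1}Q$: conditions~(1),(2) hold because $\ell_{\rho,\sigma}(A)=\ell_{\rho,\sigma}(C^m)$ and $\ell_{\rho,\sigma}(B)=\ell_{\rho,\sigma}(C^n)$, condition~(3) is $\gcd(m,n)=h\notin\{m,n\}$, and condition~(4) holds since $[A,B]=\frac1{\lambda_P\lambda_Q}[P,Q]=-\frac1{\lambda_P\lambda_Q}\in K^{\times}$, so $\ell_{\rho,\sigma}([A,B])=-\frac1{\lambda_P\lambda_Q}\ell_{\rho,\sigma}(C^{0})$ --- this is the only place $[Q,P]=1$ is used, and the relevant exponent is $0$. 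Theorem~\ref{tecnico1} yields $D\in W^{(l)}$, $\mu\in K^{\times}$, $k,j_0\in\mathds{N}$ with $[D,C^k]_{\rho,\sigma}=\mu\ell_{\rho,\sigma}(C^{k+j_0})$; replacing $D$ by $-\mu^{-1}D$ and invoking Theorem~\ref{te conmutadores1} produces a $(\rho,\sigma)$-homogeneous $E\in W^{(l)}$ with $[C^t,E]_{\rho,\sigma}=t\ell_{\rho,\sigma}(C^t)$ for all $t\in\mathds{N}$. Set $F:=\frac1m E$. The case $t=1$ with Proposition~\ref{pr v de un producto} and Definition~\ref{def rho-sigma corchete} give $v_{\rho,\sigma}(E)=\rho+\sigma$, hence $v_{\rho,\sigma}(F)=\rho+\sigma$; and since $[-,-]_{\rho,\sigma}$ depends only on leading terms (Corollary~\ref{ell depende del ell}) while $\ell_{\rho,\sigma}(P)=\lambda_P\ell_{\rho,\sigma}(C^m)$, $\ell_{\rho,\sigma}(Q)=\lambda_Q\ell_{\rho,\sigma}(C^n)$,
\[
[P,F]_{\rho,\sigma}=\frac{\lambda_P}{m}[C^m,E]_{\rho,\sigma}=\lambda_P\ell_{\rho,\sigma}(C^m)=\ell_{\rho,\sigma}(P),\qquad [Q,F]_{\rho,\sigma}=\frac{\lambda_Q}{m}[C^n,E]_{\rho,\sigma}=\frac{n}{m}\ell_{\rho,\sigma}(Q),
\]
with $\frac nm=\frac{n_0}{m_0}=\frac{v_{\rho,\sigma}(Q)}{v_{\rho,\sigma}(P)}$. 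Finally $[P,F]_{\rho,\sigma}=\ell_{\rho,\sigma}(P)\ne0$ forces $\ell_{\rho,\sigma}([P,F])=\ell_{\rho,\sigma}(P)$, so $f^{(l)}_{[P,F],\rho,\sigma}=f^{(l)}_{P,\rho,\sigma}$ by Definition~\ref{polinomio asociado f^{(l)}}.

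For items~(1) and~(2), note that Corollary~\ref{diagonalcornerfreel} applies to $C$ (same hypotheses as just checked), so $\Supp(\ell_{\rho,\sigma}(C))=\Supp(R)\ne\{(j,j)\}$ for any $j$. If $(\rho,\sigma)\notin\Val(P)$ then $\ell_{\rho,\sigma}(P)=\lambda_P R^m$ is a single monomial, hence so is $R=\nu x^{p/l}y^q$, and then $p/l\ne q$, so $v_{1,-1}(\ell_{\rho,\sigma}(P))=m(p/l-q)\ne0$; this proves~(1), and also~(2) in this case since then $\st_{\rho,\sigma}(P)$ is the exponent of $\ell_{\rho,\sigma}(P)$. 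If $(\rho,\sigma)\in\Val(P)$ --- equivalently $\#\Supp(R)>1$, whence also $(\rho,\sigma)\in\Val(Q)$ --- choose $(\rho',\sigma')\in\mathfrak{V}$ strictly between $(\rho,\sigma)$ and its predecessor in $\ov{\Val}(P)\cup\ov{\Val}(Q)$, close enough to $(\rho,\sigma)$ that $[P,Q]_{\rho',\sigma'}=0$ (Corollary~\ref{cont proporcionalidad}) and $v_{\rho',\sigma'}(\st_{\rho,\sigma}(C))>0$ (Lemma~\ref{casi continuidad}); by Proposition~\ref{le basico}~(3) and Proposition~\ref{pr v de un producto}, $\Supp(\ell_{\rho',\sigma'}(P))=\{\st_{\rho,\sigma}(P)\}=\{m\st_{\rho,\sigma}(C)\}$ and $\Supp(\ell_{\rho',\sigma'}(Q))=\{n\st_{\rho,\sigma}(C)\}$, so $v_{\rho',\sigma'}(P)/v_{\rho',\sigma'}(Q)=m/n=m_0/n_0\notin\mathds{N}$ (likewise its reciprocal), $v_{\rho',\sigma'}(P),v_{\rho',\sigma'}(Q)>0$, $\sigma'\le0$, and $(\rho',\sigma')\notin\Val(P)$. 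Applying the already-proved item~(1) at $(\rho',\sigma')$ gives $v_{1,-1}(\st_{\rho,\sigma}(P))=v_{1,-1}(\ell_{\rho',\sigma'}(P))\ne0$.

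The delicate steps are the extraction of the primitive root $R$ and the observation that $[Q,P]=1$ produces condition~(4) of Theorem~\ref{tecnico1} with exponent $0$ (so $C^0=1$); and, for item~(2), arranging the perturbed valuation $(\rho',\sigma')$ so that it lies in the correct gap of $\ov{\Val}(P)\cup\ov{\Val}(Q)$ while simultaneously preserving $[P,Q]_{\rho',\sigma'}=0$, the positivity of the $v$'s, and the non-integrality of the ratio --- the admissible range of $(\rho',\sigma')$ shrinks but a mediant always fits, so it stays nonempty. Everything else is routine bookkeeping with Propositions~\ref{pr v de un producto} and~\ref{extremosnoalineados} and Corollary~\ref{ell depende del ell}.
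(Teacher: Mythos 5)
Your proposal is correct and follows essentially the same route as the paper's proof: item (2) of Theorem~\ref{f[] en W^{(l)}} to produce the common root $R$ (made primitive so that $\gcd(m,n)\notin\{m,n\}$), Theorem~\ref{tecnico1} with $h=0$ coming from $[Q,P]=1$, Theorem~\ref{te conmutadores1} to get $E$ and hence $F=\frac1m E$, Corollary~\ref{diagonalcornerfreel} for item (1), and the perturbation to a nearby $(\rho',\sigma')$ via Proposition~\ref{le basico}, Lemma~\ref{casi continuidad} and Corollary~\ref{cont proporcionalidad} for item (2). The only (harmless) deviation is that you verify the non-integrality of $v_{\rho',\sigma'}(P)/v_{\rho',\sigma'}(Q)$ directly from the singleton supports $\{m\,\st_{\rho,\sigma}(C)\}$ and $\{n\,\st_{\rho,\sigma}(C)\}$, where the paper re-invokes Theorem~\ref{f[] en W^{(l)}}(2) at $(\rho',\sigma')$.
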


\begin{proof} By item~(2) of Theorem~\ref{f[] en W^{(l)}}, there exist $\lambda_P,\lambda_Q\in K^{\times}$, $m,n\in \mathds{N}$ and a $(\rho,\sigma)$-homogeneous polynomial $R\in L^{(l)}$, such that
$$
\ell_{\rho,\sigma}(P) = \lambda_P R^m\quad \text{and}\quad \ell_{\rho,\sigma}(Q) = \lambda_Q R^n.
$$
Clearly we can assume that
$$
R \ne \zeta S^h,\quad\text{for all $S\in L^{(l)}$, $\zeta\in K^{\times}$ and $h>1$.}
$$
Let $C\in W^{(l)}$ such that $\Psi^{(l)}(C) = R$. By Proposition~\ref{pr v de un producto},
$$
\ell_{\rho,\sigma}(P) = \lambda_P \ell_{\rho,\sigma}(C^m),\quad \ell_{\rho,\sigma}(Q) = \lambda_Q \ell_{\rho,\sigma}(C^n)
$$
and
$$
\ell_{\rho,\sigma}(C) \ne \zeta \ell_{\rho,\sigma}(D^h)\quad\text{for all $D\in W^{(l)}$, $\zeta\in K^{\times}$ and $h>1$.}
$$
Moreover, since, by item~(3) of Proposition~\ref{pr v de un producto},
\begin{equation}
v_{\rho,\sigma}(P) = m v_{\rho,\sigma}(C)\quad\text{and}\quad v_{\rho,\sigma}(Q) = n v_{\rho,\sigma}(C),\label{eqnue2}
\end{equation}
we deduce that $v_{\rho,\sigma}(C)\!>\!0$ and $\gcd(m,n)\!\notin\! \{m,n\}$. Thus, the conditions of Theo\-rem~\ref{tecnico1} and Corollary~\ref{diagonalcornerfreel} are fulfilled with
$$
A:=\frac{1}{\lambda_Q}Q,\quad B:=\frac{1}{\lambda_P}P,\quad h:=0\quad\text{and}\quad \lambda:= \frac{1}{\lambda_P \lambda_Q}.
$$
Consequently, if $(\rho,\sigma)\notin\Val(P)$, then $\Supp(\ell_{\rho,\sigma}(C)) = \{(i,j)\}$ with $i\ne j$, and so, $\Supp(\ell_{\rho,\sigma}(P)) = \{(m i,m j)\}$. Item~(1) follows immediately from this fact. In order to prove item~(2) we know that by Proposition~\ref{le basico}, Lemma~\ref{casi continuidad} and Corollary~\ref{cont proporcionalidad} there exists $(\rho',\sigma')<(\rho,\sigma)$ such that
\begin{align*}
& \Supp(\ell_{\rho',\sigma'}(P)) = \{\st_{\rho,\sigma}(P)\},\qquad\Supp(\ell_{\rho',\sigma'}(Q)) = \{\st_{\rho,\sigma}(Q)\},\\
& v_{\rho',\sigma'}(P)>0,\qquad v_{\rho',\sigma'}(Q)>0 \qquad\text{and}\qquad  [P,Q]_{\rho',\sigma'} = 0.
\end{align*}
We claim that $P$, $Q$ and $(\rho',\sigma')$ satisfy the hypothesis required $P$, $Q$ and $(\rho,\sigma)$ in the statement to of this theorem. By the above discussion, to do this, it only remains to prove that
$$
\sigma'\le 0 \qquad\text{and}\qquad\frac{v_{\rho',\sigma'}(P)}{v_{\rho',\sigma'}(Q)} \notin \mathds{N} \cup \left\{\frac{1}{n}:n\in \mathds{N}\right\}.
$$
The inequality follows directly from the fact that
$$
(\rho',\sigma')<(\rho,\sigma)\le (1,0),
$$
since $\sigma\le 0$. Let us verify the second condition. By item~(2) of Theorem~\ref{f[] en W^{(l)}}, there exist $\bar{\lambda}_P,\bar{\lambda}_Q\in K^{\times}$, $\bar{m},\bar{n}\in \mathds{N}$ and a $(\rho',\sigma')$-homoge\-neous polynomial $\bar{R}\in L^{(l)}$, such that
$$
\ell_{\rho',\sigma'}(P) = \bar{\lambda}_P \bar{R}^{\bar{m}}\quad \text{and}\quad \ell_{\rho',\sigma'}(Q) = \bar{\lambda}_Q \bar{R}^{\bar{n}}.
$$
Hence
$$
\{\st_{\rho,\sigma}(P)\} = \Supp(\ell_{\rho',\sigma'}(P)) = \bar{m} \Supp(\bar{R}) = \frac{\bar{m}}{\bar{n}} \Supp(\ell_{\rho',\sigma'}(Q)) = \frac{\bar{m}}{\bar{n}}\{\st_{\rho,\sigma}(Q)\},
$$
Thus,
$$
\frac{\bar{m}}{\bar{n}} v_{\rho,\sigma}(Q) = v_{\rho,\sigma}\Bigl(\frac{\bar{m}}{\bar{n}}\st_{\rho,\sigma}(Q)\Bigr) = v_{\rho,\sigma}(\st_{\rho,\sigma}(P)) = \frac{m}{n} v_{\rho,\sigma}(Q),
$$
where the last equality follows form~\eqref{eqnue2}. Consequently,
$$
\frac{v_{\rho',\sigma'}(P)}{v_{\rho',\sigma'}(Q)} = \frac{v_{\rho',\sigma'}(\ell_{\rho',\sigma'}(P))} {v_{\rho',\sigma'}(\ell_{\rho',\sigma'}(Q)} = \frac{\bar{m}}{\bar{n}} = \frac{m}{n}\notin \mathds{N} \bigcup \left\{\frac{1}{n}:n\in \mathds{N}\right\}.
$$
as desired. Applying item~(1) to $P$, $Q$ and $(\rho',\sigma')$, we obtain
$$
v_{1,-1}(\st_{\rho,\sigma}(P)) = v_{1,-1}(\ell_{\rho',\sigma'}(P))\ne 0
$$
which proves item~(2). Now, by Theorem~\ref{tecnico1}, there exist $D\in W^{(l)}$, $\mu\in K^{\times}$ and $k,j_0\in \mathds{N}$, such that
$$
[D,C^k]_{\rho,\sigma} = \mu C^{k+j_0},
$$
and so, by Theorem~\ref{te conmutadores1}, there exists a $(\rho,\sigma)$-homogeneous element $E\in W^{(l)}$, such that
$$
[C^t,E]_{\rho,\sigma}= t\ell_{\rho,\sigma}(C^t)\quad\text{for all $t\in \mathds{N}$.}
$$
Hence, by Corollary~\ref{ell depende del ell},
\begin{align*}
&[P,E]_{\rho,\sigma} = [\lambda_P C^m,E]_{\rho,\sigma} = m\ell_{\rho,\sigma}(\lambda_P C^m) = m \ell_{\rho,\sigma}(P)
\shortintertext{and}
&[Q,E]_{\rho,\sigma} = [\lambda_Q C^n,E]_{\rho,\sigma} = n\ell_{\rho,\sigma}(\lambda_Q C^n) = n \ell_{\rho,\sigma}(Q).
\end{align*}
If we set $F:=\frac{1}{m}E$, then we have
$$
[P,F]_{\rho,\sigma} = \ell_{\rho,\sigma}(P)\quad\text{and}\quad [Q,F]_{\rho,\sigma} = \frac{n}{m} \ell_{\rho,\sigma}(Q).
$$
Note now that $v_{\rho,\sigma}(P) = m v_{\rho,\sigma}(C)$ and $v_{\rho,\sigma}(Q) = n v_{\rho,\sigma}(C)$, and so
$$
\frac{n}{m} = \frac{v_{\rho,\sigma}(Q)}{v_{\rho,\sigma}(P)}.
$$
Finally it is clear that $[P,F]_{\rho,\sigma} = \ell_{\rho,\sigma}(P)$ implies
$$
v_{\rho,\sigma}(P) = v_{\rho,\sigma}(P) + v_{\rho,\sigma}(F) -(\rho+\sigma)\quad\text{and} \quad \ell_{\rho,\sigma}([P,F]) = \ell_{\rho,\sigma}(P).
$$
Consequently the last assertions in item~(3) are true.
\end{proof}

\begin{proposition}\label{pavadass} Let $(\rho,\sigma)\in \mathfrak{V}$ such that $\sigma\le 0$, let $P,F\in W^{(l)}\setminus\{0\}$ and let $f^{(l)}_{F,\rho,\sigma}$ and $f^{(l)}_{P,\rho,\sigma}$ be as in Definition~\ref{polinomio asociado f^{(l)}}. Assume that $F$ is $(\rho,\sigma)$-homogeneous and that $[P,F]_{\rho,\sigma}=\ell_{\rho,\sigma}(P)$. Then $f^{(l)}_{F,\rho,\sigma}$ is separable and every irreducible factor of $f^{(l)}_{P,\rho,\sigma}$ divides $f^{(l)}_{F,\rho,\sigma}$.
\end{proposition}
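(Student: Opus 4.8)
The plan is to push the statement down to the one‑variable polynomial identity supplied by Theorem~\ref{f[] en W^{(l)}}(1), and then extract both assertions from an elementary divisibility/valuation count in $K[x]$.

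First I would unpack the hypothesis $[P,F]_{\rho,\sigma}=\ell_{\rho,\sigma}(P)$. Since $\ell_{\rho,\sigma}(P)\neq 0$, Definitions~\ref{def rho-sigma proporcionales} and~\ref{def rho-sigma corchete} give that $P$ and $F$ are not $(\rho,\sigma)$-proportional, that $\ell_{\rho,\sigma}([P,F])=\ell_{\rho,\sigma}(P)$ --- hence $f^{(l)}_{[P,F],\rho,\sigma}=f^{(l)}_{P,\rho,\sigma}$ --- and that $v_{\rho,\sigma}([P,F])=v_{\rho,\sigma}(P)+v_{\rho,\sigma}(F)-(\rho+\sigma)$; comparing the last equality with $v_{\rho,\sigma}([P,F])=v_{\rho,\sigma}\bigl(\ell_{\rho,\sigma}(P)\bigr)=v_{\rho,\sigma}(P)$ yields $v_{\rho,\sigma}(F)=\rho+\sigma$. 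Now apply Theorem~\ref{f[] en W^{(l)}}(1) with $Q:=F$: writing $f:=f^{(l)}_{P,\rho,\sigma}$, $g:=f^{(l)}_{F,\rho,\sigma}$, $a:=\tfrac1\rho v_{\rho,\sigma}(F)=\tfrac{\rho+\sigma}{\rho}$ and $b:=\tfrac1\rho v_{\rho,\sigma}(P)$, it produces $h\in\mathds{N}_0$ and $c\in\mathds{Z}$ with
\[
f\bigl(x^{h}-cg+bxg'\bigr)=a\,x\,f'\,g .
\]
I will use that $a\in K^{\times}$ (since $(\rho,\sigma)\in\mathfrak{V}$ forces $\rho+\sigma>0$, and $\rho>0$), that $b$ and $c$ are scalars, and that $f(0)\neq 0$ and $g(0)\neq 0$ by Definition~\ref{polinomio asociado f^{(l)}}.

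The core step is a valuation argument. For an irreducible $q\in K[x]$ let $v_q$ be the $q$-adic valuation on $K[x]$; since $K$ has characteristic zero, $v_q(m)=e\ge 1$ implies $v_q(m')=e-1$ for any nonzero $m$. Moreover any irreducible divisor $q$ of $f$ or of $g$ is distinct from $x$ (as $f(0),g(0)\neq 0$), so $v_q(x^{h})=0$ and $v_q(a)=0$ for such $q$. For the divisibility claim, suppose $q\mid f$ but $q\nmid g$ and set $e:=v_q(f)\ge 1$: the right‑hand side has $v_q=v_q(f')=e-1$, while the left‑hand side has $v_q\ge v_q(f)=e$ --- and both sides are nonzero, since $axf'g=0$ would force $f'=0$, i.e.\ $f$ constant, contradicting $q\mid f$ --- which is impossible. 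For separability, suppose $q^{2}\mid g$ and set $k:=v_q(g)\ge 2$, so $v_q(g')=k-1$; then $v_q(cg)>0$ and $v_q(bxg')\ge v_q(g')=k-1>0$, hence $v_q\bigl(x^{h}-cg+bxg'\bigr)=v_q(x^{h})=0$, so the left‑hand side has $v_q=v_q(f)$ whereas the right‑hand side has $v_q=v_q(f')+k$. If $q\nmid f$ this reads $0=v_q(f')+k\ge k\ge 2$, impossible; if $q\mid f$, with $e:=v_q(f)\ge1$, it reads $e=(e-1)+k$, i.e.\ $k=1$, contradicting $k\ge2$. (If $f$ were constant, $axf'g=0$ has infinite $v_q$ while the left‑hand side has $v_q=0$, again impossible.) This proves that $f^{(l)}_{F,\rho,\sigma}$ is separable and that every irreducible factor of $f^{(l)}_{P,\rho,\sigma}$ divides $f^{(l)}_{F,\rho,\sigma}$.

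I do not expect a genuine obstacle: modulo Theorem~\ref{f[] en W^{(l)}}(1) the proof is a short divisibility computation. The points needing attention --- which I would state carefully --- are excluding the exceptional prime $x$ via $f(0),g(0)\neq0$, checking that the scalar $a=\tfrac{\rho+\sigma}{\rho}$ cannot vanish on $\mathfrak{V}$, using characteristic zero to pass $v_q$ through derivatives, and the harmless case splits ``$q\mid f$ versus $q\nmid f$'' and ``$f$ constant'', all of which are absorbed by the valuation inequalities above.
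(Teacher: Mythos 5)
Your argument is correct, and it follows the paper's route: both reduce the statement to the polynomial identity $x^h f_P = c f_P f_F + a x f_P' f_F - b x f_F' f_P$ supplied by item~(1) of Theorem~\ref{f[] en W^{(l)}} applied with $Q:=F$. The only difference is the last step: the paper observes that this identity says $(f_P,f_F)$ satisfies the condition $\PE(1,0,a,b,c)$ and then simply cites \cite[Prop.~1.24]{G-G-V2}, whereas you reprove that cited fact in place via the $q$-adic valuation count (correctly using $f_P(0),f_F(0)\ne 0$ to exclude the prime $x$, $a=\frac{\rho+\sigma}{\rho}\ne 0$, and characteristic zero to get $v_q(m')=v_q(m)-1$); so your write-up is self-contained where the paper's is not, but the underlying idea is the same.
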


\begin{proof} By item~(1) of Theorem~\ref{f[] en W^{(l)}}, there exist $h\ge 0$ and $c\in \mathds{Z}$, such that
$$
x^h f_P = cf_P f_F+ ax f'_P f_F-bxf'_Ff_P,
$$
where $a:=\frac{1}{\rho} v_{\rho,\sigma}(F)$, $b:=\frac{1}{\rho} v_{\rho,\sigma}(P)$, $f^{(l)}_P:= f_{P,\rho,\sigma}$ and $f_F:= f^{(l)}_{F,\rho,\sigma}$. Hence, the pair of polynomials $(f_P,f_F)$ satisfies the condition $\PE(1,0,a,b,c)$ introduced in~\cite[Def.~1.23]{G-G-V2}. Since $f_P\ne 0\ne f_F$ the result follows from~\cite[Prop.~1.24]{G-G-V2}.
\end{proof}

\section{Cutting the right lower edge}

\setcounter{equation}{0}

The central result of this section is Proposition~\ref{preparatoria}, which loosely spoken cuts the right lower edge of the support of an irreducible subrectangular pair.

\begin{figure}[H]
\centering

\begin{tikzpicture}
\draw [->] (0,0) -- (0,4.5) node[anchor=east]{$Y$};
\draw (0,3.5) -- (1,4) -- (2,4) -- (3,3) -- (3.5,2) -- (2.5,0);
\draw (3,1) -- (2,0);

\fill[gray!50!white] (2,0) -- (3,1) -- (2.5,0) -- (2,0);
\draw [->] (0,0) -- (4.5,0) node[anchor=north]{$X$};
\draw (1.5,2) node{$P$};
\draw[<-, very thin] (2.6,0.40) .. controls (3,0.70) ..  (3.5,0.8);
\draw[xshift=3.4cm,yshift=0.8cm]
node[right,text width=6.5cm]{We cut a piece of the support};
\end{tikzpicture}
\caption{}
\end{figure}

This result is used in Section~\ref{computinglowerbounds} to determine lower bounds for
$$
B:=\min\{\gcd(v_{1,1}(P),v_{1,1}(Q)), \text{ where $(P,Q)$ is an irreducible pair}\}.
$$
In the last section we prove the existence of a complete chain of corners for a given an irreducible pair, that are compatible with Proposition~\ref{preparatoria}.

\begin{lemma}\label{le ell de la suma} Let $A_i\in W^{(l)}\setminus\{0\}$ ($i=0,\dots,n$) and let $(\rho,\sigma) \in \mathfrak{V}$. Suppose that there exists $q\in \mathds{Q}$ such that $v_{\rho,\sigma}(A_i)=q$ for all $i$ and set $A:= \sum_{i=0}^n A_i$. Then
\begin{align*}
A\ne 0\,\text{ and }\, v_{\rho,\sigma}(A) = q & \Longleftrightarrow \sum_i\ell_{\rho,\sigma}(A_i) \ne 0 \\
&\Longleftrightarrow A\ne 0\,\text{ and }\,\ell_{\rho,\sigma}(A) = \sum_i \ell_{\rho,\sigma}(A_i).
\end{align*}
\end{lemma}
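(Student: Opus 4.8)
The statement is a purely formal property of leading terms with respect to the graded structure on $W^{(l)}$ given by the $(\rho,\sigma)$-degree, so I would prove it by unwinding the definition of $\ell_{\rho,\sigma}$ and $v_{\rho,\sigma}$ from Notations~\ref{not valuaciones para polinomios} and~\ref{not valuaciones para alg de Weyl}. First I would pass to $L^{(l)}$ via $\Psi^{(l)}$, since $\Psi^{(l)}$ is a $K$-linear bijection onto a polynomial-type algebra and every notion involved ($\Supp$, $v_{\rho,\sigma}$, $\ell_{\rho,\sigma}$) is defined on $W^{(l)}$ precisely by transporting it through $\Psi^{(l)}$; thus it suffices to prove the lemma for $A_i\in L^{(l)}\setminus\{0\}$ with $v_{\rho,\sigma}(A_i)=q$ for all $i$.

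**Key steps.** Write each $A_i = \ell_{\rho,\sigma}(A_i) + A_i'$, where $A_i'$ is either $0$ or satisfies $v_{\rho,\sigma}(A_i') < q$; this is immediate from the definition of the leading term as the sum of the monomials on the top $(\rho,\sigma)$-line. Summing, $A = \sum_i \ell_{\rho,\sigma}(A_i) + \sum_i A_i'$, and the second sum is $0$ or has $(\rho,\sigma)$-degree $<q$. Now observe that $\sum_i \ell_{\rho,\sigma}(A_i)$ is $(\rho,\sigma)$-homogeneous of degree $q$ whenever it is nonzero: each summand is supported on the line $\{\rho\frac{i}{l}+\sigma j = q\}$, so the sum is too, and no cancellation can raise or lower the degree of a homogeneous element — it either stays homogeneous of degree $q$ or vanishes entirely. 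From this the three equivalences drop out: if $\sum_i \ell_{\rho,\sigma}(A_i)\ne 0$, then $A$ equals this nonzero degree-$q$ homogeneous element plus lower-order terms, so $A\ne 0$, $v_{\rho,\sigma}(A)=q$, and $\ell_{\rho,\sigma}(A) = \sum_i \ell_{\rho,\sigma}(A_i)$; conversely if $\sum_i \ell_{\rho,\sigma}(A_i) = 0$, then $A = \sum_i A_i'$ is either $0$ or has $v_{\rho,\sigma}(A) < q$, so in particular it is false that ($A\ne 0$ and $v_{\rho,\sigma}(A)=q$) and also false that ($A\ne 0$ and $\ell_{\rho,\sigma}(A)=\sum_i\ell_{\rho,\sigma}(A_i)$), since the latter would force $\ell_{\rho,\sigma}(A)=0$, impossible for $A\ne 0$. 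Chaining these observations gives the cycle of implications among the three conditions. Finally, transport back through $\Psi^{(l)}$ to conclude for $A_i\in W^{(l)}\setminus\{0\}$, using that $\Psi^{(l)}$ is injective so $A\ne 0$ in $W^{(l)}$ iff $\Psi^{(l)}(A)\ne 0$.

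**Main obstacle.** There is no real obstacle here; the only point requiring a little care is bookkeeping the "$0$ or degree $<q$" disjunction consistently through the sum, and making sure the first equivalence is stated as a genuine biconditional (both the "$A\ne 0$" and the "$v_{\rho,\sigma}(A)=q$" clauses are needed on the left because $A$ could be $0$, or nonzero of strictly smaller degree, exactly when $\sum_i\ell_{\rho,\sigma}(A_i)=0$). I would keep the write-up short, essentially the two displayed decompositions above plus the homogeneity remark.
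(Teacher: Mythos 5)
Your proof is correct and follows the same route as the paper, which simply observes that the $K$-linear isomorphism $\Psi^{(l)}\colon W^{(l)}\to L^{(l)}$ preserves the $(\rho,\sigma)$-degree and declares the rest clear. You have merely written out the "clear" part (the decomposition $A_i=\ell_{\rho,\sigma}(A_i)+A_i'$ and the homogeneity observation), which is fine.
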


\begin{proof} This is clear since the isomorphism of $K$-vector spaces $\Psi^{(l)}\colon W^{(l)}\to L^{(l)}$, introduced at the beginning of Section~\ref{preliminares}, preserves the $(\rho,\sigma)$-degree.
\end{proof}

For $\varphi\in \Aut(W^{(l)})$, we will denote by $\varphi_L$ the automorphism of $L^{(l)}$ given by $\varphi_L(x^{1/l}):=\Psi^{(l)}(\varphi(X^{1/l}))$ and $\varphi_L(y):= \Psi^{(l)} (\varphi(Y))$.

\begin{proposition}\label{pr ell por automorfismos} Let $(\rho,\sigma)\in \mathfrak{V}$ and $\lambda\in K$. Assume that $\sigma\le 0$ and $\rho|l$. Consider the automorphism of $W^{(l)}$ defined by $\varphi(X^{1/l})=X^{1/l}$ and $\varphi(Y) = Y+\lambda X^{\sigma/\rho}$. Then
$$
\ell_{\rho,\sigma}(\varphi(P)) = \varphi_L(\ell_{\rho,\sigma}(P))\quad\text{and}\quad v_{\rho,\sigma}(\varphi(P)) = v_{\rho,\sigma}(P)\quad\text{for all $P\in W^{(l)}\setminus\{0\}$}.
$$
Furthermore,
$$
\ell_{\rho_1,\sigma_1}(\varphi(P))=\ell_{\rho_1,\sigma_1}(P)\quad\text{for all $(\rho,\sigma)< (\rho_1,\sigma_1) < (-1,1)$.}
$$
\end{proposition}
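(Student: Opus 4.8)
The plan is to reduce the statement to the multiplicativity of $\ell_{\rho,\sigma}$ over products (Proposition~\ref{pr v de un producto}(2)) together with Lemma~\ref{le ell de la suma}. First I would record that $\varphi$ is well defined: since $\rho\mid l$ the element $X^{\sigma/\rho}=(X^{1/l})^{\sigma l/\rho}$ lies in $W^{(l)}$, it commutes with $X^{1/l}$, and $[Y+\lambda X^{\sigma/\rho},X^{1/l}]=[Y,X^{1/l}]=\tfrac1l X^{1/l-1}$, so $\varphi$ respects the defining relation; it is invertible with $\varphi^{-1}(Y)=Y-\lambda X^{\sigma/\rho}$, and $\varphi(X^{i/l}Y^j)=X^{i/l}(Y+\lambda X^{\sigma/\rho})^j$ for all $i,j$. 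One should keep in mind that $\Psi^{(l)}$ is only $K$-linear, so one does not expect $\Psi^{(l)}\circ\varphi=\varphi_L\circ\Psi^{(l)}$; what matters is that $\ell_{\rho,\sigma}$ is genuinely multiplicative on $W^{(l)}$.

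The core is a one-variable computation. Since $v_{\rho,\sigma}(Y)=\sigma=\rho\cdot\tfrac{\sigma}{\rho}=v_{\rho,\sigma}(X^{\sigma/\rho})$, the polynomial $\Psi^{(l)}(Y+\lambda X^{\sigma/\rho})=y+\lambda x^{\sigma/\rho}$ is $(\rho,\sigma)$-homogeneous, whence $\ell_{\rho,\sigma}(Y+\lambda X^{\sigma/\rho})=y+\lambda x^{\sigma/\rho}$ and $v_{\rho,\sigma}(Y+\lambda X^{\sigma/\rho})=\sigma$. By Proposition~\ref{pr v de un producto}(2) this gives $\ell_{\rho,\sigma}\bigl(X^{i/l}(Y+\lambda X^{\sigma/\rho})^j\bigr)=x^{i/l}(y+\lambda x^{\sigma/\rho})^j=\varphi_L(x^{i/l}y^j)$, which is $(\rho,\sigma)$-homogeneous of degree exactly $v_{\rho,\sigma}(i/l,j)$. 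Writing $P=\sum a_{i/l,j}X^{i/l}Y^j$ and $q:=v_{\rho,\sigma}(P)$, I would split $\varphi(P)=A+B$, where $A$ collects the summands $a_{i/l,j}X^{i/l}(Y+\lambda X^{\sigma/\rho})^j$ with $v_{\rho,\sigma}(i/l,j)=q$ and $B$ the remaining ones, so that $v_{\rho,\sigma}(B)<q$. Each summand of $A$ has $(\rho,\sigma)$-degree $q$, and the sum of their $(\rho,\sigma)$-leading terms is $\varphi_L(\ell_{\rho,\sigma}(P))$, which is nonzero because $\varphi_L$ is an automorphism of $L^{(l)}$; Lemma~\ref{le ell de la suma} then gives $v_{\rho,\sigma}(A)=q$ and $\ell_{\rho,\sigma}(A)=\varphi_L(\ell_{\rho,\sigma}(P))$. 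As $v_{\rho,\sigma}(B)<q$, one concludes $v_{\rho,\sigma}(\varphi(P))=q$ and $\ell_{\rho,\sigma}(\varphi(P))=\ell_{\rho,\sigma}(A)=\varphi_L(\ell_{\rho,\sigma}(P))$, which is the first part.

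For the last assertion, fix $(\rho,\sigma)<(\rho_1,\sigma_1)<(-1,1)$, so that $(\rho_1,\sigma_1)\in\mathfrak{V}$. Only the one-variable step changes: from $(\rho,\sigma)<(\rho_1,\sigma_1)$, i.e.\ $\rho\sigma_1-\sigma\rho_1>0$, together with $\rho,\rho_1>0$, one gets $\sigma_1>\rho_1\sigma/\rho=v_{\rho_1,\sigma_1}(X^{\sigma/\rho})$, so $Y$ strictly dominates $\lambda X^{\sigma/\rho}$ in $(\rho_1,\sigma_1)$-degree and hence $\ell_{\rho_1,\sigma_1}(Y+\lambda X^{\sigma/\rho})=y$ and $v_{\rho_1,\sigma_1}(Y+\lambda X^{\sigma/\rho})=\sigma_1$. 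Proposition~\ref{pr v de un producto}(2) then yields $\ell_{\rho_1,\sigma_1}\bigl(X^{i/l}(Y+\lambda X^{\sigma/\rho})^j\bigr)=x^{i/l}y^j$ of $(\rho_1,\sigma_1)$-degree $v_{\rho_1,\sigma_1}(i/l,j)$, and repeating the splitting and Lemma~\ref{le ell de la suma} argument, now with the identity in place of $\varphi_L$ on the top-degree piece, gives $\ell_{\rho_1,\sigma_1}(\varphi(P))=\ell_{\rho_1,\sigma_1}(P)$.

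I do not expect a serious obstacle. The only points demanding care are that $\Psi^{(l)}$ is not an algebra map (harmless, since $\ell_{\rho,\sigma}$ is multiplicative on $W^{(l)}$) and the bookkeeping that distinguishes the regime at $(\rho,\sigma)$, where $Y$ and $\lambda X^{\sigma/\rho}$ carry the same weight and $\varphi_L$ appears, from the regime at any $(\rho_1,\sigma_1)>(\rho,\sigma)$, where $Y$ strictly dominates and the leading term is untouched.
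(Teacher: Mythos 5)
Your proposal is correct and follows essentially the same route as the paper's proof: both split $P$ into its $(\rho,\sigma)$-leading part plus a lower-degree remainder, compute $\ell_{\rho,\sigma}$ of each top-degree summand via the multiplicativity in Proposition~\ref{pr v de un producto}(2), and reassemble with Lemma~\ref{le ell de la suma}, treating the case $(\rho_1,\sigma_1)>(\rho,\sigma)$ by the observation that $\ell_{\rho_1,\sigma_1}(Y+\lambda X^{\sigma/\rho})=y$. The only cosmetic difference is that the paper first deduces $v_{\rho,\sigma}(\varphi(P))=v_{\rho,\sigma}(P)$ from the induced isomorphism of filtrations, whereas you obtain it as a byproduct of the nonvanishing of $\varphi_L(\ell_{\rho,\sigma}(P))$; both are fine.
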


\begin{proof} By item~3) of Proposition~\ref{pr v de un producto},
$$
v_{\rho,\sigma}\bigl(\varphi(X^{\frac{i}{l}} Y^j)\bigr) = i v_{\rho,\sigma}(X^{\frac{1}{l}})+ j v_{\rho,\sigma}(Y+\lambda X^{\frac{\sigma}{\rho}}) = \frac{i}{l} \rho + j\sigma = v_{\rho,\sigma}(X^{\frac{i}{l}} Y^j),
$$
for all $i\in \mathds{Z}$ and $j\in \mathds{N}_0$. Hence,
\begin{equation}
v_{\rho,\sigma}(\varphi(P)) = v_{\rho,\sigma}(P)\quad \text{for all $P\in W^{(l)}\setminus\{0\}$},\label{eq23}
\end{equation}
since $\varphi$ is bijective and therefore induce and isomorphism between the gradations associate with the $(\rho,\sigma)$-filtrations. We fix now a $P\in W^{(l)}\setminus\{0\}$ and write
$$
P = \sum_{i=0}^n \lambda_i X^{\frac{r}{l}-i\frac{\sigma}{\rho}}Y^{s+i} + R,
$$
where $R = 0$ or $v_{\rho,\sigma}(R) < v_{\rho,\sigma}(P)$. By~\eqref{eq23}, Lemma~\ref{le ell de la suma} and item~(2) of Proposition~\ref{pr v de un producto}
\begin{align*}
\ell_{\rho,\sigma}(\varphi(P)) &= \ell_{\rho,\sigma}\left(\sum_{i=0}^n \lambda_i \varphi\bigl(X^{\frac{r}{l}-i\frac{\sigma}{\rho}}Y^{s+i}\bigr) \right)\\
& = \ell_{\rho,\sigma}\left(\sum_{i=0}^n \lambda_i X^{\frac{r}{l}-i \frac{\sigma}{\rho}}(Y+\lambda X^{\frac{\sigma}{\rho}})^{s+i}\right)\\
& = \sum_{i=0}^n \lambda_i \ell_{\rho,\sigma}\bigl(X^{\frac{r}{l}-i \frac{\sigma}{\rho}}(Y+\lambda X^{\frac{\sigma}{\rho}})^{s+i}\bigr)\\
& = \sum_{i=0}^n \lambda_i x^{\frac{r}{l}-i \frac{\sigma}{\rho}}(y+\lambda x^{\frac{\sigma}{\rho}})^{s+i}\\
& = \varphi_L\left(\sum_{i=0}^n \lambda_i x^{\frac{r}{l}-i \frac{\sigma}{\rho}} y^{s+i}\right)\\
& = \varphi_L(\ell_{\rho,\sigma}(P)),
\end{align*}
as desired. Let $(\rho_1,\sigma_1)\in \mathfrak{V}$ such that $(\rho,\sigma)<(\rho_1,\sigma_1)$. Then $\rho_1\sigma < \rho \sigma_1$, and so
$$
\ell_{\rho_1,\sigma_1}(Y+\lambda X^{\frac{\sigma}{\rho}}) = y.
$$
Hence, by Proposition~\ref{pr v de un producto},
\begin{align*}
& \ell_{\rho_1,\sigma_1}\bigl(\varphi(X^{\frac{i}{l}} Y^j)\bigr)= \ell_{\rho_1,\sigma_1}\bigl(X^{\frac{i}{l}} (Y+\lambda X^{\frac{\sigma}{\rho}})^j\bigr)= x^{\frac{i}{l}} y^j
\shortintertext{and}
& v_{\rho_1,\sigma_1}\bigl(\varphi(X^{\frac{i}{l}} Y^j)\bigr) = \frac{i}{l} \rho_1 + j \sigma_1 = v_{\rho_1,\sigma_1}(X^{\frac{i}{l}} Y^j),
\end{align*}
which implies that
\begin{equation}
v_{\rho_1,\sigma_1}(\varphi(R)) = v_{\rho_1,\sigma_1}(R)\quad\text{for all $R\in W^{(l)}\setminus \{0\}$.} \label{eq24}
\end{equation}
Fix now $P\in W^{(l)}\setminus\{0\}$ and write
$$
P = \sum_{\{(i/l,j):\rho_1 i/l + \sigma_1 j = v_{\rho_1,\sigma_1}(P)\}}  \lambda_{i/l,j} X^{\frac{i}{l}}Y^j + R,
$$
with $R = 0$ or $v_{\rho_1,\sigma_1}(R)<v_{\rho_1,\sigma_1}(P)$. Again by~\eqref{eq23}, Lemma~\ref{le ell de la suma} and item~(2) of Proposition~\ref{pr v de un producto}
\begin{align*}
\ell_{\rho_1,\sigma_1}(\varphi(P)) &= \ell_{\rho_1,\sigma_1}\biggl(\sum_{i/l,j}\lambda_{i/l,j} \varphi(X^{\frac{i}{l}}Y^j)\biggr)\\
& = \ell_{\rho_1,\sigma_1}\biggl(\sum_{i/l,j}\lambda_{i/l,j} X^{\frac{i}{l}}(Y+\lambda X^{\frac{\sigma}{\rho}})^j\biggr)\\
& = \sum_{i/l,j}\lambda_{i/l,j} \ell_{\rho_1,\sigma_1}\bigl(X^{\frac{i}{l}}(Y+\lambda X^{\frac{\sigma}{\rho}})^j\bigr)\\
& = \sum_{i/l,j}\lambda_{i/l,j} x^{\frac{i}{l}}y^j\\
& = \ell_{\rho_1,\sigma_1}(P),
\end{align*}
as desired.
\end{proof}

For the rest of this section we assume that $K$ is algebraically closed.

\smallskip

Let $P,Q\in W^{(l)}$ and let $(\rho,\sigma)\in \mathfrak{V}$. Write
$$
\st_{\rho,\sigma}(P) = \Bigl(\frac{r}{l},s\Bigr)\quad\text{and}\quad \mathfrak{f}(x):= x^s f^{(l)}_{P,\rho,\sigma}(x).
$$
Let $\varphi\in \Aut(W^{(l')})$ be the automorphism defined by
$$
\varphi(X^{\frac{1}{l'}}) := X^{\frac{1}{l'}}\quad\text{and}\quad \varphi(Y) := Y + \lambda X^{\frac{\sigma}{\rho}},
$$
where $l':=\lcm(l,\rho)$ and $\lambda$ is any element of $K$ such that the multiplicity $m_{\lambda}$ of $x-\lambda$ in $\mathfrak{f}(x)$ is maximum.

\begin{proposition}\label{preparatoria} If
\begin{itemize}

\smallskip

\item[\rm{(a)}] $\sigma \le 0$,

\smallskip

\item[\rm{(b)}] $[Q,P]=1$,

\smallskip

\item[\rm{(c)}] $(\rho,\sigma)\in \Val(P)\cap    \Val(Q)$,

\smallskip

\item[\rm{(d)}] $v_{\rho,\sigma}(P)>0$ and $v_{\rho,\sigma}(Q)>0$,

\smallskip

\item[\rm{(e)}] $[P,Q]_{\rho,\sigma}=0$.

\smallskip

\item[\rm{(f)}] $\frac{v_{\rho,\sigma}(Q)}{v_{\rho,\sigma}(P)} \notin \mathds{N}$ and $\frac{v_{\rho,\sigma}(P)}{v_{\rho,\sigma}(Q)} \notin \mathds{N}$,

\smallskip

\item[\rm{(g)}] $v_{1,-1}\bigl(\en_{\rho,\sigma}(P)\bigr)<0$ and $v_{1,-1}\bigl(\en_{\rho,\sigma}(Q)\bigr)<0$,

\smallskip

\end{itemize}
then, there exists $(\rho',\sigma')\in \mathfrak{V}$ such that
\begin{enumerate}

\smallskip

\item $(\rho',\sigma')<(\rho,\sigma)$ and $(\rho',\sigma')\in \Val(\varphi(P))\cap    \Val(\varphi(Q))$,

\smallskip

\item $v_{1,-1}\bigl(\en_{\rho',\sigma'}(\varphi(P))\bigr)<0$ and $v_{1,-1} \bigl(\en_{\rho',\sigma'}(\varphi(Q))\bigr)<0$,

\smallskip

\item $v_{\rho',\sigma'}(\varphi(P))>0$ and $v_{\rho',\sigma'}(\varphi(Q))>0$,

\smallskip

\item $\frac{v_{\rho',\sigma'}(\varphi(P))}{v_{\rho',\sigma'}(\varphi(Q))}= \frac{v_{\rho,\sigma}(P)}{v_{\rho,\sigma}(Q)}$,

\smallskip

\item For all $(\rho,\sigma) <(\rho'',\sigma'') < (-1,1)$ the equalities
$$
\ell_{\rho'',\sigma''}(\varphi(P))=\ell_{\rho'',\sigma''}(P)\quad\text{and}\quad \ell_{\rho'',\sigma''}(\varphi(Q))=\ell_{\rho'',\sigma''}(Q)
$$
hold,

\smallskip

\item $\en_{\rho',\sigma'}(\varphi(P)) = \st_{\rho,\sigma}(\varphi(P)) = \Bigl(\frac{r}{l}+\frac{s\sigma}{\rho}-m_{\lambda} \frac{\sigma}{\rho},m_{\lambda}\Bigr)$,

\smallskip

\item $\en_{\rho',\sigma'}(\varphi(Q)) = \st_{\rho,\sigma}(\varphi(Q)))$ and $\en_{\rho',\sigma'}(\varphi(P)) = \frac{v_{\rho,\sigma}(P)}{v_{\rho,\sigma}(Q)} \en_{\rho',\sigma'}(\varphi(Q))$,

\smallskip

\item It is true that
$$
\qquad v_{0,1}(\en_{\rho',\sigma'}(\varphi(P))) <v_{0,1} (\en_{\rho,\sigma}(P)) \quad\text{or}\quad \en_{\rho',\sigma'}(\varphi(P)) = \en_{\rho,\sigma}(P).
$$
Furthermore, in the second case $\en_{\rho,\sigma}(P)+(\sigma/\rho,-1)\in \Supp(P)$,

\smallskip

\item $v_{\rho,\sigma}(\varphi(P)) = v_{\rho,\sigma}(P)$ and $v_{\rho,\sigma}(\varphi(Q)) = v_{\rho,\sigma}(Q)$,

\smallskip

\item $[\varphi(Q),\varphi(P)]_{\rho,\sigma} = 0$.

\smallskip

\item There exists  a $(\rho,\sigma)$-homogeneous element $F\in W^{(l)}$, which is not a monomial, such that
$$
[P,F]_{\rho,\sigma} = \ell_{\rho,\sigma}(P)\quad\text{and}\quad v_{\rho,\sigma}(F) =\rho+\sigma.
$$
Furthermore, if $\en_{\rho,\sigma}(F) = (1,1)$, then $\st_{\rho,\sigma}(\varphi(P)) = \en_{\rho,\sigma}(P)$,

\smallskip

\end{enumerate}
Note that

\begin{itemize}

\smallskip

\item[-] if $l'=1$, then $\varphi$ induces an automorphism of $W$,

\smallskip

\item[-] $\sigma'<0$, since $\sigma\le 0$ means $(\rho,\sigma)\le (1,0)$, and so $(\rho',\sigma')<(\rho,\sigma)\le (1,0)$ implies $\sigma'<0$,

\smallskip

\item[-] $v_{\rho,\sigma}(F) =\rho+\sigma>0$ implies that $\st_{\rho,\sigma}(F) \ne (0,0)\ne \en_{\rho,\sigma}(F)$.

\end{itemize}

\end{proposition}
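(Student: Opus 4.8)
The proof follows the scheme of \cite[Prop.~6.2]{G-G-V2}. We first dispose of the ``automorphic'' conclusions. Since $\rho\mid l'$ and $\sigma\le 0$, Proposition~\ref{pr ell por automorfismos} (applied over $W^{(l')}$) gives $v_{\rho,\sigma}(\varphi(P))=v_{\rho,\sigma}(P)$, $v_{\rho,\sigma}(\varphi(Q))=v_{\rho,\sigma}(Q)$ (conclusion~(9)) and the equalities of conclusion~(5). As $\varphi$ is an algebra automorphism, $[\varphi(Q),\varphi(P)]=\varphi([Q,P])=1$; since $1\ne 0$ and, by~(b) and~(e), $v_{\rho,\sigma}(P)+v_{\rho,\sigma}(Q)-(\rho+\sigma)>0=v_{\rho,\sigma}(1)$, we obtain $[\varphi(Q),\varphi(P)]_{\rho,\sigma}=0$, which is~(10). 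For the existence of $F$ in~(11) we apply Theorem~\ref{th tipo irreducibles} to $(P,Q,(\rho,\sigma))$ — its hypotheses hold by~(b), (d), (e), (f) — getting a $(\rho,\sigma)$-homogeneous $F\in W^{(l)}$ with $[P,F]_{\rho,\sigma}=\ell_{\rho,\sigma}(P)$ and $v_{\rho,\sigma}(F)=\rho+\sigma$; by Proposition~\ref{pavadass} every irreducible factor of $f^{(l)}_{P,\rho,\sigma}$ divides $f^{(l)}_{F,\rho,\sigma}$, and since $(\rho,\sigma)\in\Val(P)$ by~(c) the former has positive degree, hence so does the latter, so $F$ is not a monomial. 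Finally, by item~(2) of Theorem~\ref{f[] en W^{(l)}} we may write $\ell_{\rho,\sigma}(P)=\lambda_P R^m$ and $\ell_{\rho,\sigma}(Q)=\lambda_Q R^n$ with $R\in L^{(l)}$ a $(\rho,\sigma)$-homogeneous polynomial and $m,n\ge 2$ coprime, $m/n=v_{\rho,\sigma}(P)/v_{\rho,\sigma}(Q)$.

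Next we compute the sheared leading terms. Put $\st_{\rho,\sigma}(P)=(r/l,s)$ and $\mathfrak f(x):=x^s f^{(l)}_{P,\rho,\sigma}(x)$. Using $\ell_{\rho,\sigma}(P)=x^{r/l}y^s f^{(l)}_{P,\rho,\sigma}(x^{-\sigma/\rho}y)$ from Definition~\ref{polinomio asociado f^{(l)}} and the definition of $\varphi_L$, Proposition~\ref{pr ell por automorfismos} gives
$$
\ell_{\rho,\sigma}(\varphi(P))=\varphi_L(\ell_{\rho,\sigma}(P))=x^{r/l+s\sigma/\rho}\,\mathfrak f\bigl(x^{-\sigma/\rho}y+\lambda\bigr).
$$
Expanding $\mathfrak f$ around $\lambda$, the $Y$-exponents occurring run from $m_\lambda$ (the multiplicity of $\lambda$ in $\mathfrak f$) to $s+\gamma=\deg\mathfrak f$, where $\gamma=\deg f^{(l)}_{P,\rho,\sigma}$. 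Since $\sigma\le 0$ and $\rho+\sigma>0$, along a $(\rho,\sigma)$-line $v_{1,-1}$ strictly decreases in the $Y$-exponent, so $\st_{\rho,\sigma}(\varphi(P))=\bigl(\frac rl+\frac{s\sigma}\rho-m_\lambda\frac\sigma\rho,\,m_\lambda\bigr)$ and $\en_{\rho,\sigma}(\varphi(P))=\en_{\rho,\sigma}(P)$. Applying $\varphi_L$ to the factorizations above gives $\ell_{\rho,\sigma}(\varphi(P))=\lambda_P\widetilde R^{\,m}$, $\ell_{\rho,\sigma}(\varphi(Q))=\lambda_Q\widetilde R^{\,n}$ with $\widetilde R:=\varphi_L(R)$, whence $\st_{\rho,\sigma}(\varphi(Q))=\frac{v_{\rho,\sigma}(Q)}{v_{\rho,\sigma}(P)}\st_{\rho,\sigma}(\varphi(P))$ and $\en_{\rho,\sigma}(\varphi(Q))=\en_{\rho,\sigma}(Q)$. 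Conclusion~(8) is now immediate, as $m_\lambda\le s+\gamma$ with equality only if $\mathfrak f=a_\gamma(x-\lambda)^{s+\gamma}$; in the equality case one checks, using~(c), that $s=0$, $f^{(l)}_{P,\rho,\sigma}=a_\gamma(x-\lambda)^\gamma$ with $\lambda\ne 0$, so $\en_{\rho,\sigma}(P)+(\sigma/\rho,-1)\in\Supp(P)$ and $\st_{\rho,\sigma}(\varphi(P))=\en_{\rho,\sigma}(P)$. For the last assertion of~(11): if $\en_{\rho,\sigma}(F)=(1,1)$ then, $F$ being non-monomial, $\st_{\rho,\sigma}(F)\ne(1,1)$ has $Y$-exponent $0$; by item~(1) of Proposition~\ref{extremosnoalineados} applied to $[P,F]_{\rho,\sigma}=\ell_{\rho,\sigma}(P)$ we get $\st_{\rho,\sigma}(P)\sim\st_{\rho,\sigma}(F)$, forcing $s=0$; then $f^{(l)}_{F,\rho,\sigma}$ has degree $1$, Proposition~\ref{pavadass} forces $f^{(l)}_{P,\rho,\sigma}$ to be a power of a linear polynomial with nonzero root, so $m_\lambda=\gamma$ and $\st_{\rho,\sigma}(\varphi(P))=\en_{\rho,\sigma}(P)$.

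It remains to produce $(\rho',\sigma')$, which is the crux. Set $(\rho',\sigma'):=\Pred_{\rho,\sigma}(\varphi(P))$. The main obstacle consists of two facts: (i) $\Valinf_{\rho,\sigma}(\varphi(P))\ne\emptyset$ (so that $\Pred$ exists), and (ii) $v_{1,-1}(\st_{\rho,\sigma}(\varphi(P)))<0$. Granting these, Lemmas~\ref{separacion} and~\ref{predysucc} give $(\rho',\sigma')<(\rho,\sigma)$ (hence $\sigma'<0$), $(\rho',\sigma')\in\Val(\varphi(P))$, and $\en_{\rho',\sigma'}(\varphi(P))=\st_{\rho,\sigma}(\varphi(P))$; since $[\varphi(P),\varphi(Q)]_{\rho,\sigma}=0$, Corollaries~\ref{cont proporcionalidad} and~\ref{extremos alineados} show that the valuation detected at the aligned corner $\st_{\rho,\sigma}(\varphi(Q))$ is the same, i.e.\ $(\rho',\sigma')=\Pred_{\rho,\sigma}(\varphi(Q))$, whence $(\rho',\sigma')\in\Val(\varphi(Q))$ and $\en_{\rho',\sigma'}(\varphi(Q))=\st_{\rho,\sigma}(\varphi(Q))$; this gives~(1), (6) and~(7). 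Then~(2) is exactly~(ii) together with its $Q$-analogue (which follows from $\st_{\rho,\sigma}(\varphi(Q))\sim\st_{\rho,\sigma}(\varphi(P))$), and~(4) follows from~(6), (7) and $\st_{\rho,\sigma}(\varphi(Q))=\frac{v_{\rho,\sigma}(Q)}{v_{\rho,\sigma}(P)}\st_{\rho,\sigma}(\varphi(P))$. Conclusion~(3) follows from $v_{\rho',\sigma'}(\varphi(P))=v_{\rho',\sigma'}(\st_{\rho,\sigma}(\varphi(P)))$ and $v_{\rho,\sigma}(\varphi(P))>0$ via Lemma~\ref{casi continuidad} and Proposition~\ref{le basico1}, once one checks that $(\rho',\sigma')$ lies in the admissible range just below $(\rho,\sigma)$. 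For~(ii): $v_{1,-1}(\st_{\rho,\sigma}(\varphi(P)))\ne 0$ by item~(2) of Theorem~\ref{th tipo irreducibles} applied to $(\varphi(P),\varphi(Q),(\rho,\sigma))$, while $v_{1,-1}(\en_{\rho,\sigma}(\varphi(P)))=v_{1,-1}(\en_{\rho,\sigma}(P))<0$ by~(g); if $\en_{\rho,\sigma}(F)=(1,1)$ the previous paragraph closes the case, and otherwise one excludes $v_{1,-1}(\st_{\rho,\sigma}(\varphi(P)))>0$ using the maximality of $m_\lambda$ together with the shape restrictions on $F$ provided by Propositions~\ref{pavadass} and~\ref{extremosnoalineados}. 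I expect~(ii), and the precise control (needed for~(3)) of how far below $(\rho,\sigma)$ the valuation $(\rho',\sigma')$ may fall, to be the delicate points, exactly as in~\cite{G-G-V2}.
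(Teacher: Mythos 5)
Your overall scheme matches the paper's, and the routine conclusions ((5), (9), (10), the existence and non-monomiality of $F$ in (11), the formula for $\st_{\rho,\sigma}(\varphi(P))$, and item (8)) are handled essentially as in the text. But the two steps you yourself flag as ``delicate'' are precisely the content of the proposition, and you have not supplied them; as written the proof has a genuine gap at both.

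First, the strict inequality $v_{1,-1}(\st_{\rho,\sigma}(\varphi(P)))<0$ in the case $\en_{\rho,\sigma}(F)\sim\en_{\rho,\sigma}(P)$ does not follow from ``the maximality of $m_\lambda$'' alone: one needs a quantitative \emph{lower} bound on $m_\lambda$. The paper's chain is: writing $\en_{\rho,\sigma}(F)=(M_0,M)$, Proposition~\ref{pavadass} ($f^{(l)}_{F,\rho,\sigma}$ separable, every irreducible factor of $f^{(l)}_{P,\rho,\sigma}$ divides it) together with a case split on $s>0$ versus $s=0$ gives $\#\factors(\mathfrak f)\le M$; since $\deg\mathfrak f=N$, the pigeonhole gives $m_\lambda\ge N/M$; and the proportionality $N_0/N=M_0/M$ combined with $v_{\rho,\sigma}(F)=\rho+\sigma$ yields $\frac{k\rho}{l'(\rho+\sigma)}=\frac NM\le m_\lambda$, whence $v_{1,-1}(\st_{\rho,\sigma}(\varphi(P)))=\frac{\rho+\sigma}{\rho}\bigl(\frac{k\rho}{l'(\rho+\sigma)}-m_\lambda\bigr)\le 0$, and item (2) of Theorem~\ref{th tipo irreducibles} upgrades this to $<0$. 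None of this is in your text.

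Second, your identification $(\rho',\sigma')=\Pred_{\rho,\sigma}(\varphi(Q))$ ``by Corollaries~\ref{cont proporcionalidad} and~\ref{extremos alineados}'' is not a proof: Corollary~\ref{extremos alineados} only aligns the corners of the $(\rho,\sigma)$-leading terms and says nothing about the next valuation below $(\rho,\sigma)$ in $\ov{\Val}(\varphi(P))$ versus $\ov{\Val}(\varphi(Q))$ coinciding. The paper proves this by contradiction: if the two predecessors differed, the corner $\st_{\rho',\sigma'}(\varphi(Q))$ would be a single point while $\en_{\rho',\sigma'}(\varphi(P))\nsim\st_{\rho',\sigma'}(\varphi(P))$, so $[\varphi(Q),\varphi(P)]_{\rho',\sigma'}\ne 0$ and Proposition~\ref{extremosnoalineados} forces $\st_{\rho',\sigma'}(\varphi(P))+\st_{\rho',\sigma'}(\varphi(Q))=(1,1)$, hence $v_{0,1}(\st_{\rho,\sigma}(\varphi(Q)))\le 1$; this contradicts $v_{0,1}(\st_{\rho,\sigma}(\varphi(Q)))=\bar n\,v_{0,1}(\st_{\rho,\sigma}(\varphi_L(R)))>1$, which itself requires proving $v_{0,1}(\st_{\rho,\sigma}(\varphi_L(R)))\ge 1$ from $v_{1,-1}(\st_{\rho,\sigma}(\varphi(P)))<0$ and $v_{\rho,\sigma}(R)>0$. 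You also need to rule out $(\rho',\sigma')=(1,-1)$ (otherwise $v_{1,-1}(\varphi(P))<0$ and $v_{1,-1}(\varphi(Q))<0$, contradicting $[\varphi(Q),\varphi(P)]=1$). Without these arguments, items (1), (2), (3), (6) and (7) are not established.
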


\begin{figure}[h]
\centering

\begin{tikzpicture}[scale=0.75]
\draw[step=.5cm,gray,very thin] (0,0) grid (6.3,8.3);
\draw[step=.5cm,gray,very thin] (8,0) grid (14.3,8.3);
\draw [->] (1,0) -- (4,6) -- (4.5,8);
\draw (5.2,6) node[fill=white]{$\en_{\rho,\sigma}(P)$};
\draw (5.3,3.1) node[fill=white]{$\ell_{\rho,\sigma}$};
\draw[<-, very thin] (2.8,3.5) .. controls (4,3.1) ..  (4.8,3.1);
\draw[<-, very thin] (11.3,4.4) .. controls (12.5,4.7) ..  (13.8,4.7);
\fill[gray!15!white] (0,0) -- (1,0) -- (4,6) -- (4.4375,7.75)-- (0,7.75) -- (0,0);
\draw (1.5,4.3) node{$P$};
\draw [->] (8.5,0) -- (10.5,3) -- (12,6)-- ((12.5,8);
\fill[gray!15!white] (8,0) -- (8.5,0) -- (10.5,3) -- (12,6)-- ((12.4375,7.75) -- (8,7.75) -- (8,0);
\draw (9.5,4.3) node{$\varphi(P)$};
\draw [->] (0,0) -- (6.4,0) node[anchor=north]{$X$};
\draw [->] (8,0) -- (14.4,0) node[anchor=north]{$X$};
\draw [->] (0,0) -- (0,8.4) node[anchor=east]{$Y$};
\draw [->] (8,0) -- (8,8.4) node[anchor=east]{$Y$};
\draw [dashed] (9,0) -- (10.5,3);
\draw[<-, very thin] (9.4,1.3) .. controls (10.25,0.7) ..  (11.2,0.7);
\draw (12,0.7) node[fill=white]{$\ell_{\rho',\sigma'}$};
\draw (13.55,3.2) node[fill=white]{$\en_{\rho',\sigma'}\bigl(\varphi(P)\bigr)$};
\draw (13.6,2.4) node[fill=white] {$=\st_{\rho,\sigma}\bigl(\varphi(P)\bigr)$};
\draw[<-, very thin] (10.6,3) .. controls (11,2.8) ..  (11.9,2.75);
\draw[<-, very thin] (12.1,6) .. controls (12.5,6.2) ..  (13,6.25);
\draw (13.8,6.7) node[fill=white]{$\en_{\rho,\sigma}(P)$};
\draw (14.4,5.9) node[fill=white] {$=\en_{\rho,\sigma}\bigl(\varphi(P)\bigr)$};
\draw (14.4,4.7) node[fill=white]{$\ell_{\rho,\sigma}$};
\draw[->] (0,0) -- (1,-0.5);
\draw (1.7,-0.45) node{$(\rho,\sigma)$};
\draw[->] (8,0) -- (9,-0.5);
\draw (9.7,-0.45) node{$(\rho,\sigma)$};
\draw[->] (8,0) -- (9.5,-1);
\draw (10.35,-1) node{$(\rho',\sigma')$};
\end{tikzpicture}

\caption{Ilustration of Proposition~\ref{preparatoria}}
\end{figure}
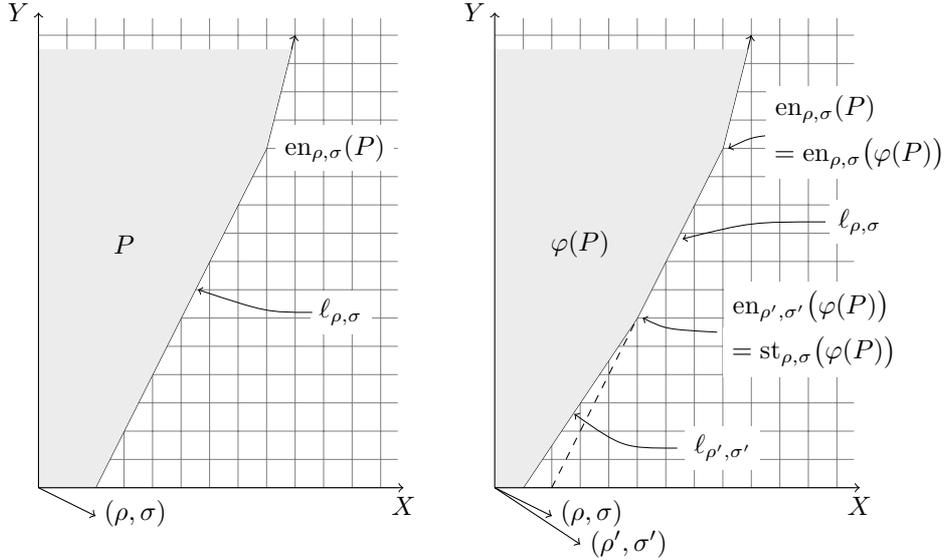

\begin{proof} Note that $\rho+\sigma>0$, $\rho>0$ and $\rho\ne -\sigma$ since $(\rho,\sigma)\in \mathfrak{V}$ and $\sigma\le 0$. We will use freely these facts. By item~(3) of Theorem~\ref{th tipo irreducibles} we can find a $(\rho,\sigma)$-homogeneous element $F\in W^{(l)}$ such that
\begin{equation}
[P,F]_{\rho,\sigma} = \ell_{\rho,\sigma}(P),\quad v_{\rho,\sigma}(F) =\rho+\sigma\quad\text{and}\quad f^{(l)}_{[P,F],\rho,\sigma} = f^{(l)}_{P,\rho,\sigma},\label{eqnue5}
\end{equation}
where $f^{(l)}_{[P,F],\rho,\sigma}$ and $f^{(l)}_{P,\rho,\sigma}$ are the polynomials introduced in Definition~\ref{polinomio asociado f^{(l)}}. We claim that
\begin{align}
1\le \#\factors(f^{(l)}_{P,\rho,\sigma})\le \deg(f^{(l)}_{F,\rho,\sigma}),\label{eq36}
\end{align}
where $\factors(f^{(l)}_{P,\rho,\sigma})$ denotes the number of linear different factors of $f^{(l)}_{P,\rho,\sigma}$. In fact, the first inequality follows from the fact that $(\rho,\sigma)\in \Val(P)$, while the second one follows from Proposition~\ref{pavadass}. Note that, by the very definition of $f^{(l)}_{F,\rho,\sigma}$, Condition~\eqref{eq36} implies that $F$ is not a monomial.

\smallskip

By~\eqref{eq57} and the definition of $\ell_{\rho,\sigma}(P)$ there exist $b_0,\dots,b_{\gamma}\in K$ with $b_0\ne 0$ and $b_{\gamma}\ne 0$, such that
$$
\ell_{\rho,\sigma}(P) = \sum_{i=0}^{\gamma} b_i x^{\frac{r}{l} -\frac{i\sigma}{\rho}} y^{s+i},
$$
and, again by~\eqref{eq57},
\begin{equation}
\en_{\rho,\sigma}(P) = \Bigl(\frac{r}{l}- \frac{\gamma\sigma}{\rho},s+\gamma\Bigr).\label{eqnue21}
\end{equation}
By Definition~\ref{polinomio asociado f^{(l)}},
$$
\mathfrak{f}(x) = \sum_{i=0}^{\gamma} b_i x^{i+s}.
$$
Let $(M_0,M) := \en_{\rho,\sigma}(F)$. From the second equality in~\eqref{eqnue5}, we obtain
\begin{equation}
M_0 = \frac{\rho+\sigma - \sigma M}{\rho}.\label{eqnue9}
\end{equation}
We assert that
\begin{equation}
1\le \#\factors(\mathfrak{f})\le M.\label{numero de factores}
\end{equation}
The first inequality if fulfilled because of~\eqref{eq36}. In order to prove the second one we begin by noting that, by the first equality in~\eqref{eqnue5} and item~(1) of Proposition~\ref{extremosnoalineados},
\begin{equation}
\st_{\rho,\sigma}(F)=(1,1)\quad\text{or}\quad \st_{\rho,\sigma}(F) \sim\st_{\rho,\sigma}(P),\label{eqnue6}
\end{equation}
and that $\st_{\rho,\sigma}(F)\ne (0,0)$ because of the second equality in~\eqref{eqnue5}. Hence, if $s>0$, then $\st_{\rho,\sigma}(F)\ne (u,0)$. Consequently, by~\eqref{eq57} and~\eqref{eq36},
$$
\#\factors(\mathfrak{f})=\#\factors(f^{(l)}_{P,\rho,\sigma})+1\le \deg(f^{(l)}_{F,\rho,\sigma})+1\le M.
$$
On the other hand, if $s=0$, then again by~\eqref{eq57} and~\eqref{eq36},
$$
\#\factors(\mathfrak{f})=\#\factors(f^{(l)}_{P,\rho,\sigma})\le \deg(f^{(l)}_{F,\rho,\sigma})\le M,
$$
as desired, proving the assertion.

\smallskip

For the sake of simplicity we set $N := \gamma+s$. Since $\deg \mathfrak{f} = N$, by~\eqref{numero de factores} there exists at least one factor $x-\lambda$ of $\mathfrak{f}$ with multiplicity $m_{\lambda}$ greater or equal to $N/M$. We take $\lambda\in K$ such that the multiplicity of $x-\lambda$ in $\mathfrak{f}(x)$ is maximum. We have
\begin{equation}
\mathfrak{f}(x) = \sum_{i = m_{\lambda}}^N a_i (x-\lambda)^i\quad\text{with $a_i\in K$, $a_{m_{\lambda}},a_N\ne 0$ and $m_{\lambda}\ge \frac{N}{M}$}.\label{eqnue8}
\end{equation}
Note that, since $\st_{\rho,\sigma}(P) = (r/l,s)$, by equality~\eqref{eq58} we have
$$
\ell_{\rho,\sigma}(P) = x^{\frac{r}{l}}y^s f^{(l)}_{P,\rho,\sigma}(x^{-\frac{\sigma} {\rho}}y)= x^{\frac{r}{l}+s\frac{\sigma}{\rho}}(x^{-\frac{\sigma}{\rho}}y)^s f^{(l)}_{P,\rho,\sigma}(x^{-\frac{\sigma} {\rho}}y)= x^{\frac{k}{l'}} \mathfrak{f}(x^{-\frac{\sigma} {\rho}}y),
$$
where $k:= \frac{rl'}{l}+\frac{l's\sigma}{\rho}$. So, by Proposition~\ref{pr ell por automorfismos},
$$
\ell_{\rho,\sigma}(\varphi(P)) = \varphi_L\bigl(\ell_{\rho,\sigma}(P)\bigr) = x^{\frac{k}{l'}} \mathfrak{f}(x^{-\frac{\sigma}{\rho}}y+\lambda) = \sum_{i=m_{\lambda}}^N a_i x^{\frac{k}{l'}} (x^{-\frac{\sigma}{\rho}}y)^i,
$$
since $\varphi_L(x^{-\sigma/\rho}y)=x^{-\sigma/\rho}y+\lambda$. But then, by the first equality in~\eqref{eq57},
\begin{equation}\label{starting de P}
\st_{\rho,\sigma}(\varphi(P))= \left(\frac{k}{l'}-m_{\lambda} \frac{\sigma}{\rho},m_{\lambda}\right) = \left(\frac{r}{l}+\frac{s\sigma}{\rho}-m_{\lambda} \frac{\sigma}{\rho},m_{\lambda}\right).
\end{equation}
Note also that by~\eqref{eqnue21},
\begin{equation}
\en_{\rho,\sigma}(P) = \Bigl(\frac{r}{l}- \frac{\gamma\sigma}{\rho},N\Bigr) = \Bigl(\frac{k}{l'}- \frac{N\sigma}{\rho},N\Bigr).\label{eqnue22}
\end{equation}
We claim that
\begin{equation}
v_{1,-1}(\st_{\rho,\sigma}(\varphi(P))<0.\label{eq50}
\end{equation}
First note that by Proposition~\ref{pr ell por automorfismos} (with $l$ replaced by $l'$),
\begin{equation}
v_{\rho,\sigma}(\varphi(P)) = v_{\rho,\sigma}(P)\qquad\text{and}\qquad v_{\rho,\sigma}(\varphi(Q)) = v_{\rho,\sigma}(Q).\label{eq49}
\end{equation}
So, item~(9) holds and the hypothesis of Theorem~\ref{th tipo irreducibles} are fulfilled for $\varphi(P)$, $\varphi(Q)$, $(\rho,\sigma)$ and $l'$. Item~(2) of that theorem gives
\begin{equation}\label{vunomenosunononulo}
 v_{1,-1}(\st_{\rho,\sigma}(\varphi(P))\ne 0.
\end{equation}
On the other hand, by the first equality in~\eqref{eqnue5} and item~(2) of Proposition~\ref{extremosnoalineados},
$$
\en_{\rho,\sigma}(F) = (1,1)\quad\text{or}\quad \en_{\rho,\sigma}(F) \sim\en_{\rho,\sigma}(P).
$$
In the first case
\begin{equation}
\Supp(F)\subseteq \{(1,1),(1+\sigma/\rho,0)\},\label{eqnue7}
\end{equation}
and so $\deg(f^{(l)}_{F,\rho,\sigma}) \le 1$. Hence, by~\eqref{eq36},
\begin{equation}
1\le \# \factors(f^{(l)}_{P,\rho,\sigma})\le \deg(f^{(l)}_{F,\rho,\sigma})\le 1,\label{eqnue24}
\end{equation}
and consequently in~\eqref{eqnue7} the equality holds. Thus, $\st_{\rho,\sigma}(F) = (1+\sigma/\rho,0)$, which implies that $l'=l$ and, by~\eqref{eqnue6}, also implies that $s=0$. Therefore $k=r$, $N=\gamma$ and $f^{(l)}_{P,\rho,\sigma} = \mathfrak{f}$. So, by~\eqref{eqnue24}
$$
\mathfrak{f} = a_N (x-\lambda)^N.
$$
where $a_N$ is as in~\eqref{eqnue8}. But then $m_{\lambda} = N = \gamma$ and so, by~\eqref{starting de P} and~\eqref{eqnue22},
$$
\st_{\rho,\sigma}(\varphi(P)) = \left(\frac{r}{l}-\gamma \frac{\sigma}{\rho},N\right) = \en_{\rho,\sigma}(P),
$$
which finishes the proof of item~(11) and yields~\eqref{eq50}, since $v_{1,-1}(\en_{\rho,\sigma}(P))<0$.

\smallskip

In the second case, by~\eqref{eqnue22}
$$
(M_0,M) := \en_{\rho,\sigma}(F) \sim \en_{\rho,\sigma}(P) = (N_0,N),
$$
where
\begin{equation}
N_0 := \frac{r}{l}- \frac{\gamma\sigma}{\rho} = \frac{k}{l'}- \frac{N\sigma}{\rho}.\label{eqnue11}
\end{equation}
Since, by~\eqref{numero de factores}
$$
M\ge 1\quad\text{and}\quad N :=\deg(\mathfrak{f})\ge \#\factors(\mathfrak{f})\ge 1,
$$
we have $\frac{N_0}{N} = \frac{M_0}{M}$. Hence, by~\eqref{eqnue9}, \eqref{eqnue8} and~\eqref{eqnue11},
$$
\frac{k\rho}{l'(\rho+\sigma)}= \frac{k/l'}{1+\sigma/\rho} = \frac{N_0+N\frac{\sigma}{\rho}} {M_0+M\frac{\sigma}{\rho}} =\frac{N(N_0/N+\sigma/\rho)}{M(M_0/M+\sigma/\rho)} = \frac{N}{M} \le m_{\lambda},
$$
which, combined with~\eqref{starting de P}, gives
$$
v_{1,-1}(\st_{\rho,\sigma}(\varphi(P)))=\frac{k}{l'}-m_{\lambda} \frac{\sigma}{\rho}-m_{\lambda} =\left(\frac{\sigma +\rho}{\rho}\right) \left(\frac{k\rho}{l'(\rho+\sigma)}-m_{\lambda}\right)\le 0.
$$
Taking into account~\eqref{vunomenosunononulo}, this yields~\eqref{eq50}, ending the proof of the claim. Now, by item~(2) of Theorem~\ref{f[] en W^{(l)}}, there exist relatively prime $\bar{m},\bar{n}\in \mathds{N}$, $\lambda_P,\lambda_Q\in K^{\times}$ and a $(\rho,\sigma)$-homogeneous $R\in L^{(l)}$ such that
\begin{equation}
\frac{\bar{n}}{\bar{m}} = \frac{v_{\rho,\sigma}(Q)}{v_{\rho,\sigma}(P)},\quad \ell_{\rho,\sigma}(P) = \lambda_P R^{\bar{m}}\quad\text{and}\quad \ell_{\rho,\sigma}(Q) = \lambda_Q R^{\bar{n}}.\label{eq46}
\end{equation}
Hence, again by Proposition~\ref{pr ell por automorfismos},
\begin{equation}
\ell_{\rho,\sigma}(\varphi(P))=\lambda_P \varphi_L (R)^{\bar{m}}\quad\text{and}\quad \ell_{\rho,\sigma}(\varphi(Q))=\lambda_Q \varphi_L(R)^{\bar{n}}.\label{eq43}
\end{equation}
Consequently, by items~(4) and (5) of Proposition~\ref{pr v de un producto},
\begin{align}
&\st_{\rho,\sigma}(\varphi(P))=\bar{m}\st_{\rho,\sigma}(\varphi_L(R)),\quad \en_{\rho,\sigma}(\varphi(P))= \bar{m}\en_{\rho,\sigma}(\varphi_L(R))\label{eq51}
\shortintertext{and}
&\st_{\rho,\sigma}(\varphi(Q)) = \bar{n}\st_{\rho,\sigma}(\varphi_L(R)),\quad \en_{\rho,\sigma}(\varphi(Q)) = \bar{n} \en_{\rho,\sigma}(\varphi_L(R)),\label{eq52}
\end{align}
and so
\begin{equation}
\st_{\rho,\sigma}(\varphi(P)) = \frac{\bar{m}}{\bar{n}}\st_{\rho,\sigma}(\varphi(Q))\quad\text{and}\quad \en_{\rho,\sigma}(\varphi(P)) = \frac{\bar{m}}{\bar{n}}\en_{\rho,\sigma}(\varphi(Q)).\label{eq53}
\end{equation}
We assert that
\begin{equation}
v_{0,1}(\st_{\rho,\sigma}(\varphi_L(R)))\ge 1.\label{eq47}
\end{equation}
In fact, otherwise $v_{0,1}(\st_{\rho,\sigma}(\varphi_L(R)))=0$, and so
$$
\st_{\rho,\sigma}(\varphi_L(R)) = (h,0)\quad\text{with $h\in \frac{1}{l'}\mathds{Z}$.}
$$
Then
\begin{equation}
v_{\rho,\sigma}(\varphi_L(R))=v_{\rho,\sigma}(\st_{\rho,\sigma}(\varphi_L(R)))= \rho h<0,\label{eqnue14}
\end{equation}
since $\rho>0$ and, by~\eqref{eq50} and~\eqref{eq51},
$$
h = v_{1,-1}(\st_{\rho,\sigma}(\varphi_L(R)))<0.
$$
But, by item~(3) of Proposition~\ref{pr v de un producto}, the second equality in~\eqref{eq46}, and the facts that $\varphi_L$ is $(\rho,\sigma)$-homogeneous, and, by hypothesis, $v_{\rho,\sigma}(P)>0$, we have
$$
v_{\rho,\sigma}(\varphi_L(R))=v_{\rho,\sigma}(R)>0,
$$
which contradicts~\eqref{eqnue14}. Hence inequality~\eqref{eq47} is true. Take now
\begin{align*}
& (\rho',\sigma'):=\max\{(\rho'',\sigma'')\in \ov{\Val}(\varphi(P)):(\rho'',\sigma'')< (\rho,\sigma)\}
\shortintertext{and}
& (\bar{\rho},\bar{\sigma}):=\max\{(\rho'',\sigma'')\in \ov{\Val}(\varphi(Q)):(\rho'',\sigma'')< (\rho,\sigma)\}.
\end{align*}
By Proposition~\ref{le basico}
\begin{equation}
\en_{\rho',\sigma'}(\varphi(P)) = \st_{\rho,\sigma}(\varphi(P)))\quad\text{and}\quad \en_{\bar{\rho},\bar{\sigma}}(\varphi(Q)) = \st_{\rho,\sigma}(\varphi(Q))). \label{eq48}
\end{equation}
Combining the first equality with equality~\eqref{starting de P}, we obtain item~(6). Moreover, by the first equalities in~\eqref{eq49} and~\eqref{eq48},
$$
v_{\rho,\sigma}(\en_{\rho',\sigma'}(\varphi(P))) = v_{\rho,\sigma}(\st_{\rho,\sigma}(\varphi(P)) = v_{\rho,\sigma}(\varphi(P)) =v_{\rho,\sigma}(P)>0,
$$
where the last inequality is true by hypothesis. Consequently
\begin{equation}
\en_{\rho',\sigma'}(\varphi(P))\ne (0,0).\label{eq40}
\end{equation}
We claim that
\begin{equation}
(\rho',\sigma') = (\bar{\rho},\bar{\sigma}).
\label{eqnue3}
\end{equation}
In order to prove this we proceed by contradiction. Assume that $(\rho',\sigma')>(\bar{\rho},\bar{\sigma})$. Then
\begin{equation}
\st_{\rho,\sigma}(\varphi(Q)))= \en_{\rho',\sigma'}(\varphi(Q))= \st_{\rho',\sigma'}(\varphi(Q)), \label{eq37}
\end{equation}
where the first equality follows from Proposition~\ref{le basico}, and the second one, from the fact that $(\rho',\sigma') \not\in\ov{\Val}(\varphi(Q))$. Furthermore
\begin{equation}
\en_{\rho',\sigma'}(\varphi(P))\ne \st_{\rho',\sigma'}(\varphi(P))\label{eq39}
\end{equation}
since $(\rho',\sigma')\in \Val(\varphi(P))$. Now, by~\eqref{eq48}, \eqref{eq53} and~\eqref{eq37},
\begin{equation}
\begin{aligned}
\en_{\rho',\sigma'}(\varphi(P))& = \st_{\rho,\sigma}(\varphi(P))\\
& = \frac{\bar{m}}{\bar{n}} \st_{\rho,\sigma}(\varphi(Q))\\
& = \frac{\bar{m}}{\bar{n}} \en_{\rho',\sigma'}(\varphi(Q))\\
& = \frac{\bar{m}}{\bar{n}} \st_{\rho',\sigma'}(\varphi(Q)).
\end{aligned}\label{eq38}
\end{equation}
We assert that
\begin{equation}
\en_{\rho',\sigma'}(\varphi(P))\nsim \st_{\rho',\sigma'}(\varphi(P)).\label{eq41}
\end{equation}
Otherwise, by the inequalities in~\eqref{eq40} and ~\eqref{eq39} there exists $\mu\in K\setminus\{1\}$ such that
\begin{equation*}
\st_{\rho',\sigma'}(\varphi(P)) = \mu\en_{\rho',\sigma'}(\varphi(P)).
\end{equation*}
which implies that
\begin{equation}
v_{\rho',\sigma'}(\varphi(P)) = \mu v_{\rho',\sigma'} (\varphi(P)).\label{eq42}
\end{equation}
On the other hand, by~\eqref{eq38}
$$
v_{\rho',\sigma'}(\varphi(Q)) = \frac{\bar{n}}{\bar{m}}v_{\rho',\sigma'}(\varphi(P)),
$$
which combined with equality~\eqref{eq42}, gives
$$
v_{\rho',\sigma'}(\varphi(P))=0=v_{\rho',\sigma'}(\varphi(Q)).
$$
But this contradicts Remark~\ref{re v de un conmutador}, since $[\varphi(Q),\varphi(P)] = 1$ and $\rho'+\sigma'>0$. Hence the condition~\eqref{eq41} is fulfilled. Combining this fact with~\eqref{eq38}, we obtain
$$
\st_{\rho',\sigma'}(\varphi(Q)) \nsim \st_{\rho',\sigma'}(\varphi(P)).
$$
Hence $[\varphi(Q),\varphi(P)]_{\rho',\sigma'} \ne 0$, by Corollary~\ref{extremos alineados}.
Then, since $[\varphi(Q),\varphi(P)] = 1$, it follows from item~(1) of Proposition~\ref{extremosnoalineados} that
\begin{equation}
\st_{\rho',\sigma'}(\varphi(P))+\st_{\rho',\sigma'}(\varphi(Q))-(1,1)=\st_{\rho',\sigma'}(1)=(0,0),\label{eq44}
\end{equation}
which implies that
\begin{equation}
v_{0,1}(\ell_{\rho',\sigma'}(\varphi(Q)))\in \{0,1\},\label{eq45}
\end{equation}
because the second coordinates in~\eqref{eq44} are non-negative. But, by~\eqref{eq43}, \eqref{eq47}, \eqref{eq37}, item~(4) of Proposition~\ref{pr v de un producto}, and the fact that $\bar{n}>1$, by the first equality in~\eqref{eq46},
$$
v_{0,1}(\en_{\rho',\sigma'}(\varphi(Q)))= v_{0,1}(\st_{\rho,\sigma}(\varphi(Q)))=\ov n v_{0,1}(\st_{\rho,\sigma} (\varphi_L(R)))> 1,
$$
which contradicts~\eqref{eq45}. Consequently, $(\rho',\sigma') > (\bar{\rho},\bar{\sigma})$ is impossible. Similarly one can prove that $(\rho',\sigma')<(\bar{\rho},\bar{\sigma})$ is also impossible, and so~\eqref{eqnue3} is true.

\smallskip

Using~\eqref{eq53}, \eqref{eq48}, \eqref{eqnue3}, and the fact that $\bar{m}/\bar{n} = v_{\rho,\sigma}(P)/ v_{\rho,\sigma}(Q)$, we obtain
$$
\en_{\rho',\sigma'}(\varphi(Q)) = \st_{\rho,\sigma}(\varphi(Q)))
$$
and
\begin{equation}\label{eq esquinas}
\en_{\rho',\sigma'}(\varphi(P)) = \st_{\rho,\sigma}(\varphi(P)) = \frac{v_{\rho,\sigma}(P)}{v_{\rho,\sigma}(Q)} \en_{\rho',\sigma'}(\varphi(Q)),
\end{equation}
which proves item~(7) and combined with~\eqref{eq50}, also proves item~(2). Hence $(\rho',\sigma')\ne (1,-1)$, since otherwise
$$
v_{1,-1}(\varphi(P))<0\quad\text{and}\quad v_{1,-1}(\varphi(Q))<0,
$$
which is impossible, because it contradicts Remark~\ref{re v de un conmutador}, since $[\varphi(Q),\varphi(P)] = 1$. This concludes the proof of item~(1). Now item~(3) follows, since by~\eqref{eq esquinas},
$$
v_{\rho',\sigma'}(\varphi(P))\le 0\Longleftrightarrow v_{\rho',\sigma'}(\varphi(Q))\le 0,
$$
and so, again by Remark~\ref{re v de un conmutador}, the falseness of item~(3) implies
$$
v_{\rho',\sigma'}(1) = v_{\rho',\sigma'}\bigr([\varphi(Q),\varphi(P)]\bigl) \le v_{\rho',\sigma'}(\varphi(Q)) + v_{\rho',\sigma'}(\varphi(P))-(\rho'+\sigma')<0,
$$
which is impossible. Item~(4) also follows from~\eqref{eq esquinas}, item~(5) from  Proposition~\ref{pr ell por automorfismos} and item~(10) from item~(9) and the facts that $[\varphi(Q),\varphi(P)]= [Q,P]$ and \hbox{$[Q,P]_{\rho,\sigma} = 0$}. Finally we prove Item~(8). Note that by item~(6) and~\eqref{starting de P}
\begin{equation}
\en_{\rho',\sigma'}(\varphi(P)) = \st_{\rho,\sigma}(\varphi(P))=\left(\frac{k}{l'}-m_{\lambda} \frac{\sigma}{\rho},m_{\lambda}\right),\label{eqnue23}
\end{equation}
and so by~\eqref{eqnue22},
$$
v_{0,1}(\en_{\rho',\sigma'}(\varphi(P))) = m_{\lambda}\le N = v_{0,1}(\en_{\rho,\sigma}(P)),
$$
Furthermore, if the equality holds, then by~\eqref{eqnue22} and~\eqref{eqnue23},
$$
\en_{\rho',\sigma'}(\varphi(P))= \en_{\rho,\sigma}(P)\quad\text{and}\quad x^s f^{(l)}_{P,\rho,\sigma}(x) = \mathfrak{f}(x) = a_N (x-\lambda)^N,
$$
where $a_N$ is as in~\eqref{eqnue8}. But $\deg(f^{(l)}_{P,\rho,\sigma})>0$ and $x\nmid f^{(l)}_{P,\rho,\sigma}$, since $(\rho,\sigma)\in \Val(P)$ and $f^{(l)}_{P,\rho,\sigma}(0)\ne 0$. Hence, from the last equality it follows that $\lambda\ne 0$, $s = 0$ and $\st_{\rho,\sigma}(P) = (k/l',0)$. So, by~\eqref{eq58}
$$
\ell_{\rho,\sigma}(P) = x^{\frac{k}{l'}} \mathfrak{f}(x^{-\frac{\sigma} {\rho}}y)= a_N x^{\frac{k}{l'}}(x^{-\frac{\sigma} {\rho}}y-\lambda)^N = \sum_{i=0}^N a_N \binom{N}{i} \lambda^{N-i}x^{\frac{k}{l'}-i\frac{\sigma}{\rho}}y^i.
$$
Consequently,
$$
\Bigl(\frac{k}{l'}- \frac{(N-1)\sigma}{\rho},N-1\Bigr)\in \Supp(P),
$$
since $\lambda\ne 0$. This finishes the proof because
$$
\en_{\rho,\sigma}(P)+\Bigl(\frac{\sigma}{\rho},-1\Bigr) = \Bigl(\frac{k}{l'}- \frac{(N-1)\sigma}{\rho}, N-1\Bigr)
$$
by~\eqref{eqnue22}.
\end{proof}

The following definition generalize~\cite[Def.~2.2]{G-G-V2}.

\begin{definition}\label{def de val} Let $l\in \mathds{N}$. For each $(r,s)\in\frac{1}{l}\mathds{Z}\times \mathds{Z} \setminus \mathds{Z}(1,1)$, we define $\val(r,s)$ to be the unique $(\rho,\sigma)\in \mathfrak{V}$ such that $v_{\rho,\sigma}(r,s)=0$.
\end{definition}

\begin{remark}\label{re val} Note that if $P\in W^{(l)}\setminus\{0\}$ and $(\rho,\sigma)\in \Val(P)$, then
$$
(\rho,\sigma) = \val\bigl(\en_{\rho,\sigma}(P) - \st_{\rho,\sigma}(P)\bigr).
$$
\end{remark}

\begin{proposition}\label{lema general} Let $P,Q\in W^{(l)}$ and let $(\rho,\sigma)\in \mathfrak{V}$ such that conditions~(a), (b), (c), (d) and~(f) of Proposition~\ref{preparatoria} are fulfilled. Assume that $\frac{v_{\rho,\sigma}(Q)}{v_{\rho,\sigma}(P)} = \frac nm$ with $n,m>1$ and $\gcd(n,m)=1$. Then
$$
\frac 1m \en_{\rho,\sigma}(P)\ne \left(\bar{r}-\frac 1l,\bar{r}\right),
$$
for all $\bar{r}\ge 2$.
\end{proposition}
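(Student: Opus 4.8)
The plan is a proof by contradiction: suppose $\frac1m\en_{\rho,\sigma}(P)=\bigl(\bar r-\frac1l,\bar r\bigr)$ for some integer $\bar r\ge 2$. First I would reduce to the situation covered by Proposition~\ref{preparatoria}, i.e. show that $[P,Q]_{\rho,\sigma}=0$. By Remark~\ref{re v de un conmutador} and $[Q,P]=1$ the only alternative is $v_{\rho,\sigma}(P)+v_{\rho,\sigma}(Q)=\rho+\sigma$ with $[P,Q]_{\rho,\sigma}=-1$. Applying Proposition~\ref{extremosnoalineados}(2) with $R:=-1$: if $\en_{\rho,\sigma}(P)\nsim\en_{\rho,\sigma}(Q)$ then $\en_{\rho,\sigma}(P)+\en_{\rho,\sigma}(Q)=(1,1)$, so $v_{0,1}\bigl(\en_{\rho,\sigma}(P)\bigr)\le 1$ and hence $\bar r\le\tfrac1m<2$, a contradiction; if $\en_{\rho,\sigma}(P)\sim\en_{\rho,\sigma}(Q)$, then a computation with Proposition~\ref{calculo del corchete} gives $c_{\alpha\beta}=c_{00}=0$, hence also $\st_{\rho,\sigma}(P)\sim\st_{\rho,\sigma}(Q)$, and then the polynomial identity coming from Theorem~\ref{f[] en W^{(l)}}(1) (whose right‑hand side is $axf_P'f_Q-bxf_Q'f_P$, with $a/b=n/m$) together with $(\rho,\sigma)\in\Val(P)\cap\Val(Q)$ is incompatible with its left‑hand side being a monomial; so this case cannot occur either. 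Thus $[P,Q]_{\rho,\sigma}=0$.

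Now by Theorem~\ref{f[] en W^{(l)}}(2) there are $\lambda_P,\lambda_Q\in K^{\times}$ and a $(\rho,\sigma)$‑homogeneous $R\in L^{(l)}$ with $\ell_{\rho,\sigma}(P)=\lambda_PR^m$ and $\ell_{\rho,\sigma}(Q)=\lambda_QR^n$. The hypothesis becomes $\en_{\rho,\sigma}(R)=\bigl(\bar r-\frac1l,\bar r\bigr)$, whence $\en_{\rho,\sigma}(Q)=n\bigl(\bar r-\frac1l,\bar r\bigr)$ and therefore $v_{1,-1}\bigl(\en_{\rho,\sigma}(P)\bigr)=-\frac ml<0$, $v_{1,-1}\bigl(\en_{\rho,\sigma}(Q)\bigr)=-\frac nl<0$; so all of the hypotheses (a)–(g) of Proposition~\ref{preparatoria} hold. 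Applying it, we obtain an automorphism $\varphi$, a non‑monomial $(\rho,\sigma)$‑homogeneous $F$, and a valuation $(\rho',\sigma')<(\rho,\sigma)$ satisfying items (1)–(11). By items (1), (3), (4), (9) and (10), and since $[\varphi(Q),\varphi(P)]=1$, the triple $\bigl(\varphi(P),\varphi(Q),(\rho',\sigma')\bigr)$ again satisfies (a)–(g) — with the same ratio $n/m$, and, via items (2) and (7), with $v_{1,-1}$ of both end corners still negative — so the construction can be repeated, producing a strictly decreasing chain $(\rho,\sigma)=(\rho_0,\sigma_0)>(\rho_1,\sigma_1)>\cdots$ of valuations together with pairs $(P_{i+1},Q_{i+1})=(\varphi_{i+1}(P_i),\varphi_{i+1}(Q_i))$ in algebras $W^{(l_i)}$ with $l_0\mid l_1\mid\cdots$.

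The contradiction comes from showing this chain must be finite. Put $\bar r_i:=\frac1m\,v_{0,1}\bigl(\en_{\rho_i,\sigma_i}(P_i)\bigr)$; since $\ell_{\rho_i,\sigma_i}(P_i)$ is, by Theorem~\ref{f[] en W^{(l)}}(2) applied at each step, an $m$‑th power up to a unit, each $\bar r_i$ is an integer, and it is positive because $v_{1,-1}\bigl(\en_{\rho_i,\sigma_i}(P_i)\bigr)<0$; by item (8) the sequence $(\bar r_i)$ is non‑increasing, hence eventually constant. Once $\bar r_{i+1}=\bar r_i$, item (8) forces its equality alternative, and the proof of that item shows $x^{s_i}f^{(l_i)}_{P_i,\rho_i,\sigma_i}=a_N(x-\lambda)^N$ with $\lambda\neq 0$ and $s_i=0$; expanding $\ell_{\rho_i,\sigma_i}(P_i)=a_Nx^{k/l_i}(x^{-\sigma_i/\rho_i}y-\lambda)^N$ and using that all of its (nonzero) coefficients lie in $L^{(l_i)}$ forces $-\sigma_i/\rho_i\in\frac1{l_i}\mathds Z$, hence $\rho_i\mid l_i$ and $l_{i+1}=l_i$. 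Thus, from the index where $(\bar r_i)$ stabilizes onward, the denominator $l_i$ is a fixed integer $l_\infty$ and every $\rho_i$ divides $l_\infty$, so $\rho_i\le l_\infty$ and $-\rho_i<\sigma_i<0$; but then $(\rho_i,\sigma_i)$ ranges over a finite subset of $\mathfrak V$, which is impossible for a strictly decreasing infinite sequence. Since the chain can always be prolonged by one more application of Proposition~\ref{preparatoria}, this is the desired contradiction.

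The main obstacle is twofold. First, the degenerate reduction in the first paragraph: ruling out $[P,Q]_{\rho,\sigma}\neq 0$ with aligned end corners requires extracting genuine information from the identity in Theorem~\ref{f[] en W^{(l)}}(1) rather than mere degree counting. Second, and more seriously, making the descent airtight: one must verify at every step that Proposition~\ref{preparatoria} really does apply to $\bigl(\varphi_i(P_{i-1}),\varphi_i(Q_{i-1}),(\rho_i,\sigma_i)\bigr)$ — the delicate point being hypothesis (c), that $(\rho_i,\sigma_i)$ lies in $\Val$ of both entries — and that the equality alternative of item (8) indeed forces $\rho_i\mid l_i$, which is exactly what confines the valuations to a finite set and terminates the descent.
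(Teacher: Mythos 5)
Your overall skeleton (force $[P,Q]_{\rho,\sigma}=0$, invoke Proposition~\ref{preparatoria}, run an infinite descent, and contradict the finiteness of admissible valuations) matches the paper's, but the descent as you set it up does not close. The crucial point you skip is that the corner condition itself must be shown to propagate: the paper proves that after one application of Proposition~\ref{preparatoria} one again has $\frac1m\en_{\rho_1,\sigma_1}(P_1)=\bigl(\bar r-\frac1l,\bar r\bigr)$ with the \emph{same} $l$, and that $\rho\mid l$. This is the entire content of the second, long half of its proof: it uses item~(11) of Proposition~\ref{preparatoria}, splits on $\en_{\rho,\sigma}(F)=(1,1)$ versus $\en_{\rho,\sigma}(F)\sim\en_{\rho,\sigma}(P)$, exploits the indivisibility of $\bigl(\bar r-\frac1l,\bar r\bigr)$ to force $\mu=1$, $\bar r=2$, $\rho=l$ in the second case, and rules out $m_\lambda=m$ via Theorem~\ref{th tipo irreducibles}(2). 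Without this invariant you cannot assert that the new triple ``again satisfies (a)--(g)'': hypothesis~(e), namely $[P_1,Q_1]_{\rho_1,\sigma_1}=0$, is \emph{not} among the conclusions of Proposition~\ref{preparatoria} (its item~(10) only gives vanishing at the old valuation $(\rho,\sigma)$), and it is precisely the hypothesis whose failure terminates such chains in Theorem~\ref{familia}. In the paper it is recovered at each step only because the preserved corner shape re-triggers the argument of its item~(1). As written, your argument establishes that the chain terminates --- which by itself is no contradiction --- rather than that it both must and cannot terminate.

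A secondary gap sits in your first paragraph: in the sub-case $\en_{\rho,\sigma}(P)\sim\en_{\rho,\sigma}(Q)$, the claim $c_{00}=0$ does not follow from Proposition~\ref{calculo del corchete} (the $(i,j)=(0,0)$ term of the bracket may sit at the origin, in which case $c_{00}\ne0$), and the asserted incompatibility of $axf_P'f_Q-bxf_Q'f_P$ with a monomial is stated, not proved. The paper avoids the alignment case split altogether: from $v_{\rho,\sigma}(P)+v_{\rho,\sigma}(Q)=\rho+\sigma$ and the assumed corner it computes $(\rho,\sigma)$ explicitly and checks directly that no point below the diagonal can lie in $\Supp(P)\cup\Supp(Q)$, contradicting $[P,Q]=1$. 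Finally, you misdiagnose the delicate hypothesis at the end: (c) is immediate from item~(1) of Proposition~\ref{preparatoria}; the real obstruction is (e) together with the persistence of the corner $\bigl(\bar r-\frac1l,\bar r\bigr)$.
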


\begin{proof} We will assume that
\begin{equation}
\frac 1m \en_{\rho,\sigma}(P) = \left(\bar{r}-\frac 1l,\bar{r}\right),\label{pepi3}
\end{equation}
for some fixed $\bar{r}\ge 2$ and we will prove successively the following two items:
\begin{enumerate}

\smallskip

\item $[P,Q]_{\rho,\sigma} = 0$, $v_{1,-1}(\en_{\rho,\sigma}(P)) <0$ and $v_{1,-1}(\en_{\rho,\sigma}(Q)) <0$.

\smallskip

\item $\rho|l$ and there exist $\varphi\in \Aut(W^{(l)})$ and $(\rho_1,\sigma_1)\in \mathfrak{V}$ with $(\rho_1,\sigma_1) < (\rho,\sigma)$, such that $P_1:=\varphi(P)$, $Q_1:=\varphi(Q)$ and $(\rho_1,\sigma_1)$ satisfy conditions~(a), (b), (c), (d) and~(f) of Proposition~\ref{preparatoria} (more precisely, these conditions are fulfilled with $(P_1,Q_1)$ playing the role of $(P,Q)$ and $(\rho_1,\sigma_1)$ playing the role of $(\rho,\sigma)$). Furthermore,
$$
\qquad\qquad\frac{v_{\rho_1,\sigma_1}(P_1)}{v_{\rho_1,\sigma_1}(Q_1)}= \frac{v_{\rho,\sigma}(P)} {v_{\rho,\sigma}(Q)}\qquad \text{and}\qquad \frac 1m \en_{\rho_1,\sigma_1}(P_1)= \left(\bar{r}-\frac 1l,\bar{r}\right).
$$

\smallskip

\end{enumerate}
Item~(2) yields an infinite, descending chain of valuations $(\rho_k,\sigma_k)$, such that $\rho_k|l$. But there are only finitely many $\rho_k$'s with $\rho_k|l$. Moreover, $0<-\sigma_k<\rho_k$, so there are only finitely many $(\rho_k,\sigma_k)$ possible, which provide us with the desired contradiction.

\smallskip

We first prove item~(1). Set $A:=\frac 1m \en_{\rho,\sigma}(P)$ and suppose $[P,Q]_{\rho,\sigma}\ne 0$. Since
$$
v_{\rho,\sigma}(P) = v_{\rho,\sigma}(mA),\quad v_{\rho,\sigma}(Q) = v_{\rho,\sigma}(nA)\quad\text{and}\quad [P,Q]=1,
$$
under this assumption we have
$$
v_{\rho,\sigma}\left(mA+nA - (1,1)\right) = v_{\rho,\sigma}(P) + v_{\rho,\sigma}(Q) - v_{\rho,\sigma}(1,1) = 0.
$$
Consequently,
\begin{equation}
v_{\rho,\sigma}(A) = \frac{\rho+\sigma}{m+n}\quad\text{and}\quad \rho\bigl(m\bar{r}l+n\bar{r}l-m-n-l\bigr) = - \sigma\bigl(m\bar{r}l+n\bar{r}l-l\bigr),\label{pepi}
\end{equation}
where for the second equality we use assumption~\eqref{pepi3}. Let
$$
d := \gcd(m\bar{r}l+n\bar{r}l-l,m+n-m\bar{r}l-n\bar{r}l+l) = \gcd(l,m+n).
$$
From the second equality in~\eqref{pepi}, it follows that
\begin{equation}
\rho = \frac{m\bar{r}l+n\bar{r}l-l}d\quad\text{and}\quad \sigma = \frac{m+n-m\bar{r}l-n\bar{r}l+l}{d} = \frac{m+n}{d}-\rho, \label{pepi2}
\end{equation}
and so $\rho+\sigma = (m+n)/d$. Hence, by the first equality in~\eqref{pepi},
$$
v_{\rho,\sigma}(P)=mv_{\rho,\sigma}(A)=\frac {m(\rho+\sigma)}{m+n}=\frac md\quad\text{and}\quad v_{\rho,\sigma}(Q)= nv_{\rho,\sigma}(A)=\frac nd.
$$
We will see that we are lead to
\begin{equation}
v_{1,-1}(P)\le 0\qquad\text{and}\qquad v_{1,-1}(Q)\le 0,\label{pepi1}
\end{equation}
which is impossible, since $[P,Q]=1$. In order to prove~\eqref{pepi1}, it suffices to check that if $(i,j)\in \frac 1l \mathds{Z}\times \mathds{N}_0$ and $i>j$, then $v_{\rho,\sigma}(i,j)>\max\{v_{\rho,\sigma}(P), v_{\rho,\sigma}(Q)\}$. But, writing $(i,j)=(j+\frac sl,j)$ with $s\in \mathds{N}$, we obtain
\begin{align*}
v_{\rho,\sigma}(i,j)=&\rho j+\rho \frac sl+\frac{m+n}{d}j-\rho j &&\text{by~\eqref{pepi2}}\\
=&\frac{s(m\bar{r}+n\bar{r}-1)}{d}+\frac{m+n}{d}j &&\text{by~\eqref{pepi2}} \\
\ge & \frac{(m+n)\bar{r}-1}{d}\\
\ge & \frac{m+n}{d} &&\text{since $\bar{r}\ge 2$ and $m+n\ge 1$.}\\
> & \max\{m/d,n/d\}\\
= & \max\{v_{\rho,\sigma}(P), v_{\rho,\sigma}(Q)\}.
\end{align*}
This concludes the proof that $[P,Q]_{\rho,\sigma}\! =\! 0$. Now, by Corollary~\ref{extremos alineados} and the assump\-tion~\eqref{pepi3}, we have
\begin{align*}
& v_{1,-1}(\en_{\rho,\sigma}(P)) =  mv_{1,-1}\left(\bar{r}-\frac 1l,\bar{r}\right)<0
\shortintertext{and}
& v_{1,-1}(\en_{\rho,\sigma}(Q)) = \frac{n}{m}v_{1,-1}(\en_{\rho,\sigma}(P))<0,
\end{align*}
which finishes the proof of item~(1).

\smallskip

We now prove item~(2). By Item~(1), the hypothesis of Proposition~\ref{preparatoria} are sa\-tisfied. Let $(\rho',\sigma')$ and $\varphi$ be as in its statement. Set
$$
P_1:=\varphi(P),\quad Q_1:=\varphi(Q)\quad\text{and}\quad (\rho_1,\sigma_1):= (\rho',\sigma').
$$
By items~(1), (3) and~(4) of Proposition~\ref{preparatoria}, we know that
$$
\frac{v_{\rho_1,\sigma_1}(P_1)}{v_{\rho_1,\sigma_1}(Q_1)}= \frac{v_{\rho,\sigma}(P)}{v_{\rho,\sigma}(Q)},
$$
and that conditions~(a), (c), (d) and~(f) of that proposition are fulfilled for $P_1$, $Q_1$ and $(\rho_1,\sigma_1)$. Moreover condition (b) follows immediately from the fact that $\varphi$ is an algebra automorphism. It remains to prove that
\begin{equation}
\rho\mid l \quad \text{and}\quad \frac 1m \en_{\rho_1,\sigma_1}(P_1)= \left(\bar{r}-\frac 1l,\bar{r}\right).\label{pepi13}
\end{equation}
By item~(11) of Proposition~\ref{preparatoria}, there is a $(\rho,\sigma)$-homogeneous element $F$, which is not a monomial, such that
\begin{equation}
[P,F]_{\rho,\sigma} = \ell_{\rho,\sigma}(P)\quad\text{and}\quad v_{\rho,\sigma}(F) =\rho+\sigma.\label{pepi6}
\end{equation}
By item~(2) of Proposition~\ref{extremosnoalineados},
$$
\en_{\rho,\sigma}(F) = (1,1)\quad\text{or}\quad \en_{\rho,\sigma}(F) \sim\en_{\rho,\sigma}(P).
$$
By items~(6) and~(11) of Proposition~\ref{preparatoria}, in  the first case we have
$$
\frac{1}{m}\en_{\rho_1,\sigma_1}(P_1) = \frac{1}{m}\st_{\rho,\sigma}(P_1) = \frac{1}{m}\en_{\rho,\sigma}(P) = \left(\bar{r}-\frac 1l,\bar{r}\right).
$$
Hence, by item~(8) of the same proposition,
$$
\en_{\rho,\sigma}(P)+\left(\frac{\sigma}{\rho},-1\right)\in \Supp(P)\subseteq \frac{1}{l}\mathds{Z}\times \mathds{Z}.
$$
Since $\en_{\rho,\sigma}(P)\in \frac{1}{l}\mathds{Z}\times \mathds{N}_0$, this implies that
$$
\left(\frac{\sigma}{\rho},-1\right)\in \frac{1}{l}\mathds{Z}\times \mathds{Z},
$$
and so $\rho|\sigma l$. But then $\rho|l$, since $\gcd(\rho,\sigma)=1$. This finishes the proof of~\eqref{pepi13} when $\en_{\rho,\sigma}(F) = (1,1)$.

\smallskip

Assume now that $\en_{\rho,\sigma}(F) \sim\en_{\rho,\sigma}(P)$. Then, since $\left(\bar{r}-\frac 1l,\bar{r}\right)$ is indivisible in $\frac 1l \mathds{Z}\times \mathds{N}_0$, we have
\begin{equation}
\en_{\rho,\sigma}(F) = \mu \frac 1m \en_{\rho,\sigma}(P) = \mu \left(\bar{r}-\frac 1l,\bar{r}\right) \quad\text{with $\mu\in \mathds{N}$.}\label{pepi5}
\end{equation}
We claim that
$$
\mu=1,\quad \bar{r}=2\quad\text{and}\quad \rho = l.
$$
By item~(2) of Theorem~\ref{f[] en W^{(l)}} there exist $\lambda_P,\lambda_Q\in K^{\times}$ and a $(\rho,\sigma)$-homogeneous polynomial $R\!\in\! L^{(l)}$,  such that
\begin{equation}
\ell_{\rho,\sigma}(P) = \lambda_P R^m\quad \text{and}\quad \ell_{\rho,\sigma}(Q) = \lambda_Q R^n.\label{pepi14}
\end{equation}
Note that $R$ is not a monomial, since $(\rho,\sigma)\in \Val(P)$. By item~(5) of Proposition~\ref{pr v de un producto} and the assumption~\eqref{pepi3}, we have
\begin{equation}
\en_{\rho,\sigma}(R) = \left(\bar{r}-\frac 1l,\bar{r}\right).\label{pepi8}
\end{equation}
Hence, by item~(2) of Proposition~\ref{le basico1},
\begin{equation}
v_{1,-1}(\st_{\rho,\sigma}(R))>v_{1,-1}(\en_{\rho,\sigma}(R))=-\frac 1l.\label{pepi4}
\end{equation}
Since, by item~(2) of Theorem~\ref{th tipo irreducibles} and item~(4) of Proposition~\ref{pr v de un producto},
$$
v_{1,-1}(\st_{\rho,\sigma}(R)) = \frac 1m v_{1,-1}(\st_{\rho,\sigma}(P))\ne 0,
$$
from inequality~\eqref{pepi4} it follows that
\begin{equation}
v_{1,-1}(\st_{\rho,\sigma}(R))> 0.\label{pepi10}
\end{equation}
Moreover, by equality~\eqref{pepi5} and the second equality in~\eqref{pepi6}
$$
v_{\rho,\sigma}\left(\mu\left(\bar{r}-\frac 1l,\bar{r}\right)-(1,1)\right)=0,
$$
which implies that
\begin{equation}
\rho(\mu \bar{r} l - \mu -l) = -\sigma(\mu \bar{r} l - l).\label{pepi7}
\end{equation}
Let
\begin{equation}
d := \gcd(\mu \bar{r} l - \mu -l,\mu \bar{r} l - l) = \gcd(\mu,l).\label{pepi12}
\end{equation}
By equality~\eqref{pepi7}
\begin{equation*}
\rho=\frac{\mu \bar{r} l -l}d\quad\text{and}\quad \sigma=\frac{\mu - \mu \bar{r} l + l}{d} = \frac{\mu}{d}-\rho.
\end{equation*}
Hence
\begin{equation}
\rho=\frac{\mu \bar{r} l -l}d \quad\text{and}\quad \rho+\sigma=\frac \mu d.\label{pepi9}
\end{equation}
So,
\begin{equation}
v_{\rho,\sigma}\left(j+\frac sl,j\right)=\frac{\mu j}{d}+\frac {(\mu \bar{r}-1)s}d \ge \frac{\mu \bar{r}-1}{d}\quad\text{for all $j\in \mathds{N}_0$ and $s\in\mathds{N}$.}\label{pepi11}
\end{equation}
If $\bar{r}>2$ or $\mu>1$, this yields
$$
v_{\rho,\sigma}\left(j+\frac sl,j\right)>\frac 1d = v_{\rho,\sigma}(R),
$$
where the last equality follows from~\eqref{pepi8} and~\eqref{pepi9}. Hence, no $(i,j)\in \frac 1l \mathds{Z}\times \mathds{N}$ with $i>j$ lies in the support of $R$, and so $v_{1,-1}(\st_{\rho,\sigma}(R))\le 0$, which contradicts inequality~\eqref{pepi10}. Thus, necessarily $\bar{r}=2$ and $\mu = 1$, which, by equality~\eqref{pepi12} and the first equality in~\eqref{pepi9}, implies $d=1$ and $\rho=l$. This finishes the proof of the claim. Combining this with~\eqref{pepi5} and~\eqref{pepi8}, we obtain
\begin{equation}
\en_{\rho,\sigma}(F) = \en_{\rho,\sigma}(R) = \left(2-\frac{1}{\rho},2\right).\label{pepi16}
\end{equation}
Now, by~\eqref{eq57}, there exists $\gamma\in\{1,2\}$, such that
$$
\st_{\rho,\sigma}(R) = \left(2-\frac{1}{\rho} + \frac {\gamma\sigma}{\rho},2-\gamma \right)= \left(1 + \frac {(\gamma-1)\sigma}{\rho},2-\gamma \right),
$$
where the last equality follows from the fact that, by the second equality in~\eqref{pepi9}, we have $\rho+\sigma = 1$. But the case $\gamma=1$ is impossible, since it contradicts inequality~\eqref{pepi10}. Thus, necessarily
\begin{equation}
\st_{\rho,\sigma}(R) = \left(1 + \frac{\sigma}{\rho},0\right) = \left(\frac{1}{\rho},0\right).\label{pepi17}
\end{equation}
Note that from equalities~\eqref{pepi16} and~\eqref{pepi17} it follows that $\deg\bigl(f_{R,\rho,\sigma}^{(\rho)}\bigr)=2$. Hence, by Remark~\eqref{f de un producto} and the first equality in~\eqref{pepi14},
$$
f_{P,\rho,\sigma}^{(\rho)} = a(x-\lambda)^{2m}\qquad\text{or}\qquad f_{P,\rho,\sigma}^{(\rho)} = a(x-\lambda)^m(x-\lambda')^m,
$$
where $a,\lambda,\lambda'\in K^{\times}$ and $\lambda'\ne \lambda$. Let $\mathfrak{f}$ be as in Proposition~\ref{preparatoria}. By item~(4) of Proposition~\ref{pr v de un producto}, the first equality in~\eqref{pepi14} and equality~\eqref{pepi17},
$$
\st_{\rho,\sigma}(P) = \left(\frac{m}{\rho},0\right)\qquad\text{and}\qquad \mathfrak{f} = f_{P,\rho,\sigma}^{(\rho)}.
$$
Let $m_{\lambda}$ be the multiplicity of $x-\lambda$ in $\mathfrak{f}$. By item~(6) of Proposition~\ref{preparatoria},
$$
\en_{\rho_1,\sigma_1}(P_1) = \st_{\rho,\sigma}(P_1) = \Bigl(\frac{m}{\rho}-m_{\lambda} \frac{\sigma}{\rho}, m_{\lambda}\Bigr) = \begin{cases} (m,m) &\text{if $m_{\lambda}=m$,}\\ (2m-m/\rho,2m) &\text{if $m_{\lambda}=2m$,}\end{cases}
$$
where for the computation we used that $\rho+\sigma=1$. In order to finish the proof it is enough to show that the case $m_{\lambda}$ is impossible, which will follow from item~(2) of Theorem~\ref{th tipo irreducibles}, if we can show that its hypothesis are satisfied by $P_1$, $Q_1$ and $(\rho,\sigma)$. But this is true by items~(9) and~(10) of Proposition~\ref{preparatoria}.
\end{proof}

\section{Computing lower bounds}\label{computinglowerbounds}

\setcounter{equation}{0}

Our next aim is to determine a lower bound for the value
$$
B:=\min\{\gcd(v_{1,1}(P),v_{1,1}(Q)), \text{ where $(P,Q)$ is an irreducible pair}\}.
$$
More precisely, we will prove that $B>14$. In a forthcoming article we will carry these results over from Dixmier pairs to Jacobian pairs, where this first result already improves the lower bound for the greatest common divisor of the degrees given in~\cite{N1} and~\cite{N2} which is $B>8$. We will also try to raise this lower bound to at least~$52$, which would imply that
$$
\max\{\deg(P),\deg(Q)\}\ge 156,
$$
improving thus the result of~\cite{M}, which says that a Keller map $F$ with $\deg(F)<101$ is invertible.

\smallskip

Let $(P,Q)$ be an irreducible pair and let $(\rho,\sigma)\!\in\! \Val(P)$. Assume that $\sigma\!<\!0$. By~\cite[Prop.~6.3~(1) and~(2)]{G-G-V2}, we know that there exist $\lambda_P,\lambda_Q\in K^{\times}$, $n,m\in \mathds{N}$, a $(\rho,\sigma)$-homogeneous element $R\in L$ and a $(\rho,\sigma)$-homogeneous element $F\in W$, which is not a monomial, such that $n,m>1$, $\gcd(m,n) =1$ and
\begin{xalignat}{2}
&\ell_{\rho,\sigma}(P) = \lambda_P R^m,&&\ell_{\rho,\sigma}(Q) = \lambda_Q R^n,\label{chee}\\
&[F,P]_{\rho,\sigma}= \ell_{\rho,\sigma}(P),&&v_{\rho,\sigma}(F) = \rho+\sigma.\label{chee1}\\
&\en_{\rho,\sigma}(F)\ne (1,1), &&\en_{\rho,\sigma}(F) = \mu \en_{\rho,\sigma}(R) \quad\text{with $0<\mu<1$.}\label{pepa}\\
& \frac{v_{\rho,\sigma}(P)}{v_{\rho,\sigma}(Q)}=\frac{v_{1,1}(P)}{v_{1,1}(Q)}=\frac mn.\label{peepaa}
\end{xalignat}

\begin{proposition}\label{pr no (3,6) ni (4,6)} It is true that
$$
\frac{1}{m}\en_{\rho,\sigma}(P)\notin \{(3,6),(4,6)\}.
$$

\end{proposition}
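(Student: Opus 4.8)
The plan is to argue by contradiction, using Proposition~\ref{preparatoria} to push the support-corner of $P$ downward repeatedly while controlling its exact location, and to show that starting from $\frac1m\en_{\rho,\sigma}(P)\in\{(3,6),(4,6)\}$ forces an impossible geometric configuration (or an infinite descent that cannot terminate). The key point is that $(\rho,\sigma)\in\Val(P)\cap\Val(Q)$ by~\eqref{chee}, that hypotheses (a)--(f) of Proposition~\ref{preparatoria} are met thanks to (b) (irreducibility giving $[Q,P]=1$), (d), (e), and (f) (the latter from $m,n>1$, $\gcd(m,n)=1$ in~\eqref{peepaa}), and that~\eqref{pepa} places us in the ``second case'' $\en_{\rho,\sigma}(F)=\mu\en_{\rho,\sigma}(R)$ with $0<\mu<1$. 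So I would first record that Proposition~\ref{preparatoria} applies and, crucially, that we are NOT in the situation $\en_{\rho,\sigma}(F)=(1,1)$ — this is exactly~\eqref{pepa}, and it is what will prevent the ``clean'' termination.

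\medskip

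\textbf{Step 1.} Suppose $\frac1m\en_{\rho,\sigma}(P) = (\bar r - 1/l,\bar r)$ for some $\bar r\ge 2$; by Proposition~\ref{lema general} this is impossible, so $\en_{\rho,\sigma}(R)=\frac1m\en_{\rho,\sigma}(P)$ is NOT of the form $(\bar r-1/l,\bar r)$. In the cases at hand, $\frac1m\en_{\rho,\sigma}(P)\in\{(3,6),(4,6)\}$, i.e. $\en_{\rho,\sigma}(R)\in\{(3,6),(4,6)\}$. I would then combine this with item~(2) of Proposition~\ref{le basico1} (applied with $(\rho,\sigma)<(1,-1)$ impossible — rather use the version that gives $v_{1,-1}(\st_{\rho,\sigma}(R))>v_{1,-1}(\en_{\rho,\sigma}(R))$), and with item~(2) of Theorem~\ref{th tipo irreducibles} ($v_{1,-1}(\st_{\rho,\sigma}(P))\ne 0$, hence $v_{1,-1}(\st_{\rho,\sigma}(R))\ne 0$), to deduce $v_{1,-1}(\st_{\rho,\sigma}(R))>0$. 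This is the analogue of~\eqref{pepi10} in the proof of Proposition~\ref{lema general}, and it forces $\st_{\rho,\sigma}(R)$ to lie strictly above the diagonal.

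\medskip

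\textbf{Step 2.} Now use~\eqref{pepa}: $\en_{\rho,\sigma}(F)=\mu\,\en_{\rho,\sigma}(R)$ with $\mu\in(0,1)$, together with $v_{\rho,\sigma}(F)=\rho+\sigma$ from~\eqref{chee1}. Writing $\en_{\rho,\sigma}(R)=(e_0,e)$ with $(e_0,e)\in\{(3,6),(4,6)\}$, this gives $e\mu\in\mathds N$ (since the second coordinate of $\en_{\rho,\sigma}(F)$ is a non-negative integer), so $\mu\in\{1/6,2/6,3/6,4/6,5/6\}$ when $e=6$, and correspondingly $\en_{\rho,\sigma}(F)\in\{(\mu e_0,\mu e)\}$. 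Then $v_{\rho,\sigma}(\en_{\rho,\sigma}(F))=\rho+\sigma$ yields a linear Diophantine constraint relating $\rho,\sigma,\mu$ exactly as~\eqref{pepi7}--\eqref{pepi9} do in Proposition~\ref{lema general}; solving it pins down $\rho$ (as a multiple of $l$) and $\rho+\sigma$ in terms of $\mu$ and $(e_0,e)$. From this, the valuation inequality $v_{\rho,\sigma}(j+s/l,j)\ge$ (something) for points strictly below the diagonal — the analogue of~\eqref{pepi11} — should show that NO point of $\Supp(R)$ with first coordinate exceeding its second can have $(\rho,\sigma)$-value equal to $v_{\rho,\sigma}(R)$, i.e. $v_{1,-1}(\st_{\rho,\sigma}(R))\le 0$, contradicting Step~1. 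I expect this to eliminate $(3,6)$ and $(4,6)$ outright for most of the possible $\mu$.

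\medskip

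\textbf{The main obstacle} will be the residual values of $\mu$ (and hence of $\rho,\sigma$) that survive the crude valuation estimate of Step~2 — precisely those where $\bar r$ is small (here the ``$\bar r$'' role is played by $3$ or $4$, which is why this case needs separate, more extensive analysis rather than a one-line Diophantine kill as in Proposition~\ref{lema general}). For those, I would invoke the iteration machinery: apply Proposition~\ref{preparatoria} to get $(\rho',\sigma')<(\rho,\sigma)$ and $\varphi$ with $\frac{v_{\rho',\sigma'}(\varphi(P))}{v_{\rho',\sigma'}(\varphi(Q))}=\frac mn$ unchanged, and track $\en_{\rho',\sigma'}(\varphi(P))$ via items~(6), (7), (8) of that proposition. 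Item~(8) gives a monovariant ($v_{0,1}$ of the corner strictly decreases, or the corner is fixed and then $\en_{\rho,\sigma}(P)+(\sigma/\rho,-1)\in\Supp(P)$, which as in Proposition~\ref{lema general} forces $\rho\mid l$); combined with $\deg f^{(l)}_{F,\rho,\sigma}\le M$ and~\eqref{numero de factores}-type bounds on the number of factors of $\mathfrak f$, the admissible corners $(3m,6m)$ and $(4m,6m)$ should be shown to generate a strictly descending chain of valuations $(\rho_k,\sigma_k)$ with $\rho_k\mid l$, which is impossible since $0<-\sigma_k<\rho_k\mid l$ leaves only finitely many choices. Putting Steps~1--2 and the iteration together gives the contradiction, proving $\frac1m\en_{\rho,\sigma}(P)\notin\{(3,6),(4,6)\}$.
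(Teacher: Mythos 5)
Your overall strategy (contradiction, one use of Proposition~\ref{preparatoria}, then a descent) is the right family of ideas, but two of your concrete steps do not go through, and the step that actually drives the paper's proof is missing. First, Step~1 is logically invalid: from $v_{1,-1}(\st_{\rho,\sigma}(R))>v_{1,-1}(\en_{\rho,\sigma}(R))$ and $v_{1,-1}(\st_{\rho,\sigma}(R))\ne 0$ you cannot conclude $v_{1,-1}(\st_{\rho,\sigma}(R))>0$, because here $v_{1,-1}(\en_{\rho,\sigma}(R))=-3$ (resp.\ $-2$), not $-1/l$; the corresponding deduction in Proposition~\ref{lema general} works only because there $\en_{\rho,\sigma}(R)=(\bar r-1/l,\bar r)$ sits at distance $1/l$ from the diagonal. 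Second, Step~2 cannot ``eliminate $(3,6)$ and $(4,6)$ outright'': the configurations it would have to rule out, e.g.\ $(\rho,\sigma)=(3,-1)$, $\st_{\rho,\sigma}(R)=(1,0)$, $\en_{\rho,\sigma}(R)=(3,6)$ (and in fact $v_{1,-1}(\st_{\rho,\sigma}(R))=1>0$ there), are perfectly consistent at the level of supports and valuations; the paper accepts exactly these configurations and derives the contradiction only afterwards.

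What is missing is the exact computation that makes the single application of Proposition~\ref{preparatoria} decisive. Using~\eqref{pepa} together with the constraints from~\cite{G-G-V2} one pins down $\en_{\rho,\sigma}(F)=(2,4)$ (resp.\ $(2,3)$), hence $(\rho,\sigma)=(3,-1)$ (resp.\ $(2,-1)$) and $\st_{\rho,\sigma}(R)=(1,0)$; then $\deg f_{F,\rho,\sigma}=1$ forces, via Proposition~\ref{pavadass} and Remark~\ref{f de un producto}, that $f^{(1)}_{P,\rho,\sigma}$ is a constant times the $2m$-th (resp.\ $3m$-th) power of a single irreducible factor, so the maximal multiplicity is $m_{\lambda}=2m$ (resp.\ $3m$) exactly. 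Item~(6) of Proposition~\ref{preparatoria} then yields the new corner \emph{exactly}: $\en_{\rho_1,\sigma_1}(P_1)=(5m/3,2m)=m(2-\tfrac13,2)$ (resp.\ $(5m/2,3m)=m(3-\tfrac12,3)$), which is precisely of the special form $m(\bar r-\tfrac1l,\bar r)$ that Proposition~\ref{lema general} forbids. Your closing descent sketch never establishes that the new corner has this special form; without it, items~(6)--(8) of Proposition~\ref{preparatoria} only give the monovariant used in Theorem~\ref{familia} to prove the chain is \emph{finite}, and a finite chain is not a contradiction. The infinite descent lives entirely inside Proposition~\ref{lema general}, and you can only invoke it after computing the corner exactly.
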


\begin{proof}
Write
$$
(r,s):=\en_{\rho,\sigma}(R)= \frac{1}{m} \en_{\rho,\sigma}(P)\quad\text{and}\quad (r',s'):= (\mu r,\mu s) = \en_{\rho,\sigma}(F).
$$
Since
$$
\rho r'+\sigma s' = v_{\rho,\sigma}(F) = \rho+\sigma,
$$
we have
\begin{equation}
\rho(r'-1) = -\sigma(s'-1),\label{pepea}
\end{equation}
which determines $(\rho,\sigma)$ as a function of $\en_{\rho,\sigma}(F)$, because $\gcd(\rho,\sigma)=1$, $\sigma<0$ and $\en_{\rho,\sigma}(F)\ne (1,1)$. Note that the equality~\eqref{pepea} means that
$$
(\rho,\sigma) = \val\bigl((r',s')-(1,1)\bigr).
$$
Also, we note that $R$ is not a monomial since $(\rho,\sigma)\in \Val(P)$, and so there exists $\gamma\in \mathds{N}$, such that
\begin{equation}
\st_{\rho,\sigma}(R) = (r+\gamma\sigma,s-\gamma\rho)\in \mathds{N}_0\times \mathds{N}_0.\label{pepesa}
\end{equation}
Next, we prove separately that $(r,s)\ne (3,6)$ and $(r,s)\ne (4,6)$.

\smallskip

\noindent\bf Proof of $\bm{(r,s)\ne (3,6)}$.\rm\enspace Assume by contradiction that $(r,s)=(3,6)$. Then, by the equality in~\eqref{pepa} and~\cite[Prop.~6.3~(4)]{G-G-V2}, necessarily
\begin{equation}
(r',s'):=\en_{\rho,\sigma}(F)=(2,4).\label{eqqq3}
\end{equation}
Hence, by~\cite[Prop.~6.3~(6)]{G-G-V2}, and equalities~\eqref{pepea} and~\eqref{pepesa},
\begin{equation}
(\rho,\sigma)=(3,-1)\quad\text{and}\quad \st_{\rho,\sigma}(R) = (1,0).\label{eqqq4}
\end{equation}
Note now that, by~\eqref{eqqq3} and~\cite[Lemma~5.6]{G-G-V2},
$$
\ell_{3,-1}(F)=\mu_0 xy +\mu_1 x^2y^4\qquad\text{with $\mu_0,\mu_1\in K^{\times}$}.
$$
Hence, by~\cite[Def.~1.20]{G-G-V2},
$$
f_{F,3,-1}=\mu_0+\mu_1x.
$$
Moreover, since, by~\cite[Def.~1.20]{G-G-V2},
$$
\st_{3,-1}(R)=(1,0)\quad\text{and}\quad \en_{3,-1}(R)=(3,6),
$$
we have
$$
\deg(f_{R,3,-1})=2.
$$
So, by the first equalities in~\eqref{chee} and~\eqref{chee1}, and~\cite[Rem.~1.21 and Prop.~4.6]{G-G-V2},
$$
f_{R,3,-1}=\mu(\mu_0+\mu_1 x)^2\quad\text{with $\mu\in K^{\times}$.}
$$
Consequently, by Remark~\ref{f y f^{(l)}},
$$
f_{R,3,-1}^{(1)}=\mu(\mu_0+\mu_1 x^3)^2.
$$
Hence, by the first equality in~\eqref{chee}, the second one in~\eqref{eqqq4}, item~(4) of Proposition~\ref{pr v de un producto} and Remark~\ref{f de un producto},
$$
\st_{3,-1}(P)=(m,0)\quad\text{and}\quad f_{P,3,-1}^{(1)}=\mu^m(\mu_0+\mu_1 x^3)^{2m}.
$$
This implies that the polynomial $\mathfrak{f}$, introduced at the beginning of this Section, e\-quals $f_{P,3,-1}^{(1)}$, and that the multiplicity $m_{\lambda}$ of each linear factor $x-\lambda$ of $\mathfrak{f}$ equals~$2m$. So, if we verify that Conditions~(a)--(g) of Proposition~\ref{preparatoria} are satisfied, we can and will apply it with $\lambda\!\in\! K$ an arbitrary root of $\mathfrak{f}$. Now we proceed to check the a\-bove mentioned conditions. Items~(a)--(b) are trivial and items~(c)--(f) follow from~\cite[Prop.~3.6,~3.7 and Rem.~3.9]{G-G-V2}. Finally, item~(g) is satisfied, since, by~\eqref{chee} and~\cite[Prop~1.9~(5)]{G-G-V2}, we have
\begin{align*}
& v_{1,-1}\bigl(\en_{3,-1}(P)\bigr) = m v_{1,-1}\bigl(\en_{3,-1}(R)\bigr) = -3 m
\shortintertext{and}
& v_{1,-1}\bigl(\en_{3,-1}(Q)\bigr) = n v_{1,-1}\bigl(\en_{3,-1}(R)\bigr) = -3 n.
\end{align*}
Let $\varphi\in \Aut(W^{(3)})$ and $(\rho',\sigma')\in \mathfrak{V}$ be as in Proposition~\ref{preparatoria}. Set
$$
P_1:=\varphi(P),\quad Q_1:=\varphi(Q),\quad \rho_1:=\rho'\quad\text{and}\quad\sigma_1:=\sigma'.
$$
By item~(1), (3), (4) and~(6) of Proposition~\ref{preparatoria} we know that $P_1$, $Q_1$ and $(\rho_1,\sigma_1)$ satisfy conditions~(a), (c), (d) and~(f) in the statement of that proposition, and that
\begin{equation}
\en_{\rho_1,\sigma_1}(P_1) = \st_{3,-1}(P_1) = (5m/3,2m).\label{eqqq8}
\end{equation}
Moreover condition~(b) of Proposition~\ref{preparatoria} is trivially satisfied. This contradicts Proposition~\ref{lema general} for $P_1$, $Q_1$, $l=3$, $(\rho_1,\sigma_1)$ and $\bar{r}=3$, and eliminates so the case ${(r,s)= (3,6)}$.
\smallskip

\noindent\bf Proof of $\bm{(r,s)\ne (4,6)}$.\rm\enspace Assume by contradiction that $(r,s)=(4,6)$. Then, by equality~\eqref{pepa} and~\cite[Prop.~6.3~(4)]{G-G-V2}, necessarily
\begin{equation}
(r',s'):=\en_{\rho,\sigma}(F)=(2,3).\label{eqnn1}
\end{equation}
Hence, by~\cite[Prop.~6.3~(6)]{G-G-V2}, and equalities~\eqref{pepea} and~\eqref{pepesa},
\begin{equation}
(\rho,\sigma)=(2,-1)\quad\text{and}\quad \st_{\rho,\sigma}(R) = (1,0).\label{eqnn2}
\end{equation}
Note now that, by~\eqref{eqqq3} and~\cite[Lemma~5.6]{G-G-V2},
$$
\ell_{2,-1}(F) = \mu_0 xy +\mu_1 x^2y^3\qquad\text{with $\mu_0,\mu_1\in K^{\times}$}.
$$
Hence, by~\cite[Def.~1.20]{G-G-V2},
$$
f_{F,2,-1}=\mu_0+\mu_1x.
$$
Moreover, since, by~\cite[Def.~1.20]{G-G-V2},
$$
\st_{2,-1}(R)=(1,0)\quad\text{and}\quad \en_{2,-1}(R)=(4,6),
$$
we have
$$
\deg(f_{R,2,-1})=3.
$$
So, by equality~\eqref{chee} and~\cite[Rem.~1.21 and Prop.~4.6]{G-G-V2},
$$
f_{R,2,-1}=\mu(\mu_0+\mu_1 x)^3\quad\text{with $\mu\in K^{\times}$.}
$$
Consequently, by Remark~\ref{f y f^{(l)}},
$$
f_{R,3,-1}^{(1)}=\mu(\mu_0+\mu_1 x^2)^3.
$$
Hence, by the first equality in~\eqref{chee}, the second one in~\eqref{eqqq4}, item~(4) of Proposition~\ref{pr v de un producto} and Remark~\ref{f de un producto},
$$
\st_{2,-1}(P)=(m,0)\quad\text{and}\quad f_{P,2,-1}^{(1)}=\mu^m(\mu_0+\mu_1 x^2)^{3m}.
$$
This implies that the polynomial $\mathfrak{f}$, introduced at the beginning of this Section, e\-quals $f_{P,2,-1}^{(1)}$, and that the multiplicity $m_{\lambda}$ of each linear factor $x-\lambda$ of $\mathfrak{f}$ equals~$3m$. So, if we verify that Conditions~(a)--(g) of Proposition~\ref{preparatoria} are satisfied, we can and will apply it with $\lambda\in K$ an arbitrary root of $\mathfrak{f}$. Now we proceed to check the above mentioned conditions. Items~(a)--(b) are trivial and items~(c)--(f) follow from~\cite[Prop.~3.6,~3.7 and Rem.~3.9]{G-G-V2}. Finally, item~(g) is satisfied, since, by~\eqref{chee} and~\cite[Prop~1.9~(5)]{G-G-V2}, we have
\begin{align*}
& v_{1,-1}\bigl(\en_{2,-1}(P)\bigr) = m v_{1,-1}\bigl(\en_{2,-1}(R)\bigr) = -2 m
\shortintertext{and}
& v_{1,-1}\bigl(\en_{2,-1}(Q)\bigr) = n v_{1,-1}\bigl(\en_{2,-1}(R)\bigr) = -2 n.
\end{align*}
Let $\varphi\in \Aut(W^{(2)})$ and $(\rho',\sigma')\in \mathfrak{V}$ be as in Proposition~\ref{preparatoria}. Set
$$
P_1:=\varphi(P),\quad Q_1:=\varphi(Q),\quad \rho_1:=\rho'\quad\text{and}\quad\sigma_1:=\sigma'.
$$
By item~(1), (3), (4) and~(6) of Proposition~\ref{preparatoria} we know that $P_1$, $Q_1$ and $(\rho_1,\sigma_1)$ satisfy conditions~(a), (c), (d) and~(f) in the statement of that proposition, and that
\begin{equation}
\en_{\rho_1,\sigma_1}(P_1) = \st_{2,-1}(P_1) = (5m/2,3m).\label{eqnn3}
\end{equation}
Moreover condition~(b) of Proposition~\ref{preparatoria} is trivially satisfied. This contradicts Proposition~\ref{lema general} for $P_1$, $Q_1$, $l=2$, $(\rho_1,\sigma_1)$ and $\bar{r}=2$, and eliminates so the case ${(r,s)= (4,6)}$.
\end{proof}

\begin{proposition}\label{cota} Let
$$
B:=\min\{\gcd(v_{1,1}(P),v_{1,1}(Q)), \text{ where $(P,Q)$ is an irreducible pair}\}.
$$
We have $B>14$.
\end{proposition}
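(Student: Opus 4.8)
The plan is to prove that $g:=\gcd\bigl(v_{1,1}(P),v_{1,1}(Q)\bigr)\ge 15$ for an arbitrary irreducible pair $(P,Q)$. The first step is an arithmetic reduction. Fix $(\rho,\sigma)\in\Val(P)$ with $\sigma<0$, so that the relations~\eqref{chee}--\eqref{peepaa} hold, and set $(r,s):=\en_{\rho,\sigma}(R)$. By~\eqref{peepaa} we have $v_{1,1}(P)/v_{1,1}(Q)=m/n$ with $\gcd(m,n)=1$, hence $v_{1,1}(P)=gm$; since $\ell_{\rho,\sigma}(P)=\lambda_P R^m$, item~(5) of Proposition~\ref{pr v de un producto} gives $\en_{\rho,\sigma}(P)=m(r,s)$, and because $\en_{\rho,\sigma}(P)\in\Supp(P)$,
$$
g=\frac{v_{1,1}(P)}{m}\ge\frac{v_{1,1}\bigl(\en_{\rho,\sigma}(P)\bigr)}{m}=r+s .
$$
So it is enough to rule out every admissible value of $(r,s)$ with $r+s\le 14$.

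The second step lists the admissible $(r,s)$. By~\eqref{pepa} there is $\mu\in(0,1)$ with $\en_{\rho,\sigma}(F)=\mu(r,s)\in\mathds{N}_0\times\mathds{N}_0$ and $\en_{\rho,\sigma}(F)\ne(1,1)$, and by~\eqref{chee1}, $v_{\rho,\sigma}\bigl(\en_{\rho,\sigma}(F)\bigr)=\rho+\sigma$, so that $(\rho,\sigma)=\val\bigl(\en_{\rho,\sigma}(F)-(1,1)\bigr)$. The mere existence of such a $\mu$ forces $\gcd(r,s)\ge 2$; using $v_{\rho,\sigma}\bigl(\mu(r,s)\bigr)=\rho+\sigma$ together with $\sigma<0$ one checks that $r\le 2$ is impossible, and $r<s$ follows from the sign conditions on the corner $\en_{\rho,\sigma}(P)$. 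Hence $3\le r<s$ and $\gcd(r,s)\ge 2$, and the candidates with $r+s\le 14$ are exactly
$$
(3,6),\quad(4,6),\quad(3,9),\quad(4,8),\quad(4,10),\quad(6,8) .
$$

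The third step eliminates these six pairs. The values $(3,6)$ and $(4,6)$ are excluded by Proposition~\ref{pr no (3,6) ni (4,6)}. For each of the remaining four I would pin down $\en_{\rho,\sigma}(F)$ uniquely (using the relevant part of~\cite[Prop.~6.3]{G-G-V2}), hence $(\rho,\sigma)$; list the finitely many $\st_{\rho,\sigma}(R)$ allowed by $\en_{\rho,\sigma}(R)=(r,s)$ and the $v_{1,-1}$-monotonicity of Proposition~\ref{le basico1}; and apply Proposition~\ref{preparatoria}. In every one of these cases $\st_{\rho,\sigma}(R)$ has positive $y$-coordinate and $f^{(l)}_{R,\rho,\sigma}$ is separable — here one uses that $R$ may be taken not to be a proper power, which forbids a repeated factor — so, since $f^{(l)}_{P,\rho,\sigma}=\lambda_P\bigl(f^{(l)}_{R,\rho,\sigma}\bigr)^m$ (Remark~\ref{f de un producto}), the multiplicity of the polynomial $\mathfrak f$ entering Proposition~\ref{preparatoria} is strictly maximised at $x=0$. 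Thus $\varphi=\mathrm{id}$, and item~(6) of Proposition~\ref{preparatoria} hands us $(\rho',\sigma')<(\rho,\sigma)$ in $\Val(P)$ with $\tfrac1m\en_{\rho',\sigma'}(P)=\st_{\rho,\sigma}(R)$, at which~\eqref{chee}--\eqref{peepaa} again apply. Concretely one finds $(4,8)\rightsquigarrow(2,3)$ and $(6,8)\rightsquigarrow(4,5)$ in one branch, both of the form $(\bar r-1,\bar r)$ with $\bar r\ge 2$ and hence forbidden by Proposition~\ref{lema general}; one finds $(3,9)\rightsquigarrow(2,4)$ and, in the remaining branches, $(4,10)\rightsquigarrow(3,6)$ (again killed by Proposition~\ref{pr no (3,6) ni (4,6)}) or $(4,10)\rightsquigarrow(2,2)$, and $(6,8)\rightsquigarrow(2,2)$. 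In the leftover cases, applying~\eqref{pepa} and~\eqref{chee1} at $(\rho',\sigma')$: the only $\mu\in(0,1)$ with $\mu\cdot(2,2k)\in\mathds{Z}^2$ gives $\en_{\rho',\sigma'}(F)=(1,k)$, which for $k=1$ contradicts $\en_{\rho',\sigma'}(F)\ne(1,1)$ and for $k\ge 2$ satisfies $v_{\rho',\sigma'}(1,k)=\rho'+\sigma'$, forcing $\sigma'=0$, a contradiction. Every branch closes, so $r+s\ge 15$ and $B=g\ge 15>14$.

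The step I expect to be the main obstacle is the third one: the six-way analysis carried out rigorously. One must pin down $\en_{\rho,\sigma}(F)$ uniquely from the cited properties of $F$, enumerate all admissible $\st_{\rho,\sigma}(R)$, check at every link of the induced descending chain of valuations that hypotheses~(a)--(g) of Proposition~\ref{preparatoria} remain in force (in particular that we stay inside $\Val(P)\cap\Val(Q)$ and that the $v_{1,-1}$-coordinate of the relevant corner stays negative until a genuinely terminal vertex is reached), and verify that the non-primitivity exclusion on $R$ is exactly what prevents a tie in the multiplicity of $\mathfrak f$. None of this is conceptually deep, but it is where all the work lies.
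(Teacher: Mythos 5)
Your reduction to $r+s\ge 15$ and your list of six candidate values of $(r,s)$ coincide with the paper's, as does the disposal of $(3,6)$ and $(4,6)$ via Proposition~\ref{pr no (3,6) ni (4,6)}; but your elimination of $(3,9)$, $(4,8)$, $(6,8)$, $(4,10)$ — the part you yourself flag as the main obstacle — has two concrete holes. First, your reason for taking $\varphi=\mathrm{id}$ in Proposition~\ref{preparatoria} is that $f^{(1)}_{R,\rho,\sigma}$ is separable ``because $R$ is not a proper power''. That implication is false (a polynomial such as $(x-1)^2(x-2)$ has a repeated factor without being a proper power), and in the branches with $\st_{\rho,\sigma}(R)=(2,2)$ the conclusion itself fails: there $f^{(1)}_{R,\rho,\sigma}$ has degree $2\rho$ but, by Proposition~\ref{pavadass}, at most $\deg\bigl(f^{(1)}_{F,\rho,\sigma}\bigr)=\rho$ distinct roots, so it is provably not separable and $x=0$ is at best tied for maximal multiplicity in $\mathfrak f$. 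What you actually need — that $f_{P,\rho,\sigma}$ is, up to a scalar, a power of the separable $f_{F,\rho,\sigma}$ — is exactly what the paper extracts from \cite[Rem.~1.21 and Prop.~4.6]{G-G-V2} inside the proof of Proposition~\ref{pr no (3,6) ni (4,6)}; with that one may still \emph{choose} $\lambda=0$, but ``strictly maximised at $x=0$'' is wrong. Second, $\en_{\rho,\sigma}(F)$ is not pinned down uniquely for $(r,s)=(4,8)$: both $(2,4)$ and $(3,6)$ are compatible with \eqref{pepa} and \eqref{chee1}, giving respectively $(\rho,\sigma)=(3,-1)$ with $\st_{\rho,\sigma}(R)\in\{(3,5),(2,2)\}$ and $(\rho,\sigma)=(5,-2)$ with $\st_{\rho,\sigma}(R)=(2,3)$. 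Your case list treats only the $(2,3)$ and (implicitly) $(2,2)$ branches; the branch $\st_{3,-1}(R)=(3,5)$ is never addressed, and it is covered neither by Proposition~\ref{lema general} nor by your $(2,2k)$ argument (it can be closed, e.g.\ by noting that $\gcd(3,5)=1$ obstructs \eqref{pepa} at the next valuation, but you must say so).

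For comparison, the paper never iterates Proposition~\ref{preparatoria} at this stage. Having computed $\en_{\rho,\sigma}(F)$, hence $(\rho,\sigma)$ and the finitely many possible $\st_{\rho,\sigma}(R)$ exactly as you do, it excludes every resulting value $(2,4)$, $(3,5)$, $(2,2)$, $(2,3)$, $(4,5)$ outright by \cite[Prop.~6.3~(6)]{G-G-V2}; the single surviving branch, $(r,s)=(4,10)$ with $\st_{4,-1}(R)=(3,6)$, is closed by passing to the predecessor valuation $\Pred_{4,-1}(P_0)$ (via \cite[Lemma~2.7~(2)]{G-G-V2}, not via $\varphi$), where the corresponding $R_1$ has $\en_{\rho_1,\sigma_1}(R_1)=(3,6)$ and Proposition~\ref{pr no (3,6) ni (4,6)} applies once more. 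If you want to keep your iterative scheme you must (i) replace the separability claim by the multiplicity analysis based on Proposition~\ref{pavadass} and \cite[Prop.~4.6]{G-G-V2}, (ii) complete the branch enumeration for $(4,8)$, and (iii) note explicitly that \eqref{chee}--\eqref{peepaa} reapply at $(\rho',\sigma')$ only because $\varphi=\mathrm{id}$ leaves the pair unchanged in $W$.
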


\begin{proof} Let $(P,Q)$ be an irreducible pair such that $B=\gcd(v_{1,1}(P),v_{1,1}(Q))$. Let $(P_0,Q_0)$ and $(\rho,\sigma)$ be as in~\cite[Prop.~6.2]{G-G-V2}. Thus, $(P_0,Q_0)$ is an irreducible pair, $\sigma<0$ and $(\rho,\sigma)\in\Val(P_0)\cap \Val(Q_0)$. Hence, by~\cite[Prop.~6.3~(1)]{G-G-V2}, there exists $\lambda_{P_0},\lambda_{Q_0}\in K^{\times}$, a $(\rho,\sigma)$-ho\-mo\-geneous element $R\in L\setminus\{0\}$, and $m,n\in \mathds{N}$, with $m,n>1$ and $\gcd(m,n) = 1$, such that
\begin{equation}
\ell_{\rho,\sigma}(P_0) = \lambda_{P_0} R^m,\qquad \ell_{\rho,\sigma}(Q_0) = \lambda_{Q_0} R^n\label{eccua2}
\end{equation}
and
\begin{equation}
\frac{m}{n} = \frac{v_{\rho,\sigma}(P_0)}{v_{\rho,\sigma}(Q_0)} = \frac{v_{1,1}(P_0)}{v_{1,1}(Q_0)}.\label{eccua3}
\end{equation}
From the first equality in~\eqref{eccua2} and~\cite[Prop.~1.9~(5)]{G-G-V2}, it follows that
$$
\frac{1}{m}\en_{\rho,\sigma}(P_0) = \en_{\rho,\sigma}(R),
$$
while, equality~\eqref{eccua3} implies that
$$
\gcd(v_{1,1}(P_0),v_{1,1}(Q_0)) = \frac{1}{m} v_{1,1}(P_0).
$$
On the other hand, by~\cite[Prop.~6.2~(2)]{G-G-V2},
\begin{equation}
v_{1,1}(P_0) = v_{1,1}(P)\quad\text{and}\quad v_{1,1}(Q_0) = v_{1,1}(Q),\label{eccua1}
\end{equation}
and so
$$
B = \gcd(v_{1,1}(P_0),v_{1,1}(Q_0)) = \frac{1}{m} v_{1,1}(P_0)\ge v_{1,1}(\en_{\rho,\sigma}(R)) = r+s,
$$
where $(r,s) := \en_{\rho,\sigma}(R)$. Since, by~\cite[Prop.~6.2~(g)]{G-G-V2}, we have $r<s$, it follows from~\cite[Prop.~6.3~(3) and~(5)]{G-G-V2}, that if if $r+s\le 14$, then
\begin{equation}
(r,s)\in \{(3,9),(4,8),(6,8),(4,10),(3,6),(4,6)\}.\label{eccua4}
\end{equation}
Furthermore, by conditions~\eqref{chee1} and~\eqref{pepa} there exist a $(\rho,\sigma)$-homogeneous element $F\in W\setminus\{0\}$, such that
\begin{xalignat*}{2}
&[F,P_0]_{\rho,\sigma}= \ell_{\rho,\sigma}(P_0),&&v_{\rho,\sigma}(F) = \rho+\sigma,\\
&\en_{\rho,\sigma}(F)\ne (1,1), &&\en_{\rho,\sigma}(F) = \mu \en_{\rho,\sigma}(R) \quad\text{with $0<\mu<1$.}
\end{xalignat*}
Write $(r',s'):=\en_{\rho,\sigma}(F)$, so that
\begin{equation}
(r',s')=\mu(r,s).\label{pep}
\end{equation}
Since
$$
\rho r'+\sigma s' = v_{\rho,\sigma}(F) = \rho+\sigma,
$$
we have
\begin{equation}
\rho(r'-1) = -\sigma(s'-1),\label{pepe}
\end{equation}
which determines $(\rho,\sigma)$ as a function of $(r',s')$ because $\gcd(\rho,\sigma)=1$, $\sigma<0$ and $(r',s')\ne (1,1)$. Finally, since $(\rho,\sigma)\in \Val(P)$, we know that $R$ is not a monomial. Hence there exists $\gamma\in \mathds{N}$, such that
\begin{equation}
\st_{\rho,\sigma}(R) = (r+\gamma\sigma,s-\gamma\rho)\in \mathds{N}_0\times \mathds{N}_0.\label{pepes}
\end{equation}
Now we will analyze each of the possibilities for $(r,s)$ in~\eqref{eccua4} and see that none of them can hold. First of all we note that cases $(r,s)=(3,6)$ and $(r,s)=(4,6)$ are covered by Proposition~\ref{pr no (3,6) ni (4,6)}.

\bigskip

\noindent $\bm{(r,s)=(3,9)}${\bf.}\enspace By~\eqref{pep} and~\cite[Prop.~6.3 (4)]{G-G-V2}, necessarily
$$
(r',s'):=\en_{\rho,\sigma}(F)=(2,6).
$$
Hence, by~\eqref{pepe} and~\eqref{pepes},
\begin{equation*}
(\rho,\sigma)=(5,-1)\quad\text{and}\quad \st_{\rho,\sigma}(R)=(2,4),
\end{equation*}
which contradicts~\cite[Prop.~6.3~(6)]{G-G-V2}.

\medskip

\noindent $\bm{(r,s)=(4,8)}${\bf.}\enspace By~\eqref{pep} and~\cite[Prop.~6.3~(4)]{G-G-V2}, necessarily
$$
(r',s'):=\en_{\rho,\sigma}(F)\in\{(2,4),(3,6)\}.
$$
If $(r',s') = (2,4)$, then by~\eqref{pepe} and~\eqref{pepes},
$$
(\rho,\sigma)=(3,-1)\quad\text{and}\quad \st_{\rho,\sigma}(R)\in\{(3,5),(2,2)\}
$$
and if $(r',s') = (3,6)$, then again by~\eqref{pepe} and~\eqref{pepes},
$$
(\rho,\sigma)=(5,-2)\quad\text{and}\quad \st_{\rho,\sigma}(R)=(2,3).
$$
Both cases are impossible by~\cite[Prop.~6.3~(6)]{G-G-V2}.

\medskip

\noindent $\bm{(r,s)=(6,8)}${\bf.}\enspace By~\eqref{pep} and~\cite[Prop.~6.3~(4)]{G-G-V2}, necessarily
$$
(r',s'):=\en_{\rho,\sigma}(F) = (3,4).
$$
Hence, by~\eqref{pepe} and~\eqref{pepes},
$$
(\rho,\sigma)=(3,-2)\quad\text{and}\quad \st_{\rho,\sigma}(R)\in\{(4,5),(2,2)\},
$$
which also contradicts~\cite[Prop.~6.3~(6)]{G-G-V2}.

\medskip

\noindent $\bm{(r,s)=(4,10)}${\bf.}\enspace By~\eqref{pep} and~\cite[Prop.~6.3~(4)]{G-G-V2}, necessarily
$$
(r',s'):=\en_{\rho,\sigma}(F) = (2,5).
$$
Hence, by~\eqref{pepe} and~\eqref{pepes},
$$
(\rho,\sigma)=(4,-1)\quad\text{and}\quad \st_{\rho,\sigma}(R)\in\{(3,6),(2,2)\}.
$$
Again by~\cite[Prop.~6.3~(6)]{G-G-V2}, the case $\st_{4,-1}(R) = (2,2)$ is impossible. So, we can assume that $\st_{4,-1}(R) = (3,6)$. As above of~\cite[Lemma~2.4]{G-G-V2}, let
$$
\Valinf_{4,-1}(P_0) := \left\{\val\left(\left(i,j\right)-\st \right):\left(i,j\right)\in \Supp(P_0) \text{ and } v_{1,-1}\left(i,j\right)> v_{1,-1}(\st)\right\},
$$
where $\st := \st_{4,-1}(P_0)$. Since, by~\cite[Prop.~1.9~(4) and Prop.~3.6]{G-G-V2}, and the first equality in~\eqref{eccua2}
$$
v_{1,-1}(\st_{4,-1}(P_0)) = v_{1,-1}(\st_{4,-1}(R)) = -3 m < 0\quad\text{and}\quad v_{1,-1}(P_0)>0,
$$
we have $\Valinf_{4,-1}(P_0)\ne \emptyset$. Let
$$
(\rho_1,\sigma_1) := \Pred_{4,-1}(P_0) :=\max(\Valinf_{4,-1}(P_0)).
$$
By~\cite[Lemma~2.7~(2)]{G-G-V2}, we know that
\begin{equation}
(\rho_1,\sigma_1)\in\Val(P_0)\quad\text{and}\quad \en_{\rho_1,\sigma_1}(P_0) = \st_{4,-1}(P_0).\label{eccua5}
\end{equation}
By~\eqref{chee} and~\eqref{peepaa} there exists $\lambda'_{P_0},\lambda'_{Q_0}\in K^{\times}$, a $(\rho_1,\sigma_1)$-homogeneous element $R_1\in L\setminus\{0\}$, and $m_1,n_1\in \mathds{N}$, with $m_1,n_1>1$, $\gcd(m_1,n_1) = 1$, such that
\begin{equation}
\ell_{\rho_1,\sigma_1}(P_0) = \lambda'_{P_0} R_1^{m_1}\quad\text{and}\quad \ell_{\rho_1,\sigma_1}(Q_0) = \lambda'_{Q_0} R_1^{n_1}.\label{eccua6}
\end{equation}
and
\begin{equation}
\frac{m_1}{n_1} = \frac{v_{\rho_1,\sigma_1}(P_0)}{v_{\rho_1,\sigma_1}(Q_0)} = \frac{v_{1,1}(P_0)}{v_{1,1}(Q_0)}.\label{eccua7}
\end{equation}
Combining the last equality with~\eqref{eccua3}, we obtain that $m_1/n_1 = m/n$, which im\-plies $m_1 = m$ and $n_1 = n$, since $\gcd(m_1,n_1) = 1 = \gcd(m,n)$. Consequently, by the equality in~\eqref{eccua5}, \cite[Prop.~1.9~(4) and~(5)]{G-G-V2}, and the first equalities in~\eqref{eccua2} and~\eqref{eccua6},
$$
\en_{\rho_1,\sigma_1}(R_1) = \frac{1}{m}\en_{\rho_1,\sigma_1}(P_0) = \frac{1}{m} \st_{4,-1}(P_0) = \st_{4,-1}(R_0) = (3,6),
$$
which is impossible by Proposition~\ref{pr no (3,6) ni (4,6)}.
\end{proof}

\section{Compatible complete chains}

\setcounter{equation}{0}

In this last section we construct a sequence of pairs $(P_j,Q_j)$ in $W^{(l_j)}$ using Proposition~\ref{preparatoria}, and prove that the sequence is finite. Associated with this sequence are certain triples $A_j= \bigl(A_j,(\rho_j,\sigma_j),l_j\bigr)$, with $A_j\in \frac 1{l_j}\mathds{Z}\times \mathds{N}$, which form a compatible complete chain of corners of the support of the last pair.

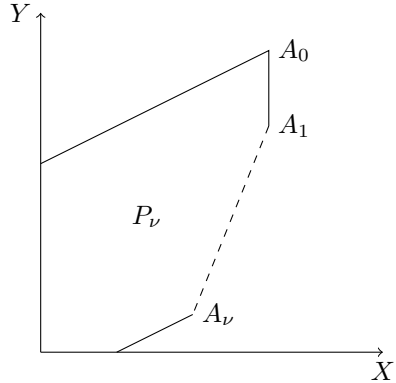
\begin{figure}[H]
\centering

\begin{tikzpicture}
\draw [->] (0,0) -- (0,4.5) node[anchor=east]{$Y$};
\draw [->] (0,0) -- (4.5,0) node[anchor=north]{$X$};
\draw (0,2.5) -- (3,4)node[anchor=west]{$A_0$} -- (3, 3)node[anchor=west]{$A_1$};
\draw (1,0) -- (2,0.5)node[anchor=west]{$A_{\nu}$};
\draw[dashed] (3,3) -- (2,0.5);
\draw (1.4,1.8) node {$P_{\nu}$};
\end{tikzpicture}
\caption{A complete chain $A_0,\dots, A_{\nu}$}
\end{figure}

The smallest of such compatible complete chains, i.e., with $v_{1,1}(A_0)$ minimal, will give a lower bound for $B$, improving Proposition~\ref{cota}. Until now, the smallest compatible chain found starts at $A_0=(9,21)$.

\smallskip

Let $(P,Q)$ be an irreducible pair in $W$. By~\cite[Prop.~3.6 and~3.8]{G-G-V2} there exist $m,n\in \mathds{N}$, such that $m,n>1$, $\gcd(m,n) = 1$ and $\frac{v_{1,1}(P)}{v_{1,1}(Q)} = \frac{m}{n}$.

\begin{theorem}\label{familia} There exist $\nu\in \mathds{N}$, $\psi\in \Aut(W)$ and families
\begin{equation}
\bigl((P_j,Q_j),(\rho_j,\sigma_j),l_j\bigr)_{0\le j\le \nu}\quad\text{and}\quad (\varphi_j)_{1\le j\le \nu}, \label{eqnue10}
\end{equation}
with $P_0 = \psi(P)$, $Q_0 = \psi(Q)$, $l_0=1$, such that
\begin{equation}
(1,0) > (\rho_0,\sigma_0),\quad v_{1,1}(P_0) = v_{1,1}(P),\quad v_{1,1}(Q_0) = v_{1,1}(Q),\label{vimos}
\end{equation}
and
\begin{equation}
\begin{alignedat}{3}
& l_j = \lcm(l_{j-1},\rho_{j-1}),\qquad &&P_j,Q_j\in W^{(l_j)},\qquad &&(\rho_j,\sigma_j)\in \mathfrak{V},\\
&\varphi_j\in \Aut(W^{(l_j)}),\qquad && P_j=\varphi_j(P_{j-1}),\qquad && Q_j=\varphi_j(Q_{j-1}),
\end{alignedat}\label{veamos}
\end{equation}
for all $j\ge 1$. Furthermore they fulfill:

\begin{enumerate}

\smallskip

\item $[Q_j,P_j]=1$ for all $j$,

\smallskip

\item $v_{1,-1}(\en_{\rho_j,\sigma_j}(P_j))<0$ and $v_{1,-1}(\en_{\rho_j,\sigma_j}(Q_j))<0$ for all $j$,

\smallskip

\item $(\rho_j,\sigma_j)\in \Val(P_j)\cap \Val(Q_j)$ for all $j\ge 0$,

\smallskip

\item $v_{\rho_j,\sigma_j}(P_j)>0$ and $v_{\rho_j,\sigma_j}(Q_j)>0$ for all $j$,

\smallskip

\item $\frac{v_{\rho_j,\sigma_j}(P_j)}{v_{\rho_j,\sigma_j}(Q_j)} = \frac{m}{n}$ for all $j$,

\smallskip

\item $\en_{\rho_j,\sigma_j}(P_j) = \frac{m}{n} \en_{\rho_j,\sigma_j}(Q_j)$ for all $j$,

\smallskip

\item $(\rho_{j-1},\sigma_{j-1})>(\rho_j,\sigma_j)$ for $j=1,\dots,\nu$,

\smallskip

\item The equalities
$$
\qquad\qquad v_{\rho_{j-1},\sigma_{j-1}}(P_j)\! =\! v_{\rho_{j-1},\sigma_{j-1}}(P_{j-1})\! \quad\text{and}\!\quad v_{\rho_{j-1},\sigma_{j-1}}(Q_j)\! =\! v_{\rho_{j-1},\sigma_{j-1}}(Q_{j-1})
$$
hold for $j=1,\dots,\nu$,

\smallskip

\item The equalities
$$
\ell_{\rho'',\sigma''}(P_j)=\ell_{\rho'',\sigma''}(P_{j-1})\quad\text{and}\quad \ell_{\rho'',\sigma''}(Q_j)=\ell_{\rho'',\sigma''}(Q_{j-1})
$$
hold for $j=1,\dots,\nu$ and $(\rho_{j-1},\sigma_{j-1}) <(\rho'',\sigma'') < (-1,1)$,

\smallskip

\item $\en_{\rho_j,\sigma_j}(P_j) = \st_{\rho_{j-1},\sigma_{j-1}}(P_j)$ for $j=1,\dots,\nu$,

\smallskip

\item For each $j=1,\dots,\nu$ there exists a $(\rho_{j-1},\sigma_{j-1})$-homogeneous element $F_{j-1}$ in $W^{(l_{j-1})}$, which is not a monomial, such that
\begin{align*}
& v_{\rho_{j-1},\sigma_{j-1}}(F_{j-1}) = \rho_{j-1} + \sigma_{j-1}
\shortintertext{and}
& [P_{j-1},F_{j-1}]_{\rho_{j-1},\sigma_{j-1}} = \ell_{\rho_{j-1},\sigma_{j-1}}(P_{j-1}).
\end{align*}
Furthermore,
$$
\qquad \en_{\rho_{j-1},\sigma_{j-1}}(F_{j-1}) = (1,1) \Longrightarrow \st_{\rho_{j-1},\sigma_{j-1}}(P_j) = \en_{\rho_{j-1},\sigma_{j-1}}(P_{j-1}),
$$

\smallskip

\item $[Q_j,P_j]_{\rho_j,\sigma_j}=0$ for $j=0,\dots, \nu-1$,

\smallskip

\item $[Q_{\nu},P_{\nu}]_{\rho_{\nu},\sigma_{\nu}}=1$.

\smallskip

\end{enumerate}
For the sake of simplicity in the sequel we will write $T_j:= \bigl((P_j,Q_j),(\rho_j,\sigma_j),l_j\bigr)$.

\end{theorem}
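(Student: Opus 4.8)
The plan is to build the families in~\eqref{eqnue10} by iterating Proposition~\ref{preparatoria}, starting from a suitable normalization of $(P,Q)$ obtained from the results of~\cite{G-G-V2}, and to prove that the iteration terminates by a descent argument on the valuations $(\rho_j,\sigma_j)$.

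First I would construct the base triple $T_0$. By~\cite[Prop.~6.2]{G-G-V2} (used already in the proof of Proposition~\ref{cota}) there is $\psi\in\Aut(W)$ and $(\rho_0,\sigma_0)\in\mathfrak{V}$ with $\sigma_0<0$, so that $P_0:=\psi(P)$, $Q_0:=\psi(Q)$, $l_0:=1$ satisfy $v_{1,1}(P_0)=v_{1,1}(P)$, $v_{1,1}(Q_0)=v_{1,1}(Q)$, and $(\rho_0,\sigma_0)\in\Val(P_0)\cap\Val(Q_0)$; combining~\cite[Prop.~6.3]{G-G-V2} with~\cite[Prop.~3.6,~3.7,~3.8]{G-G-V2} one gets $[Q_0,P_0]=1$, $v_{\rho_0,\sigma_0}(P_0)>0$, $v_{\rho_0,\sigma_0}(Q_0)>0$, $[P_0,Q_0]_{\rho_0,\sigma_0}=0$, the ratio condition $v_{\rho_0,\sigma_0}(P_0)/v_{\rho_0,\sigma_0}(Q_0)=m/n$ with $m,n>1$ coprime (hence~(f)), the alignment $\en_{\rho_0,\sigma_0}(P_0)=\tfrac mn\en_{\rho_0,\sigma_0}(Q_0)$, and $v_{1,-1}(\en_{\rho_0,\sigma_0}(P_0))<0$, $v_{1,-1}(\en_{\rho_0,\sigma_0}(Q_0))<0$. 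This verifies~\eqref{vimos} and items~(1)--(6) for $j=0$, and also checks that hypotheses~(a)--(g) of Proposition~\ref{preparatoria} hold for $(P_0,Q_0,\rho_0,\sigma_0,1)$.

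Next I would run the inductive step. Given $T_{j-1}$ satisfying~(a)--(g) of Proposition~\ref{preparatoria} (with $l=l_{j-1}$), that proposition produces $(\rho_j,\sigma_j):=(\rho',\sigma')<(\rho_{j-1},\sigma_{j-1})$ and the automorphism $\varphi_j:=\varphi\in\Aut(W^{(l_j)})$, $l_j:=\lcm(l_{j-1},\rho_{j-1})$, $P_j:=\varphi_j(P_{j-1})$, $Q_j:=\varphi_j(Q_{j-1})$. Items~(1)--(11) of Proposition~\ref{preparatoria} then give directly items~(2),~(3),~(4),~(5) (via item~(4) of the proposition together with~\eqref{peepaa}/the $m/n$ equality), (6) (item~(7) of the proposition), (7)~(item~(1)), (8)--(9)~(items~(9) and~(5)), (10)~(item~(6)), (11)~(item~(11)), and item~(1) of the theorem follows since $\varphi_j$ is an algebra automorphism and $[Q_0,P_0]=1$. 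Item~(3) for $j\ge1$ comes from item~(1) of the proposition; item~(12) — namely $[Q_j,P_j]_{\rho_j,\sigma_j}=0$ — is exactly item~(10) of the proposition applied at stage $j$ before we decide whether to continue. To keep iterating I must re-verify hypotheses~(a)--(g) of Proposition~\ref{preparatoria} for $T_j$: (a) holds since $(\rho_j,\sigma_j)<(\rho_{j-1},\sigma_{j-1})\le(1,0)$ forces $\sigma_j<0$; (b) is item~(1); (c),(d) are items~(1),(3); (f) is item~(4) combined with $\gcd(m,n)=1$; (g) is item~(2); and (e) is item~(10). Thus the process continues as long as $[P_j,Q_j]_{\rho_j,\sigma_j}=0$.

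Finally I would prove termination and extract $\nu$. Each $\varphi_j$ is of the type described before Proposition~\ref{preparatoria}, so $l_j\mid l_{j+1}$ and, crucially, the proof of Proposition~\ref{lema general} shows (via item~(8)/item~(11) of Proposition~\ref{preparatoria}) that whenever the chain survives one sees $\rho_j\mid l_j$ and in fact $l_{j+1}=l_j$ stabilizes after finitely many steps — more simply, there are only finitely many $(\rho,\sigma)\in\mathfrak{V}$ with $\rho\mid l$ for a fixed $l$ and $0<-\sigma<\rho$, and the sequence $(\rho_j,\sigma_j)$ is strictly decreasing by item~(1) of the proposition, so it cannot be infinite; this gives $\nu\in\mathds{N}$, the last index for which the construction is defined, and for $j=\nu$ necessarily $[Q_\nu,P_\nu]_{\rho_\nu,\sigma_\nu}\ne0$. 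Since $[Q_\nu,P_\nu]=1$ and $\rho_\nu+\sigma_\nu>0$, Remark~\ref{re v de un conmutador} combined with items~(4)--(6) forces $v_{\rho_\nu,\sigma_\nu}([Q_\nu,P_\nu])=0=v_{\rho_\nu,\sigma_\nu}(1,1)$, whence $[Q_\nu,P_\nu]_{\rho_\nu,\sigma_\nu}=\ell_{\rho_\nu,\sigma_\nu}(1)=1$, giving item~(13). The main obstacle I anticipate is the termination argument: one must argue carefully that the $l_j$ do not grow without bound (so that "finitely many $(\rho,\sigma)$ with $\rho\mid l$" is applicable), which requires re-using the divisibility analysis of Proposition~\ref{lema general} rather than Proposition~\ref{preparatoria} alone, and one must be careful that the last step genuinely falls outside the hypotheses only through failure of~(e) and not of some other condition — which is why I verified above that (a),(b),(c),(d),(f),(g) are automatically inherited.
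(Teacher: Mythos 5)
Your construction of $T_0$ from \cite[Prop.~6.2 and 6.3]{G-G-V2} and your inductive step via Proposition~\ref{preparatoria} follow the paper's proof closely, and the verification that hypotheses (a)--(g) are inherited at each stage (with the iteration stopping exactly when (e) fails) is correct. The genuine problem is the termination argument. Your "more simply" version does not work: a strictly decreasing sequence in $\mathfrak{V}$ can perfectly well be infinite (e.g.\ $(k,1-k)$ for $k=1,2,\dots$ is strictly decreasing), so strict decrease of $(\rho_j,\sigma_j)$ alone proves nothing; and the finiteness of $\{(\rho,\sigma):\rho\mid l,\ 0<-\sigma<\rho\}$ is only useful if $l$ is fixed, whereas $l_{j+1}=\lcm(l_j,\rho_j)$ can a priori grow at every step. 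Moreover the divisibility $\rho_j\mid l_j$ that you invoke is \emph{not} available "whenever the chain survives": it is a consequence of the second alternative in item~(8) of Proposition~\ref{preparatoria}, i.e.\ it holds only when $\en_{\rho_{j+1},\sigma_{j+1}}(P_{j+1})=\en_{\rho_j,\sigma_j}(P_j)$, because then $\en_{\rho_j,\sigma_j}(P_j)+(\sigma_j/\rho_j,-1)\in\Supp(P_j)\subseteq\frac{1}{l_j}\mathds{Z}\times\mathds{Z}$ forces $\rho_j\mid\sigma_j l_j$ and hence $\rho_j\mid l_j$ since $\gcd(\rho_j,\sigma_j)=1$.

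The missing idea is the auxiliary integer invariant. By item~(8) of Proposition~\ref{preparatoria} the quantity $v_{0,1}(\en_{\rho_j,\sigma_j}(P_j))$ is a non-increasing sequence of non-negative integers, so it can strictly decrease only finitely often; it therefore suffices to show that each maximal plateau (a run of indices on which it is constant) is finite. On a plateau the second alternative of item~(8) applies at every step, so $\rho_{j+k}\mid l_{j+k}$ and consequently $l_{j+k+1}=\lcm(l_{j+k},\rho_{j+k})=l_{j+k}$; thus $l$ is frozen along the plateau, all the $\rho_{j+k}$ divide this fixed $l$, each satisfies $0<-\sigma_{j+k}<\rho_{j+k}$, and the $(\rho_{j+k},\sigma_{j+k})$ are pairwise distinct (being strictly decreasing), so the plateau is finite. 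You gestured at "re-using the divisibility analysis of Proposition~\ref{lema general}", but that proposition's descent serves a different purpose and does not by itself supply the bound on the $l_j$ here; the correct source is item~(8) of Proposition~\ref{preparatoria} combined with the monotone invariant. With that repair the rest of your argument, including the derivation of item~(13) at the terminal index, is sound.
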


\begin{proof} Let $\psi\in \Aut(W)$ and $(P_0,Q_0)$ be as in~\cite[Prop.~6.2]{G-G-V2}. Set
$$
\rho_0:=\rho,\quad \sigma_0:=\sigma,\quad\text{and}\quad l_0:=1.
$$
The assertions in conditions~\eqref{vimos} follow from~\cite[Prop.~6.2~(2) and~(a)]{G-G-V2}. Furthermore, items~(b)--(g) of~\cite[Prop.~6.2]{G-G-V2} imply items~(1)--(5) and~(12) for $T_0$. Finally item~(6) is consequence of item~(5) and~\cite[Rem.~3.11]{G-G-V2} (Note that items~(7)--(11) and~(13) only make sense for $j>0$).

\smallskip

Assume that we have $T_0,\dots,T_{j_0}$ and $\varphi_1,\dots,\varphi_{j_0}$ such that conditions~\eqref{veamos} and items~(1)--(12) are fulfilled for $j\!<\!j_0$, and that conditions~\eqref{veamos} and items~\mbox{(1)--(11)} are fulfilled for $T_{j_0}$ and $\varphi_{j_0}$. If $[Q_{j_0},P_{j_0}]_{ \rho_{j_0},\sigma_{j_0}}\ne 0$, then we set $\nu:=j_0$. Clearly, since $[Q_{\nu},P_{\nu}]=1$, item~(13) is true. If $[Q_{j_0},P_{j_0}]_{\rho_{j_0}, \sigma_{j_0}}=0$, then $(P_{j_0},Q_{j_0})$ and $(\rho_{j_0},\sigma_{j_0})$ fulfill the conditions required to $(P,Q)$ and $(\rho,\sigma)$ in the hypothesis of Proposition~\ref{preparatoria} with $l\!:=\!l_{j_0}$.
Applying that proposition we obtain

\begin{itemize}

\smallskip

\item[-] $(\rho',\sigma')\in \mathfrak{V}$,

\smallskip

\item[-] a $(\rho,\sigma)$-homogeneous element $F$ of $W^{(l)}$,

\smallskip

\item[-] $\varphi\in \Aut(W^{(l')})$, such that
$$
\varphi(X^{\frac{1}{l'}}) = X^{\frac{1}{l'}}\quad\text{and}\quad \varphi(Y) = Y + \lambda X^{\frac{\sigma}{\rho}},
$$
in which $l':=\lcm(\rho_{j_0},l)$ and $\lambda\in K$ is any element such that the multiplicity of $x-\lambda$ in $x^{s_{j_0}}f^{(l)}_{P_{j_0},\rho_{j_0},\sigma_{j_0}}(x)$ is maximum, where $s_{j_0}$ is the second coordinate of $\st_{\rho_{j_0},\sigma_{j_0}}(P_{j_0})$.

\end{itemize}
which enjoy the properties established there, in items~(1)--(11). We set
$$
l_{j_0+1}:=l',\quad \varphi_{j_0+1}:= \varphi,\quad P_{j_0+1}:=\varphi_{j_0+1}(P_{j_0})\quad\text{and}\quad Q_{j_0+1}:=\varphi_{j_0+1}(Q_{j_0}).
$$
Now it is clear that items~(1)--(11) are fulfilled for $j=j_0+1$.

\smallskip

Next we will prove that this process is finite. By item~(8) of Proposition~\ref{preparatoria} we know that
$$
v_{0,1}(\en_{\rho_{j+1},\sigma_{j+1}}(P_{j+1})) \le v_{0,1} (\en_{\rho_j,\sigma_j}(P_j)).
$$
Hence, since $v_{0,1}(\en_{\rho_j,\sigma_j}(P_j))\in \mathds{N}_0$ for all $j$, it suffices to prove that for each $j$ there are only finitely many $k\ge 0$ such that
\begin{equation}\label{esquinasbajan}
v_{0,1}(\en_{\rho_{j+k},\sigma_{j+k}}(P_{j+k})) = v_{0,1}(\en_{\rho_j,\sigma_j}(P_j)).
\end{equation}
We claim that if \eqref{esquinasbajan} is fulfilled for $k=1$ and $j$, then $\rho_j|l_j$, and therefore $l_{j+1}=l_j$. In fact, again by item~(8) of Proposition~\ref{preparatoria}, in this case
$$
\en_{\rho_j,\sigma_j}(P_j)+\Bigl(\frac{\sigma_j}{\rho_j},-1\Bigr)\in \Supp(P_j)\subseteq \frac{1}{l_j}\mathds{Z}\times \mathds{Z}.
$$
Since $\en_{\rho_j,\sigma_j}(P_j)\in \frac{1}{l_j}\mathds{Z}\times \mathds{Z}$, we obtain that $(\sigma_j/\rho_j,-1)\in \frac{1}{l_j}\mathds{Z}\times \mathds{Z}$, i.e.,
$$
\frac{\sigma_j}{\rho_j}=\frac{h}{l_j},
$$
for some $h\in \mathds{Z}$. But then $\rho_j|\sigma_j l_j$, and so $\rho_j|l_j$, since $\gcd(\rho_j,\sigma_j)=1$. This proves the claim.

Now, If~\eqref{esquinasbajan} is fulfilled for $k=0,\dots,k_0$, then $\rho_{j+k}|l_j$ for $k=0,\dots,k_0$. So there are only finitely many $\rho_{j+k}$ possible. But $0<-\sigma_{j+k}<\rho_{j+k}$, so there are only finitely many $(\rho_{j+k},\sigma_{j+k})$ possible. Since $(\rho_{j+k+1},\sigma_{j+k+1})< (\rho_{j+k},\sigma_{j+k})$, we have proved that for each $j$ there are only finitely many $k\ge 0$ such that~\eqref{esquinasbajan} is fulfilled, which concludes the proof of the theorem.
\end{proof}

\smallskip

To each triple $T_j$ as in Theorem~\ref{familia} we associate the triple $S_j := (A_j,(\rho_j,\sigma_j),l_j)$, where $A_j:=\frac{1}{m} \en_{\rho_j,\sigma_j}(P_j)$.

\begin{proposition}\label{chains} Let $(S_j)_{j=0,\dots,\nu}$ be a family associated with the irreducible pair $(P,Q)$, according to Theorem~\ref{familia}. The following facts hold:
\begin{enumerate}

\smallskip

\item $l_0=1$ and $l_j=\lcm(\rho_{j-1},l_{j-1})$ for $j=1,\dots,\nu$,

\smallskip

\item $A_j\in \frac{1}{l_j}\mathds{N}\times \mathds{N}$ for all $j$,

\smallskip

\item $v_{1,-1}(A_j)<0$ and $v_{\rho_j,\sigma_j}(A_j)>0$ for all $j$,

\smallskip

\item $(1,0)>(\rho_0,\sigma_0)$ and $(\rho_{j-1},\sigma_{j-1})>(\rho_j,\sigma_j)$ for $j=1,\dots,\nu$.

\smallskip

\item $v_{\rho_{j-1},\sigma_{j-1}}(A_j)=v_{\rho_{j-1},\sigma_{j-1}}(A_{j-1})$ for $j=1,\dots,\nu$.

\smallskip

\item For all $j$, there exist $A_j'\in \frac{1}{l_j}\mathds{N}\times \mathds{N}$ such  that
$$
v_{\rho_j,\sigma_j}(A'_j)=v_{\rho_j,\sigma_j}(A_j)\quad\text{and}\quad v_{1,-1}(A'_j)>v_{1,-1}(A_j).
$$

\smallskip

\item If $A_j\ne A_{j+1}$, then $\frac{\rho_j+\sigma_j}{v_{\rho_j,\sigma_j}(A_j)} A_j\in \frac{1}{l_j}\mathds{N}\times \mathds{N}$.

\smallskip

\item $v_{\rho_{\nu},\sigma_{\nu}}(A_{\nu})=\frac{\rho_{\nu}+\sigma_{\nu}}{n+m}$.

\smallskip

\end{enumerate}
\end{proposition}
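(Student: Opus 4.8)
The plan is to read off each of the eight assertions from properties~(1)--(13) of Theorem~\ref{familia}, using only the book-keeping tools already at hand: Proposition~\ref{pr v de un producto} (behaviour of $\ell_{\rho,\sigma},v_{\rho,\sigma},\st,\en$ under products), Proposition~\ref{extremosnoalineados}, Proposition~\ref{le basico1}, Definition~\ref{def alineados}, Remark~\ref{re v de un conmutador} and Theorem~\ref{f[] en W^{(l)}}. Throughout one exploits that $\sigma_j<0$ for every $j$: by item~(4) (proved first) $(\rho_j,\sigma_j)\le(\rho_0,\sigma_0)<(1,0)$, and $(1,0)$ is the element of $\ov{\mathfrak{V}}$ with vanishing second coordinate. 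Items~(1) and~(4) are literally part of the data produced by Theorem~\ref{familia}: $l_0=1$ and $l_j=\lcm(\rho_{j-1},l_{j-1})$ come from~\eqref{veamos}, the inequality $(1,0)>(\rho_0,\sigma_0)$ from~\eqref{vimos}, and $(\rho_{j-1},\sigma_{j-1})>(\rho_j,\sigma_j)$ is item~(7) of Theorem~\ref{familia}.

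For items~(2), (3) and~(5) the key point is that $\en_{\rho_j,\sigma_j}(P_j)$ is a monomial of $\ell_{\rho_j,\sigma_j}(P_j)$, hence lies on the line $v_{\rho_j,\sigma_j}=v_{\rho_j,\sigma_j}(P_j)$. Thus $v_{\rho_j,\sigma_j}(A_j)=\tfrac1m v_{\rho_j,\sigma_j}(P_j)>0$ by item~(4) of Theorem~\ref{familia}, and $v_{1,-1}(A_j)=\tfrac1m v_{1,-1}(\en_{\rho_j,\sigma_j}(P_j))<0$ by item~(2) of Theorem~\ref{familia}; this is item~(3). Item~(2) follows: writing $A_j=(a,b)$, one has $b\in\mathds{N}_0$ since $P_j\in W^{(l_j)}$; $b=0$ would give $a>0$ from $v_{\rho_j,\sigma_j}(A_j)>0$ and then $v_{1,-1}(A_j)=a>0$, contradicting item~(3), so $b\ge1$; then $\rho_j a=v_{\rho_j,\sigma_j}(A_j)-\sigma_j b>0$, so $a>0$; finally $A_j\in\tfrac1{l_j}\mathds{Z}\times\mathds{Z}$ because item~(6) of Theorem~\ref{familia} gives $n\,\en_{\rho_j,\sigma_j}(P_j)=m\,\en_{\rho_j,\sigma_j}(Q_j)$ with both sides in $\tfrac1{l_j}\mathds{Z}\times\mathds{Z}$ and $\gcd(m,n)=1$. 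For item~(5), chain items~(10) and~(8) of Theorem~\ref{familia}:
\begin{align*}
v_{\rho_{j-1},\sigma_{j-1}}(A_j)&=\tfrac1m v_{\rho_{j-1},\sigma_{j-1}}\bigl(\st_{\rho_{j-1},\sigma_{j-1}}(P_j)\bigr)=\tfrac1m v_{\rho_{j-1},\sigma_{j-1}}(P_j)\\
&=\tfrac1m v_{\rho_{j-1},\sigma_{j-1}}(P_{j-1})=v_{\rho_{j-1},\sigma_{j-1}}(A_{j-1}).
\end{align*}

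Item~(8): by item~(13) of Theorem~\ref{familia}, $[Q_\nu,P_\nu]_{\rho_\nu,\sigma_\nu}=1\ne0$, so $P_\nu$ and $Q_\nu$ are not $(\rho_\nu,\sigma_\nu)$-proportional and $v_{\rho_\nu,\sigma_\nu}([Q_\nu,P_\nu])=v_{\rho_\nu,\sigma_\nu}(P_\nu)+v_{\rho_\nu,\sigma_\nu}(Q_\nu)-(\rho_\nu+\sigma_\nu)$; the left side is $v_{\rho_\nu,\sigma_\nu}(1)=0$ by item~(1), while $v_{\rho_\nu,\sigma_\nu}(P_\nu)=m\,v_{\rho_\nu,\sigma_\nu}(A_\nu)$ and (via item~(5) of Theorem~\ref{familia}) $v_{\rho_\nu,\sigma_\nu}(Q_\nu)=n\,v_{\rho_\nu,\sigma_\nu}(A_\nu)$, so $(m+n)v_{\rho_\nu,\sigma_\nu}(A_\nu)=\rho_\nu+\sigma_\nu$. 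Item~(7): assume $A_j\ne A_{j+1}$ and let $F_j$ be as in item~(11) of Theorem~\ref{familia}, so $[P_j,F_j]_{\rho_j,\sigma_j}=\ell_{\rho_j,\sigma_j}(P_j)$; if $\en_{\rho_j,\sigma_j}(F_j)=(1,1)$ that item forces $\st_{\rho_j,\sigma_j}(P_{j+1})=\en_{\rho_j,\sigma_j}(P_j)$, hence $A_{j+1}=\tfrac1m\en_{\rho_{j+1},\sigma_{j+1}}(P_{j+1})=\tfrac1m\st_{\rho_j,\sigma_j}(P_{j+1})=A_j$ by item~(10) of Theorem~\ref{familia}, a contradiction. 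Thus $\en_{\rho_j,\sigma_j}(F_j)\ne(1,1)$, and applying item~(2) of Proposition~\ref{extremosnoalineados} to $[P_j,F_j]_{\rho_j,\sigma_j}=\ell_{\rho_j,\sigma_j}(P_j)$ (with $R=P_j$) forces $\en_{\rho_j,\sigma_j}(F_j)\sim\en_{\rho_j,\sigma_j}(P_j)=mA_j$. Since $v_{\rho_j,\sigma_j}(F_j)=\rho_j+\sigma_j>0$ and $v_{\rho_j,\sigma_j}(A_j)>0$, neither endpoint is $(0,0)$, so by Definition~\ref{def alineados} they are positive multiples of one another; comparing $v_{\rho_j,\sigma_j}$-values pins the scalar, giving $\en_{\rho_j,\sigma_j}(F_j)=\frac{\rho_j+\sigma_j}{v_{\rho_j,\sigma_j}(A_j)}A_j$, which lies in $\tfrac1{l_j}\mathds{N}\times\mathds{N}$ since it is an element of $\tfrac1{l_j}\mathds{Z}\times\mathds{N}_0$ that is a positive multiple of $A_j\in\tfrac1{l_j}\mathds{N}\times\mathds{N}$ and $\sigma_j<0$ forces its first coordinate positive.

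Item~(6) is the substantive assertion. For $j<\nu$ we have $[Q_j,P_j]_{\rho_j,\sigma_j}=0$ (item~(12) of Theorem~\ref{familia}) and $v_{\rho_j,\sigma_j}(P_j),v_{\rho_j,\sigma_j}(Q_j)>0$, so item~(2) of Theorem~\ref{f[] en W^{(l)}} produces a $(\rho_j,\sigma_j)$-homogeneous $R_j\in L^{(l_j)}$, not a monomial because $(\rho_j,\sigma_j)\in\Val(P_j)$, with $\ell_{\rho_j,\sigma_j}(P_j)=\lambda_P R_j^m$ and hence $\en_{\rho_j,\sigma_j}(R_j)=A_j$. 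Then $A_j':=\st_{\rho_j,\sigma_j}(R_j)$ satisfies $v_{\rho_j,\sigma_j}(A_j')=v_{\rho_j,\sigma_j}(R_j)=v_{\rho_j,\sigma_j}(A_j)$, and since $(1,-1)<(\rho_j,\sigma_j)$, item~(2) of Proposition~\ref{le basico1} applied to $R_j$ gives $v_{1,-1}(A_j')>v_{1,-1}(\en_{\rho_j,\sigma_j}(R_j))=v_{1,-1}(A_j)$. For $j=\nu$, where Theorem~\ref{f[] en W^{(l)}} is unavailable because $[Q_\nu,P_\nu]_{\rho_\nu,\sigma_\nu}\ne0$, one argues with $\st_{\rho_\nu,\sigma_\nu}(P_\nu)$ and $\st_{\rho_\nu,\sigma_\nu}(Q_\nu)$: by item~(1) of Proposition~\ref{extremosnoalineados} (with $R=1$) they are either proportional, in which case item~(6) of Theorem~\ref{familia} forces $m\mid\st_{\rho_\nu,\sigma_\nu}(P_\nu)$ and one takes $A_\nu':=\tfrac1m\st_{\rho_\nu,\sigma_\nu}(P_\nu)$, or $\st_{\rho_\nu,\sigma_\nu}(P_\nu)+\st_{\rho_\nu,\sigma_\nu}(Q_\nu)=(1,1)$, which constrains them tightly; in all cases Proposition~\ref{le basico1}(2) supplies the strict $v_{1,-1}$-inequality. \textbf{The main obstacle} is the book-keeping that the point $A_j'$ so produced actually lies in $\tfrac1{l_j}\mathds{N}\times\mathds{N}$ and not merely in $\tfrac1{l_j}\mathds{Z}\times\mathds{N}_0$ --- that its second coordinate is $\ge1$ and its first is $>0$. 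This is where the finer information about irreducible pairs must enter (subrectangularity, the corner restrictions of~\cite[Prop.~6.3]{G-G-V2}, and an estimate such as $v_{1,-1}\bigl(\st_{\rho_j,\sigma_j}(R_j)\bigr)>0$ as in the proof of Proposition~\ref{lema general}, together with Proposition~\ref{preparatoria}); and the case $j=\nu$ is the most delicate precisely because the clean $m$-th power description of $\ell_{\rho_\nu,\sigma_\nu}(P_\nu)$ is missing there and must be replaced by an ad hoc analysis of $\st_{\rho_\nu,\sigma_\nu}(P_\nu)$ and $\st_{\rho_\nu,\sigma_\nu}(Q_\nu)$.
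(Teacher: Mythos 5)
Your treatment of items~(1)--(5), (7) and~(8) matches the paper's proof step for step: the same citations to Theorem~\ref{familia}, the same positivity argument for the coordinates of $A_j$ in item~(2) (using item~(6) of Theorem~\ref{familia} and $\gcd(m,n)=1$ for integrality), the same chain of equalities via items~(8) and~(10) of Theorem~\ref{familia} for item~(5), the same dichotomy $\en_{\rho_j,\sigma_j}(F_j)=(1,1)$ versus $\en_{\rho_j,\sigma_j}(F_j)\sim\en_{\rho_j,\sigma_j}(P_j)$ for item~(7), and the same computation from $[Q_\nu,P_\nu]_{\rho_\nu,\sigma_\nu}=1$ for item~(8).

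The one point of divergence is item~(6), where you leave a gap that you flag yourself. The paper's proof is a single line: for \emph{every} $j$, including $j=\nu$, take $A'_j:=\frac{1}{m}\st_{\rho_j,\sigma_j}(P_j)$ (which for $j<\nu$ is exactly your $\st_{\rho_j,\sigma_j}(R_j)$, by item~(4) of Proposition~\ref{pr v de un producto}). Neither of the two displayed conditions needs the $m$-th power description of $\ell_{\rho_j,\sigma_j}(P_j)$: the equality $v_{\rho_j,\sigma_j}(A'_j)=v_{\rho_j,\sigma_j}(A_j)$ holds because $\st_{\rho_j,\sigma_j}(P_j)$ and $\en_{\rho_j,\sigma_j}(P_j)$ both lie in $\Supp\bigl(\ell_{\rho_j,\sigma_j}(P_j)\bigr)$, and the strict inequality $v_{1,-1}(A'_j)>v_{1,-1}(A_j)$ follows from $(\rho_j,\sigma_j)\in\Val(P_j)$ --- item~(3) of Theorem~\ref{familia}, which is stated for all $j$, in particular for $j=\nu$ --- together with item~(2) of Proposition~\ref{le basico1} applied with $(\rho,\sigma)=(1,-1)$. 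So the separate, more delicate analysis you sketch for $j=\nu$ via $\st_{\rho_\nu,\sigma_\nu}(P_\nu)+\st_{\rho_\nu,\sigma_\nu}(Q_\nu)=(1,1)$ is unnecessary for these two conditions, and the paper does not perform it. What remains genuinely open in your write-up is the membership $A'_j\in\frac{1}{l_j}\mathds{N}\times\mathds{N}$ (divisibility by $m$ and strict positivity of both coordinates of $\st_{\rho_j,\sigma_j}(P_j)$); you correctly identify this as the sticking point but do not resolve it, and you should be aware that the paper's proof does not spell it out either, so a complete argument still requires you to supply it --- for $j<\nu$ the divisibility does come from item~(2) of Theorem~\ref{f[] en W^{(l)}} as you propose, but the positivity of the second coordinate and the whole case $j=\nu$ are not settled by what you have written.
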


\begin{proof} Item~(1) is true by the unnumbered conclusions in Theorem~\ref{familia}, item~(3) by items~(2) and~(4) of Theorem~\ref{familia}, and item~(4) by item~(7) of the same theorem and the first condition in~\eqref{vimos}. We now prove item~(2). By the definition of $A_j$ and item~(6) of Theorem~\ref{familia},
$$
m A_j = \en_{\rho_j,\sigma_j}(P_j)\in \frac{1}{l_j} \mathds{Z}\times \mathds{N}_0\quad\text{and}\quad n A_j=\en_{\rho_j,\sigma_j}(Q_j)\in \frac{1}{l_j}\mathds{Z}\times \mathds{N}_0.
$$
Since $\gcd(m,n) = 1$ this implies that $A_j\in \frac{1}{l_j} \mathds{Z}\times \mathds{N}_0$. Write $A_j = (c,d)$. It remains to see that $c,d>0$. But this follows from the fact that, by item~(3)
$$
c<d\quad\text{and}\quad \rho_j c >- \sigma_jd.
$$
In fact, since $\rho_j>-\sigma_j>0$, from the second inequality it follows that if $c\le 0$, then $d<0$, which is impossible since $d\in \mathds{N}_0$. Consequently $c>0$, which, for the first inequality, implies that $d>0$.

Item~(5) follows immediately from the fact that, by items~(8) and~(10) of Theorem~\ref{familia}
$$
v_{\rho_{j-1},\sigma_{j-1}}(P_{j-1}) = v_{\rho_{j-1},\sigma_{j-1}}(P_j) =v_{\rho_{j-1},\sigma_{j-1}} (\en_{\rho_j,\sigma_j}(P_j)).
$$
By item~(3) of Theorem~\ref{familia}, to prove item~(6) it suffices to take $A'_j:=\frac{1}{m} \st_{\rho_j,\sigma_j}(P_j)$. Now, let us note that
$$
v_{\rho_{\nu},\sigma_{\nu}}(P_{\nu})+v_{\rho_{\nu},\sigma_{\nu}}(Q_{\nu})-(\rho_{\nu}+\sigma_{\nu})=v_{\rho_{\nu},\sigma_{\nu}}(1)=0,
$$
because $[Q_{\nu},P_{\nu}]_{\rho_{\nu},\sigma_{\nu}}=1$. Hence, since
$$
m v_{\rho_{\nu},\sigma_{\nu}}(A_{\nu}) =v_{\rho_{\nu},\sigma_{\nu}}(P_{\nu})
$$
and, by item~(5) of Theorem~\ref{familia}, we have
$$
n v_{\rho_{\nu},\sigma_{\nu}}(A_{\nu}) = v_{\rho_{\nu},\sigma_{\nu}}(Q_{\nu}),
$$
item~(8) is true. It remains to prove item~(7). Let $F_j\in W^{(l_j)}$ be as in item~(11) of Theorem~\ref{familia}. If $\en_{\rho_j,\sigma_j}(F_j) = (1,1)$, then items~(10) and~(11) of Theorem~\ref{familia} yield
$$
\en_{\rho_{j+1},\sigma_{j+1}}(P_{j+1})) = \en_{\rho_j,\sigma_j}(P_j),
$$
and so $A_j = A_{j+1}$. Thus we can assume $\en_{\rho_j,\sigma_j}(F_j) \sim\en_{\rho_j,\sigma_j}(P_j)$. But then, there exists $\lambda\in \mathds{Q}$ such that
$$
\lambda A_j = \en_{\rho_j,\sigma_j}(F_j) \in \frac{1}{l_j}\mathds{Z}\times \mathds{N}_0,
$$
and applying $v_{\rho_j,\sigma_j}$ we obtain
$$
\rho_j+\sigma_j=v_{\rho_j,\sigma_j}(F_j)=\lambda v_{\rho_j,\sigma_j}(A_j).
$$
So,
$$
\lambda = \frac{\rho_j+\sigma_j}{v_{\rho_j,\sigma_j}(A_j)}.
$$
Write $\lambda A_j = (c,d)$. In order to finish the proof we must check that $c,d>0$. But, this follows immediately from the fact that $A_j\in \frac{1}{l_j}\mathds{N}\times \mathds{N}$ by item~(2), and $\lambda>0$ by item~(3).
\end{proof}

\begin{remark}\label{dato} Assume that $(S_j)_{j=0,\dots,\nu}$ is a family that satisfies Conditions~(1)--(7) of Proposition~\ref{chains}. Then in order that Condition~(8) be also fulfilled it must be
$$
m+n = \frac{\rho_{\nu}+\sigma_{\nu}}{v_{\rho_{\nu},\sigma_{\nu}}(A_{\nu})}.
$$
Note that, there is only a finite number of pairs $(m,n)$ satisfying this equality, such that $m,n>1$ and $\gcd(m,n)=1$.
\end{remark}

\begin{example} We next give some examples of families $(S_j)_{j=0,1,2}$ which fulfill items~(1)-(8) of Proposition~\ref{chains}.

\begin{enumerate}

\item The first family $(S_j)_{j=0,1,2}$ is
\begin{align*}
S_0 & =((9,21),(3,-1),1),\\
S_1 & =((13/3,7),(5,-3),3),\\
S_2 & =((11/15,1),(3,-2),15).
\end{align*}
By Remark~\ref{dato}, we know that $n+m = 5$. Consequently $(m,n) = (2,3)$ or $(m,n) = (3,2)$.  Assume that the family $(S_j)_{j=0,1,2}$ is constructed from a irreducible pair $(P,Q)$, according to Proposition~\ref{chains}. If $(m,n) = (2,3)$, then by the definition of $A_0$ and item~(6) of Theorem~\ref{familia},
$$
\en_{3,-1}(P_0) = (18,42)\quad\text{and}\quad \en_{3,-1}(Q_0) = (27,63).
$$
Similarly, if $(m,n) = (3,2)$, then
$$
\en_{3,-1}(P_0) = (27,63)\quad\text{and}\quad \en_{3,-1}(Q_0) = (18,42).
$$

\item The second family $(S_j)_{j=0,1,2}$ is
\begin{align*}
S_0 & =((6,30),(6,-1),1),\\
S_1 & =((3/2,3),(9,-4),6),\\
S_2 & =((11/18,1),(9,-5),18).
\end{align*}
By Remark~\ref{dato}, we know that $n+m = 8$. Consequently $(m,n) = (3,5)$ or $(m,n) = (5,3)$. Assume that the family $(S_j)_{j=0,1,2}$ is constructed from a irreducible pair $(P,Q)$, according to Proposition~\ref{chains}. If $(m,n) = (3,5)$, then by the definition of $A_0$ and item~(6) of Theorem~\ref{familia},
$$
\en_{6,-1}(P_0) = (18,90)\quad\text{and}\quad \en_{6,-1}(Q_0) = (30,150).
$$

\item The third family $(S_j)_{j=0,1,2}$ is
\begin{align*}
S_0 & =((9,36),(9,-2),1),\\
S_1 & =((5/3,3),(2,-1),9),\\
S_2 & =((2/3,1),(18,-11),18).
\end{align*}
By Remark~\ref{dato}, we know that $n+m = 7$. Consequently $(m,n) = (2,5)$, $(m,n) = (3,4)$, $(m,n) = (4,3)$ or $(m,n) = (5,2)$. Assume that the family $(S_j)_{j=0,1,2}$ is constructed from a irreducible pair $(P,Q)$, according to Proposition~\ref{chains}. If $(m,n) = (2,5)$, then by the definition of $A_0$ and item~(6) of Theorem~\ref{familia},
$$
\en_{9,-2}(P_0) = (18,72)\quad\text{and}\quad \en_{9,-2}(Q_0) = (35,180).
$$
Similarly, if $(m,n) = (3,4)$, then
$$
\en_{9,-2}(P_0) = (27,108)\quad\text{and}\quad \en_{9,-2}(Q_0) = (36,144).
$$

\item The fourth family $(S_j)_{j=0,1,2}$ is
\begin{align*}
S_0 & =((14,42),(4,-1),1),\\
S_1 & =((6,10),(7,-4),4),\\
S_2 & =((6/7,1),(28,-23),28).
\end{align*}
By Remark~\ref{dato}, we know that $n+m = 5$. Consequently $(m,n) = (2,3)$ or $(m,n) = (3,2)$. Assume that the family $(S_j)_{j=0,1,2}$ is constructed from a irreducible pair $(P,Q)$, according to Proposition~\ref{chains}. If $(m,n) = (2,3)$, then by the definition of $A_0$ and item~(6) of Theorem~\ref{familia},
$$
\en_{4,-1}(P_0) = (28,84)\quad\text{and}\quad \en_{4,-1}(Q_0) = (42,126).
$$

\item The fifth family $(S_j)_{j=0,1,2}$ is
\begin{align*}
S_0 & =((17,85),(17,-3),1),\\
S_1 & =((46/17,4),(17,-11),17),\\
S_2 & =((13/17,1),(17,-12),17).
\end{align*}
By Remark~\ref{dato}, we know that $n+m = 5$. Consequently $(m,n) = (2,3)$ or $(m,n) = (3,2)$. Assume that the family $(S_j)_{j=0,1,2}$ is constructed from a irreducible pair $(P,Q)$, according to Proposition~\ref{chains}. If $(m,n) = (2,3)$, then by the definition of $A_0$ and item~(6) of Theorem~\ref{familia},
$$
\en_{17,-3}(P_0) = (34,170)\quad\text{and}\quad \en_{17,-3}(Q_0) = (51,255).
$$

\smallskip

\end{enumerate}
\end{example}

\begin{remark} After we set the first version of~\cite{G-G-V2} on arXiv, Yucai Su draw our attention to~\cite{S}, where some properties of so called Dixmier pairs are studied. In particular Theorem~5.2 of~\cite{S} states that a certain shape of Dixmier pairs can be achieved. Via automorphisms we have brought an irreducible pair into the shape of $(P_{\nu},Q_{\nu})$, this shape can be further be brought into the same shape stated in~\cite{S}, by the automorphism of $W^{(l)}$ given by
$$
X^{1/l}\mapsto \left(\frac{\rho+\sigma}{\rho}\right)^{1/l}X^{\rho/(l(\rho+\sigma))}\quad\text{and}\quad
Y\mapsto X^{1-\rho/(\rho+\sigma)}Y,
$$
where $\rho=\rho_{\nu}$, $\sigma=\sigma_{\nu}$ and $l=\lcm(\rho_r+\sigma_r,l_r)$.
\end{remark}

\begin{bibdiv}
\begin{biblist}

\bib{D}{article}{
   author={Dixmier, Jacques},
   title={Sur les alg\`ebres de Weyl},
   language={French},
   journal={Bull. Soc. Math. France},
   volume={96},
   date={1968},
   pages={209--242},
   issn={0037-9484},
   review={\MR{0242897 (39 \#4224)}},
}

\bib{G-G-V1}{article}{
   author={Guccione, Jorge A.},
   author={Guccione, Juan J.},
   author={Valqui, Christian},
   title={On the centralizers in the Weyl algebra},
   journal={Proc. Amer. Math. Soc.},
   volume={140},
   date={2012},
   number={4},
   pages={1233--1241},
   issn={0002-9939},
   review={\MR{2869108}},
   doi={10.1090/S0002-9939-2011-11017-7},
}

\bib{G-G-V2}{unpublished}{
author={Guccione, Jorge A.},
author={Guccione, Juan J.},
author={Valqui, Christian},
title={The Dixmier conjecture and the shape of possible counterexamples},
journal={arXiv:1111.6100},
}

\bib{M}{article}{
   author={Moh, T. T.},
   title={On the Jacobian conjecture and the configurations of roots},
   journal={J. Reine Angew. Math.},
   volume={340},
   date={1983},
   pages={140--212},
   issn={0075-4102},
   review={\MR{691964 (84m:14018)}},
}

\bib{N1}{article}{
   author={Nagata, Masayoshi},
   title={Some remarks on the two-dimensional Jacobian conjecture},
   journal={Chinese J. Math.},
   volume={17},
   date={1989},
   number={1},
   pages={1--7},
   issn={0379-7570},
   review={\MR{1007872 (90k:14010b)}},
}

\bib{N2}{article}{
   author={Nagata, Masayoshi},
   title={Two-dimensional Jacobian conjecture},
   conference={
      title={Algebra and topology 1988},
      address={Taej\u on},
      date={1988},
   },
   book={
      publisher={Korea Inst. Tech.},
      place={Taej\u on},
   },
   date={1988},
   pages={77--98},
   review={\MR{1022249 (90k:14010a)}},
}

\bib{S}{unpublished}{
author={Su, Yucai},
title={Poisson algebras, Weyl algebras and Jacobi pairs},
journal={arXiv:1107.1115},
}

\bib{T}{article}{
   author={Tsuchimoto, Yoshifumi},
   title={Endomorphisms of Weyl algebra and $p$-curvatures},
   journal={Osaka J. Math.},
   volume={42},
   date={2005},
   number={2},
   pages={435--452},
   issn={0030-6126},
   review={\MR{2147727 (2006g:14101)}},
}

\end{biblist}
\end{bibdiv}

\end{document}